\documentclass[11pt,a4paper]{article}
\ifdefined\pdfminorversion\pdfminorversion=7\fi
\usepackage[T1]{fontenc}
\usepackage[utf8]{inputenc}
\usepackage{lmodern,amsmath,amssymb,amsthm,mathtools}
\usepackage[margin=27mm]{geometry}
\usepackage{microtype}
\usepackage{graphicx,booktabs,array}
\usepackage[font=small,labelfont=bf]{caption}
\usepackage[hidelinks]{hyperref}
\hypersetup{pdftitle={The P\'olya conjecture for Neumann eigenvalues of spherical shells},pdfauthor={Yutian Li}}
\numberwithin{equation}{section}
\newtheorem{theorem}{Theorem}[section]
\newtheorem{lemma}[theorem]{Lemma}
\newtheorem{proposition}[theorem]{Proposition}
\newtheorem{corollary}[theorem]{Corollary}
\theoremstyle{remark}
\newtheorem{remark}[theorem]{Remark}

\newcommand{\W}{\mathcal W}

\allowdisplaybreaks[1]
\title{P\'olya's conjecture for Neumann eigenvalues of spherical shells}
\author{Yutian Li\\[3pt]
  \small School of Mathematics and Statistics\\
  \small Nanfang College, Guangzhou 510970, Guangdong, China\\
  \small\href{mailto:yutianlee@gmail.com}{\texttt{yutianlee@gmail.com}}}
\date{22 October 2026}
\begin{document}
\maketitle
\begin{abstract}
We prove the strict P\'olya inequality for Neumann eigenvalues of spherical
shells in every dimension $d\ge2$, for every radius ratio and every positive
energy. The proof also covers balls. For shells, the excess over the leading
Weyl term has a positive lower bound depending only on dimension. Above
explicit frequency thresholds independent of the radius ratio, we obtain
a surplus of boundary order. These estimates remain uniform as the hole
shrinks or the shell becomes thin. We compare weighted sampling errors
for inverse radial actions while retaining the removed volume exactly.
The proof is entirely analytic and requires no computer-assisted
verification.
\end{abstract}

\noindent\textbf{MSC 2020:} 35P15 (primary); 35P20, 33C10, 11P21.\\
\noindent\textbf{Keywords:} P\'olya's conjecture, Neumann eigenvalues,
spherical shells, Euclidean balls, Weyl's law, Bessel phase, lattice points.

\begingroup\small
\makeatletter
\renewcommand*\l@section[2]{%
  \@dottedtocline{1}{0em}{1.5em}{\bfseries #1}{\bfseries #2}}
\makeatother
\tableofcontents
\endgroup

\section{Introduction and main results}\label{sec:introduction}

P\'olya's conjecture asks whether the leading term in Weyl's law is an
upper bound for the Dirichlet counting function and a lower bound for
the Neumann counting function at every positive energy
\cite{Polya1954,Polya1961,Weyl1912}. We prove the strict Neumann inequality
for spherical shells in every dimension $d\ge2$ and for every radius
ratio. The estimates also include zero inner radius, giving an analytic
proof of the strict ball inequality without computer-assisted estimates.
The planar annulus inequality is due to Dai and Sun~\cite{DSannuli};
we give a proof using the same inverse-action comparison as in higher
dimensions. The earlier ball results are discussed after
Corollary~\ref{cor:balls}.

For shells, the quantitative estimates are uniform as the inner radius tends to zero
and as the shell becomes thin. They give a surplus of boundary order
above explicit frequency thresholds and an enhancement at every positive
energy with a constant depending only on dimension. For a fixed smooth
hole shape, the enhancement also permits removal of a sufficiently small
additional hole, uniformly over a compact family of radius ratios.

Let $d\ge2$, $0<r<R$, and
\[
 \Omega_{r,R}=\{y\in\mathbb R^d:r<|y|<R\}.
\]
For the ball endpoint, set $\Omega_{0,R}=B_R$.
Write $0=\lambda_0^N<\lambda_1^N\le\lambda_2^N\le\cdots$ for
its Neumann eigenvalues, counted with multiplicity, and define the
strict counting function by
\[
 N^<_\Omega(\Lambda)=\#\{j\ge0:\lambda_j^N(\Omega)<\Lambda\}.
\]
For $n\ge1$, let $\omega_n$ be the volume of the unit ball in
$\mathbb R^n$ and put $L_{0,n}=\omega_n/(2\pi)^n$. Thus
\[
 L_{0,d}|\Omega_{r,R}|\Lambda^{d/2}
 =\frac{(R^d-r^d)\Lambda^{d/2}}{2^d\Gamma(d/2+1)^2}.
\]

\subsection{The strict inequality for shells and balls}

\begin{theorem}[Strict P\'olya inequality]\label{thm:all-dimensions}
For every $d\ge2$, $0<r<R$, and $\Lambda>0$,
\begin{equation}\label{eq:polya}
 N^<_{\Omega_{r,R}}(\Lambda)>
 \frac{(R^d-r^d)\Lambda^{d/2}}{2^d\Gamma(d/2+1)^2}.
\end{equation}
Equivalently, for every integer $k\ge1$,
\[
 \lambda_k^N(\Omega_{r,R})<
 \left(\frac{2^d\Gamma(d/2+1)^2k}{R^d-r^d}\right)^{2/d}.
\]
At $\Lambda=0$, both sides of the counting inequality are zero.
The planar annulus result is due to Dai and Sun~\cite[Proposition~5.13]{DSannuli};
Section~\ref{sec:planar} gives a proof using the comparison developed here.
\end{theorem}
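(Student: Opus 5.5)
Separation of variables reduces the problem to one-dimensional radial problems, and I would then compare the resulting lattice count with Weyl's volume term. Neumann eigenfunctions on $\Omega_{r,R}$ have the form $f(\rho)Y_\ell(\theta)$ with $Y_\ell$ a spherical harmonic of degree $\ell\ge0$. The harmonic space has dimension
\[
 m_\ell=\binom{\ell+d-1}{d-1}-\binom{\ell+d-3}{d-1}.
\]
For each $\ell$ the radial eigenvalues are the squares $k^2$ of the zeros $k$ of the cross product
\[
 J_\nu'(kr)Y_\nu'(kR)-J_\nu'(kR)Y_\nu'(kr),\qquad \nu=\ell+\tfrac{d-2}{2},
\]
adjusted for the factor $\rho^{-(d-2)/2}$. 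Hence
\[
 N^<(\Lambda)=\sum_{\ell\ge0} m_\ell\,n_\ell(\sqrt\Lambda),
\]
where $n_\ell(k)$ counts radial Neumann eigenvalues below $k^2$, the constant mode included when $\ell=0$. The Weyl term is rewritten the same way. Using $\sum_\ell m_\ell\approx$ a weighted integral in $\nu$, I would write $L_{0,d}|\Omega|\Lambda^{d/2}$ as $\int m(\nu)\,A(\nu,k)\,d\nu/\pi$, with $m(\nu)$ the polynomial interpolating $m_\ell$. Here
\[
 A(\nu,k)=\int_{\max(r,\nu/k)}^{R}\sqrt{k^2-\nu^2/\rho^2}\,d\rho
\]
is the WKB radial action. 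This identity is elementary: it is the phase-space volume computed in polar coordinates. The removed volume $r^d$ appears exactly as the region $\rho<r$ excluded from the action integral.

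The second step is a lower bound $n_\ell(k)\ge \lfloor A(\nu,k)/\pi+\beta\rfloor$ with a phase correction $\beta>0$. I would get it from the Sturm/Prüfer phase monotonicity of the radial ODE, by comparing the Neumann Bessel phase with the action. At the outer boundary a Neumann condition gains roughly $1/4$ over Dirichlet in the turning-point regime. In the regime $\nu/k<r$, with no turning point inside, the two Neumann ends give a phase gain near $1/2$. Classical bounds on the zeros of $J_\nu'$ and on the phase of $J_\nu'+iY_\nu'$ (monotonicity in the argument, Watson-type inequalities) should suffice. The count is then bounded below by a weighted lattice sum
\[
 \sum_\ell m_\ell\bigl\lfloor A(\nu_\ell,k)/\pi+\beta_\ell\bigr\rfloor .
\]

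The third step, which is the main obstacle, compares this floor sum with the integral. This is where the abstract's ``inverse radial actions'' enter. I would swap the order of counting: for each radial index $j$, count the $\ell$ with $A(\nu_\ell,k)/\pi+\beta_\ell>j$. Since $A$ is decreasing and convex in $\nu$, that is $\nu_\ell<\nu_j^*(k)$, where $\nu^*_j$ inverts the action. The count becomes $\sum_j M(\nu^*_j)$, with $M$ the cumulative harmonic dimension, a polynomial of degree $d-1$ in $\nu$. The Weyl term is the corresponding integral over $j$. Summing a monotone function over integers and comparing with its integral produces a sampling error. The phase shifts $\beta$ give a positive shift of order $k^{d-1}$ times a boundary term, and this must dominate the Euler--Maclaurin errors.

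The delicate regimes are small $\Lambda$, where only a few modes exist, and the transitions $r\to0$ or $r\to R$. For small $\Lambda$ I would argue directly: the constant mode gives $N^<\ge1$, and the first nonzero eigenvalue, which is $\ell=1$ radially nodeless, must lie below the Weyl threshold for $k=1$. That requires an explicit bound on $\lambda_1^N$ of the shell, obtained from test functions $x_i\,g(|x|)$. Finally I would check strictness and uniformity by verifying that the net weighted sampling error is bounded below by a positive dimensional constant for all $k$. The ball ($r=0$) follows as the limiting case treated identically, and $d=2$ recovers the Dai--Sun result.
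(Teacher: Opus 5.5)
You have the same architecture as the paper: separation of variables, the radial action, a phase lower bound for each radial count, interchange of angular sectors and radial rows, and comparison of the row sum with the Weyl integral. But the decisive third step is only asserted, and the structural fact you invoke for it is false. With $K=R\sqrt\Lambda$ and $\mu=r\sqrt\Lambda$, your $A(\nu,k)/\pi$ is $A=G_K-G_\mu$, and $A''=\pi^{-1}\bigl((K^2-z^2)^{-1/2}-(\mu^2-z^2)^{-1/2}\bigr)<0$ on $(0,\mu)$. So the shell action is concave before the inner turning point and convex only after it, and the convex-profile lattice estimates available for balls do not apply.

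A generic monotone or Euler--Maclaurin comparison of $\sum_jC_d(x(j+1/4))$ with its integral has error of size $C_d(K)\approx 2K^{d-1}/(d-1)!$. That is the same order as the gain, so ``the phase shifts must dominate the errors'' is the whole theorem and needs sharp explicit constants. The missing idea is to compare the shell inverse $x=A^{-1}$ with the ball inverse $\xi_K$, which coincide for $0\le t\le G_K(\mu)$:
\begin{itemize}
\item For a convex angular weight $F\le C_d$ with $zF''/F'-1$ nonincreasing, the shell's weighted slope is unimodal and the ball's is decreasing. A bounded periodic primitive then bounds $|E_S(F)-E_B(F)|$ by $\frac18M_F$.
\item The moment identity retains the removed volume as the nonnegative term $\W_d(\mu,0)-J_F(\mu)$.
\item An explicit lower bound for the ball's quarter-sampling error supplies the margin; for instance $E_{B,p}/K^p>18/125$ when $K\ge\frac74(p+1)^{3/2}$.
\end{itemize}
Even then the transfer loss is of order $\rho^{p/2}K^p/T$, with $\rho=r/R$ and $T=(K-\mu)/\pi$. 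This is useless when the width $(R-r)\sqrt\Lambda$ is small, so thin shells need a separate argument (constant and cosine radial trials). For $d=2$ the linear weight has an unbounded terminal slope, which the paper cancels by subtracting a semicircle. Your proposal has nothing in place of either.

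Two further steps fail as stated.

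\emph{The phase correction.} The phase bounds do not give a uniform $\beta_\ell=3/4$. For $d\ge3$ the radial Neumann condition is of Robin type, $tZ_\nu'-\alpha Z_\nu=0$ with $\alpha=(d-2)/2$. When $\nu_\ell$ is close to the inner turning point $\mu$, both the Bessel phase error and the Robin correction (bounded only by $\alpha/\sqrt{\mu^2-\nu_\ell^2}$) are large, so only $\lfloor A(\nu_\ell)+1/2\rfloor$ is guaranteed there. You have to confine these exceptional degrees to a bounded window of orders ($\nu_\ell<H_d^*=O(d^{3/2})$) and of inner frequencies, and show their total multiplicity is negligible against the surplus. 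Your remark about a gain near $1/2$ when there is no turning point says nothing about this transition.

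\emph{The low-frequency range.} This is not a matter of a few modes. The paper's comparison closes only for $K\ge\frac74d^{3/2}$ when $d\ge5$, and already for $d=5$ and small holes thousands of eigenvalues lie below that threshold. The constant mode and the degree-one eigenvalues handle Weyl terms only up to $1+d$. The paper covers the whole intermediate range with explicit radial trials in every angular degree. These are transferred from the ball to all shells by the endpoint principle $\lambda_{q,j}(\rho)\le\max\{\lambda_{q,j}(0),\lambda_{q,j}(r)\}$ and the ground-level comparison $\lambda_{q,0}(\rho)\le\lambda_{q,0}(0)$.
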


The phase, variational, and counting estimates include $r=0$ in every
dimension. At this endpoint the inner branch is empty and the phase
defect vanishes, giving the ball inequality directly.

\begin{corollary}[Neumann balls]\label{cor:balls}
For every $d\ge2$, $R>0$ and $\Lambda>0$,
\[
 N^<_{B_R}(\Lambda)>\frac{R^d\Lambda^{d/2}}{2^d\Gamma(d/2+1)^2},
 \qquad B_R=\{y\in\mathbb R^d:|y|<R\}.
\]
\end{corollary}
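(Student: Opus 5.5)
The corollary is the $r=0$ endpoint of Theorem~\ref{thm:all-dimensions}. By scaling, $N^<_{B_R}(\Lambda)=N^<_{B_1}(R^2\Lambda)$, so we may take $R=1$. The plan is to rerun the shell argument with the inner branch deleted, as the remark after the theorem suggests. We do not pass to a limit $r\to0$: a limit would only give the non-strict inequality. Instead we use the quantitative enhancement at $r=0$ directly. That enhancement is the positive dimension-dependent excess, stated in the paper to hold uniformly as the hole shrinks.

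\textbf{Step 1 (separation of variables).} Neumann eigenfunctions of $B_1$ are $j_\ell(\sqrt\lambda\,\rho)Y_\ell$, with $j_\ell(x)=x^{1-d/2}J_{\ell+d/2-2+1}(x)$ up to normalization and $\nu=\ell+d/2-1$. The eigenvalues are $\lambda=x^2$, where $x$ ranges over the positive zeros of $\frac{d}{dx}\bigl(x^{1-d/2}J_\nu(x)\bigr)$, together with $\lambda=0$ for $\ell=0$. The multiplicity is $\dim\mathcal H_\ell=\binom{\ell+d-1}{d-1}-\binom{\ell+d-3}{d-1}$. Hence
\[
N^<_{B_1}(\Lambda)=\sum_{\ell\ge0}\dim\mathcal H_\ell\,\#\{k:\ j'_{\nu,k}{}^{\!*}<\sqrt\Lambda\},
\]
where the $\ell=0$ term counts the zero eigenvalue.

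\textbf{Step 2 (phase bound).} Write $\mu=\sqrt\Lambda$. Using the Bessel phase estimates established in the body of the paper with the inner phase defect set to zero, the count in mode $\ell$ is at least $\lfloor \mu\,\W(\nu/\mu)/\pi+\tfrac14+\text{(Neumann shift)}\rfloor$. Here $\mu\W(\nu/\mu)=\sqrt{\mu^2-\nu^2}-\nu\arccos(\nu/\mu)$ is the radial action. The key point is that the outer Neumann boundary contributes a favorable $+\tfrac14$-type shift relative to the Dirichlet count. This shift is what produces the lower bound.

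\textbf{Step 3 (weighted lattice comparison).} Sum over $\ell$ with weights $\dim\mathcal H_\ell$. The plan is to compare
\[
\sum_\ell \dim\mathcal H_\ell\Bigl(\tfrac{\mu}{\pi}\W(\nu/\mu)+\tfrac14-\{\cdot\}\Bigr)
\]
with the integral $\int \dim\mathcal H(\ell)\,\tfrac{\mu}{\pi}\W\,d\ell$. That integral equals $L_{0,d}\omega_d\mu^d$, the Weyl term. We control the sampling error of the floor function by the inverse-action comparison of the paper, which compares the weighted counts in the variable $\W^{-1}$. The surplus from the Neumann shift, of order $\mu^{d-1}$, dominates the sawtooth errors above the explicit frequency threshold. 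Below the threshold, the dimension-only positive lower bound on the excess gives strictness at every $\Lambda>0$. For very small $\Lambda$, below the first nonzero eigenvalue, the zero eigenvalue alone gives $N^<=1$, so we need $\Lambda<(2^d\Gamma(d/2+1)^2)^{2/d}$ there. This follows because $\lambda_1^N(B_1)=(j'_{d/2,1})^2$ lies below this bound, by a Szeg\H{o}--Weinberger-type comparison or an explicit Bessel estimate.

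The main obstacle is Step 3 in the intermediate frequency range. There the sawtooth errors are comparable to the boundary surplus, and the inverse-action comparison must be uniform in $\ell$ near the turning point $\nu\approx\mu$. Since the theorem's estimates are stated to include $r=0$, the intended proof is to invoke those estimates with the vanishing phase defect. The strict inequality then follows exactly as in Theorem~\ref{thm:all-dimensions}.
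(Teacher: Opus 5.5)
Your closing remark (set $\rho=0$, where the inner branch is empty and the phase defect vanishes) is in fact the whole of the paper's proof. Theorems~\ref{thm:planar-complete}, \ref{d3:thm:main}, \ref{thm:d4all} and~\ref{thm:clipped-surplus} are each proved for every $0\le\rho<1$ and every $K>0$, so one puts $\rho=0$ and rescales.

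The route you actually describe for strictness, however, contains a circular step. Above the explicit thresholds you may use the ball surplus from Proposition~\ref{d2:prop:disk}, Proposition~\ref{d3:prop:jointclosure}, \eqref{eq:d4-quantitative-surplus} and Theorem~\ref{thm:clipped-surplus}, all of which include $\mu=0$. Below the thresholds you invoke ``the dimension-only positive lower bound on the excess,'' i.e.\ the constant $\varepsilon_d$ of Theorem~\ref{thm:uniform-enhancement}. That theorem is stated only for $0<r<R$. More seriously, its proof obtains $\varepsilon_d$ from Lemma~\ref{lem:compact-surplus} by lower semicontinuity and compactness on $[0,\rho_1]\times[0,A]$. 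That step needs $S(0,K)>0$ as input, which is Corollary~\ref{cor:balls} itself (Remark~\ref{rem:enp-dependencies}). So the range you single out as the main obstacle is exactly where your argument assumes its conclusion. Even granting $\varepsilon_d$ for shells, transferring it to $\rho=0$ would need $\lambda_j(0)\le\liminf_{\rho\downarrow0}\lambda_j(\rho)$. That is the opposite of the semicontinuity proved in Lemma~\ref{lem:compact-surplus}.

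What is missing is a non-circular input for low frequencies: the radial trial-space estimates. These are proved with ball trials and hence hold at $\rho=0$ directly:
Lemma~\ref{d2:lem:variational} for $K\le8$;
Proposition~\ref{d3:thm:low} for $K\le44/5$;
Lemma~\ref{lem:d4-ground-direct} for $K\le12$;
Proposition~\ref{prop:var-ranges}, with Lemma~\ref{lem:constant-low-frequency}, for $K\le\frac{7}{4}d^{3/2}$ when $d\ge5$.
At $\rho=0$ one has $T=K/\pi$, so these ranges meet the comparison ranges, and together they cover every $K>0$.

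Your separate small-$\Lambda$ step does not close the gap. It treats only $\Lambda\le\lambda_1^N(B_1)$. Moreover, Szeg\H{o}--Weinberger cannot bound $\lambda_1^N(B_1)$ from above, because the ball is the maximizer. An explicit Bessel or trial-function estimate is needed there, which is again what the variational lemmas provide.
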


The strict disk inequality was proved in
\cite[Corollary~1.6]{FLPSdisk}, with certified computation on a bounded
frequency interval. The disk proof in \cite[Section~4]{Li} replaces that
step by explicit Rayleigh--Ritz estimates and finite rational calculations
printed in the paper. In dimensions $d\ge3$, the Neumann ball inequality
was proved in \cite{FLPSballs}, where certified computation completes
$3\le d\le12$, and independently in \cite{Li}, where an ancillary
exact-arithmetic program verifies one compact estimate for $d\ge7$.
Their main statements use a non-strict inequality, with inclusive counting
in \cite{FLPSballs} and strict counting in \cite{Li}.
The proof given here is strict at every positive energy and requires no
computer-assisted step.

\subsection{Uniform spectral surplus}

Following He--Jiang--Lin--Wang~\cite{HJLW}, a domain satisfies the
\emph{enhanced Neumann P\'olya inequality} if there are $c_1,c_2>0$
such that
\[
 N^<_\Omega(\Lambda)
 \ge L_{0,d}|\Omega|\Lambda^{d/2}
       +\max\{c_1\Lambda^{(d-1)/2},c_2\}
 \qquad(\Lambda>0).
\]
For shells, the enhancement constant can be chosen independently of
the radius ratio. The dimensional proofs below provide the required
quantitative estimates. In dimension two,
Proposition~\ref{prop:planar-surplus} gives a surplus greater than $K/4000$
for $K=R\sqrt\Lambda\ge8$, uniformly in the radius ratio.

\begin{theorem}[Enhancement uniform over all radius ratios]
\label{thm:uniform-enhancement}\label{thm:uniform-enp}
For each $d\ge2$ there is $\varepsilon_d>0$, depending only on $d$,
such that, for every $0<r<R$ and $\Lambda>0$,
\begin{equation}\label{eq:uniform-enhancement}
 N^<_{\Omega_{r,R}}(\Lambda)
 \ge\frac{(R^d-r^d)\Lambda^{d/2}}
           {2^d\Gamma(d/2+1)^2}
 +\varepsilon_d\bigl[1+R^{d-1}\Lambda^{(d-1)/2}\bigr].
\end{equation}
\end{theorem}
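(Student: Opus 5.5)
We propose to prove Theorem~\ref{thm:uniform-enhancement} by splitting the energy range according to $K=R\sqrt\Lambda$ and combining two estimates that are uniform in the ratio $\rho=r/R\in[0,1)$. By scaling we may take $R=1$, so $\Lambda=K^2$. The target then reads
\[
N^<_{\Omega_{\rho,1}}(K^2)\ge \frac{(1-\rho^d)K^d}{2^d\Gamma(d/2+1)^2}+\varepsilon_d(1+K^{d-1}).
\]
Separation of variables writes the spectrum as Neumann radial problems indexed by angular momentum $\ell\ge0$, each with multiplicity $\dim\mathcal H_\ell$ (spherical harmonics of degree $\ell$). The counting function is therefore a weighted lattice sum over $(\ell,n)$ of radial eigenvalues below $K^2$.

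\textbf{High energies.} For $K\ge K_d$, with $K_d$ an explicit threshold independent of $\rho$ (of the kind the abstract announces), we would use the boundary-order surplus. For each $\ell$, a Bessel-phase lower bound shows that the number of radial Neumann eigenvalues below $K^2$ is at least the inverse radial action $\frac1\pi\int_{\max(\rho,\nu/K)}^1\sqrt{K^2-\nu^2/s^2}\,ds$ (with $\nu=\ell+(d-2)/2$) plus a positive phase constant, roughly $1/4$ coming from the Neumann condition at the outer wall. The inner-boundary phase defect must be kept nonnegative; this is where the retained removed volume enters.

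Summing over $\ell$ with the weights $\dim\mathcal H_\ell$ and comparing the weighted sampling error of the action with its integral (the Weyl term $(1-\rho^d)\times$const$\cdot K^d$) leaves a surplus $\ge c_d K^{d-1}$. The constant $c_d$ comes from $\sum_{\ell\lesssim K}\dim\mathcal H_\ell\cdot\tfrac14\asymp K^{d-1}$ minus the Euler--Maclaurin error in $\ell$, which is also of order $K^{d-1}$ but with a smaller constant. The essential point is that every constant depends only on $d$. The thin-shell regime $\rho\to1$ must be handled separately in this summation, because there the action per $\ell$ is small. We would rely on the surplus contributed by the $n=0$ branch: all $\ell$ with $\ell(\ell+d-2)\lesssim K^2$ contribute at least one eigenvalue each, and this supplies $\gtrsim K^{d-1}$ on its own, uniformly in $\rho$.

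\textbf{Low energies.} For $0<K\le K_d$ we need $N^<-\text{Weyl}\ge 2\varepsilon_d(1+K_d^{d-1})^{\,}$-type bounds on a compact $K$ range, uniformly in $\rho\in[0,1)$. We would use the strict inequality of Theorem~\ref{thm:all-dimensions} together with a compactness argument. The difference $D(\rho,K)=N^<-\text{Weyl}$ is lower semicontinuous in $K$ from the left, since $N^<$ is left-continuous and piecewise constant while the Weyl term is continuous. Between jumps $D$ decreases, so its infimum over a compact interval is approached only just before eigenvalue crossings. We would use explicit estimates here rather than bare compactness, because compactness in $\rho$ fails at $\rho\to1$ and at the crossings.

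Concretely, on $(0,\lambda_1]$ we have $N^<=1$, and the Weyl term is at most $c\,(1-\rho^d)\lambda_1^{d/2}$. A Rayleigh-quotient upper bound on $\lambda_1(\Omega_{\rho,1})\le C_d$, uniform in $\rho$ via the test function $y_1$, shows this is at most a fixed fraction below $1$. Beyond $\lambda_1$, the count $N^<\ge1+d$ grows through the $\ell=1$, $n=0$ branch, and we would iterate this with the explicit variational upper bounds $\lambda_{\ell,0}\le \ell(\ell+d-2)/\rho_*^2$-type estimates valid uniformly in $\rho$.

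\textbf{Main obstacle.} We expect the main difficulty to be making the low-to-intermediate range quantitative uniformly as $\rho\to1$ and $\rho\to0$ simultaneously. In particular, we must verify that the high-energy threshold $K_d$ is genuinely $\rho$-independent, so that the variational bounds cover $[0,K_d]$ with a positive margin. We would first try to obtain this from the same inverse-action comparison used in the main proof, extracting from its strictness an explicit margin rather than mere positivity.
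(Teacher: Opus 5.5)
\paragraph{Verdict.} There is a genuine gap, and it lies in the low-energy step. Your high-energy step can be repaired by citation; the low-energy step has no working argument.

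\paragraph{The low-energy step.} You set compactness aside because it supposedly ``fails at the crossings,'' but it does not. Since $\{\Lambda:N^<(\Lambda)>j\}=\{\Lambda>\lambda_j\}$ is open, the strict count is lower semicontinuous. Hence the surplus $S(\rho,K)=N^<_{\Omega_{\rho,1}}(K^2)-\W_d(K,\rho K)$ \emph{attains} its infimum at a crossing, where its value is the left limit.

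What you are missing is the joint version of this fact.
\begin{itemize}
\item Each $\lambda_j(\rho)$ is continuous on $(0,1)$; pull the forms back to a fixed shell by radial maps.
\item At the ball endpoint, $\limsup_{\rho\downarrow0}\lambda_j(\rho)\le\lambda_j(0)$; restrict ball eigenfunctions to the shell.
\item Hence $S$ is lower semicontinuous on $[0,\rho_1]\times(0,A]$.
\item The extension $S(\rho,0)=1$ keeps lower semicontinuity on the compact rectangle, because $N^<\ge1$ for $K>0$ and $\W_d(K,\rho K)\to0$ uniformly as $K\downarrow0$.
\end{itemize}
Pointwise strict positivity then gives a minimum $m>0$ (Lemma~\ref{lem:compact-surplus}). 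For $\rho>0$ it comes from Theorem~\ref{thm:all-dimensions}, and at $\rho=0$ from Corollary~\ref{cor:balls}.

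The limit $\rho\to1$ needs no eigenvalue bound at all. Choose $\rho_1$ with $\W_d(K_d^*,\rho_1K_d^*)\le1/2$. Then the constant mode alone gives $S\ge1/2$ for $\rho\ge\rho_1$ and $0<K\le K_d^*$. For $K\ge K_d^*$, \eqref{eq:all-radii-explicit-tail} gives $S>c_dK^{d-1}\ge c_d(1+K^{d-1})/2$. One then takes $\varepsilon_d=\min\{c_d/2,\min(m,1/2)/(1+(K_d^*)^{d-1})\}$.

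\paragraph{Why your explicit replacement does not work as stated.} What you propose instead is an unexecuted program, and its first step already breaks in dimension two. The trial $y_1$ gives $\lambda_1(\Omega_{\rho,1})\le(d+2)(1-\rho^d)/(1-\rho^{d+2})$. For $d=2$, the Weyl term at this energy equals $(1-\rho^2)/(1+\rho^2)$, which tends to $1$ as $\rho\to0$. So there is no uniform fraction below one.

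More broadly, carrying explicit margins through all of $0<K\le K_d^*$, uniformly in $\rho$, would yield an explicit $\varepsilon_d$. The paper does not obtain this and lists it as an open question. The strict variational estimates of Section~\ref{sec:variational} are not, in general, proved with uniform margins, so you cannot simply quote them for that purpose.

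\paragraph{The high-energy step.} Invoke Theorem~\ref{thm:effective-surplus} under \eqref{eq:all-radii-explicit-onset} rather than relying on your sketch. The sketch's thin-shell claim is too loose. The $n=0$ row gives about $2K^{d-1}/(d-1)!$ eigenvalues, while the Weyl term is about $2I_dTK^{d-1}/(d-1)!$. So that row yields a surplus only while $I_dT$ stays below one, and you would still have to check overlap with the comparison range with a uniform margin. The paper instead covers $T\le\sqrt{d-1}$, $K\ge24(d-1)^2$ using all cosine rows together with the Riesz-mean inequality of Lemma~\ref{lem:interval-riesz}; this is Lemma~\ref{lem:thin-uniform-surplus}.
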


The proof combines the explicit high-frequency surplus of
Theorem~\ref{thm:effective-surplus} with lower semicontinuity of the
counting function on compact sets of radius ratios that contain the ball
endpoint $r=0$. Strictness at this endpoint follows from
Corollary~\ref{cor:balls} in every dimension.
The zero mode covers the remaining limit near $r=R$.
The constant $\varepsilon_d$ is not explicit. The next theorem gives
explicit coefficients and two frequency thresholds.

\paragraph{Explicit high-frequency bounds.}

\begin{theorem}[Explicit frequency thresholds]\label{thm:effective-surplus}
For each $d\ge2$, let $k_d,w_d,c_d$ be the following constants:
\begin{equation}\label{eq:effective-surplus-constants}
\begin{array}{c|c|c|c}
 d & k_d & w_d & c_d\\ \hline
 2 & 8 & 0 & 1/4000\\[2pt]
 3 & 44/5 & 4\pi/3 & 1/100000\\[2pt]
 4 & 12 & 16/3 & 1/20000\\[2pt]
 d\ge5 & \frac74d^{3/2} & \frac{3\pi}{4}\sqrt{d-1}
        & \frac{1}{1000(d-1)!}
\end{array}
\end{equation}
The estimate
\begin{equation}\label{eq:effective-surplus}
 N^<_{\Omega_{r,R}}(\Lambda)
 -\frac{(R^d-r^d)\Lambda^{d/2}}{2^d\Gamma(d/2+1)^2}
 >c_dR^{d-1}\Lambda^{(d-1)/2}
\end{equation}
holds under either of the following conditions:
\begin{equation}\label{eq:effective-surplus-onset}
 R\sqrt\Lambda\ge k_d,
 \qquad (R-r)\sqrt\Lambda\ge w_d,
\end{equation}
or
\begin{equation}\label{eq:all-radii-explicit-onset}
 R\sqrt\Lambda\ge K_d^*,\qquad
 K_d^*=\begin{cases}
 8,&d=2,\\
 24(d-1)^2,&d\ge3.
 \end{cases}
\end{equation}
\end{theorem}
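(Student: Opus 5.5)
We plan to prove Theorem~\ref{thm:effective-surplus} through separation of variables and an inverse-action lattice count. Set $K=R\sqrt\Lambda$ and $\rho=r/R\in[0,1)$. Neumann eigenfunctions on $\Omega_{r,R}$ factor as radial functions times spherical harmonics of degree $\ell\ge0$, and degree $\ell$ has multiplicity $m_\ell=\dim\mathcal H_\ell(\mathbb S^{d-1})$. For each $\ell$, the radial Neumann problem on $[r,R]$ is a Sturm--Liouville problem with effective angular momentum $\nu=\ell+(d-2)/2$. Its eigenvalues below $\Lambda$ are counted by a WKB/Bessel phase bound of the form
\[
 \#\{n:\lambda_{\ell,n}<\Lambda\}\ \ge\ \Bigl\lfloor \tfrac1\pi\Phi_\nu(K,\rho)+\beta_\nu\Bigr\rfloor+1 .
\]
Here $\Phi_\nu(K,\rho)=\int_{\max(r,\nu/\sqrt\Lambda)}^{R}\sqrt{\Lambda-\nu^2/s^2}\,ds$ is the radial action. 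The offset $\beta_\nu$ comes from Neumann phase estimates for $J_\nu'$ and the cross products $J_\nu'Y_\nu'-J_\nu'Y_\nu'$. A phase defect appears only on the inner branch $\nu<K\rho$. We take these phase and variational lower bounds from the earlier sections. The Neumann condition at the outer turning point gives a phase gain of about $1/4$, and this gain is what ultimately produces the positive boundary term.

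We then sum over $\ell$. We write $N^<\ge\sum_{\ell}m_\ell\bigl(\tfrac1\pi\Phi_\nu+\beta_\nu+1-\{\cdot\}\bigr)$ and compare this sum with the Weyl term. The weighted integral $\int m(\nu)\Phi_\nu\,d\nu/\pi$ reproduces exactly $L_{0,d}\omega_d^{-1}\dots(R^d-r^d)\Lambda^{d/2}$. We keep the removed volume $r^d$ exactly rather than estimating it. The remaining terms come from three sources. The first is the Euler--Maclaurin sampling error of $\nu\mapsto m_\nu\Phi_\nu$. The second is the sum of the offsets $m_\ell(\beta_\nu+1/2)$, which is positive and of order $K^{d-1}$. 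The third is the fractional-part losses $m_\ell(\{\cdot\}-1/2)$. We handle the fractional parts by passing to the inverse action. For fixed radial index $n$, count the $\ell$ with $\Phi_\nu/\pi+\beta_\nu\ge n$; this set is an interval $\ell\le L_n$, since the action decreases in $\nu$. So the count equals $\sum_n M(L_n)$ with $M(L)=\sum_{\ell\le L}m_\ell$, a polynomial in $L$ of degree $d-1$. The sampling error in $n$ of the monotone inverse action can then be bounded by a one-sided trapezoid/midpoint comparison. Its contribution is of lower order $O(K^{d-2})$ times explicit constants, with at most one boundary half-term that we control directly.

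Next we make the constants explicit. The boundary gain is $c\,K^{d-1}$ with $c$ comparable to $|\mathbb S^{d-1}|/(4(2\pi)^{d-1}(d-1)!)$ up to normalization. The errors are bounded by $C_d K^{d-2}$ plus inner-branch defect terms. The first condition \eqref{eq:effective-surplus-onset} is arranged so that the errors are dominated. $K\ge k_d$ beats the $K^{d-2}$ terms, which is where $\tfrac74 d^{3/2}$ comes from through the growth of $m_\ell$ in $d$. $(R-r)\sqrt\Lambda\ge w_d$ guarantees that each relevant radial problem has enough oscillation for the phase bounds to hold and that the inner defect is absorbed. The second condition \eqref{eq:all-radii-explicit-onset} covers thin shells with $(R-r)\sqrt\Lambda<w_d$. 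There we discard all radial modes except $n=0$ and count only the lowest radial branch, $\lambda_{\ell,0}\le \ell(\ell+d-2)/r^2$ roughly, via the test function $Y_\ell$. This gives $N^<\ge M(L)$ with $L\approx r\sqrt\Lambda$. The Weyl term is at most $C(R-r)R^{d-1}\Lambda^{d/2}\le Cw_dK^{d-1}$, and $M(r\sqrt\Lambda)\sim 2K^{d-1}/(d-1)!$ exceeds it by a margin of order $K^{d-1}$ once $K\ge 24(d-1)^2$, after we account for $r\ge R-w_d/\sqrt\Lambda$.

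The main obstacle is the uniform explicit control of the Neumann phase offset $\beta_\nu$ near the turning point $\nu\approx K$ and on the inner branch near $\nu\approx K\rho$. There Bessel asymptotics degenerate, and the bound must hold uniformly in $\rho$, including $\rho=0$. We would first try monotonicity of the Bessel phase in $\nu$, using Nicholson-type integrals, to get a one-sided bound $\beta_\nu\ge 1/4-\theta$ that holds uniformly. We would then absorb the transition window of width $O(K^{1/3})$ into the $K^{d-2}$ error budget.
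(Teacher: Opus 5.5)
Your scaffolding (separation of variables, a three-quarter radial count, rewriting the angular sum as $\sum_j C_d(x(j+1/4))$ over the inverse action, and a lowest-row trial for thin shells) matches the paper's. However, the step that carries the first condition is false as stated.

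\textbf{The inner branch under the first condition.} Under \eqref{eq:effective-surplus-onset} the width $hK$ may equal $w_d$, which does not grow with $K$. Then $T=hK/\pi$ stays bounded, and on the inner branch $x^p$ drops from $\mu^p=\rho^pK^p$ to $0$ over a $t$-interval of length less than $T$. Its sampling error is therefore of order $K^{d-1}$, the order of the surplus itself, and not ``one half-term plus $O(K^{d-2})$''. For thin shells, $x(t)^p\approx\rho^{p/2}K^p(1-t^2/T^2)_+^{p/2}$. The quarter-shifted sampling error of this profile, after removing the endpoint term $\frac14\rho^{p/2}K^p$, is still an $O(1)$ multiple of $K^p$, of either sign depending on $T$. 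No one-sided trapezoid or midpoint estimate applies, because $A=G_K-G_\mu$ is concave on $(0,\mu)$ and convex on $(\mu,K)$.

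You also treat $M(L_n)$ as a polynomial in $x(n+1/4)$, but it is a step function. The floor costs the term $-\frac p2z^{p-1}$, a normalized loss $\frac{I_p}{2\pi}(1-\rho^p)$, again of order $K^{d-1}$ and comparable to the whole quarter-sampling gain $E_{B,p}(1/4)/K^p<\frac14$. So the surplus is a small difference of several boundary-order terms, and nothing in your plan controls the inner-branch one.

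\textbf{The missing idea} is the paper's shell--ball comparison. By Lemma~\ref{lem:variation-comparison}, the difference of the shell and ball sampling errors is at most $\frac18$ of the variation of the difference of weighted slopes. Unimodality of the shell slope and monotonicity of the ball slope reduce this to $\frac18M_F$, with $M_p\le h_p\rho^{p/2}K^p/T$ (Lemma~\ref{lem:general-transfer}, Corollary~\ref{lem:radiusM}). Combined with the exact moment identity for the removed volume and the ball surplus of Lemma~\ref{lem:onepoint-ball}, this loss is affordable precisely when $T\ge\frac34\sqrt p$. That is the origin of $w_d$, not ``enough oscillation for the phase bounds'', which do not involve the width. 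Under \eqref{eq:effective-surplus-onset} the paper simply reads off the resulting margins from Theorem~\ref{thm:clipped-surplus}, \eqref{eq:d4-quantitative-surplus}, and Propositions~\ref{d3:prop:jointclosure} and~\ref{prop:planar-surplus}. In $d=2$, where $w_2=0$ and the linear weight has an unbounded inner slope, the transfer bound is infinite. There the semicircle subtraction of Lemma~\ref{d2:lem:semicircle} and the concavity reduction to width endpoints are needed.

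\textbf{The thin-shell half of the second condition also does not close.} On the normalized shell $s^{-2}\ge1$, so $\lambda_{q,0}\ge q$. Hence the lowest radial row contains at most $\frac2{p!}(K+O(1))^p$ modes. The Weyl term is $\frac{2K^p}{p!}TI_{p+1}A_\rho$, with $A_\rho=\frac{1+\rho+\cdots+\rho^p}{p+1}=1-O(1/K)$ when $hK$ is bounded. So the lowest row alone gives a surplus of order $K^{d-1}$ only if $TI_{p+1}$ stays below $1$ by a fixed margin, i.e.\ $hK<B_d=\pi/I_{p+1}$, the threshold in \eqref{eq:var-thin-exact}.

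In $d=4$, $B_4=16/3=w_4$. At the width where your first range starts, the lowest row and the Weyl term agree to leading order, so that row's surplus is $O(K^2)$ and falls below $K^3/20000$ for large $K$. Your two ranges therefore leave a gap of widths just below $16/3$. In $d\ge5$ the leading margin $1-w_d/B_d$ tends to $1-\frac34\sqrt{\pi/2}\approx0.06$, so the overlap is delicate there too.

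The paper closes this as follows. It uses all Neumann cosine functions on $(\rho,1)$ as radial trials, which turns the count into integer samples of powers of a semicircle. It then applies the interval Riesz-mean inequality of Lemma~\ref{lem:interval-riesz}, $\sum_{n\ge0}(1-n^2/t^2)_+^{p/2}\ge tI_{p+1}+\frac13$. The remainder $\frac13$ gives the surplus for every $T\le\sqrt p$ once $K\ge24p^2$ (Lemma~\ref{lem:thin-uniform-surplus}). This overlaps the first condition because $w_d/\pi<\sqrt{d-1}$.
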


The planar row follows from Proposition~\ref{prop:planar-surplus}.
Dai--Sun's larger coefficient $1/20$ for $K>80$ is recorded in
Remark~\ref{rem:planar-dai-sun}.
Condition~\eqref{eq:effective-surplus-onset} separates the outer-radius
scale from the width scale and can give a lower threshold for a
fixed shell. Condition~\eqref{eq:all-radii-explicit-onset} is independent
of the width. It follows by combining the thick-shell comparison with a
Neumann cosine-trial estimate and an elementary Riesz-mean inequality
for the Neumann interval (Lemma~\ref{lem:interval-riesz}); a related
interval inequality is \cite[Lemma~2.6]{HJLW}.
The power $\Lambda^{(d-1)/2}$ is the boundary order in the shell Weyl
law~\cite{GJWY}; the constants above are convenient explicit choices
and are not optimized.

\subsection{Removal of a small hole}

He--Jiang--Lin--Wang's removal theorem~\cite[Theorem~1.1]{HJLW}
preserves an enhanced inequality when a sufficiently small smooth hole
is removed from a fixed host. For a compact family of shells, the
enhancement above and a uniform estimate on bounded-energy spectral
subspaces give a common admissible hole size.

\begin{corollary}[Uniform removal from a compact family of shells]
\label{cor:perforated-shells}\label{cor:uniform-perforated-shells}
Fix $d\ge2$, a compact interval
$I=[\rho_-,\rho_+]\Subset(0,1)$, and a bounded connected domain
$U\subset\mathbb R^d$ with smooth boundary. Put
$\Omega_\rho=\Omega_{\rho,1}$. There exist constants
$a_0,C_1',C_2'>0$, depending only on $d,I,U$, such that, for every
$\rho\in I$, every $0<a<a_0$, and every $y\in\mathbb R^d$ satisfying
$\overline{y+aU}\subset\Omega_\rho$, the set
\[
 \Omega'_{\rho,a,y}=\Omega_\rho\setminus\overline{y+aU}
\]
satisfies
\[
 N_{\Omega'_{\rho,a,y}}^<(\Lambda)
 \ge L_{0,d}|\Omega'_{\rho,a,y}|\Lambda^{d/2}
       +\max\{C_1'\Lambda^{(d-1)/2},C_2'\}
 \qquad(\Lambda>0).
\]
In particular, these perforated shells satisfy the strict Neumann
P\'olya inequality.
\end{corollary}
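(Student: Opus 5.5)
The plan is to run the proof of He--Jiang--Lin--Wang's removal theorem \cite[Theorem~1.1]{HJLW} and check that every constant it uses can be chosen uniformly for $\rho\in I$. The input is Theorem~\ref{thm:uniform-enhancement}. With $R=1$ and $r=\rho$ it gives, for every $\rho\in I$ and $\Lambda>0$,
\[
 N^<_{\Omega_\rho}(\Lambda)\ge L_{0,d}|\Omega_\rho|\Lambda^{d/2}+\varepsilon_d\bigl[1+\Lambda^{(d-1)/2}\bigr],
\]
where $\varepsilon_d$ depends only on $d$. So the host enhancement constants are uniform over all of $(0,1)$, not just over $I$. The removed volume $a^d|U|$ only lowers the Weyl term for $\Omega'_{\rho,a,y}$. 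It therefore suffices to show
\[
 N^<_{\Omega'}(\Lambda)\ge N^<_{\Omega_\rho}(\Lambda)-E(a,\Lambda)
\]
with an error $E$ that is small compared with $\varepsilon_d[1+\Lambda^{(d-1)/2}]$, uniformly in $\rho\in I$ and in the position $y$. We then take $C_1',C_2'$ to be, say, $\varepsilon_d/2$.

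I will split into a high-frequency range $\Lambda\ge\Lambda_0$ and a low-frequency range $\Lambda<\Lambda_0$.

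\textbf{High frequencies.} Here I follow the HJLW argument, which uses the Neumann bracketing and two-term inequalities available for smooth domains. They control the count on $\Omega'$ by the count on the host minus a term of order $a^{d-1}\Lambda^{(d-1)/2}$ plus lower order. The implied constants depend only on the geometry of $\partial(aU)$ after rescaling, and on the host only through a fixed collar of the outer sphere and of the inner sphere $|x|=\rho$. These spheres have curvature bounded by $1/\rho_-$ for $\rho\in I$, so the constants are uniform. Choosing $a_0$ small makes this error at most $\tfrac12\varepsilon_d\Lambda^{(d-1)/2}$ once $\Lambda\ge\Lambda_0$, where $\Lambda_0$ may depend on $d,I,U$.

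\textbf{Bounded energies.} For $\Lambda\le\Lambda_0$ I use the "uniform estimate on bounded-energy spectral subspaces" mentioned before the statement. Let $V_\rho$ be the span of the Neumann eigenfunctions of $\Omega_\rho$ with eigenvalue below $\Lambda_0+1$. Its dimension is bounded uniformly in $\rho\in I$. Every $u\in V_\rho$ satisfies $\|u\|_{L^\infty}+\|\nabla u\|_{L^\infty}\le C(d,I,\Lambda_0)\|u\|_{L^2}$, by interior and boundary elliptic regularity on shells with uniformly controlled geometry, or directly from the explicit Bessel--spherical-harmonic form of the eigenfunctions. Restricting $V_\rho$ to $\Omega'$ gives trial functions whose Rayleigh quotients move by $O(a^d)$. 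The min--max principle then gives $\lambda_j^N(\Omega')\le\lambda_j^N(\Omega_\rho)+Ca^d$ for all eigenvalues below $\Lambda_0$.

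The difficulty is the strict counting near eigenvalues of $\Omega_\rho$. The host surplus $\varepsilon_d$ absorbs this. On the bounded window the surplus is at least $\varepsilon_d$, so losing a few eigenvalues that sit just below $\Lambda$ would be fatal unless we use Lipschitz continuity in $\Lambda$ of the Weyl term. Concretely, I compare $N^<_{\Omega'}(\Lambda)$ with $N^<_{\Omega_\rho}(\Lambda-Ca^d)$. The Weyl term changes by at most $C'a^d$, uniformly for $\Lambda\le\Lambda_0$, and this is below $\varepsilon_d/2$ for small $a$.

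\textbf{Main obstacle.} I expect the main difficulty to be the uniformity in $y$ of the high-frequency step when the hole approaches $\partial\Omega_\rho$. The hypothesis $\overline{y+aU}\subset\Omega_\rho$ allows the hole to be arbitrarily close to either sphere. My first attempt is to check that HJLW's estimate depends only on $\operatorname{dist}$-independent quantities. Failing that, I would use a reflection or extension across the nearby sphere, which is uniformly smooth for $\rho\in I$, to reduce the near-boundary case to the interior one.
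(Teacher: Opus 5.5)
Your bounded-energy step is sound and is essentially the paper's. The paper uses the multiplicative form $\lambda_j^N(\Omega')\le\lambda_j^N(\Omega_\rho)/(1-\eta)$ and needs only an $L^\infty$ bound, since the Dirichlet energy can only decrease under restriction.

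\textbf{The gap is in the high-frequency step, and it comes from your opening reduction.} Having discarded the removed volume, you ask for $N^<_{\Omega'}(\Lambda)\ge N^<_{\Omega_\rho}(\Lambda)-E(a,\Lambda)$ with $E$ small compared with $\varepsilon_d[1+\Lambda^{(d-1)/2}]$. No choice of $a_0$ achieves this for large $\Lambda$. Weyl's law on the two smooth domains gives
\[
 N^<_{\Omega_\rho}(\Lambda)-N^<_{\Omega'}(\Lambda)=L_{0,d}a^d|U|\Lambda^{d/2}(1+o(1)),
\]
which outgrows $\Lambda^{(d-1)/2}$ for each fixed $a>0$.

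For the same reason, the bracketing loss is not ``a term of order $a^{d-1}\Lambda^{(d-1)/2}$ plus lower order''. The inclusion $H^1(\Omega_\rho)\hookrightarrow H^1(\Omega')\oplus H^1(y+aU)$ and scaling give
\[
 N^<_{\Omega'}(\Lambda)\ge N^<_{\Omega_\rho}(\Lambda)-N^<_U(a^2\Lambda),
\]
and the leading term of $N^<_U(a^2\Lambda)$ is $L_{0,d}a^d|U|\Lambda^{d/2}$. The argument closes only because this term equals $L_{0,d}(|\Omega_\rho|-|\Omega'|)\Lambda^{d/2}$. So the removed volume must be kept, not thrown away.

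The paper does exactly this, using the Weyl remainder for the fixed smooth domain $U$, extended to all $\Lambda>0$ by connectedness:
\[
 N^<_U(\Lambda)\le L_{0,d}|U|\Lambda^{d/2}+\alpha_U\Lambda^{(d-1)/2}+1.
\]
This leaves the error $\alpha_Ua^{d-1}\Lambda^{(d-1)/2}+1$. It is absorbed once $\alpha_Ua^{d-1}<\varepsilon_d/2$ and $\Lambda\ge\Lambda_1=(4/\varepsilon_d)^{2/(d-1)}$. The spectral-subspace bound is then taken on eigenvalues up to $\Lambda_1$.

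With this correction, the obstacle you single out does not arise. The bracketing inequality holds for every admissible $y$, however close $\overline{y+aU}$ is to either sphere. The host enters only through the enhanced inequality of Theorem~\ref{thm:uniform-enhancement}, which is already uniform over all of $(0,1)$. So no collar, curvature, or reflection argument is needed.

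Compactness of $I$ is used only for the uniform constant in $\|f\|^2_{L^\infty(\Omega_\rho)}\le M\|f\|^2_{L^2(\Omega_\rho)}$ on the low-energy spectral subspaces. That bound holds up to $\partial\Omega_\rho$, so the low-energy step is uniform in $y$ as well.
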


The proof applies the same removal argument in every dimension;
its planar case uses the high-frequency surplus of
Proposition~\ref{prop:planar-surplus} through
Theorem~\ref{thm:uniform-enhancement}.
The additional hole need not be spherical or concentric with the
host shell. For outer radius $R$, the same conclusion applies to holes
$y+aR\,U$ with $a<a_0$, with enhancement constants
$R^{d-1}C_1'$ and $C_2'$. The compactness restriction on $I$
controls the spectral-subspace estimates used in the removal step.

\subsection{Previous work}

P\'olya proved the conjecture for tiling domains \cite{Polya1961};
Kellner removed the additional assumptions
in the Neumann case \cite{Kellner}. For general domains the
Berezin--Li--Yau and Kr\"oger inequalities give bounds with dimensional
factors \cite{Berezin1972,LiYau,Kroger,Laptev}; Filonov improved the
Neumann constant for planar convex domains \cite{FilonovConvex}.
General background is given in \cite{LMP,FLW}.

Filonov, Levitin, Polterovich and Sher proved the conjecture for disks and planar sectors and its
Dirichlet form for balls of every dimension \cite{FLPSdisk}, followed
by the Dirichlet annulus theorem \cite{FLPSDirichletAnnuli}.
The Neumann disk and Dirichlet annulus proofs include certified
computation on bounded parameter ranges. Dai and Sun proved the
Neumann inequality for planar annuli, with quantitative refinements
\cite{DSannuli}, and studied annular sectors \cite{DSsectors}.
The companion paper \cite{LiDirichletShells} proves the Dirichlet
inequality for spherical shells and Aharonov--Bohm annuli. Together,
the two papers give the
Dirichlet and Neumann inequalities for every concentric spherical shell.

Product constructions give further examples. Laptev's Dirichlet theorem
applies to $\Omega_1\times\Omega_2$ when
$\Omega_1\subset\mathbb R^{d_1}$ satisfies the conjecture, $d_1\ge2$,
and $\Omega_2$ is bounded \cite[Theorem~2.8]{Laptev}; thin products
are studied in \cite{HeWang}. Other extensions concern unions and
non-tiling families \cite{FreitasLagacePayette,FreitasSalavessa} and
magnetic disks \cite{FLPSAB}. Quantitative bounds are developed in
\cite{JiangLin,GMWZ}; the enhancement and removal results above use
the formulation of He, Jiang, Lin and Wang \cite{HJLW}.

\subsection{Proof strategy and organization}\label{sec:proof-overview}

Write
\[
 K=R\sqrt\Lambda,\qquad \rho=r/R,\qquad \mu=\rho K,
 \qquad T=(K-\mu)/\pi.
\]
Separation of variables reduces the spectrum to radial counts with
explicit angular multiplicities. Phase estimates and lattice-counting
arguments for balls and annuli were developed in
\cite{FLPSdisk,FLPSballs,FLPSDirichletAnnuli,DSannuli}; we use the uniform
Bessel phase bounds of~\cite{FLPSphase}. The Dirichlet annulus argument
already uses a difference of ball actions and exact volume cancellation
\cite[Section~7.1]{FLPSDirichletAnnuli}.

The main difficulty is that the shell action $A=G_K-G_\mu$, defined in
\eqref{eq:action}, is concave on $(0,\mu)$ and convex on $(\mu,K)$.
It is $1/2$-Lipschitz, but the convex-profile lattice estimate of
\cite[Theorem~6.1]{FLPSdisk} does not apply on its whole domain.
Lemma~\ref{lem:variation-comparison} bounds the difference of sampling
errors by the variation of the difference of derivatives. We apply it to
the inverse actions of the shell and the ball, whose common outer branch
cancels. For the weights used in dimensions at least three, monotonicity
of the ball slope and unimodality of the shell slope bound the remaining
error by one eighth of the maximal weighted inner slope
(Lemma~\ref{lem:general-transfer}). The quadrature
argument uses a bounded periodic primitive, also appearing in
\cite[Lemma~2.8]{DSannuli}.

An exact moment identity preserves the volume of the removed ball.
Together with the localized phase estimate of
Proposition~\ref{prop:phase}, this gives the counting comparison
\[
 \text{count}-\text{Weyl term}
 \ \ge\ \text{ball surplus}+\text{inner-volume gain}
       -\text{sampling loss}-\text{phase loss}.
\]
One clipped angular polynomial treats every $d\ge5$; quadratic and cubic
weights treat $d=3,4$. The phase estimate identifies the angular degrees
where the Neumann boundary correction can cause a loss near the inner
turning point. Radial trial spaces cover the low-frequency and small-width
ranges where the comparison alone is insufficient.

Dimension two requires a linear angular weight, whose inverse-action
slope is singular at the terminal point. Subtracting a matching
semicircle and the ball's terminal linear term gives a finite comparison
bound (Lemma~\ref{d2:lem:semicircle}). Concavity in the squared inner
frequency (Lemma~\ref{d2:lem:concavity}) reduces the count to width
endpoints, and one estimate covers every endpoint with at least three
radial rows. The two-row and variational estimates complete the annulus
proof. The disk is treated first by the linear-weight ball estimate.

Section~\ref{sec:common-quadrature} develops the sampling comparison;
Sections~\ref{sec:common-phase} and~\ref{sec:variational} supply the radial
phase and variational estimates. Section~\ref{sec:dimensions} proves the
strict inequality in every dimension and concludes with the shell and
ball results. Its parameter ranges are summarized in
Table~\ref{tab:roadmap}. Section~\ref{sec:enhanced-removal} proves the
uniform surplus and hole-removal results. The appendices contain the
phase enclosures, scalar estimates, and detailed variational calculations.

The source archive includes optional exact-arithmetic checks of the
finite inequalities; the formulas and their proofs are given in the text.

\section{The weighted shell--ball comparison}
\label{sec:common-quadrature}

We compare weighted action sums for the shell and the ball, then relate
them to the eigenvalue count. The comparison uses only the action
curves. Its spectral application requires a radial count estimate,
which is proved in Section~\ref{sec:common-phase}.

\subsection{Normalization and the shell action}
We use the dimensionless parameters
\begin{equation}\label{eq:notation}
 K=R\sqrt\Lambda,\qquad \mu=r\sqrt\Lambda=\rho K,
 \qquad \rho=r/R,\qquad h=1-\rho,
 \qquad p=d-1,\qquad \alpha=(d-2)/2.
\end{equation}
We allow $0\le\rho<1$. At $\rho=0$, $\Omega_{0,1}$ denotes the
unit ball. After scaling, $N^<(K^2)$ is the strict Neumann count of
$\Omega_{\rho,1}$, including its constant eigenfunction when $K>0$. Write
\[
 \W_d(K,\mu)=\frac{K^d-\mu^d}{2^d\Gamma(d/2+1)^2},
 \qquad W_d(K)=\W_d(K,0).
\]
The angular eigenvalue and Bessel order in degree $\ell$ are
$q_\ell=\ell(\ell+d-2)$ and $\nu_\ell=\ell+\alpha$.
The multiplicities are $\kappa_{2,0}=1$, $\kappa_{2,\ell}=2$ for
$\ell\ge1$, and, for $d\ge3$,
\[
 \kappa_{d,\ell}=\frac{(2\ell+d-2)(\ell+d-3)!}{\ell!(d-2)!},
\]
with $\kappa_{3,0}=1$.
Define
\begin{equation}\label{eq:action}
 G_L(z)=\frac{\sqrt{L^2-z^2}-z\arccos(z/L)}{\pi}
 \quad(0\le z\le L,\ L>0),\qquad G_L(z)=0\quad(z>L),
\end{equation}
set $G_0\equiv0$, and put $A(z)=G_K(z)-G_\mu(z)$ and
$T=(K-\mu)/\pi$.
Table~\ref{tab:notation} collects the notation used in several sections.

\begin{table}[htbp]
\centering\small
\renewcommand{\arraystretch}{1.15}
\begin{tabular}{@{}p{0.25\textwidth}p{0.68\textwidth}@{}}
\toprule
Notation & Meaning and definition\\
\midrule
$K,\mu,\rho,h,T$
& Outer and inner frequencies, radius ratio, normalized width, and
action height; \eqref{eq:notation} and $T=(K-\mu)/\pi$.\\
$G_L,A;\ \xi_L,x$
& Ball and shell actions, and their inverses;
\eqref{eq:action} and Section~\ref{sec:inverse-actions}.\\
$E_\delta;\ E_S,E_B;\ M_F$
& Shifted sampling error, its shell and ball versions, and the
maximum weighted inner slope; Section~\ref{sec:weighted-comparison}.\\
$\Psi_p;\ \Phi_2,\Phi_3$
& Angular comparison weights for $d\ge5$ and $d=3,4$;
\eqref{eq:clipped-weight} and \eqref{eq:jp-poly}.\\
$D_{\mathrm{ph}},H_d^*$
& Localized phase defect and its angular cutoff;
Proposition~\ref{prop:phase} and \eqref{eq:phase-new-H}.\\
$P_2,\W_2,\Sigma_n$
& Planar lattice count, Weyl term, and fixed-row surplus;
\eqref{d2:eq:phase-count} and \eqref{d2:eq:fixed-row-surplus}.\\
\bottomrule
\end{tabular}
\caption{Notation used across the comparison and dimensional proofs.
Auxiliary scalar functions are defined locally.}
\label{tab:notation}
\end{table}

\subsection{Inverse actions and their moments}\label{sec:inverse-actions}
The shell action decreases strictly from $T$ to zero on $[0,K]$.
Let $x=A^{-1}$ and, for $L>0$, $\xi_L=G_L^{-1}$, all extended by
zero beyond their natural domains; put $\xi_0\equiv0$. The shell and outer-ball inverses agree for $0\le t\le c$, where
$c=A(\mu)=G_K(\mu)$, and $x\le\xi_K$ everywhere. At $\mu=0$
one has $x=\xi_K$ and $c=T$.
For every $m\ge0$, changing variables $t=A(z)$ and integrating by
parts gives
\begin{equation}\label{eq:moment}
 \int_0^T x(t)^m\,dt
 =\frac{K^{m+1}-\mu^{m+1}}{\pi(m+1)}I_{m+1},
 \qquad I_j=\int_0^{\pi/2}\cos^j u\,du.
\end{equation}
Indeed, $-G_K'(z)=\pi^{-1}\arccos(z/K)$; substitution $z=K\cos u$
evaluates its weighted integral. The Wallis formula therefore gives,
for $p=d-1$,
\begin{equation}\label{eq:weylmoment}
 \W_d(K,\mu)=\frac2{p!}\int_0^T x(t)^p\,dt.
\end{equation}

\begin{figure}[htbp]
  \centering
  \includegraphics[width=\linewidth]{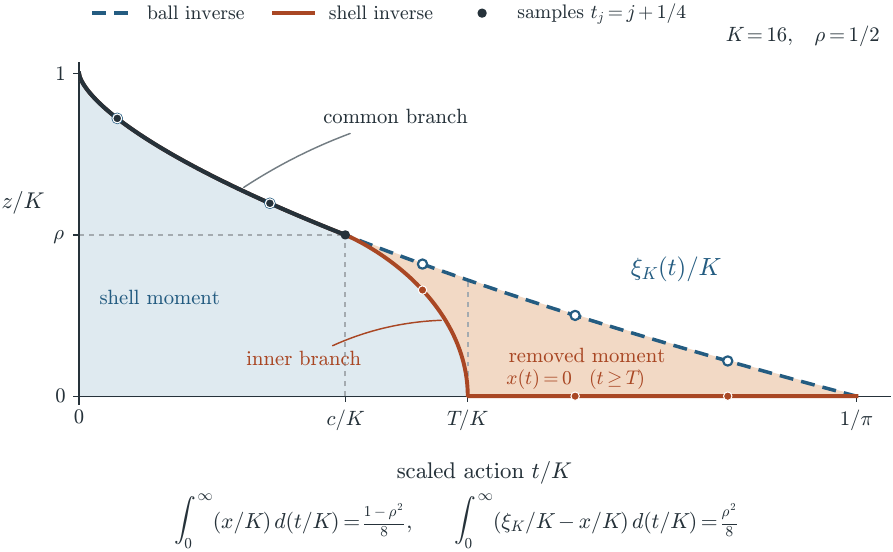}
  \caption{Inverse actions for $K=16$ and $\mu=8$, plotted in the
  scaled coordinates $(t/K,z/K)$. The ball inverse
  $\xi_K=G_K^{-1}$ and the shell inverse
  $x=(G_K-G_\mu)^{-1}$ agree on $0\le t\le c=G_K(\mu)$;
  the shell's inner branch occupies $c<t<T=(K-\mu)/\pi$.
  Both inverses are extended by zero, and the dots mark the quarter
  samples $t_j=j+1/4$. The shaded regions give the normalized first
  moment of the shell and the moment removed from the ball.
  The moment identity \eqref{eq:moment} gives the corresponding
  formula for the power $p=d-1$ used in dimension $d$.}
  \label{fig:action-comparison}
\end{figure}

\subsection{A weighted quadrature comparison}\label{sec:weighted-comparison}
The elementary comparison below separates the quadrature argument
from the geometry of the inverse actions.

\begin{lemma}[Sampling errors and variation]
\label{lem:variation-comparison}
Let $f_1,f_2$ be absolutely continuous, compactly supported functions
on $[0,\infty)$, and put
\[
 E_\delta(f)=\sum_{j\ge0}f(j+\delta)-\int_0^\infty f(t)\,dt.
\]
Suppose $v=f_2'-f_1'$ has a right-continuous representative of
bounded variation on $[0,\infty)$. Let $V_+,V_-$ be its positive
and negative variation masses on $(0,\infty)$, including all jumps
to zero at the ends of the supports. Then, for every $0<\delta<1$,
\[
 \left|E_\delta(f_1)-E_\delta(f_2)
 -(1/2-\delta)\bigl(f_1(0)-f_2(0)\bigr)\right|
 \le\frac18\max\{V_+,V_-\}.
\]
\end{lemma}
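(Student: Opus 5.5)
The plan is to apply the estimate to the single function $g=f_1-f_2$. Since $E_\delta$ is linear, $E_\delta(f_1)-E_\delta(f_2)=E_\delta(g)$, and $g$ is absolutely continuous and compactly supported with $g'=-v$ almost everywhere. The argument is a two-step Euler--Maclaurin expansion with a shifted sawtooth. I will integrate by parts twice and control the second remainder using a bounded periodic primitive.

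\emph{First integration by parts.} Introduce the $1$-periodic sawtooth $P(t)=\{t-\delta\}-\tfrac12$. It has mean zero, slope $1$, and jumps of $-1$ at the sample points $t=j+\delta$. On $[0,\infty)$ the distributional derivative of $P$ is $1-\sum_{j\ge0}\delta_{j+\delta}$, and $P(0)=\tfrac12-\delta$. Since $g$ is absolutely continuous with compact support, integrating $\int_0^\infty P g'\,dt$ by parts over each interval between consecutive samples and summing gives
\[
 E_\delta(g)=\Bigl(\tfrac12-\delta\Bigr)g(0)-\int_0^\infty P(t)g'(t)\,dt
 =\Bigl(\tfrac12-\delta\Bigr)g(0)+\int_0^\infty P(t)v(t)\,dt .
\]
I will fix the signs by checking the formula on a single hat function. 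This produces exactly the boundary term $(1/2-\delta)(f_1(0)-f_2(0))$ in the statement. It remains to show $\bigl|\int_0^\infty Pv\,dt\bigr|\le\frac18\max\{V_+,V_-\}$.

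\emph{Second integration by parts.} Let $Q$ be the continuous periodic primitive of $P$ with mean zero,
\[
 Q(t)=\tfrac12\bigl(\{t-\delta\}^2-\{t-\delta\}\bigr)+\tfrac1{12}.
\]
Its range is $[-\tfrac1{24},\tfrac1{12}]$. This is the bounded periodic primitive mentioned in the overview, as in \cite[Lemma~2.8]{DSannuli}. Extend $v$ by $0$ on $(-\infty,0)$, so that the jump of $v$ at $t=0$ and the jumps to zero at the ends of the supports are all included in the Stieltjes measure $dv$ on $[0,\infty)$. Since $Q$ is continuous and $v$ is a right-continuous BV function vanishing for large $t$, the Stieltjes integration by parts has no boundary contributions:
\[
 \int_0^\infty Pv\,dt=-\int_{[0,\infty)}Q\,dv .
\]
The measure $dv$ has total mass $v(\infty)-v(0^-)=0$. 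Hence $V_+=V_-$, and we may replace $Q$ by $Q-c$ with $c=\tfrac1{48}$, the midpoint of its range. This gives $|Q-c|\le\tfrac1{16}$, and therefore
\[
 \Bigl|\int Q\,dv\Bigr|\le\tfrac1{16}(V_++V_-)=\tfrac18\max\{V_+,V_-\}.
\]

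\emph{Main obstacle.} The step requiring the most care is the bookkeeping in the two integrations by parts. The first must produce precisely the coefficient $(1/2-\delta)$ at $t=0$ with the correct sign, which depends on the value $P(0^+)$. The second must account for every endpoint jump of $v$ inside $dv$, so that the total mass is zero and no stray boundary term $Q(0)v(0)$ survives. Once the extension of $v$ by zero is fixed, continuity of $Q$ justifies the Stieltjes step. The measure-zero discrepancy between the representatives of $v$ and $-g'$ is harmless, since $v$ enters only through integrals against Lebesgue measure and through $dv$ of its right-continuous representative.
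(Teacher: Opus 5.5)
Your proof is correct and is essentially the paper's argument. Both use the same shifted-sawtooth integration by parts, followed by Stieltjes integration against a continuous periodic primitive of oscillation $1/8$. The only difference is the boundary at $0$: the paper uses the primitive $\tfrac12(\{t-\delta\}-\tfrac12)^2=Q+\tfrac1{24}$, shifted to vanish at $0$, which yields the asymmetric $q_0$-bounds later reused in Lemma~\ref{d2:lem:semicircle}; you instead add an atom at the origin and center $Q$.

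Two bookkeeping fixes are needed, neither affecting the conclusion:
\begin{itemize}
\item Your first identity has the wrong sign on the integral; it should read $E_\delta(g)=(\tfrac12-\delta)g(0)+\int_0^\infty Pg'\,dt$. This is harmless because only the absolute value is bounded.
\item The statement's $V_\pm$ are masses on $(0,\infty)$, so they exclude the atom $v(0)=V_--V_+$ that you place at the origin. Your two equal masses are therefore not $V_\pm$ themselves. Each of them equals $\max\{V_+,V_-\}$, and that is exactly what makes $\tfrac1{16}$ times their sum equal to $\tfrac18\max\{V_+,V_-\}$.
\end{itemize}
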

\begin{proof}
Set
\[
 \psi_\delta(t)=\{t+1-\delta\}-\tfrac12,\qquad
 Q_\delta(t)=\tfrac12\psi_\delta(t)^2,\qquad
 P_\delta(t)=Q_\delta(t)-Q_\delta(0).
\]
The function $Q_\delta$ is continuous, $Q_\delta'=\psi_\delta$
almost everywhere, and $0\le Q_\delta\le1/8$.
Fubini's theorem and Stieltjes integration by parts give
\[
 \begin{aligned}
 &E_\delta(f_1)-E_\delta(f_2)
 -(1/2-\delta)\bigl(f_1(0)-f_2(0)\bigr)\\
 &\qquad=-\int_0^\infty v(t)\psi_\delta(t)\,dt
 =\int_0^\infty P_\delta(t)\,dv(t),
 \end{aligned}
\]
because $P_\delta(0)=0$ and $v$ is eventually zero.
Writing $q_0=Q_\delta(0)$, the last integral lies between
$-q_0V_+-(1/8-q_0)V_-$ and
$(1/8-q_0)V_++q_0V_-$.
Since $0\le q_0\le1/8$, both bounds imply the assertion.
\end{proof}

For a nonnegative function $F$ with $F(0)=0$ and $0<\delta<1$, define
\[
 E_S(F;\delta)=\sum_{j\ge0}F(x(j+\delta))
                  -\int_0^\infty F(x(t))\,dt,
\]
and define $E_B(F;\delta)$ by replacing $x$ with $\xi_K$.
Write $E_{S,p}(\delta),E_{B,p}(\delta)$ when $F(z)=z^p$.
When the shift is omitted, it is $\delta=1/4$.
The next lemma bounds the absolute difference of the sampling errors.
Under its hypotheses, the inner slope is unimodal, and the bound is
one eighth of its maximum.

\begin{lemma}[Transfer for a convex angular weight]
\label{lem:general-transfer}
Suppose $F$ is continuous and convex, vanishes on $[0,a]$ for
some $a\ge0$, is $C^2$ with $F'>0$ on $(a,\infty)$, and
\[
 z\longmapsto\frac{zF''(z)}{F'(z)}-1
 \quad\hbox{is nonincreasing on }(a,\infty).
\]
For $0<z<\mu$, put
\[
 H(z^2)=\frac1\pi\int_\mu^K
       \frac{ds}{s^2\sqrt{1-z^2/s^2}}.
\]
If $\mu>a$, assume the one-sided supremum
\[
 M_F=\sup_{a<z<\mu}\frac{F'(z)}{zH(z^2)}
\]
is finite; otherwise set $M_F=0$. Then, for every $0<\delta<1$,
\begin{equation}\label{eq:general-transfer}
 \bigl|E_S(F;\delta)-E_B(F;\delta)\bigr|\le\frac18M_F.
\end{equation}
If $\mu\le a$, the two errors are equal. If $\mu>a$, then
\begin{equation}\label{eq:general-Jensen}
 M_F\le\frac{\mu K}{T}
       \sup_{a<z<\mu}\frac{F'(z)}z\sqrt{1-\beta_*z^2},
 \qquad \beta_* =\frac{K^2+K\mu+\mu^2}{3K^2\mu^2}.
\end{equation}
\end{lemma}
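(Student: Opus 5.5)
The plan is to apply Lemma~\ref{lem:variation-comparison} with $f_1=F\circ x$ and $f_2=F\circ\xi_K$, which reduces \eqref{eq:general-transfer} to showing that both variation masses of $v=f_2'-f_1'$ are at most $M_F$. The boundary term drops out because $x(0)=\xi_K(0)=K$, so $f_1(0)=f_2(0)$. Both functions are compactly supported and absolutely continuous, since $F$ is $C^1$ away from $a$ and the inverse actions are monotone. The case $\mu\le a$ is immediate. The two inverses agree on $[0,c]$, and wherever they differ both take values below $\mu\le a$, where $F=0$. Hence $f_1=f_2$, and the two errors coincide.

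Now let $\mu>a$. On $[0,c]$ we have $v=0$. For $t>c$ I express both derivatives through slopes in the $z$-variable. Since $\arccos(z/K)-\arccos(z/\mu)=\int_\mu^K z\,s^{-2}(1-z^2/s^2)^{-1/2}\,ds$, one gets $-A'(z)=zH(z^2)$ for $0<z<\mu$. Hence $-(F\circ x)'(t)=S(t):=F'(x)/(xH(x^2))$ on the inner branch, and similarly $-(F\circ\xi_K)'(t)=B(t):=\pi F'(\xi_K)/\arccos(\xi_K/K)$. These give $v=S-B$ on $(c,\infty)$. The key structural facts are the following.

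First, there is no jump at $t=c$: $G_\mu'(\mu)=0$, so $-A'(\mu)=-G_K'(\mu)$ and $S(c^+)=B(c^+)$.

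Second, $B$ is nonincreasing in $t$. As $t$ increases, $\xi_K$ decreases; $F'$ is nondecreasing by convexity, and $\arccos(\cdot/K)$ increases as its argument decreases. Also $B$ vanishes once $\xi_K\le a$.

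Third, $S$ is unimodal, as a function of $z=x(t)$ and hence of $t$. Compute
\[
 z\frac{d}{dz}\log\frac{F'(z)}{zH(z^2)}=\Bigl(\frac{zF''}{F'}-1\Bigr)-\frac{2z^2H'(z^2)}{H(z^2)} .
\]
The first bracket is nonincreasing by hypothesis. For the second term, $H$ is a positive superposition of the log-convex functions $w\mapsto(1-w/s^2)^{-1/2}$, hence log-convex in $w=z^2$ with $H'>0$. So $H'/H$ is nondecreasing, and multiplying by the increasing factor $z^2$ keeps it nondecreasing. The whole expression is therefore nonincreasing, changes sign at most once, and $S$ rises to its supremum $M_F$ and then falls to $0$.

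The variation count follows. Starting from $v=0$, the positive variation of $S-B$ is at most the rise of $S$ plus the fall of $B$, namely $(M_F-S(c))+B(c)=M_F$. The negative variation is at most the fall of $S$ from $M_F$ to $0$ plus the rise of $B$, which is zero, so it is at most $M_F$. Any jumps at the ends of the supports are included because both $S$ and $B$ are tracked down to $0$. Lemma~\ref{lem:variation-comparison} then gives \eqref{eq:general-transfer}. I expect the main obstacle to be the unimodality step. The log-convexity of $H$ must be made precise, and one must check that a single sign change in the $z$-variable, together with the no-jump identity at $\mu$, is exactly what makes the two masses collapse to $M_F$.

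For \eqref{eq:general-Jensen} I use Jensen's inequality. Normalize $ds/s^2$ on $[\mu,K]$: its mass is $(K-\mu)/(K\mu)=\pi T/(K\mu)$. The map $u\mapsto(1-z^2u)^{-1/2}$ is convex in $u=s^{-2}$, so
\[
 H(z^2)\ge\frac{T}{K\mu}\bigl(1-z^2\,\mathbb E[s^{-2}]\bigr)^{-1/2}.
\]
Here
\[
 \mathbb E[s^{-2}]=\frac{(\mu^{-3}-K^{-3})/3}{\mu^{-1}-K^{-1}}=\frac{K^2+K\mu+\mu^2}{3K^2\mu^2}=\beta_* .
\]
Inserting this lower bound into $F'(z)/(zH(z^2))$ and taking the supremum gives \eqref{eq:general-Jensen}. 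This also shows that $M_F$ is finite whenever the right-hand side is.
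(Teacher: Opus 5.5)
Your proposal is correct and follows the paper's proof essentially step for step: you apply Lemma~\ref{lem:variation-comparison} to $F\circ x$ and $F\circ\xi_K$, obtain unimodality of the inner slope from log-convexity of $H$, use the monotone ball slope, and do the same variation bookkeeping $(M_F-S(c))+B(c)$ and $M_F+0$, finishing with the same Jensen bound for \eqref{eq:general-Jensen}. The one loose phrase is the justification of absolute continuity: monotonicity alone does not give it, but it holds because the inverse actions are themselves absolutely continuous ($C^1$ on the open interval with integrable derivative) and $F$ is Lipschitz on $[0,K]$ by convexity, as the paper notes.
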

\begin{proof}
Equality when $\mu\le a$ follows because the two inverse curves
agree wherever $F$ is nonzero. Suppose $\mu>a$. On the inner branch,
$-A'(z)=zH(z^2)$, so the magnitude of the weighted slope is
\[
 g_S(t)=-\frac d{dt}F(x(t))
       =\frac{F'(z)}{zH(z^2)},\qquad z=x(t)>a.
\]
The positive integrands of $H(y)$ are strictly log-convex in $y$.
H\"older's inequality makes $H$ log-convex, and $H'>0$.
Consequently $2y(\log H)'(y)$ increases strictly. Thus
\[
 \frac{d\log g_S}{d\log z}
 =\frac{zF''(z)}{F'(z)}-1-2z^2\frac{H'(z^2)}{H(z^2)}
\]
decreases strictly. The slope is unimodal in $z$, and hence also
in the reverse variable $t$; either monotone part may be empty.
Its terminal jump to zero at $t=A(a)$ preserves unimodality.
This includes $a=0$, using its finite one-sided limit.
The corresponding ball slope is
\[
 g_B(t)=\frac{\pi F'(z)}{\arccos(z/K)},\qquad z=\xi_K(t)>a.
\]
It decreases with $t$, by convexity of $F$, and its terminal jump
is again downwards. Extend both slopes by zero after their
respective vanishing times. They agree on $(0,c)$ and at $c$;
write $g_0=g_S(c)=g_B(c)$.

On $[c,\infty)$, the positive and negative variations of $g_S$
have masses $M_F-g_0$ and $M_F$, including its terminal jump;
those of $g_B$ have masses zero and $g_0$.
Thus each variation mass of $g_S-g_B$ is at most $M_F$.
The inverse curves are absolutely continuous and $F$ is Lipschitz on
$[0,K]$, so both weighted compositions are absolutely continuous.
Apply Lemma~\ref{lem:variation-comparison} to
$f_1=F\circ x$ and $f_2=F\circ\xi_K$, which agree on $[0,c]$.

For \eqref{eq:general-Jensen}, use probability measure proportional
to $s^{-2}ds$ on $[\mu,K]$, under which the mean of $s^{-2}$ is
$\beta_*$. Jensen's inequality for $v\mapsto(1-z^2v)^{-1/2}$ gives
\[
 H(z^2)\ge\frac{T}{\mu K}(1-\beta_*z^2)^{-1/2},
\]
which bounds the defining supremum for $M_F$.
\end{proof}

\begin{corollary}[Power weights]\label{lem:radiusM}
For $p>2$ and $\mu>0$, the inner slope $-(x^p)'$ is unimodal.
With $M_p=0$ at $\mu=0$, one has
\begin{equation}\label{eq:transfer}
 \left|E_{S,p}(\delta)-E_{B,p}(\delta)\right|\le\frac18M_p.
\end{equation}
Moreover,
\begin{equation}\label{eq:Mbound}
 \begin{split}
 M_p&\le h_p\rho
       \left(\frac{3\rho^2}{1+\rho+\rho^2}\right)^{(p-2)/2}
                  \frac{K^p}{T}
       \le h_p\rho^{p/2}\frac{K^p}{T},\\
 h_p&=\frac p{\sqrt{p-1}}
       \left(\frac{p-2}{p-1}\right)^{(p-2)/2}.
 \end{split}
\end{equation}
The ratio $h_p/\sqrt p$ decreases strictly for $p>2$.
\end{corollary}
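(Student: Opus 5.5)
This is a direct specialization of Lemma~\ref{lem:general-transfer} to $F(z)=z^p$ with $a=0$, followed by a one-variable maximization. First I verify the hypotheses. For $p>2$ the function $F$ is continuous and convex, vanishes at $0$, and is $C^2$ with $F'>0$ on $(0,\infty)$. Moreover $zF''/F'-1=p-2$ is constant, hence nonincreasing. So the lemma applies with $a=0$.

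The proof of that lemma shows $d\log g_S/d\log z$ is strictly decreasing, which gives the unimodality of the inner slope $-(x^p)'$. The lemma also yields \eqref{eq:transfer} with $M_p=M_F$. When $\mu=0$ the curves $x$ and $\xi_K$ coincide, so the errors agree and $M_p=0$ is consistent. I must also check that $M_F$ is finite. Here $F'(z)/z=pz^{p-2}$ is bounded on $(0,\mu)$ because $p>2$. Combined with the lower bound $H\ge T/(\mu K)$ from \eqref{eq:general-Jensen}, this gives finiteness.

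For \eqref{eq:Mbound}, I start from \eqref{eq:general-Jensen}:
\[
 M_p\le\frac{\mu K}{T}\sup_{0<z<\mu}p z^{p-2}\sqrt{1-\beta_*z^2}.
\]
Substitute $y=\beta_*z^2$. The function $y^{(p-2)/2}(1-y)^{1/2}$ on $[0,1]$ is maximized at $y_0=(p-2)/(p-1)$. Its maximal value is $((p-2)/(p-1))^{(p-2)/2}(p-1)^{-1/2}$, so the supremum is at most $p\beta_*^{-(p-2)/2}(p-1)^{-1/2}((p-2)/(p-1))^{(p-2)/2}=h_p\beta_*^{-(p-2)/2}$. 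Here I need $\beta_*\mu^2\le1$, so that the constraint $z<\mu$ only restricts further. This holds because $K^2+K\mu+\mu^2\le3K^2$.

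Now insert $\mu=\rho K$ and $\beta_*^{-1}=3K^2\rho^2/(1+\rho+\rho^2)$. The prefactor becomes $\mu K\cdot K^{p-2}/T\cdot(\ldots)$, which gives
\[
 M_p\le h_p\,\rho\left(\frac{3\rho^2}{1+\rho+\rho^2}\right)^{(p-2)/2}\frac{K^p}{T}.
\]
The second inequality in \eqref{eq:Mbound} needs $3\rho^2/(1+\rho+\rho^2)\le\rho$, which is equivalent to $(1-\rho)^2\ge0$. Raising to the power $(p-2)/2$ and multiplying by $\rho$ gives $\rho^{p/2}$.

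For the monotonicity of $h_p/\sqrt p$, write
\[
 \log(h_p/\sqrt p)=\tfrac12\log p-\tfrac12\log(p-1)+\tfrac{p-2}2\bigl[\log(p-2)-\log(p-1)\bigr].
\]
Differentiating gives
\[
 \tfrac12\Bigl[\tfrac1p-\tfrac1{p-1}\Bigr]+\tfrac12\log\tfrac{p-2}{p-1}+\tfrac{p-2}{2}\Bigl[\tfrac1{p-2}-\tfrac1{p-1}\Bigr].
\]
The last term equals $\tfrac1{2(p-1)}$, so the derivative simplifies to $\tfrac1{2p}+\tfrac12\log(1-\tfrac1{p-1})$. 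Since $\log(1-u)<-u$, this is less than $\tfrac1{2p}-\tfrac1{2(p-1)}<0$.

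The only delicate step is checking that the unconstrained maximizer argument is legitimate, namely $\beta_*\mu^2\le1$; everything else is routine calculus.
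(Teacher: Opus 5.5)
Your proposal is correct and follows the paper's proof: specialize Lemma~\ref{lem:general-transfer} to $F(z)=z^p$ with $a=0$, maximize $z^{p-2}\sqrt{1-\beta_*z^2}$ over $[0,\beta_*^{-1/2}]$, use $\rho(1-\rho)^2\ge0$, and differentiate the logarithm. Your derivative $\tfrac1{2p}+\tfrac12\log\bigl(1-\tfrac1{p-1}\bigr)$ is exactly half of the paper's expression with $n=p-1$, and your explicit check that $M_F$ is finite via $H\ge T/(\mu K)$ supplies a detail the paper leaves implicit.
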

\begin{proof}
At $\mu=0$, both the transfer difference and $M_p$ vanish.
Suppose $\mu>0$ and take $F(z)=z^p$ in Lemma~\ref{lem:general-transfer}. The expression in the
monotonicity hypothesis is the constant $p-2$. Maximizing $z^{p-2}\sqrt{1-\beta_*z^2}$ over
$0\le z\le\beta_*^{-1/2}$ gives the first bound. The second uses
$3\rho^2/(1+\rho+\rho^2)\le\rho$.
Put $n=p-1>1$. Differentiating the logarithm of $h_p^2/p$ gives
\[
 \frac d{dn}\{\log(1+1/n)+(n-1)\log(1-1/n)\}
 =\log(1-1/n)+\frac1n-\frac1{n(n+1)}<0.
\]

\end{proof}

\subsection{Angular rows and the counting reduction}
\label{sec:common-angular}
Let $p=d-1\ge2$, $\alpha=(p-1)/2$, and
$C_d(z)=\sum_{\ell+\alpha\le z}\kappa_{d,\ell}$.
\begin{lemma}[Angular multiplicity bounds]
\label{lem:angular}
For every $z\ge0$,
\begin{equation}\label{eq:angular}
 C_d(z)\le\frac2{p!}(z+1/2)^p.
\end{equation}
For $p\ge5$, put $b_p=p(p-1)(p-5)/24$. Then
\begin{equation}\label{eq:angular-threepower}
 C_d(z)\ge\frac2{p!}
       \left(z^p-\frac p2z^{p-1}-b_pz^{p-2}\right).
\end{equation}
When $z\ge\alpha$, put $L=\lfloor z-\alpha\rfloor$ and $u=L+p/2$.
The exact formula is
\begin{equation}\label{eq:angular-exact}
 C_d(z)=\frac{2u}{p!}\prod_{i=1}^{p-1}(u+i-p/2).
\end{equation}
\end{lemma}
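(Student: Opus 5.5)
The plan is to prove the exact formula \eqref{eq:angular-exact} first, and then read both bounds off it. Throughout, $p=d-1\ge2$.

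\emph{Exact formula.} The sum $\sum_{\ell=0}^{L}\kappa_{d,\ell}$ is the dimension of the space of spherical harmonics of degree at most $L$ on $S^{d-1}$. I will verify it directly by induction on $L$, or use the standard identity
\[
 \sum_{\ell=0}^L\kappa_{d,\ell}=\binom{L+p}{p}+\binom{L+p-1}{p}.
\]
Factor $(L+p-1)!/(p!\,L!)$ out of the right-hand side. What remains is $(L+p)+L=2L+p=2u$, and the product $(L+1)\cdots(L+p-1)$ equals $\prod_{i=1}^{p-1}(u+i-p/2)$. This gives \eqref{eq:angular-exact}. The cases $d=3$, where the formula gives $(L+1)^2$, and $\kappa_{3,0}=1$ are consistent with this.

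\emph{Upper bound \eqref{eq:angular}.} If $z<\alpha$, then $C_d(z)=0$ and there is nothing to prove. Otherwise $z\ge L+\alpha$, so $z+1/2\ge u$. The shifts $s_i=i-p/2$, $1\le i\le p-1$, are symmetric about $0$, and every factor satisfies $u+s_i\ge L+1>0$. Pair the factors $u+s$ and $u-s$ and use $(u+s)(u-s)\le u^2$; when $p$ is even, the unpaired middle factor is exactly $u$. Hence $\prod_i(u+s_i)\le u^{p-1}$ and $C_d(z)\le 2u^p/p!\le 2(z+1/2)^p/p!$.

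\emph{Lower bound \eqref{eq:angular-threepower}.} Let $R_p(z)=z^{p-2}(z^2-\tfrac p2 z-b_p)$. For $z<\alpha$ we have $z<p/2$ and $b_p\ge0$, so $R_p(z)<0\le C_d(z)$. For $z\ge\alpha$, the count $C_d$ is constant on $[u-\tfrac12,u+\tfrac12)$ for each $u=L+p/2$, and it equals $P(u)=\frac{2u}{p!}\prod_i(u+s_i)$ there. I will check that $R_p$ is nondecreasing wherever it is positive. Then it suffices to prove
\[
 Q(u)=\frac{p!}{2}P(u)-R_p(u+\tfrac12)\ge0,\qquad u\in\tfrac p2+\mathbb Z_{\ge0}.
\]
Naively replacing $u$ by $z-\tfrac12$ loses a term of order $z^{p-2}$, so this left-endpoint comparison is essential. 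Expanding both sides in powers of $u$:
\begin{itemize}
 \item the coefficients of $u^p$ agree;
 \item the coefficient of $u^{p-1}$ is $0$ on both sides;
 \item the coefficient of $u^{p-2}$ is $-\sum_i s_i^2=-p(p-1)(p-2)/12$ on both sides. The value of $b_p$ was chosen to make this hold: $\tfrac{p(p-1)}8-\tfrac{p(p-1)}4-b_p=-\tfrac{p(p-1)(p-2)}{12}$.
\end{itemize}
So $Q$ is a polynomial of degree at most $p-3$ in $u$.

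The main obstacle is to show $Q\ge0$ for all $u\ge p/2$, uniformly in $p\ge5$. I would first compute the $u^{p-3}$ coefficient. From $P$ it is $0$, because $u\prod_i(u+s_i)$ is $u$ times an even or odd polynomial with the same parity pattern. From $-R_p(u+\tfrac12)$ it is a positive multiple of $p(p-1)(p-2)$, and I expect it to be positive. Next I would bound the lower-order part. On the $P$ side, the elementary symmetric sums of the $s_i^2$ give alternating terms, which I would control by $\prod_i(1-s_i^2/u^2)\ge1-\sum_i s_i^2/u^2$ for $u\ge p/2>|s_i|$. On the $R_p$ side, a binomial tail estimate with $u\ge p/2$ controls the remainder. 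If these crude tails fail for the smallest $u$, I would treat $u=p/2,\,p/2+1$ (that is, $L=0,1$) directly from the exact formula, where $C_d$ equals $1$ and $d+1$ respectively.
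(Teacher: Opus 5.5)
Your exact formula (via $\binom{L+p}{p}+\binom{L+p-1}{p}$) and your upper bound (pairing $u\pm s$ and using $u\le z+\tfrac12$) are correct, and they are the paper's argument. Your reduction of the lower bound to $Q(u)=\tfrac{p!}{2}P(u)-R_p(u+\tfrac12)\ge0$ is also valid. It is not, however, an alternative to the ``naive'' substitution. With $w=u+\tfrac12$ it says $\tfrac{p!}{2}P(w-\tfrac12)\ge R_p(w)$, which is exactly the paper's step $u\ge z-\tfrac12$. Moreover, $b_p$ is precisely the constant for which that substitution loses nothing at order $z^{p-2}$. There is also a slip in your coefficient check. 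The $u^{p-2}$ coefficient of $u\prod_i(u+s_i)$ is $e_2(s)=-\tfrac12\sum_i s_i^2=-p(p-1)(p-2)/24$. The left side of your displayed identity equals this value, not $-p(p-1)(p-2)/12$. The two sides still agree, so your conclusion $\deg Q\le p-3$ stands.

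The genuine gap is that $Q\ge0$, which is the whole content of \eqref{eq:angular-threepower}, is only announced (``I expect it to be positive'', ``if these crude tails fail\dots''). No estimate is actually carried out, and checking $L=0,1$ by hand cannot give a bound uniform in $p$. The missing step is short.

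For $u\ge p/2$ every $|s_i|\le p/2-1<u$, so pairing gives
$\tfrac{p!}{2}P(u)=u^p\prod_{\mathrm{pairs}}(1-s^2/u^2)\ge u^p-\gamma_pu^{p-2}$, with $\gamma_p=p(p-1)(p-2)/24$. Now set
\[
 D(u)=u^p-\gamma_pu^{p-2}-R_p\bigl(u+\tfrac12\bigr).
\]
The coefficient of $u^{p-k}$ in $D$ vanishes for $k\le2$. For $k\ge3$, using $p\binom{p-1}{k-1}=k\binom pk$, it equals $2^{-k}\bigl[(k-1)\binom pk+4b_p\binom{p-2}{k-2}\bigr]\ge0$. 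Hence $D\ge0$ for all $u\ge0$, and no tail estimate or case analysis is needed.

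The paper closes the same inequality in the variable $v=1/(2z)\le1/(p-1)$ instead. It bounds $(1-v)^p$ below by its cubic Taylor polynomial and $(1-v)^{p-2}$ above by its quadratic one. This leaves the nonnegative remainder $\tfrac{p(p-1)(p-2)(p-3)}{6}v^3\bigl(1-\tfrac{p-2}{2}v\bigr)$.
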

\begin{proof}
Summing the spherical harmonic multiplicities gives the product.
Its offsets are symmetric, with the squares of the positive offsets
summing to $\gamma_p=p(p-1)(p-2)/24$. Pairing factors gives \eqref{eq:angular}, since
$u\le z+1/2$.

For the lower bound assume $p\ge5$ and $z\ge\alpha$.
The product increases with $u\ge(p-2)/2$, so substitute
$u\ge z-1/2$ and use $\prod_i(1-v_i)\ge1-\sum_i v_i$ for
$0\le v_i\le1$ after pairing. Thus
\[
 \frac{p!}{2}C_d(z)\ge(z-1/2)^p-\gamma_p(z-1/2)^{p-2}.
\]
Put $v=1/(2z)\le1/(p-1)$. Taylor's lower bound through degree
three for $(1-v)^p$ and upper bound through degree two for
$(1-v)^{p-2}$ yield
\begin{align*}
 \frac{p!C_d(z)}{2z^p}
 &\ge(1-v)^p-4\gamma_pv^2(1-v)^{p-2}\\
 &\ge1-pv-4b_pv^2
 +\frac{p(p-1)(p-2)(p-3)}6v^3
                 \left(1-\frac{p-2}{2}v\right)\\
 &\ge1-pv-4b_pv^2.
\end{align*}
This proves \eqref{eq:angular-threepower} above $\alpha$.
Below $\alpha<p/2$, its right side is nonpositive because $b_p\ge0$.
\end{proof}

Define the three-quarter action count
\[
 P_d(K,\mu)=\sum_{\ell\ge0}\kappa_{d,\ell}
        \lfloor A(\ell+\alpha)+3/4\rfloor.
\]
Interchanging angular sectors and radial rows gives exactly
\begin{equation}\label{eq:rows}
 P_d(K,\mu)=\sum_{j\ge0}C_d(x(j+1/4)).
\end{equation}
Here $\alpha>0$, so a zero-extended terminal inverse contributes
zero. Equality at a node is included on both sides.

For a continuous weight $F$ with $F(0)=0$, write
\begin{align*}
 Q_F(K)&=\sum_{j\ge0}F(\xi_K(j+1/4)),\\
 J_F(L)&=\frac1\pi\int_0^L F(z)\arccos(z/L)\,dz\quad(L>0),
 \qquad J_F(0)=0.
\end{align*}
Changing variables in the inverse action gives
$\int F(x(t))\,dt=J_F(K)-J_F(\mu)$.

\begin{proposition}[A counting reduction for angular weights]
\label{prop:physical-transfer}
Let $d\ge3$ and suppose that $0\le F\le C_d$ satisfies the
hypotheses of Lemma~\ref{lem:general-transfer}. Let $D_{\mathrm{ph}}\ge0$
satisfy $N^<(K^2)\ge P_d(K,\mu)-D_{\mathrm{ph}}$. Then
\begin{equation}\label{eq:physical-transfer}
\begin{split}
 N^<(K^2)-\W_d(K,\mu)\ge{}&Q_F(K)-\W_d(K,0)\\
 &+\bigl[\W_d(\mu,0)-J_F(\mu)\bigr]
 -\frac18M_F-D_{\mathrm{ph}}.
\end{split}
\end{equation}
If $F(z)\le2z^p/p!$, the bracketed inner-volume term is nonnegative.
At $\mu=0$, this term and $M_F$ both vanish.
\end{proposition}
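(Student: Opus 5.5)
The plan is to chain the assumed phase inequality with the row identity \eqref{eq:rows} and the transfer Lemma~\ref{lem:general-transfer}, keeping track of the exact integrals. Starting from $N^<(K^2)\ge P_d(K,\mu)-D_{\mathrm{ph}}$, I would use \eqref{eq:rows} to write $P_d(K,\mu)=\sum_{j\ge0}C_d(x(j+1/4))$. Since $0\le F\le C_d$, this sum is at least $\sum_{j\ge0}F(x(j+1/4))$. All terms are finite because $x$ vanishes beyond $T$ and $F(0)=0$. By the definition of $E_S$ with $\delta=1/4$, this equals $E_S(F)+\int_0^\infty F(x(t))\,dt=E_S(F)+J_F(K)-J_F(\mu)$, using the change of variables recorded before the proposition. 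Lemma~\ref{lem:general-transfer} then gives $E_S(F)\ge E_B(F)-\frac18M_F$. Since $E_B(F)=Q_F(K)-J_F(K)$ (the same change of variables with $\mu=0$, i.e.\ $\int F(\xi_K)=J_F(K)$), the $J_F(K)$ terms cancel. This leaves
\[
 N^<(K^2)\ge Q_F(K)-J_F(\mu)-\tfrac18M_F-D_{\mathrm{ph}}.
\]
Subtracting $\W_d(K,\mu)=\W_d(K,0)-\W_d(\mu,0)$, which is immediate from the definition of $\W_d$, gives exactly \eqref{eq:physical-transfer}.

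For the sign of the bracket, I would use the moment identity with the ball in place of the shell. Taking $\mu=0$ and outer frequency $\mu$ in \eqref{eq:weylmoment} gives
\[
 \W_d(\mu,0)=\frac2{p!}\int_0^{\mu/\pi}\xi_\mu(t)^p\,dt=J_{F_0}(\mu),\qquad F_0(z)=\frac{2z^p}{p!}.
\]
Here the second equality is the same change of variables $t=G_\mu(z)$, with $-G_\mu'=\pi^{-1}\arccos(z/\mu)$. Since $J_F$ is monotone in $F$ (the kernel $\arccos(z/\mu)\ge0$), the bound $F\le F_0$ gives $J_F(\mu)\le\W_d(\mu,0)$. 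At $\mu=0$ both $\W_d(0,0)$ and $J_F(0)$ vanish by definition, and $M_F=0$ by the convention in Lemma~\ref{lem:general-transfer}.

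There are a few technical points to check, and I expect them to be the only obstacle, since the main argument is bookkeeping.

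\emph{Hypotheses of the transfer lemma.} Lemma~\ref{lem:general-transfer} is applied only through its conclusion \eqref{eq:general-transfer}, which needs the stated hypotheses on $F$ and finiteness of $M_F$. If $M_F=\infty$, the inequality is vacuous, so it holds trivially.

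\emph{The case $\mu\le a$.} If $\mu\le a$, the errors coincide and $M_F=0$ is consistent with the argument above.

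\emph{Exchanging sum and integral.} Writing the sum as $E_S$ plus the integral requires $F\circ x$ to be compactly supported, and this holds because $x$ is supported in $[0,T]$.

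\emph{The case $\mu=0$.} When $\mu=0$, the shell inverse equals $\xi_K$, so $E_S=E_B$ exactly.
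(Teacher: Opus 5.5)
Your proposal is correct and follows the paper's proof step for step. Both arguments use the row identity with $F\le C_d$, then the transfer bound $E_S(F)\ge E_B(F)-\frac18M_F$, which cancels $J_F(K)$ through the two change-of-variables identities. Both then split $\W_d(K,\mu)=\W_d(K,0)-\W_d(\mu,0)$ and apply the moment identity \eqref{eq:weylmoment} to the weight $2z^p/p!$ to get the sign of the bracket.
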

\begin{proof}
The angular row identity gives
$N^<(K^2)\ge\sum_jF(x(j+1/4))-D_{\mathrm{ph}}$.
Add $E_S(F)\ge E_B(F)-M_F/8$ to the integral identity above and use
$\W_d(K,\mu)=\W_d(K,0)-\W_d(\mu,0)$.
The final sign follows from \eqref{eq:weylmoment} applied to $2z^p/p!$.
\end{proof}

The proposition is conditional only on the stated comparison between
the radial count and the action count. Section~\ref{sec:common-phase}
will supply a valid defect $D_{\mathrm{ph}}$ for use in the dimensional
proofs.

\subsection{The clipped angular polynomial}
For $p\ge4$, set $b_4=0$ and retain
$b_p=p(p-1)(p-5)/24$ for $p\ge5$. Define
\begin{equation}\label{eq:clipped-weight}
 R_p(z)=z^p-\frac p2z^{p-1}-b_pz^{p-2},\qquad
 \Psi_p(z)=(R_p(z))_+.
\end{equation}
Let $a$ be the positive root of $z^2-(p/2)z-b_p$.

\begin{lemma}[The clipped angular weight]\label{lem:clipped-angular}
For every integer $p\ge4$, one has $C_{p+1}\ge2\Psi_p/p!$.
The weight $\Psi_p$ satisfies the hypotheses of
Lemma~\ref{lem:general-transfer}, and $M_{\Psi_p}\le M_p$.
If $C=\Psi_p-R_p$ and $K\ge K_0=(7/4)(p+1)^{3/2}$, then
\begin{equation}\label{eq:clipping-correction}
 0\le\frac{J_C(\mu)}{K^p}
 <\begin{cases}
 1/100000,&p=4,\\
 1/200000,&p\ge5.
 \end{cases}
\end{equation}
The correction vanishes at $\mu=0$.
\end{lemma}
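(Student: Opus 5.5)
The proof splits into four independent claims: the comparison $C_{p+1}\ge2\Psi_p/p!$, the hypotheses of Lemma~\ref{lem:general-transfer}, the inequality $M_{\Psi_p}\le M_p$, and the clipping bound \eqref{eq:clipping-correction}.

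\textbf{The comparison with $C_{p+1}$.} For $p\ge5$, \eqref{eq:angular-threepower} gives $C_{p+1}\ge 2R_p/p!$. Since $C_{p+1}\ge0$, it follows that $C_{p+1}\ge2\Psi_p/p!$.

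For $p=4$ we have $b_4=0$, so $R_4=z^4-2z^3$. Here I use the exact formula \eqref{eq:angular-exact}: $C_5(z)=\tfrac{2u}{4!}(u^2-9/4)(u^2-1/4)$ with $u=L+2$. I plan to check directly that this is at least $\tfrac2{4!}(z^4-2z^3)$ for $u\ge z-1/2$. Writing $w=z-1/2$, the monotone product at $u=w$ is $w(w^2-9/4)(w^2-1/4)$. This should be compared with $(w+1/2)^3(w-3/2)$. The comparison is a degree-four polynomial inequality on $w\ge 3/2$, since $R_4\le0$ below $z=2$; I expect it to be routine.

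\textbf{Hypotheses of Lemma~\ref{lem:general-transfer}.} Write $\Psi_p=z^{p-2}q(z)$ on $(a,\infty)$, where $q(z)=z^2-(p/2)z-b_p$ is positive there. Then $\Psi_p$ is continuous, vanishes on $[0,a]$, and is $C^2$ with $\Psi_p'>0$ beyond $a$. Convexity holds because $z^{p-2}$ and $q$ are both increasing, positive and convex there.

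The key point is that $zF''/F'-1$ is nonincreasing. Write $F'=z^{p-3}s(z)$ with
\[
 s(z)=pz^2-\frac{p(p-1)}2z-(p-2)b_p .
\]
Then
\[
 \frac{zF''}{F'}-1=(p-4)+\frac{zs'(z)}{s(z)}.
\]
The ratio $zs'/s=(2pz^2-\tfrac{p(p-1)}2z)/s$ tends to $2$ as $z\to\infty$. Differentiating, its monotonicity reduces to the sign of a quadratic in $z$. That quadratic has nonnegative coefficients precisely because of the sign of the linear and constant terms of $s$, and this needs $b_p\ge0$.

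\textbf{The bound $M_{\Psi_p}\le M_p$.} This should follow from $\Psi_p'(z)\le pz^{p-1}$ on $(a,\mu)$. Indeed,
\[
 \Psi_p'(z)-pz^{p-1}=-\tfrac{p(p-1)}2z^{p-2}-(p-2)b_pz^{p-3}\le0 .
\]
Both $M_{\Psi_p}$ and $M_p$ are suprema of $F'/(zH)$, and $M_p$ takes its supremum over the larger interval $(0,\mu)$, so the inequality follows.

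\textbf{The clipping bound (main obstacle).} On $[0,a]$ the correction is $C=-R_p=z^{p-2}q_-(z)\ge0$, and $C$ vanishes beyond $a$. Since $\arccos\le\pi/2$,
\[
 J_C(\mu)\le\tfrac12\int_0^a z^{p-2}\bigl(b_p+\tfrac p2z\bigr)\,dz\le\tfrac12 a^{p-1}\Bigl(\tfrac{b_p}{p-1}+\tfrac{a}{2}\Bigr)\lesssim a^p ,
\]
with $a\approx\tfrac p4+\sqrt{p^2/16+b_p}=O(p^{3/2})$. Hence $J_C/K^p\le c\,(a/K_0)^p$, and $a/K_0$ is bounded by a constant strictly less than $1$, of order roughly $(1/\sqrt{24})/(7/4)\approx0.12$. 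The factor $(a/K_0)^p$ therefore decays geometrically. For $p\ge5$ it is tiny, and $p=4$ (with $a=2$, $K_0=7\cdot5^{3/2}/4$) is an explicit numerical check.

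The hardest part is making the uniform-in-$p$ constant rigorous. The plan is to bound $a\le p^{3/2}/\sqrt{24}+p/2$ and show that $(a/K_0)^p$ times the polynomial prefactor is below $1/200000$ already at $p=5$, then use monotone decrease in $p$.

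Finally, at $\mu=0$ we have $J_C(0)=0$ by definition, so the correction vanishes there.
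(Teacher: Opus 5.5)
Your route is the same as the paper's: the three-power bound for $p\ge5$, the exact product at $p=4$, $\Psi_p'\le pz^{p-1}$ for $M_{\Psi_p}\le M_p$, action density at most $1/2$, and geometric decay of $(a/K_0)^p$. The comparison for $p\ge5$, the convexity and $M_{\Psi_p}\le M_p$ are fine. There are, however, three genuine problems, the first of which makes a step fail.

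\textbf{The clipping estimate fails as written.} This is the step that carries the explicit constants. Dropping $-z^2$ from $-R_p=z^{p-2}(b_p+\tfrac p2z-z^2)$ loses a factor of up to $p+1$.
\begin{itemize}
\item At $p=4$ your bound is $J_C(\mu)\le\tfrac12\int_0^2 2z^3\,dz=4$. This gives $J_C/K^4\le 4/K_0^4=1024/37515625\approx2.7\times10^{-5}$, which exceeds $1/100000$.
\item The fix is to integrate exactly. Using $a^2=(p/2)a+b_p$,
\[
 \tfrac12\int_0^a(-R_p)\,dz=\frac{a^p}{4(p+1)}+\frac{b_pa^{p-1}}{(p-1)(p+1)} .
\]
This equals $4/5$ at $p=4$ and gives $1024/187578125<10^{-5}$.
\item For $p\ge5$ your uniform plan does not start at $p=5$. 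Your bound $a\le p/2+p^{3/2}/\sqrt{24}$ gives only $a/K_0\le0.186$ there, and your prefactor is $1/4$, so you get about $5.6\times10^{-5}>1/200000$. Even $a/K_0<1/7$ would give $7^{-5}/4>1/200000$.
\item The paper inserts $a\ge p/2$ into the exact integral to get $J_C/K^p\le\frac{p-2}{12(p+1)}(a/K)^p$. It then proves $a<p^{3/2}/4$ from $Q(p^{3/2}/4)=\frac p{48}[p(\sqrt p-3)^2+3p-10]>0$, where $Q(z)=z^2-(p/2)z-b_p$.
\item This yields $J_C/K^p<7^{-p}/12\le 1/201684$, so the margin at $p=5$ is thin. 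Alternatively, you could check $p=5,6$ with the exact roots $a=5/2$ and $a=(3+\sqrt{14})/2$, and start the uniform estimate at $p=7$.
\end{itemize}

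\textbf{The monotonicity hypothesis is justified incorrectly.} Write $s(z)=pz^2-Bz-D$ with $B=p(p-1)/2$ and $D=(p-2)b_p$. Then
\[
 \Bigl(\frac{zs'}{s}\Bigr)'=-\frac{pBz^2+4pDz-BD}{s^2}.
\]
The constant term $-BD$ is negative for $p\ge6$, so this quadratic does not have nonnegative coefficients. The required sign holds on $(a,\infty)$ only because $s>0$ there. You already have that from $\Psi_p'>0$: $pz^2-Bz>D$ gives $pBz^2-BD>B^2z>0$.

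The paper instead factors $R_p'=pz^{p-3}(z-r)(z+\sigma)$ with $r-\sigma=(p-1)/2$ and $r\sigma=(p-2)b_p/p$, so $r>\sigma\ge0$. Then $R_p'(a)>0$ forces $a>r$, and $-r/(z-r)^2+\sigma/(z+\sigma)^2<0$ for $z>a$.

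\textbf{Your exact formula for $C_5$ is wrong.} You used the $p=5$ product. For $p=4$ the offsets $i-p/2$ are $-1,0,1$, so $C_5(z)=u^2(u^2-1)/12$, which has degree four. With the correct formula, the check is
\[
 w^2(w^2-1)-(w+\tfrac12)^3(w-\tfrac32)=\tfrac12w^2+w+\tfrac3{16}>0
 \qquad\text{for } w=z-\tfrac12\ge\tfrac32,
\]
which matches the paper.
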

\begin{proof}
For $p\ge5$, use \eqref{eq:angular-threepower} and $C_{p+1}\ge0$.
For $p=4$ and $z\ge2$, the exact multiplicity formula, with
$u=L+2\ge z-1/2$, gives
\[
 12C_5(z)\ge(z-1/2)^2\bigl((z-1/2)^2-1\bigr)
 =z^4-2z^3+\frac12z^2+\frac12z-\frac3{16}\ge R_4(z).
\]
Below two, $\Psi_4=0$.

Factor $R_p=z^{p-2}(z-a)(z+b)$, where $b\ge0$.
On $z\ge a$, its factors and their derivatives are nonnegative,
so $R_p$ is increasing and convex. At $a$, its derivative jumps
upwards from zero after clipping. Moreover,
$0<R_p'(z)\le pz^{p-1}$ for $z>a$.
Write
\[
 R_p'(z)=pz^{p-3}(z-r)(z+s),\qquad
 r-s=(p-1)/2>0,\qquad rs=(p-2)b_p/p\ge0.
\]
Thus $r>s\ge0$. Since
$R_p'(a)=a^{p-3}((p/2)a+2b_p)>0$, one has $a>r$, and
\[
 \frac{d}{dz}\left(\frac{zR_p''(z)}{R_p'(z)}-1\right)
 =-\frac r{(z-r)^2}+\frac s{(z+s)^2}<0\qquad(z>a).
\]
The one-sided inner slope is finite because $a>0$.
These facts verify the transfer hypotheses, and the derivative bound
gives $M_{\Psi_p}\le M_p$.

The correction $C=(-R_p)\mathbf1_{[0,a]}$ is nonnegative. Since the
action density is at most $1/2$ and $a^2=(p/2)a+b_p$,
\begin{equation}\label{eq:clipping-integral}
 J_C(\mu)\le\frac12\int_0^a[-R_p(z)]\,dz
 =\frac{a^p}{4(p+1)}
   +\frac{b_pa^{p-1}}{(p-1)(p+1)}.
\end{equation}
For $p=4$, $a=2$ and the last expression is $4/5$. Hence
$J_C(\mu)/K^4\le1024/187578125<1/100000$.
For $p\ge5$, use $a\ge p/2$ in \eqref{eq:clipping-integral} to get
\[
 \frac{J_C(\mu)}{K^p}
 \le\frac{p-2}{12(p+1)}\left(\frac aK\right)^p.
\]
For $Q(z)=z^2-(p/2)z-b_p$,
\[
 Q(p^{3/2}/4)=\frac p{48}
       \bigl[p(\sqrt p-3)^2+3p-10\bigr]>0.
\]
Since $p^{3/2}/4>p/2$, it follows that $a<p^{3/2}/4$ and
$a/K_0<1/7$. Thus $J_C(\mu)/K^p<7^{-p}/12<1/200000$.
\end{proof}

When Proposition~\ref{prop:physical-transfer} is applied to
$F=2\Psi_p/p!$, expanding $\Psi_p=R_p+C$ gives
\begin{equation}\label{eq:clipped-ball-reduction}
 Q_{\Psi_p}(K)-J_{\Psi_p}(\mu)
 \ge Q_{R_p}(K)-J_{R_p}(\mu)-J_C(\mu).
\end{equation}
Here $Q_{R_p}$ and $J_{R_p}$ are defined by linearity. The discarded
term is $Q_C(K)\ge0$, so only the inner-ball clipping integral is lost.

\subsection{Quarter sampling for power weights}
\label{sec:common-ball}
For the ball inverse, the parameterization
$\xi_K=K\cos u$, $t=K(\sin u-u\cos u)/\pi$ gives
\begin{equation}\label{eq:ballslope}
 -(\xi_K^p)'=\frac{p\pi K^{p-1}\cos^{p-1}u}{u}.
\end{equation}
This slope decreases for $p\ge1$.

\begin{lemma}[An upper bound for quarter sampling]\label{lem:exactquarter}
Let $f$ be compactly supported on $[0,\infty)$, continuous at zero,
with $f,f'$ locally absolutely continuous on $(0,\infty)$.
Suppose that $\eta=-D^3f$ is a nonnegative Radon measure on
$(0,\infty)$, where the distributional derivative includes the
extension of $f$ by zero through the right endpoint of its support. Then
\begin{equation}\label{eq:shifted-quarter-upper}
 E_{1/4}(f):=\sum_{j\ge0}f(j+1/4)-\int_0^\infty f(t)\,dt
 \le\frac14f(1/24)\le\frac14f(0).
\end{equation}
The first inequality is strict if $f$ is nonzero. In particular, for
every integer $m\ge2$,
\begin{equation}\label{eq:exactquarter-ball}
 \sum_{j\ge0}\xi_K(j+1/4)^m
 <K^m\left(\frac{KI_{m+1}}{\pi(m+1)}+\frac14\right).
\end{equation}
The sum with the shell inverse $x$ in place of $\xi_K$ satisfies
the same upper bound.
\end{lemma}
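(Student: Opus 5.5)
The plan is to reduce \eqref{eq:shifted-quarter-upper} to a one-parameter family of test functions, using a representation of $f$ through $\eta$. Since $f$ is compactly supported and $\eta=-D^3f$ includes the zero extension through the right endpoint, Taylor's formula from the right (three integrations of $-\eta$ from $+\infty$) gives
\[
 f(t)=\int_{(0,\infty)}g_s(t)\,d\eta(s),\qquad g_s(t)=\tfrac12(s-t)_+^2 .
\]
In particular $f$ is a positive mixture of nonincreasing nonnegative functions. Hence $f$ is nonincreasing and $f(1/24)\le f(0)$, which is the second inequality. All three terms of $E_{1/4}(f)-\frac14f(1/24)$ are linear in $f$, and Fubini applies because $\eta$ is supported in a bounded set. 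Therefore
\[
 E_{1/4}(f)-\tfrac14f(1/24)=\int\mathcal K(s)\,d\eta(s),
\]
where
\[
 \mathcal K(s)=\sum_{j\ge0}\tfrac12(s-j-\tfrac14)_+^2-\frac{s^3}6-\tfrac18(s-\tfrac1{24})_+^2 .
\]
It then suffices to show that $\mathcal K(s)<0$ for every $s>0$. Strictness for nonzero $f$ follows, since a nonzero $f$ forces $\eta\ne0$.

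The core step, and the one I expect to be the main obstacle, is the sign of the explicit piecewise cubic $\mathcal K$. I would apply exact Euler--Maclaurin to $g_s$ with shift $\delta=1/4$. The boundary term $(1/2-\delta)g_s(0)=s^2/8$ cancels the quadratic part of $\tfrac18(s-1/24)^2$. The next Bernoulli term involves $B_2(1/4)=-1/48$ and $g_s'(0)=-s$, and it should cancel the linear term $-s/96$. This cancellation is the reason for the point $1/24$. What remains for $s\ge1/24$ is a constant, $-\tfrac18(1/24)^2$, plus a bounded periodic third-order remainder. That remainder is an explicit cubic in the fractional part $\{s-1/4\}$, essentially $-\tfrac16\widetilde B_3$ evaluated at the shifted variable. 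So $\mathcal K$ restricted to $[1/24,\infty)$ is a $1$-periodic piecewise cubic, and I would check its negativity on one period by locating its critical points. For $0<s<1/24$, the function is simply $\tfrac12(s-1/4)_+^2-s^3/6$, with the first term vanishing there, and it is clearly negative. This finite-interval verification is where the constant $1/24$ must be confirmed to work.

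For \eqref{eq:exactquarter-ball}, take $f=\xi_K^m$. By \eqref{eq:moment} with $\mu=0$, its integral is $K^{m+1}I_{m+1}/(\pi(m+1))$, and $f(0)=K^m$. Strictness then yields the stated bound, provided the hypothesis $\eta\ge0$ holds. That hypothesis means the slope $-f'$, given by \eqref{eq:ballslope}, is convex in $t$ and has no upward jump in $f''$ at the terminal point $u=0$, i.e.\ $t=0$ side, or at $u=\pi/2$. I would verify this in the parameter $u$ using $dt/du=Ku\sin u/\pi$. The slope $\propto\cos^{m-1}u/u$ has $t$-derivative $\propto(\cos^{m-1}u/u)'/(u\sin u)$, and I would show this is monotone in $u$ with the correct sign. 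At the terminal end $u\to\pi/2$, the slope tends to zero when $m\ge2$, so $f'$ is continuous; the jump of $f''$ to zero must then be checked to be downward, and this is where $m\ge2$ is needed. Finally, the shell case is immediate from pointwise comparison. Since $0\le x\le\xi_K$, each term satisfies $x(j+1/4)^m\le\xi_K(j+1/4)^m$, so the shell sum is at most the ball sum and inherits the same strict upper bound.
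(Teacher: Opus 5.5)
Your proposal is correct and is essentially the paper's argument: your kernel $\mathcal K(s)=E_{1/4}(g_s)-\tfrac14 g_s(1/24)$ equals $-\bigl(D(s)-\tfrac18\min\{s,\tau\}^2-\tfrac1{96}(s-\tau)_+\bigr)$ with $\tau=1/24$ in the paper's notation, so $\mathcal K<0$ is the same pointwise inequality, and your treatment of $\xi_K^m$ (monotonicity of $f''$ in $u$, continuity of $f'$ and a downward $f''$-jump at $t=K/\pi$ for $m\ge2$) and of the shell via $x\le\xi_K$ is also the paper's. Two points for the write-up: on $[1/24,\infty)$ one gets $\mathcal K(s)=-\tfrac{37}{4608}-\tfrac16B_3(\{s+3/4\})$ with $B_3(v)=v^3-\tfrac32v^2+\tfrac12v$, so the constant $-\tfrac1{128}$ from the $B_3(1/4)$ endpoint term must be kept, since the final check is the very tight inequality $\sqrt3<111/64$; and the interchange of sum and integral should rest on Tonelli together with $\int s^2\,\eta(ds)=2f(0)<\infty$ (from continuity at zero), not on bounded support, because $\eta$ can have infinite mass near $0$, as it does for $\xi_K^m$.
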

\begin{proof}
Integration from the right endpoint gives the ramp representation
\[
 f(t)=\int_0^\infty k_s(t)\,\eta(ds),\qquad
 k_s(t)=\frac12(s-t)_+^2,\qquad
 \int_0^\infty s^2\eta(ds)=2f(0)<\infty.
\]
The last identity follows by continuity and monotone convergence;
the representation also makes $f$ nonnegative and nonincreasing.
For $B_3(v)=v^3-3v^2/2+v/2$, direct summation gives
\begin{equation}\label{eq:quarter-ramp-deficit}
 D(s):=\frac14k_s(0)-E_{1/4}(k_s)
 =\frac{s}{96}+\frac1{128}+\frac16B_3(\{s+3/4\}).
\end{equation}
Here $\{\cdot\}$ denotes the fractional part, and
$|B_3(v)|\le\sqrt3/36$ for $0\le v\le1$. We shall also use,
for every $\tau>0$, the identities
\begin{equation}\label{eq:quarter-ramp-moments}
 \frac12\int\min\{s,\tau\}^2\eta(ds)
 =f(0)-f(\tau)+\tau f'(\tau),\qquad
 \int(s-\tau)_+\eta(ds)=-f'(\tau).
\end{equation}

Take $\tau=1/24$. For every $s>0$,
\[
 D(s)>\frac18\min\{s,\tau\}^2+\frac1{96}(s-\tau)_+.
\]
For $s\le1/4$, use $D(s)=s^2/8+s^3/6$: the difference is $s^3/6$
below $\tau$, and $(s-\tau)^2/8+s^3/6$ above it. For $s\ge1/4$,
the Bernoulli bound and $\sqrt3<111/64$ give
\[
 D(s)\ge\frac{s}{96}+\frac1{128}-\frac{\sqrt3}{216}
 >\frac{s}{96}-\frac1{4608}.
\]
By Tonelli's theorem and \eqref{eq:quarter-ramp-moments}, integration
cancels the derivative terms because $\tau/4=1/96$. This proves
\eqref{eq:shifted-quarter-upper}, strictly unless $\eta=0$, when $f=0$.

For the application put $\xi_K=K\cos u$ and
$t=K(\sin u-u\cos u)/\pi$. If $f=(\xi_K/K)^m$, then
\[
 f''=\frac{m\pi^2}{K^2}\left[
 \frac{(m-1)\cos^{m-2}u}{u^2}
 +\frac{\cos^{m-1}u}{u^3\sin u}\right].
\]
Both summands decrease with $u$ for $m\ge2$, so $-f'''\ge0$
in the interior. At $t=K/\pi$, $f$ vanishes to order $m$; its
second derivative extends continuously by zero if $m>2$, and jumps
down from $8/K^2$ to zero if $m=2$, adding a positive atom to $-D^3f$.
The moment identity \eqref{eq:moment} gives \eqref{eq:exactquarter-ball};
the shell assertion follows from $x\le\xi_K$.
\end{proof}

\begin{lemma}[Lower bounds for ball sampling]
\label{lem:onepoint-ball}
For every integer $p\ge2$ and $K\ge(7/4)(p+1)^{3/2}$,
\begin{equation}\label{eq:onepoint-ball}
 \frac{E_{B,p}(1/4)}{K^p}>\frac{18}{125}.
\end{equation}
For $p=4$, the right side may be replaced by $161/1000$;
for $p=5,6$, it may be replaced by $31/200$.
For the linear weight, every $K\ge8$ satisfies
\begin{equation}\label{eq:onepoint-ball-linear}
 E_{1/4}(\xi_K)>\frac K4-\frac4{25}K^{1/3}-\frac16.
\end{equation}
\end{lemma}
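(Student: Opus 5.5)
We need a lower bound on the quarter-sampling error $E_{1/4}(f)$ for $f=\xi_K^p$ (and for $f=\xi_K$), of size comparable to $\tfrac14 f(0)=K^p/4$. The upper bound of Lemma~\ref{lem:exactquarter} already gives $E_{1/4}(f)\le\frac14f(1/24)$. The plan is to reverse that argument: isolate the large contribution of the initial ramp and bound what remains by the slope at a chosen point. Throughout we write $f=(\xi_K/K)^p$, which is normalized so that $f(0)=1$.

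\emph{Step 1: exact representation.} As in the proof of Lemma~\ref{lem:exactquarter}, write $f=\int k_s\,\eta(ds)$ with $\eta=-D^3f\ge0$. This gives
\[
 E_{1/4}(f)=\tfrac14 f(0)-\int D(s)\,\eta(ds),\qquad
 D(s)=\tfrac{s}{96}+\tfrac1{128}+\tfrac16B_3(\{s+3/4\}).
\]
It therefore suffices to find an upper bound $D(s)\le \frac18\min\{s,\tau\}^2+\beta(s-\tau)_+ +\gamma$ for some $\tau$. The moment identities \eqref{eq:quarter-ramp-moments} then turn this into
\[
 \int D\,d\eta\le f(0)-f(\tau)+\tau f'(\tau)-\beta f'(\tau)+\gamma\,\eta\text{-mass}.
\]
I will avoid the constant term $\gamma$, whose $\eta$-mass equals $-f''(0+)$ and could be large, by choosing the piecewise bound so that $\gamma=0$. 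The natural choice is $\tau$ of order one with $\beta=\tau/4+c$. With that choice the main error becomes $(1-f(\tau))+(\beta-\tau)(-f'(\tau))$, and we must show it is less than $\frac14-\frac{18}{125}$ after dividing by $K^p$.

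\emph{Step 2: ball slope estimates.} By \eqref{eq:ballslope} and the parameterization $t=K(\sin u-u\cos u)/\pi$, near $t=0$ we have $u\approx(3\pi t/K)^{1/3}$. This gives
\[
 1-f(\tau)\approx \frac{p u^2}{2}\sim p(3\pi\tau/K)^{2/3},\qquad
 -f'(\tau)\approx \frac{p\pi}{Ku}.
\]
Both are small exactly when $K\gg p^{3/2}$, which is why the threshold is $K\ge\frac74(p+1)^{3/2}$: the quantity $p(\tau/K)^{2/3}$ is then at most a fixed small constant. I will turn these approximations into rigorous inequalities using $\sin u-u\cos u\ge u^3/3-u^5/30$ and $1-\cos^pu\le pu^2/2$, then evaluate at the threshold, where the error is worst because the bound decreases in $K$. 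For the refined constants at $p=4,5,6$ I will redo this numerically with the sharper thresholds.

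\emph{Step 3: the linear weight.} The case $f=\xi_K$ has $p=1$, so $-f''$ is not monotone in the same way. The second derivative blows up at the terminal point $t=K/\pi$, so I will take care with the endpoint atom there. The same scheme applies, and the loss is $K(1-\cos u(\tau))+(\ldots)f'$. This yields the $K^{1/3}$ term, coming from $K\cdot u^2\sim K\cdot K^{-2/3}$, together with a constant $1/6$ that absorbs the $\beta$ slope terms.

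The hardest part will be Step 1. We need a clean piecewise majorant of $D(s)$ that involves the Bernoulli oscillation but has no constant term, and whose constants are good enough to reach $18/125$. I would first try $\tau=1/4$, using the exact form $D(s)=s^2/8+s^3/6$ on $[0,1/4]$ and bounding $B_3$ by $\sqrt3/36$ beyond it. If that leaves a residual constant, I would absorb it into the linear term by increasing $\beta$ slightly.
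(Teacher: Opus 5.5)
\textbf{There is a genuine gap in Step~1.} Your overall scheme is the paper's: the ramp representation, a pointwise majorant for the ramp deficit $D$, the moment identities \eqref{eq:quarter-ramp-moments}, and explicit ball bounds at the frequency threshold. But the majorant you propose in Step~1 cannot exist. For $0<s\le1/4$ one has exactly $D(s)=s^2/8+s^3/6>s^2/8$. So $D(s)\le\frac18\min\{s,\tau\}^2+\beta(s-\tau)_+$ fails near $s=0$ for every $\tau>0$. The excess cannot be moved into the linear term, which vanishes for $s<\tau$. A constant $\gamma>0$ is excluded, as you say, because the $\eta$-mass is $f''(0+)=+\infty$ for the ball.

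The missing idea is that the quadratic coefficient must exceed $1/8$. If $\tau\ge1/4$, then $D(1/4)=1/96$ already forces it to be at least $1/6$. The paper takes $\tau=17/50$ and $c=27/80$, and proves $D(s)\le\frac c2\min\{s,\tau\}^2+\frac1{96}(s-\tau)_+$ separately on $s\le1/4$, on $1/4\le s\le\tau$, and on $s\ge\tau$. Integration then gives
\[
E_{1/4}(f)\ge\mathcal L(f):=\Bigl(\frac14-c\Bigr)f(0)+cf(\tau)+\Bigl(c\tau-\frac1{96}\Bigr)\bigl(-f'(\tau)\bigr).
\]
So the coefficient of $f(0)$ is negative, namely $-7/80$, and the slope term is a \emph{gain} with coefficient $313/3000$, not a loss. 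Separately, $\frac18\int\min\{s,\tau\}^2\,\eta(ds)$ equals $\frac14\bigl(f(0)-f(\tau)+\tau f'(\tau)\bigr)$, not $f(0)-f(\tau)+\tau f'(\tau)$.

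\textbf{The slope gain is essential at the threshold.} Your expectation that $1-f(\tau)$ is small there is wrong for $\tau$ of order one. With $\varepsilon=\frac12(3\pi\tau)^{2/3}K^{-2/3}$ one has $\cos u(\tau)\approx1-\varepsilon$. At $K=\frac74(p+1)^{3/2}$ this gives $p\varepsilon\approx3/4$, so $f(\tau)\approx e^{-3/4}\approx0.47$. Even $\frac14f(\tau)\approx0.12$ is below $18/125$, and the value terms $-\frac7{80}+\frac{27}{80}f(\tau)$ give only about $0.07$. The estimate closes only because the slope term adds roughly $\frac{313}{1530}\,p\varepsilon\,e^{-3/4}\approx0.07$, and the paper's final margin is just $\frac7{31875}$.

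Hence $\tau$, $c$ and the minimal asymptotic slope $1/96$ must be tuned together. Taking $\tau$ small to bring $f(\tau)$ near one does not rescue your version. Then $\frac c2\tau^2$ is tiny, so the linear coefficient must far exceed $1/96$ to dominate the Bernoulli oscillation of $D$ just beyond $\tau$, and the slope term becomes a loss.

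\textbf{The linear weight.} The obstruction is not a blow-up of $f''$ at $t=K/\pi$; for $f=\xi_K/K$ it tends to $8/(\pi K^2)$. The obstruction is the kink $f'(K/\pi-)=-2/K$ of the zero extension, which prevents $-D^3f$ from being a measure. The paper subtracts the ramp $r(t)=\frac2K(\frac K\pi-t)_+$, applies $\mathcal L$ to $f-r$, and computes $E_{1/4}(r)$ exactly. The resulting inequality $E_{1/4}(r)-\mathcal L(r)\ge-\frac1{6K}$ is the source of the $-\frac16$. The $K^{1/3}$ term comes from $\varepsilon K=\frac12(3\pi\tau)^{2/3}K^{1/3}$, as you guessed.
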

\begin{proof}
Put $\tau=17/50$ and $c=27/80$. We first prove that, under the
hypotheses of Lemma~\ref{lem:exactquarter},
\begin{equation}\label{eq:onepoint-quarter}
 E_{1/4}(f)\ge\mathcal L(f):=
 -\frac7{80}f(0)+cf(\tau)-\frac{313}{3000}f'(\tau).
\end{equation}
The ramp deficit \eqref{eq:quarter-ramp-deficit} satisfies
\[
 D(s)\le\frac c2\min\{s,\tau\}^2+\frac1{96}(s-\tau)_+.
\]
For $s\le1/4$, use $D(s)=s^2/8+s^3/6\le s^2/6$.
For $1/4\le s\le\tau$, set $w=s-1/4\in[0,9/100]$ and use
$80w^3\le9w^2$. The required inequality follows from
\[
 80s^3-261s^2+120s-15
 \le-192(w-3/256)^2-37/1024<0.
\]
For $s\ge\tau$, the Bernoulli bound gives
\[
 D(s)-\frac{s}{96}\le\frac1{128}+\frac{\sqrt3}{216}
 <\frac1{128}+\frac7{864}
 <\frac{c\tau^2}{2}-\frac\tau{96}
 =\frac{19159}{1200000}.
\]
Integrate against $\eta=-D^3f$ and apply
\eqref{eq:quarter-ramp-moments}. The coefficients are
$1/4-c=-7/80$ and $c\tau-1/96=313/3000$, proving
\eqref{eq:onepoint-quarter}.

Now put $f=(\xi_K/K)^p$, $\beta=313/1530$,
$a=(3\pi\tau)^{2/3}/2$, and $\varepsilon=aK^{-2/3}$.
Both frequency hypotheses imply $K\ge8$; since $\pi<22/7$,
$a<11/10$ and $\varepsilon<11/40<3/10$.
For $p\ge2$, the proof of Lemma~\ref{lem:exactquarter} verifies
the hypotheses of \eqref{eq:onepoint-quarter} for $f$.
For $p=1$, subtract
$r(t)=(2/K)(K/\pi-t)_+$ and put $h=f-r$.
The ball parameterization $\xi_K=K\cos u$ gives
$f'=-\pi/(Ku)$ and $f''=\pi^2/(K^2u^3\sin u)$.
Thus $f''$ decreases, $h(K/\pi)=h'(K/\pi)=0$, and the zero
extension of $h''$ has a downward terminal jump. Hence $-D^3h$
is a nonnegative measure, and \eqref{eq:onepoint-quarter} applies to $h$.
If $M$ counts the samples $j+1/4<K/\pi$ and $z=M-K/\pi$, then
\[
 E_{1/4}(r)=\frac1{2\pi}+\frac1K\left(\frac z2-z^2\right)
 \ge\frac1{2\pi}-\frac3{16K},\qquad
 -\frac14\le z\le\frac34,
\]
while $\mathcal L(r)=1/(2\pi)-1/(48K)$.
Consequently
\[
 E_{1/4}(f)\ge\mathcal L(f)-\frac{\mathbf1_{\{p=1\}}}{6K}.
\]

Writing $\xi_K(\tau)=K\cos u$, integration of
$\arccos(1-s)\ge\sqrt{2s}$ and inversion give $\cos u\ge1-\varepsilon$.
For $0\le s\le1/2$, squaring shows
$(1-s/2)^{-1/2}\le1+s/3$; integrating yields
\[
 u\le\arccos(1-\varepsilon)\le\sqrt{2\varepsilon}(1+\varepsilon/9),
 \qquad
 -f'(\tau)\ge\frac{2p\varepsilon}{3\tau}
       \frac{(1-\varepsilon)^{p-1}}{1+\varepsilon/9}.
\]
It follows that
\[
 \frac{E_{B,p}}{K^p}\ge B_p(\varepsilon)
                  -\frac{\mathbf1_{\{p=1\}}}{6K},\qquad
 B_p(\varepsilon):=-\frac7{80}
 +(1-\varepsilon)^p
 \left[c+\frac{\beta p\varepsilon}{(1-\varepsilon)(1+\varepsilon/9)}\right].
\]
For $p=1$, the same calculation gives
\[
 \mathcal L(\xi_K)\ge\frac K4
 -a\left(c-\frac\beta{1+\varepsilon/9}\right)K^{1/3}.
\]
Since $\beta>1/5$ and $\varepsilon<3/10$,
\[
 a\left(c-\frac\beta{1+\varepsilon/9}\right)
 <\frac{11}{10}\left(\frac{27}{80}-\frac6{31}\right)
 =\frac{3927}{24800}<\frac4{25},
\]
so
\begin{equation}\label{eq:linear-onepoint-functional}
 \mathcal L(\xi_K)>\frac K4-\frac4{25}K^{1/3}.
\end{equation}
Together with $E_{1/4}(\xi_K)\ge\mathcal L(\xi_K)-1/6$, this proves
\eqref{eq:onepoint-ball-linear}.

For $p\ge2$, put $n=p+1$. The frequency assumption gives
$0<\varepsilon<\varepsilon_0:=3/(4n)\le1/4$.
Since $\beta<c$, differentiation gives
$B_p'(\varepsilon)\le p(1-\varepsilon)^{p-1}(\beta-c)<0$.
The logarithmic bound $\log(1-t)\ge-t-t^2/[2(1-t)]$ gives
$(1-\varepsilon_0)^p\ge e^{-3/4}(1+3/(8n))$.
Also $(1-\varepsilon_0)(1+\varepsilon_0/9)<1$. With
$L=c+3\beta/4$, it follows that
\[
 B_p(\varepsilon_0)+\frac7{80}
 >e^{-3/4}\left(1+\frac3{8n}\right)
              \left(L-\frac{3\beta}{4n}\right)>Le^{-3/4}.
\]
For the last inequality use $n\ge3$ and $2c-3\beta=25/408>0$.
A geometric bound for the exponential tail completes the uniform estimate:
\[
 e^{3/4}<1+\frac34+\frac{(3/4)^2}{2}
 +\frac{(3/4)^3}{6(1-3/16)}=\frac{881}{416}<\frac{125}{59},
 \qquad -\frac7{80}+\frac{59L}{125}-\frac{18}{125}=\frac7{31875}>0.
\]
For $p=4,5,6$, direct substitution and clearing positive denominators give
\[
 B_4(3/20)>\frac{161}{1000},\qquad
 B_5(1/8)>\frac{31}{200},\qquad
 B_6(3/28)>\frac{31}{200}.
\]
\end{proof}

\section{The radial count and its phase defect}
\label{sec:common-phase}

The weighted comparison requires a lower bound for the eigenvalue count
in terms of the action count. We include $\rho=0$, with the regular
condition at the center of the ball. Proposition~\ref{prop:phase}
states the bound; the remainder of the section proves the ball case
and the three sufficient phase criteria for shells.

For $q\ge0$, let $\lambda_{q,j}(\rho)$,
$j=0,1,\ldots$, be the eigenvalues of the radial form
\[
 \frac{\displaystyle\int_\rho^1
        \bigl(s^{d-1}|u'|^2+q s^{d-3}|u|^2\bigr)\,ds}
      {\displaystyle\int_\rho^1s^{d-1}|u|^2\,ds}.
\]
The form domain consists of functions locally absolutely continuous
on $(\rho,1]$ with finite mass and energy in the displayed integrals; it is
$H^1(\rho,1)$ when $\rho>0$. The associated operator has Neumann
conditions at each positive endpoint and the regular ball condition
at zero. All trial functions below are taken in the form domain.
Thus $n_\ell^<(K^2)$ counts the radial eigenvalues
$\lambda_{q_\ell,j}(\rho)<K^2$, and
$N^<=\sum_{\ell\ge0}\kappa_{d,\ell}n_\ell^<$.

Write $S_\nu(t)=\pi G_t(\nu)$.
All radial counts below are strict and include the constant eigenfunction
in degree zero.  Our aim is the three-quarter count
\begin{equation}\label{eq:threequarter}
 n_\ell^<(K^2)\ge\lfloor A(\nu_\ell)+3/4\rfloor.
\end{equation}
For the ball it holds in every angular degree. For shells in dimension
three, it holds in every positive degree. In dimension four and above,
we bound the possible exceptional degrees explicitly. The planar count
is proved separately in Section~\ref{sec:planar}.

For $p=d-1$, define the phase cutoff
\begin{equation}\label{eq:phase-new-H}
 H_d^*=\max\left\{\frac{19}{2},
       \sqrt{\frac14+\frac{62500}{50653}p^3}\right\}.
\end{equation}
Useful envelopes, obtained by squaring, are
\begin{equation}\label{eq:cp-H-envelopes}
 H_d^*\le\max\{19/2,(9/8)p^{3/2}\}\quad(p\ge3),\qquad
 H_d^*\le(10/9)p^{3/2}\quad(p\ge8).
\end{equation}

\begin{proposition}[A localized phase defect]\label{prop:phase}\label{prop:tp-phasewindow}
Let $d\ge3$, $K>0$ and $0\le\rho<1$, with $\mu=\rho K$.
Use the action count $P_d$ from \eqref{eq:rows}, and set
\[
 \mathfrak b_d(L)=L(L+d)+\frac{2L^2}{2L+d+2}\quad(L\ge0).
\]
Define
\[
 \mathcal E_d=\left\{\ell\ge0:
 \nu_\ell<H_d^*,\quad
 \mu^2<\nu_\ell^2+\nu_\ell^{4/3}
                  +\frac53(\alpha+3/8)^2,\quad
 \mu^2>\mathfrak b_d(\ell)\right\},
 \qquad \mathfrak b_d(0)=0.
\]
Then
\begin{equation}\label{eq:phase-double-window}
 N^<(K^2)\ge P_d(K,\mu)-\sum_{\ell\in\mathcal E_d}\kappa_{d,\ell}.
\end{equation}

At $\mu=0$ the exceptional set is empty and the defect is zero.
The defect also vanishes if
\begin{equation}\label{eq:tp-nodefect}
 \mu^2\ge (H_d^*)^2+(H_d^*)^{4/3}
                       +\frac53(\alpha+3/8)^2.
\end{equation}
Every statement remains valid with any larger cutoff $H$ in place
of $H_d^*$.
\end{proposition}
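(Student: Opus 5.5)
The statement reduces to a degree-by-degree claim. Since $N^<=\sum_\ell\kappa_{d,\ell}n_\ell^<(K^2)$ and $P_d=\sum_\ell\kappa_{d,\ell}\lfloor A(\nu_\ell)+3/4\rfloor$, it suffices to prove the three-quarter count \eqref{eq:threequarter} for every $\ell\notin\mathcal E_d$. For $\ell\in\mathcal E_d$ we need only the weaker bound
\[
 n_\ell^<(K^2)\ge\lfloor A(\nu_\ell)+3/4\rfloor-1 .
\]
Each exceptional degree then costs at most $\kappa_{d,\ell}$, which is exactly the defect in \eqref{eq:phase-double-window}.

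\textbf{Radial phase and the weak bound.} For each degree I would write the radial equation in terms of Bessel functions of order $\nu=\nu_\ell$. The Neumann eigenvalue condition becomes a phase condition. The $j$-th eigenvalue lies below $K^2$ as soon as an outer phase $\theta^{\mathrm{out}}_\nu(K)$ minus an inner phase $\theta^{\mathrm{in}}_\nu(\mu)$ exceeds $\pi(j+1/4)$ up to the boundary corrections. Here each phase is the argument of $(J_\nu,Y_\nu)$ after the Neumann modification, i.e.\ of the derivative $(s^{-\alpha}J_\nu(s))'$. The uniform Bessel phase bounds of \cite{FLPSphase} give the outer phase $\ge S_\nu(K)+\pi/4$, the quarter gain being the Neumann boundary correction. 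This is precisely the ball case $\mu=0$, where the inner condition is regularity and contributes nothing; so at $\mu=0$ the count is $\lfloor G_K(\nu)+3/4\rfloor$ with no defect, and $\mathcal E_d$ is empty because $\mathfrak b_d(\ell)\ge0=\mu^2$ forces the third condition to fail. For shells, the inner Neumann phase must be bounded above by $S_\nu(\mu)$ plus a quarter. Any inner excess below $\pi$ loses at most one eigenvalue, which gives the weak bound for every degree.

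\textbf{The three no-defect criteria.} I would then show that the inner excess is harmless, so that \eqref{eq:threequarter} holds, in three regimes matching the complement of $\mathcal E_d$.

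\emph{(i) Small inner frequency, $\mu^2\le\mathfrak b_d(\ell)$.} The inner point lies well inside the classically forbidden region. I would use a variational argument: take as trial space the functions of the ball problem, suitably cut off, or compare by domain monotonicity of the radial form restricted to $[\rho,1]$. The quantity $\mathfrak b_d(L)=L(L+d)+2L^2/(2L+d+2)$ looks like the condition that the regular solution $s^{-\alpha}J_\nu$ has nonpositive derivative sign behaviour (Neumann trial) near $\rho$. I would verify it via the Bessel inequality $J_\nu'(x)/J_\nu(x)\ge$ explicit rational bound for $x^2\le\mathfrak b$, which shows the ball eigenfunctions stay admissible and the ball count transfers.

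\emph{(ii) Large inner frequency, $\mu^2\ge\nu^2+\nu^{4/3}+\tfrac53(\alpha+3/8)^2$.} The inner point is past the turning point by a margin. The Debye-type phase enclosure from \cite{FLPSphase} then shows the inner Neumann phase is at most $S_\nu(\mu)+\pi/4$, so the $1/4$ shifts cancel exactly.

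\emph{(iii) Large degree, $\nu\ge H_d^*$.} Uniform Airy-region estimates control the transition zone. The constants $62500/50653=(50/37)^3$ suggest an estimate of the form $(\text{error})\cdot\nu^{-2/3}\cdot p$, absorbed once $\nu^2\ge\tfrac14+(50/37)^3p^3$.

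The remaining claims follow quickly. The no-defect condition \eqref{eq:tp-nodefect} empties $\mathcal E_d$, since $\nu<H^*$ makes the window bound monotone in $\nu$. Replacing $H_d^*$ by a larger $H$ only enlarges $\mathcal E_d$, hence weakens the claim.

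The main obstacle is (iii), together with the transition zone in (ii). There the phase estimates must hold uniformly in $\nu$ near the turning point $\mu\approx\nu$, where neither the oscillatory nor the exponential asymptotics apply. I would first try the monotonicity of the modulus $M_\nu(x)^2=J_\nu^2+Y_\nu^2$ (Nicholson's formula), adapted to derivatives. I would then bound the inner phase excess by the integral of $(x^2-\nu^2)^{-3/2}$-type corrections, keeping the constants explicit enough to match the window $\nu^{4/3}$ and $H_d^*$.
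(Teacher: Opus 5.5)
Your decomposition matches the paper's: reduce to single angular degrees, prove \eqref{eq:threequarter} outside $\mathcal E_d$ by three criteria, and lose at most $\kappa_{d,\ell}$ in each exceptional degree. The gap is in criterion (iii), which you leave without a mechanism, and the route you sketch cannot close it. Corrections of type $(t^2-\nu^2)^{-3/2}$, and the Robin term $\alpha(t^2-\nu^2)^{-1/2}$, blow up at the turning point; already $E_\nu(t)>\pi/4$ whenever $t^2-\nu^2\le\tfrac12\nu^{4/3}$. So no such estimate certifies $\phi_{\alpha,\nu}(\mu)<S_\nu(\mu)+\pi/2$ for $\mu$ just above $\nu$.

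The missing idea is a joining argument through the first crossing. By the upward-crossing structure in Lemma~\ref{lem:exact}, $\phi_{\alpha,\nu}(t)<\pi/2\le S_\nu(t)+\pi/2$ for every $t$ below the first positive zero $s_1$ of $tJ_\nu'-\alpha J_\nu$, i.e.\ below the first ball Neumann frequency in that degree. It therefore suffices to prove a \emph{variational lower bound} $s_1>t_*$ at a point $t_*>\nu$ where $E_\nu(t_*)+\arctan\bigl(\alpha/\sqrt{t_*^2-\nu^2}\bigr)<\pi/4$. Both error terms decrease in $t$, so \eqref{eq:cp-enclosure} then holds for every $t$, hence for every $\mu$.

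The paper obtains $s_1^2>\nu^2+c\,\nu^{4/3}$ with $c>9/10$ from three ingredients: the Liouville form $w=r^{(d-1)/2}u$, the bound $r^{-2}\ge1+2(1-r)$, and the Airy half-line Robin estimate of Lemma~\ref{lem:phase-new-airy}, with rescaled Robin parameter $(\alpha+1/2)/(2(\nu^2-1/4))^{1/3}\le37/100$. Since $\alpha+1/2=p/2$, this condition together with $\nu\ge19/2$ is exactly $\nu\ge H_d^*$ (note $62500/50653=\tfrac12(50/37)^3$, not $(50/37)^3$). The cutoff thus says the Robin parameter is small on the Airy scale.

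The same first-crossing principle drives (i). If you make your domain-monotonicity idea precise as Neumann bracketing at $s=\rho$, it does give, for $\ell\ge1$, $n_\ell^<(K^2)$ at least the ball count once $\mu^2\le\lambda_{q_\ell,0}(0)$. This is a valid alternative to the paper's use of $\phi_{\alpha,\nu}(\mu)<\pi/2$ in \eqref{eq:ceiling}. But you still need the explicit bound $\lambda_{q_\ell,0}(0)>\mathfrak b_d(\ell)$, which the paper proves by Barta's inequality with $f=r^\ell e^{-ar^2/2-br^4/4}$, $a+b=\ell$.

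Criterion (ii) is also mis-stated. By \eqref{eq:ceiling} and the outer bound $\phi_{\alpha,\nu}(K)>S_\nu(K)+\pi/4$, the three-quarter count needs only $\phi_{\alpha,\nu}(\mu)\le S_\nu(\mu)+\pi/2$, the value the regular centre gives in the ball. Your stated target, $\phi_{\alpha,\nu}(\mu)\le S_\nu(\mu)+\pi/4$ with the shifts cancelling, is false for every $\mu>\nu$: \eqref{eq:flps} gives $\varphi_\nu>S_\nu+\pi/4$, and the shift $\delta_\alpha$ in \eqref{eq:robinphase} is positive.

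Moreover, \cite{FLPSphase} controls only $\varphi_\nu=\arg H_\nu'$. You must bound the Neumann correction separately, $\delta_\alpha<\arctan\bigl(\alpha/\sqrt{t^2-\nu^2}\bigr)$, using Nicholson's integral to get $\operatorname{Im}(H_\nu'/H_\nu)>\sqrt{t^2-\nu^2}/t$. The working criterion is then $E_\nu(\mu)+\alpha/\sqrt{\mu^2-\nu^2}\le\pi/4$, and this Robin term is where the $\alpha$ in the window comes from.

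Finally, the weak bound in exceptional degrees needs an argument. It follows from $\phi_{\alpha,\nu}<\theta_\nu+\pi$, with $\theta_\nu<S_\nu-\pi/4$ for $t>\nu$ and $\theta_\nu\le-\pi/4$ for $t\le\nu$. This caps the inner excess at $\pi/2$ and gives \eqref{eq:halfshift}.
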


We prove this proposition after establishing the phase criteria below.

\subsection{Exact counting and the two phase estimates}

Write $H_\nu=J_\nu+iY_\nu$ and
$Q_{\alpha,\nu}(t)=tH'_\nu(t)-\alpha H_\nu(t)$, and choose its
continuous phase $\phi_{\alpha,\nu}$ with limit $\pi/2$ at zero.
Set $\phi_{\alpha,\nu}(0)=\pi/2$.
\begin{lemma}\label{lem:exact}
For $K>0$, $0\le\rho<1$, and each angular degree,
\begin{equation}\label{eq:ceiling}
 n_\ell^<(K^2)=\left\lceil
 \frac{\phi_{\alpha,\nu_\ell}(K)-\phi_{\alpha,\nu_\ell}(\mu)}\pi
 \right\rceil,
\end{equation}
and
\begin{equation}\label{eq:phiprime}
 \phi'_{\alpha,\nu}(t)
 =\frac{2(t^2-\nu^2+\alpha^2)}{\pi t|Q_{\alpha,\nu}(t)|^2},\qquad t>0.
\end{equation}
\end{lemma}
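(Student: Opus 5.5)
Fix an angular degree $\ell$ with $\nu=\nu_\ell$ and $q=q_\ell=\nu^2-\alpha^2$. I would reduce the radial problem to Bessel functions and count eigenvalues by a Prüfer-type phase.

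\textbf{Step 1: the eigenfunctions.} The radial eigenfunctions with eigenvalue $k^2$ have the form $u(s)=s^{-\alpha}Z_\nu(ks)$, where $Z_\nu$ is a cylinder function. Differentiating gives
\[
 s\,u'(s)=s^{-\alpha}\bigl(ks\,Z_\nu'(ks)-\alpha Z_\nu(ks)\bigr).
\]
So the Neumann condition at a positive endpoint $s_0$ says that $tZ'_\nu(t)-\alpha Z_\nu(t)$ vanishes at $t=ks_0$.

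\textbf{Step 2: phase formulation of the boundary conditions.} Write $Z_\nu=\operatorname{Re}(e^{-i\theta}H_\nu)$, so $Z_\nu=\cos\theta\,J_\nu+\sin\theta\,Y_\nu$. Then the boundary condition at $s_0$ reads $\operatorname{Re}\bigl(e^{-i\theta}Q_{\alpha,\nu}(ks_0)\bigr)=0$, that is,
\[
 \phi_{\alpha,\nu}(ks_0)\equiv\theta+\pi/2 \pmod\pi .
\]
Hence $k^2$ is an eigenvalue exactly when $\phi_{\alpha,\nu}(k)-\phi_{\alpha,\nu}(k\rho)\in\pi\mathbb Z$.

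For the ball, regularity at $0$ forces $\theta=0$, and $Q_{\alpha,\nu}\sim -i(\text{const})t^{-\nu}$ as $t\to0$ gives phase $\pi/2$. So the same condition holds with $\phi(0)=\pi/2$, consistent with $\mu=0$.

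\textbf{Step 3: the derivative formula.} Since $tQ'_{\alpha,\nu}=t^2H_\nu''+tH_\nu'-\alpha tH_\nu'$, Bessel's equation gives
\[
 tQ'_{\alpha,\nu}=(\nu^2-t^2)H_\nu-\alpha\,tH_\nu' .
\]
From $\phi'=\operatorname{Im}(\overline Q Q')/|Q|^2$ I would then compute $\operatorname{Im}(\overline Q Q')$ using the Wronskian $\operatorname{Im}(\overline H_\nu H_\nu')=2/(\pi t)$. The $|H_\nu|^2$ and $|H_\nu'|^2$ terms are real, and the cross terms combine to $(t^2-\nu^2+\alpha^2)\cdot 2/(\pi t)\cdot t^{-1}\cdot t$. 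Checking the bookkeeping yields \eqref{eq:phiprime}.

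\textbf{Step 4: counting.} I would not rely on monotonicity of $\phi$, since $\phi'<0$ for $t^2<\nu^2-\alpha^2$. The key fact is instead that the map
\[
 k\mapsto\Theta(k):=\phi_{\alpha,\nu}(k)-\phi_{\alpha,\nu}(k\rho)
\]
is strictly increasing. This is the main obstacle. The cleanest route is the Sturm–Liouville Prüfer comparison: $\Theta(k)/\pi$ is the standard Prüfer angle difference for the radial equation with spectral parameter $k^2$, and Prüfer angles at the right endpoint are strictly increasing in the eigenvalue parameter.

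Concretely, I would define the Prüfer angle $\vartheta(s;k)$ of the solution satisfying the left boundary condition, using the variables $(u,\,s^{d-1}u')$. I would then show that $\Theta(k)$ equals $\vartheta(1;k)-\vartheta(\rho;k)$ up to a fixed normalization. The standard monotonicity of $\vartheta(1;k)$ in $k^2$, for a right-endpoint Neumann problem, then applies, with $\vartheta(\rho;k)$ held fixed by the left condition.

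Given this, $\Theta(0^+)=0$, because both phases tend to $\phi(0)$ as $k\to0$: when $\rho>0$ both arguments tend to $0$, and the ball case follows from the convention. The $j$-th eigenvalue is then the $j$-th $k$ with $\Theta(k)=j\pi$, for $j=0,1,\dots$; the constant mode corresponds to $j=0$ when $\ell=0$.

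One subtlety needs checking: for $\ell\ge1$ with $\rho>0$, $\Theta(0^+)$ is still $0$, and $j=0$ is never attained at $k=0$. The strict count of eigenvalues below $K^2$ is therefore $\#\{j\ge0:\,j\pi<\Theta(K)\}=\lceil\Theta(K)/\pi\rceil$, which is \eqref{eq:ceiling}.
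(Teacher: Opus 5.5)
Your Steps 1--3 are sound. The two Neumann conditions reduce to $\Theta(k)\in\pi\mathbb Z$, with $\phi_{\alpha,\nu}(0)=\pi/2$ for the regular ball solution. Expanding $\operatorname{Im}(\overline{Q}\,tQ')$ with the Wronskian $\operatorname{Im}(\overline{H_\nu}H_\nu')=2/(\pi t)$ gives $\frac2\pi(t^2-\nu^2+\alpha^2)$, which is \eqref{eq:phiprime}.

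The gap is in Step~4: the claim that $k\mapsto\Theta(k)$ is strictly increasing is false in every positive angular degree. For $\ell\ge1$ the small-argument expansions give $\operatorname{Re}Q_{\alpha,\nu}(t)\sim\ell\,(t/2)^\nu/\Gamma(\nu+1)>0$ and $\operatorname{Im}Q_{\alpha,\nu}(t)\sim(\nu+\alpha)\Gamma(\nu)(2/t)^\nu/\pi\to+\infty$. Hence $\phi_{\alpha,\nu}(t)=\pi/2-ct^{2\nu}(1+o(1))$ with $c>0$, so $\Theta(k)<0$ for small $k>0$ although $\Theta(0^+)=0$. In $d=3$, $\ell=1$ you can see this directly from $\phi_1'$: $\Theta'(k)\approx-(1-\rho^3)k^2/2$.

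For the same reason, $\Theta$ is not the Pr\"ufer difference $\vartheta(1;k)-\vartheta(\rho;k)$ up to any fixed normalization. The Pr\"ufer difference tends to a value in $(-\pi/2,0)$ as $k\to0$ and increases, whereas $\Theta$ starts at $0$ and decreases. The two angles only reach the lattice $\pi/2+\pi\mathbb Z$ simultaneously. Your final paragraph actually exposes the inconsistency. If $\Theta$ increased strictly from $0$, the level $0$ would never be reached at any $k>0$. Then $\lceil\Theta(K)/\pi\rceil\ge1$ for every $K>0$, which would assert an eigenvalue below every $K^2$, even though the lowest radial eigenvalue is positive when $\ell\ge1$.

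What you need, and what the paper proves, is the weaker statement that crossings of $\pi\mathbb Z$ are upward, together with the sign of $\Theta$ near $k=0$.
\begin{itemize}
\item Normalize the inner Neumann solution by $v(\rho,k)=1$, $v'(\rho,k)=0$. Green's identity in $k$ gives $-v(1,k)\,\partial_kv'(1,k)=2k\int_\rho^1s^{d-1}v^2\,ds>0$ at an eigenfrequency.
\item The Wronskian gives $v'(1,k)=-\frac\pi2\rho^\alpha|Q_{\alpha,\nu}(\rho k)||Q_{\alpha,\nu}(k)|\sin\Theta(k)$, and $\operatorname{sgn}v(1,k)=(-1)^j$ when $\Theta(k)=j\pi$. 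Together these give $\Theta'>0$ at each crossing.
\item Since $\Theta\to0^-$ for $\ell>0$, the crossings occur successively at $0,\pi,2\pi,\dots$, and the first one is the lowest eigenvalue.
\item For $\ell=0$, $\Theta>0$ near zero and the constant mode sits at $k=0$.
\end{itemize}
This yields \eqref{eq:ceiling}. The ball case is analogous, with $Q_{\alpha,\alpha}=-tH_{\alpha+1}$ fixing the sign for $\ell=0$.

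Alternatively, your Pr\"ufer idea can be repaired, but only at the level of band indices. You would show that $\vartheta(s;k)$ and $\phi_{\alpha,\nu}(ks)-\phi_{\alpha,\nu}(\rho k)+\pi/2$ meet $\pi/2+\pi\mathbb Z$ at the same $s$ and in the same direction. Both derivatives there have the sign of $k^2s^2-q$, including the tangency at $k^2s^2=q$. Then the monotonicity of $\vartheta(1;k)$ transfers to the position of $\Theta(k)$ relative to $\pi\mathbb Z$, not to $\Theta$ itself.
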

\begin{proof}
The Bessel equation and Wronskian give \eqref{eq:phiprime}.
First suppose $\rho>0$. The Neumann determinant vanishes when the
phase increment is in $\pi\mathbb Z$, and these crossings are upwards.
Indeed, normalize the inner Neumann
solution by $v(\rho,k)=1$, $v'(\rho,k)=0$.  At an eigenfrequency,
Green's identity gives
\[
 -v(1,k)\,\partial_kv'(1,k)
 =2k\int_\rho^1 s^{d-1}v(s,k)^2\,ds>0.
\]
Put $\Delta(k)=\phi_{\alpha,\nu}(k)-\phi_{\alpha,\nu}(\rho k)$.
The Bessel Wronskian gives
\[
 v'(1,k)=-\frac\pi2\rho^\alpha
       |Q_{\alpha,\nu}(\rho k)||Q_{\alpha,\nu}(k)|\sin\Delta(k).
\]
At $\Delta(k)=j\pi$, it also gives
$\operatorname{sgn}v(1,k)=(-1)^j$. Substitution into Green's
identity proves $\Delta'(k)>0$ at every crossing. For $\ell>0$ the
increment tends to zero from below as the frequency tends to zero;
its successive crossings are $0,\pi,2\pi,\ldots$.
For $\ell=0$ it is positive near zero, and the zero eigenvalue is already
present. This proves the ceiling formula for $\rho>0$, including
its left value at every positive eigenfrequency.

For $\rho=0$, use the regular solution
$v(s,k)=s^{-\alpha}J_\nu(ks)$. Its outer Neumann condition is
$\operatorname{Re}Q_{\alpha,\nu}(k)=0$, or
$\phi_{\alpha,\nu}(k)\in\pi/2+\pi\mathbb Z$.
The boundary term at zero vanishes, so the same Green identity holds
with the integral over $(0,1)$. At a crossing, the Wronskian gives
\[
 J_\nu(k)\operatorname{Im}Q_{\alpha,\nu}(k)=\frac2\pi.
\]
Since $\partial_kv'(1,k)=-|Q_{\alpha,\nu}(k)|
\sin\phi_{\alpha,\nu}(k)\,\phi'_{\alpha,\nu}(k)$ there,
Green's identity proves that every crossing is strictly upwards.
For $\ell>0$, the small-argument expansions put
$\phi_{\alpha,\nu_\ell}$ below $\pi/2$ near zero.
For $\ell=0$, the identity
$Q_{\alpha,\alpha}(k)=-kH_{\alpha+1}(k)$ puts it above $\pi/2$.
Thus
\[
 n_\ell^<(K^2)=\left\lceil
       \frac{\phi_{\alpha,\nu_\ell}(K)-\pi/2}{\pi}
                  \right\rceil.
\]
The initial values are zero in positive angular degrees and one in
degree zero, accounting for the constant eigenfunction. At a positive
eigenfrequency the ceiling again gives the left value. This is
\eqref{eq:ceiling} with $\mu=0$.
\end{proof}

The FLPS phase bounds \cite[Theorems~1.4 and~1.10]{FLPSphase}, with
$\theta_\nu=\arg H_\nu$ and $\varphi_\nu=\arg H'_\nu$ tending to
$-\pi/2$ and $\pi/2$ respectively, give, for $t>\nu$,
\begin{equation}\label{eq:flps}
 \theta_\nu(t)<S_\nu(t)-\frac\pi4,\qquad
 S_\nu(t)+\frac\pi4<\varphi_\nu(t)
 <S_\nu(t)+\frac\pi4+E_\nu(t),
 \qquad E_\nu(t)=\frac{9t^2-2\nu^2}{24(t^2-\nu^2)^{3/2}}.
\end{equation}
The upper bound for $\varphi_\nu$ follows from the majorant in
\cite[Theorem~1.10]{FLPSphase}. By \cite[Lemma~1.8]{FLPSphase},
$S_\nu+\pi/4+E_\nu$ decreases to its unique minimum and then increases.
The majorant replaces the part before this minimum by its minimum
value and agrees with $S_\nu+\pi/4+E_\nu$ afterwards, so it never
exceeds the expression used in \eqref{eq:flps}.
Nicholson's integral \cite[10.9.30]{DLMF} and the Wronskian give
$H'_\nu/H_\nu=a+ib$ with $a<0<b$.  Moreover, for $t>\nu$,
\[
 |H_\nu(t)|^2
 =\frac8{\pi^2}\int_0^\infty K_0(2t\sinh s)\cosh(2\nu s)\,ds
 <\frac8{\pi^2}\int_0^\infty K_0(2ts)\cosh(2\nu s)\,ds
 =\frac2{\pi\sqrt{t^2-\nu^2}}.
\]
The last integral follows by inserting the standard integral for $K_0$
and integrating in $s$ first.  Hence $b>\sqrt{t^2-\nu^2}/t$ and
\begin{align}
 \phi_{\alpha,\nu}&=\varphi_\nu+\delta_\alpha,\qquad
 \delta_0=0,\label{eq:robinphase}\\
 0<\delta_\alpha(t)<\arctan\frac{\alpha}{\sqrt{t^2-\nu^2}}
 &\le\frac{\alpha}{\sqrt{t^2-\nu^2}}\quad(t>\nu,\ \alpha>0).
 \label{eq:deltabound}
\end{align}
For $\alpha=0$, $\delta_\alpha=0$.  Indeed,
$\tan\delta_\alpha=(\alpha/t)b/(a^2-(\alpha/t)a+b^2)$.
Also $\phi_{\alpha,\nu}<\theta_\nu+\pi$ at every positive argument.

For the ball, the lower bound
$\phi_{\alpha,\nu}(K)>S_\nu(K)+\pi/4$ for $K>\nu$
and the exact ceiling formula give
\[
 n_\ell^<(K^2)\ge\lfloor G_K(\nu_\ell)+3/4\rfloor.
\]
If $K\le\nu_\ell$, the right side is zero. Thus the
three-quarter count holds in every degree when $\rho=0$, including
degree zero, and $N^<_{B_1}(K^2)\ge P_d(K,0)$ with no phase defect.

The same phase bounds give the universal estimate
\begin{equation}\label{eq:halfshift}
 n_\ell^<(K^2)\ge\lfloor A(\nu_\ell)+1/2\rfloor.
\end{equation}
For $t>\nu$, the preceding bounds give
$S_\nu+\pi/4<\phi_{\alpha,\nu}<S_\nu+3\pi/4$.
For $0<t\le\nu$, the Wronskian identity
$\theta_\nu'(t)=2/(\pi t|H_\nu(t)|^2)>0$ and the limiting upper
bound in \eqref{eq:flps} give $\theta_\nu(t)\le-\pi/4$.
Hence $\phi_{\alpha,\nu}(t)<\theta_\nu(t)+\pi\le3\pi/4$.
Subtracting these endpoint bounds in \eqref{eq:ceiling} proves
\eqref{eq:halfshift}. If $K\le\nu$, the desired floor is zero.

The following argument joins the phase bounds for $\nu>\alpha$.
If $t_* >\nu$
lies before the first positive zero of $tJ'_\nu-\alpha J_\nu$, and
\[
 E_\nu(t_*)+\arctan\frac\alpha{\sqrt{t_*^2-\nu^2}}<\frac\pi4,
\]
then
\begin{equation}\label{eq:cp-enclosure}
 \phi_{\alpha,\nu}(t)<\pi/2\ (t\le\nu),\qquad
 S_\nu(t)+\pi/4<\phi_{\alpha,\nu}(t)<S_\nu(t)+\pi/2\ (t>\nu).
\end{equation}
Before $t_*$ the first crossing has not occurred; afterwards both
error bounds decrease.  The same conclusion follows if the upper FLPS
estimate itself gives $\phi_{\alpha,\nu}(t_*)<\pi/2$, because
\eqref{eq:phiprime} has only one change of sign.  Subtracting endpoints
in \eqref{eq:cp-enclosure} proves \eqref{eq:threequarter}.

\subsection{First-crossing bounds and localization at the inner endpoint}

\begin{lemma}\label{lem:phase-upper-window}
For $L>0$ and $d\ge2$, the first ball eigenvalue in angular degree $L$ is greater than
$\mathfrak b_d(L)$. The same radial bound holds for real $d\ge2$ and $L>0$.
Consequently \eqref{eq:threequarter} holds if $\mu^2\le \mathfrak b_d(\ell)$ and $\ell>0$.

\end{lemma}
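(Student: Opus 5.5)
The plan is to reduce the radial bound to a sign statement for an explicit entire function and check it with a truncated alternating series. The consequence for \eqref{eq:threequarter} then follows from the phase enclosures already established.

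\emph{Reduction.} In the ball, degree $L>0$, the regular radial solution is $s^{-\alpha}J_\nu(ks)$ with $\nu=L+\alpha$. Its outer Neumann condition is
\[
 f(k)=kJ_\nu'(k)-\alpha J_\nu(k)=0 .
\]
By Lemma~\ref{lem:exact} with $\rho=0$, the first ball eigenvalue is $k_1^2$, where $k_1$ is the first positive zero of $f$. It therefore suffices to show $f(k)>0$ for $0<k^2\le\mathfrak b_d(L)$.

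Inserting the Bessel series gives, with $x=k^2/4$,
\[
 f(k)=\Bigl(\frac k2\Bigr)^{\nu}\sum_{m\ge0}
 \frac{(-1)^m(2m+L)\,x^m}{m!\,\Gamma(m+\nu+1)} .
\]
This uses $2m+\nu-\alpha=2m+L$. Everything depends only on $L$ and $\nu+1=L+d/2$, so real $d\ge2$ is allowed throughout.

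\emph{Alternating series.} Write $a_m$ for the $m$-th term in absolute value. The ratio is
\[
 \frac{a_{m+1}}{a_m}=\frac{(2m+L+2)\,x}{(2m+L)(m+1)(m+\nu+1)} .
\]
I would check that this ratio is below $1$ for all $m\ge2$ in the range $4x\le\mathfrak b_d(L)$. Since $\mathfrak b_d(L)\le L(L+d)+L$, this is a simple comparison: the ratio decreases in $m$, so it reduces to the case $m=2$. The alternating tail from $m=3$ on is then negative and bounded below by $-a_3$. Hence
\[
 f\,(k/2)^{-\nu}\Gamma(\nu+1)\;\ge\; L-\frac{(L+2)x}{\nu+1}+\frac{(L+4)x^2}{2(\nu+1)(\nu+2)}-\frac{(L+6)x^3}{6(\nu+1)(\nu+2)(\nu+3)} .
\]
The two-term truncation vanishes already at $k^2=2L(2L+d)/(L+2)$, which is below $L(L+d)$. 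So the quadratic term is essential, and this is what produces the correction $2L^2/(2L+d+2)$, in which $2L+d+2=2(\nu+2)$.

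\emph{Main obstacle.} I expect the hardest step to be verifying that this cubic lower bound stays positive on $0\le 4x\le\mathfrak b_d(L)$, uniformly in $L>0$ and real $d\ge2$. I would first substitute $y=4x/\bigl(L(L+d)\bigr)\in[0,\,1+2L/((2L+d+2)(L+d))]$ and clear denominators to get a polynomial inequality in $(L,d,y)$. I would then use monotonicity in $y$ to reduce to the endpoint $y_{\max}$ plus an interior check. If the cubic term spoils positivity for small $L$ (near $L=1$, $d=2$), I would instead pair terms as $(a_2-a_3)\ge0$ and keep $a_4$. As a fallback I would use the Rayleigh--Ritz / Sturm comparison: the first eigenvalue exceeds $k^2$ as long as $f$ has not changed sign.

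\emph{Consequence.} Let $\ell>0$ and $\mu^2\le\mathfrak b_d(\ell)$. Then $\mu<k_1$, so $\mu$ lies before the first zero of $\operatorname{Re}Q_{\alpha,\nu}$. Since $\phi_{\alpha,\nu}$ starts at $\pi/2$, goes below it for $\ell>0$, and first returns to $\pi/2$ exactly at $k_1$, we get $\phi_{\alpha,\nu}(\mu)<\pi/2$.

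If $\mu\le\nu$, then $G_\mu(\nu)=0$. If $\nu<\mu$, then $\pi/2\le S_\nu(\mu)+\pi/2$. In both cases
\[
 \phi(\mu)<S_\nu(\mu)+\pi/2 .
\]
For $K>\nu$ we also have $\phi(K)>S_\nu(K)+\pi/4$. Together these give
\[
 y:=\frac{\phi(K)-\phi(\mu)}{\pi}>A(\nu)-\tfrac14 .
\]
Put $n=\lfloor A+3/4\rfloor$. Then $n-1\le A-1/4<y$, so $\lceil y\rceil\ge n$. This is \eqref{eq:threequarter} by \eqref{eq:ceiling}. If $K\le\nu$, the right side is zero.
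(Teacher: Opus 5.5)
\textbf{The radial eigenvalue bound has a genuine gap.} Your reduction to positivity of $f(k)=kJ_\nu'(k)-\alpha J_\nu(k)$ on $0<k^2\le\mathfrak b_d(L)$ is valid. Your ``consequence'' paragraph is also correct and is essentially the paper's argument. The problem is that the alternating-series truncation cannot certify that positivity.

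First, the claim $a_{m+1}/a_m<1$ for all $m\ge2$ is false once $L$ is moderately large. With $x\approx L(L+d)/4$ and $\nu+3=L+d/2+2$, the ratio at $m=2$ is about $L(L+d)/(12(L+d/2+2))\gtrsim L/12$. For example, $d=2$, $L=20$ gives $a_3/a_2\approx1.8$.

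Second, even where the tail estimate holds, your cubic lower bound is negative at the endpoint $k^2=\mathfrak b_d(L)$ already for $d=2$, $L=3$. There $x=21/5$, and the bound equals $3-\tfrac{21}{4}+\tfrac{3087}{1000}-\tfrac{83349}{90000}\approx-0.089$, whereas the full series is about $+0.06$. For large $L$ the cubic bound behaves like $L-L^2/4+L^3/32-L^4/384\to-\infty$.

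This failure is structural, not a matter of pairing terms better. Near $k\approx\nu$ the normalized sum $\Gamma(\nu+1)(k/2)^{-\nu}J_\nu(k)$ is exponentially small in $\nu$. Its largest terms, near $m\approx0.2\nu$, are exponentially large. Any truncation certifying positivity must therefore have order growing linearly in $L$, which rules out a fixed polynomial inequality uniform in $L$ and $d$.

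Your fallbacks do not close the gap. Keeping $a_4$ only moves the failure to slightly larger $L$. Rayleigh--Ritz gives upper bounds for eigenvalues, which is the wrong direction here. The remark that the first eigenvalue exceeds $k^2$ until $f$ changes sign is your reduction restated, not a comparison argument.

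\textbf{What is missing is a lower-bound certificate for the first Neumann eigenvalue.} The paper uses ground-state factorization (Barta's inequality) with the positive trial $f(r)=r^Le^{-ar^2/2-br^4/4}$, where $b=L^2/(2L+d+2)$ and $a=L-b$. This trial satisfies $f'(1)=0$. With $t=r^2$, the quotient $\bigl(-f''-\frac{d-1}{r}f'+\frac{L(L+d-2)}{r^2}f\bigr)/f$ equals
$V(t)=(2L+d)(a+bt)+2bt-t(a+bt)^2$.
This function is strictly concave, with $V(0)=V(1)=\mathfrak b_d(L)$. Hence $V>\mathfrak b_d(L)$ on $(0,1)$, and the first eigenvalue strictly exceeds $\mathfrak b_d(L)$ for all real $L>0$ and $d\ge2$ at once. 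This also explains the form of $\mathfrak b_d$: $b$ is chosen exactly to equalize the two endpoint values of $V$.
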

\begin{proof}
Choose
\[
 b=\frac{L^2}{2L+d+2},\qquad a=L-b,\qquad
 f(r)=r^L e^{-ar^2/2-br^4/4}.
\]
Then $f'(1)=0$, and the radial operator quotient, with $t=r^2$, is
\[
 V(t)=(2L+d)(a+bt)+2bt-t(a+bt)^2.
\]
It has $V''(t)=-4ab-6b^2t<0$ and $V(0)=V(1)=\mathfrak b_d(L)$.
Ground-state factorization proves the strict eigenvalue bound.
If $\mu^2\le \mathfrak b_d(\ell)$, then $\phi(\mu)<\pi/2$; the outer
FLPS lower bound and \eqref{eq:ceiling} prove the counting assertion.
\end{proof}

\begin{lemma}\label{lem:tp-localphase}
The three-quarter count also holds if $\mu>\nu_\ell$ and
\begin{equation}\label{eq:tp-errorcriterion}
 E_{\nu_\ell}(\mu)+\frac\alpha{\sqrt{\mu^2-\nu_\ell^2}}
 \le\frac\pi4.
\end{equation}
A sufficient condition is
\begin{equation}\label{eq:tp-simplephase}
 \mu^2\ge\nu_\ell^2+\nu_\ell^{4/3}
                  +\frac53\left(\alpha+\frac38\right)^2.
\end{equation}
\end{lemma}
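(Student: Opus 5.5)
The plan has two parts: first show that \eqref{eq:tp-errorcriterion} gives the three-quarter count, then show that \eqref{eq:tp-simplephase} implies \eqref{eq:tp-errorcriterion}.

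\textbf{Step 1: the criterion gives the count.} We use the exact ceiling formula \eqref{eq:ceiling} with the phase bounds evaluated at both endpoints.

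At the outer endpoint $K>\mu>\nu_\ell$, the lower bound is $\phi_{\alpha,\nu_\ell}(K)>S_{\nu_\ell}(K)+\pi/4$. It follows from \eqref{eq:flps} and \eqref{eq:robinphase}, since $\delta_\alpha\ge0$.

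At the inner endpoint we need the sharper upper bound $\phi_{\alpha,\nu_\ell}(\mu)\le S_{\nu_\ell}(\mu)+\pi/2$. Combine \eqref{eq:robinphase}, the FLPS majorant $\varphi_\nu<S_\nu+\pi/4+E_\nu$, and the bound \eqref{eq:deltabound} on $\delta_\alpha$. Together these give
\[
 \phi_{\alpha,\nu_\ell}(\mu)<S_{\nu_\ell}(\mu)+\frac\pi4+E_{\nu_\ell}(\mu)+\frac{\alpha}{\sqrt{\mu^2-\nu_\ell^2}}\le S_{\nu_\ell}(\mu)+\frac\pi2 .
\]
Subtracting, the phase increment exceeds $S_{\nu_\ell}(K)-S_{\nu_\ell}(\mu)-\pi/4=\pi(A(\nu_\ell)-1/4)$. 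The ceiling of $A(\nu_\ell)-1/4+\epsilon$ is at least $\lfloor A(\nu_\ell)+3/4\rfloor$, so \eqref{eq:threequarter} follows.

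This step needs no first-crossing argument, because the FLPS bounds hold pointwise for every $t>\nu$. In particular the pointwise bound at $\mu$ suffices, and the enclosure \eqref{eq:cp-enclosure} is not needed. When $\alpha=0$ the $\delta$ term vanishes.

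\textbf{Step 2: the simple condition implies the criterion.} Put $\nu=\nu_\ell$ and $D=\mu^2-\nu^2\ge\nu^{4/3}+\frac53(\alpha+3/8)^2$. Rewrite the error term as
\[
 E_\nu(\mu)=\frac{9D+7\nu^2}{24D^{3/2}}=\frac{3}{8\sqrt D}+\frac{7\nu^2}{24D^{3/2}} .
\]
The required inequality then becomes
\[
 \frac{\alpha+3/8}{\sqrt D}+\frac{7\nu^2}{24D^{3/2}}\le\frac\pi4 .
\]
Write $D=X+Y$ with $X=\nu^{4/3}$ and $Y=\frac53(\alpha+3/8)^2$, and set $c=\alpha+3/8$.

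The second term is at most $\frac{7\nu^2}{24X^{1/2}(X+Y)}=\frac{7\nu^{4/3}}{24(X+Y)}$, which is at most $7/24$. The first term is at most $c/\sqrt{Y}=\sqrt{3/5}\approx0.775$. The sum $0.775+0.292$ exceeds $\pi/4\approx0.785$, so this crude split fails and the two terms must be bounded jointly.

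In terms of $s=X/D\in[0,1]$, the second term is at most $\frac{7}{24}s$. For the first term, $D\ge Y/(1-s)$, so it is at most $\sqrt{3/5}\,\sqrt{1-s}$. We then maximize $\phi(s)=\sqrt{0.6(1-s)}+\frac{7}{24}s$ over $[0,1]$. The derivative is $-\sqrt{0.6}/(2\sqrt{1-s})+7/24$, which is negative at $s=0$ and becomes more negative as $s$ grows, so $\phi$ decreases and its maximum is $\phi(0)=\sqrt{0.6}\approx0.7746<\pi/4$. That settles the criterion.

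In this bound the second term is controlled using only $D\ge X$ and $D\ge Y/(1-s)$, with $s$ defined by $X=sD$. These are consistent: $X=sD$ gives $D-X=(1-s)D\ge Y$.

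\textbf{Main obstacle.} The main difficulty is Step 2. It requires recognizing that the two error terms must be traded off against each other rather than bounded separately. Step 1 is routine given the FLPS majorant; the only care needed there is the endpoint case where $\mu$ coincides with an eigenfrequency boundary, which the ceiling convention in \eqref{eq:ceiling} already handles.
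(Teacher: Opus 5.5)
Your proposal is correct and follows the paper's proof. Both arguments use the pointwise endpoint phase bounds to get an increment exceeding $\pi A(\nu_\ell)-E_{\nu_\ell}(\mu)-\alpha/\sqrt{\mu^2-\nu_\ell^2}$, and both trade the two error terms against each other along the constraint $(\nu^{4/3}+\tfrac53(\alpha+3/8)^2)/D\le1$, with the maximum $\sqrt{3/5}<\pi/4$ at the endpoint. The only difference is the parametrization: you use $s=\nu^{4/3}/D$ with $s^{3/2}\le s$ and a monotonicity check, whereas the paper uses $y=\sqrt{5/3}\,(\alpha+3/8)/\sqrt D$ and $(1-y^2)^{3/2}\le2(1-y)$ to obtain a bound linear in $y$.
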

\begin{proof}
The two endpoint phase bounds give an increment strictly greater
than $\pi A(\nu_\ell)-E_{\nu_\ell}(\mu)
-\alpha/\sqrt{\mu^2-\nu_\ell^2}$.
For the sufficient condition set $D=\mu^2-\nu^2$, $a=\alpha+3/8$
and $y=\sqrt{5/3}\,a/\sqrt D\le1$. Since
$\nu^{4/3}/D\le1-y^2$ and $(1-y^2)^{3/2}\le2(1-y)$,
\[
 \frac{7\nu^2}{24D^{3/2}}+\frac a{\sqrt D}
 \le\frac7{12}(1-y)+\sqrt{\frac35}\,y
 \le\sqrt{\frac35}<\frac{25}{32}<\frac\pi4.
\]
This proves \eqref{eq:tp-errorcriterion}, including $\nu=0$.
\end{proof}

\subsection{A uniform phase cutoff}

The next lemma is the only Airy estimate needed in the proof.
\begin{lemma}\label{lem:phase-new-airy}
The lowest eigenvalue of $-v''+xv$ on $(0,\infty)$ with
$v'(0)=-cv(0)$ is greater than $29/50$ when $0\le c\le37/100$.
\end{lemma}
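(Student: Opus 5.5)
Since the Rayleigh quotient for $-v''+xv$ on $(0,\infty)$ with the Robin condition $v'(0)=-cv(0)$ is
\[
 \mathcal R_c(v)=\frac{\int_0^\infty(|v'|^2+x|v|^2)\,dx-c|v(0)|^2}{\int_0^\infty|v|^2\,dx},
\]
the lowest eigenvalue $\lambda(c)$ is nonincreasing in $c$. Hence it suffices to treat the worst case $c=37/100$ and show $\lambda(37/100)>29/50$. The plan is to work with the exact eigenfunction rather than a variational lower bound, which is awkward to obtain. On $(0,\infty)$ the decaying solution of $-v''+(x-\lambda)v=0$ is $v(x)=\mathrm{Ai}(x-\lambda)$. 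So $\lambda$ is an eigenvalue exactly when
\[
 \mathrm{Ai}'(-\lambda)+c\,\mathrm{Ai}(-\lambda)=0 .
\]

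The lowest eigenvalue lies between the Neumann value $\lambda(0)=|a_1'|\approx1.0188$ and the Dirichlet limit $|a_1|\approx2.338$ as $c\to\infty$. For $c\ge0$ it is the first zero $s>0$ of $g_c(s)=\mathrm{Ai}'(-s)+c\,\mathrm{Ai}(-s)$. For $0\le s\le|a_1'|$, $\mathrm{Ai}(-s)>0$, and $\mathrm{Ai}'(-s)$ decreases from $\mathrm{Ai}'(0)<0$ to zero. Therefore $\lambda(c)>29/50$ will follow once
\[
 \mathrm{Ai}'(-s)+c\,\mathrm{Ai}(-s)<0\qquad\text{for }0\le s\le 29/50,
\]
that is, once $c<-\mathrm{Ai}'(-s)/\mathrm{Ai}(-s)$ on $[0,29/50]$. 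The log-derivative ratio $-\mathrm{Ai}'(-s)/\mathrm{Ai}(-s)$ decreases in $s$, because its $s$-derivative is $-(s+r^2)$ for the Riccati variable $r$. So only the endpoint $s=29/50$ needs checking: I would show that $-\mathrm{Ai}'(-0.58)/\mathrm{Ai}(-0.58)>0.37$. Numerically $\mathrm{Ai}(-0.58)\approx0.466$ and $\mathrm{Ai}'(-0.58)\approx-0.19$, giving a ratio of about $0.40$, so the margin is real but modest.

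To keep the argument analytic, I would use the Maclaurin series $\mathrm{Ai}=c_1f-c_2g$, where
\[
 f(x)=\sum_k\frac{3^k(1/3)_k}{(3k)!}x^{3k},\qquad g(x)=\sum_k\frac{3^k(2/3)_k}{(3k+1)!}x^{3k+1}.
\]
At $x=-0.58$ these series alternate with rapidly decreasing terms, so truncating after three or four terms gives rigorous rational enclosures. The constants $c_1=\mathrm{Ai}(0)$ and $c_2=-\mathrm{Ai}'(0)$ enter through $\Gamma(1/3),\Gamma(2/3)$, and I would enclose them in rational intervals. Alternatively, I would avoid the Gamma values by a shooting argument: integrate the Riccati equation $r'=x-\lambda+r^2$ for $r=v'/v$ and compare with explicit sub- and supersolutions.

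The main obstacle is bounding the ratio $c_2/c_1$ rigorously without tables. Since the margin is about $0.03$, crude bounds on $\Gamma(1/3)$ are enough. A second point is making sure the phase-monotonicity argument that identifies the first zero handles the Robin sign convention correctly. If the series enclosure proves messy, the fallback is a direct variational lower bound via a ground-state factorization. Choose an explicit positive $w$ on $(0,\infty)$ with $-w''/w+x\ge29/50$ and $w'(0)/w(0)\le-37/100$, for instance a piecewise product of $e^{-\beta x}$ and a Gaussian-type decay. This mirrors the trial-function argument of Lemma~\ref{lem:phase-upper-window} and yields $\lambda>29/50$ by the Barta-type inequality.
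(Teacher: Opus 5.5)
Your reduction is correct, and it is a genuinely different route from the paper's. Monotonicity of $\lambda(c)$ in $c$, the characterization of eigenvalues as zeros of $\mathrm{Ai}'(-s)+c\,\mathrm{Ai}(-s)$, and the identity $m'(s)=-(s+m(s)^2)$ for $m(s)=-\mathrm{Ai}'(-s)/\mathrm{Ai}(-s)$ together reduce the lemma to the single inequality $m(29/50)>37/100$.

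Two points in the set-up need repair.
\begin{itemize}
\item With $v'(0)=-cv(0)$ and $c>0$, the boundary term $-c|v(0)|^2$ is attractive. So $\lambda(c)$ lies \emph{below} the Neumann value $|a_1'|$ and tends to $-\infty$ as $c\to\infty$; the Dirichlet value is the limit $c\to-\infty$. Your bracketing is backwards, although the rest of your argument only uses $s\in[0,|a_1'|]$.
\item You must also exclude an eigenvalue $s\le0$. Since $\frac{d}{dt}\bigl(\mathrm{Ai}'(t)^2-t\,\mathrm{Ai}(t)^2\bigr)=-\mathrm{Ai}(t)^2$, one has $\mathrm{Ai}'(t)^2>t\,\mathrm{Ai}(t)^2$. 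Hence $m$ decreases on all of $(-\infty,|a_1|)$, and $m(s)\ge m(0)\approx0.729>c$ for $s\le0$.
\end{itemize}
Also, for $r=v'/v$ the Riccati equation is $r'=x-\lambda-r^2$, not $x-\lambda+r^2$.

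The genuine gap is the decisive numerical step, because your values are wrong. In fact $\mathrm{Ai}(-0.58)\approx0.4912$ and $\mathrm{Ai}'(-0.58)\approx-0.1832$, so $m(29/50)\approx0.3729$. The margin is therefore about $0.003$, not $0.03$; the eigenvalue at $c=37/100$ is about $0.584$. This is consistent with the paper's $Q(0)$ exceeding $37/100$ by only $1/3700$.

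Consequently, crude Gamma bounds do not suffice. Put $\kappa=-\mathrm{Ai}'(0)/\mathrm{Ai}(0)=2\pi\,3^{-1/6}/\Gamma(1/3)^2\approx0.7290$. By the Wronskian $fg'-f'g=1$, at $x=-0.58$ one has $m=(\kappa g'-f')/(f-\kappa g)$ with $\partial m/\partial\kappa=(f-\kappa g)^{-2}>0$. Thus $m>37/100$ is equivalent to $\kappa>\kappa_0\approx0.7235$, which requires $\Gamma(1/3)<2.6891$ (the true value is $2.6789\ldots$). The natural crude bound $\Gamma(1/3)<2.7$ gives only $\kappa>0.717$, and the argument fails.

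Your route can be completed, but only with an enclosure of $\Gamma(1/3)$ to about $0.4\%$, together with enclosures of $\pi$, $3^{1/6}$ and the alternating series tails. That computation is the entire content of the proof, and the proposal leaves it undone. What the route would buy is a sharp if-and-only-if criterion, showing $37/100$ is within $0.003$ of optimal.

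The paper instead avoids special-function values altogether. It takes $Q=y-9/(10(y+2/15))$ with $y=\sqrt{x+121/100}$ and verifies $x-29/50-Q^2+Q'>0$ by an explicit polynomial identity in $t=y-11/10$. It then completes the square, using $Q(0)=37/100+1/3700\ge c$.

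That is essentially your fallback, but the trial you suggest cannot work in its simplest two-piece form. A Gaussian piece $Q=-w'/w=\beta+2\gamma x$ needs $\beta\ge37/100$ and $2\gamma\ge29/50+\beta^2$ at the origin. Switching at $x_1$ to an exponential tail with constant $Q=q$ requires an upward jump and $q^2\le x_1-29/50$, that is, $\beta+2\gamma x_1\le q\le\sqrt{x_1-29/50}$. These conditions have no solution, since $(37/100+0.7169\,x_1)^2-x_1+29/50>0.6$ for all $x_1\ge0$. A workable trial must track the exact Riccati solution closely on a bounded interval, as the paper's single formula does.
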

\begin{proof}See Appendix~\ref{app:proof-lem-phase-new-airy}.\end{proof}

\begin{proposition}\label{prop:phase-new-cutoff}
If $\nu\ge19/2$ and
\begin{equation}\label{eq:phase-new-condition}
 \alpha+\frac12\le\frac{37}{100}
                     \bigl(2(\nu^2-1/4)\bigr)^{1/3},
\end{equation}
then \eqref{eq:cp-enclosure}, and consequently the three-quarter
count, hold.
\end{proposition}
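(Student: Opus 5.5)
We need the joining criterion stated just before \eqref{eq:cp-enclosure}. It asks for a point $t_*>\nu$ before the first positive zero $j'_{\alpha,\nu}$ of $tJ'_\nu-\alpha J_\nu$ with
\[
E_\nu(t_*)+\arctan\frac{\alpha}{\sqrt{t_*^2-\nu^2}}<\frac\pi4 .
\]
The alternative is a point where the FLPS upper bound already gives $\phi_{\alpha,\nu}(t_*)<\pi/2$. The plan is to use Lemma~\ref{lem:phase-new-airy} to bound the first crossing from below by an Airy-scale point, and then check the error inequality there using the size condition \eqref{eq:phase-new-condition}.

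\emph{Step 1: reduce the first crossing to an Airy problem.} Put $w(t)=\sqrt t\,J_\nu(t)$, so that $-w''+(\nu^2-1/4)t^{-2}w=w$. The zero of $tJ'_\nu-\alpha J_\nu$ becomes a Robin condition $w'/w=(\alpha+1/2)/t$ at the point $t$.

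Reflect the variable, $s=\nu_1-t$ with $\nu_1=\sqrt{\nu^2-1/4}$, so the potential crosses $1$ at $s=0$. Bound $(\nu^2-1/4)t^{-2}-1$ from below by its tangent line $2s/\nu_1$; this is legitimate because $t\mapsto\nu_1^2/t^2$ is convex.

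Rescale with $x=(2/\nu_1)^{1/3}s$. The problem becomes $-v''+xv$ with Robin parameter
\[
c=(\alpha+1/2)\,t^{-1}(\nu_1/2)^{1/3}\approx(\alpha+1/2)\bigl(2(\nu^2-1/4)\bigr)^{-1/3}.
\]
Condition \eqref{eq:phase-new-condition} is exactly designed to make $c\le 37/100$, up to the factor $t/\nu_1$, which should be absorbed using $t\ge\nu_1$ or handled by monotonicity in the sign.

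Comparing Sturm–Liouville problems on $[t,\infty)$ by a Rayleigh quotient, a Neumann/Robin solution with the oscillation starting before $t$ would need a quadratic-form eigenvalue $\le0$. Lemma~\ref{lem:phase-new-airy} gives $29/50$ instead. I expect the conclusion to be
\[
j'_{\alpha,\nu}>\nu_1+\frac{29}{50}\Bigl(\frac{\nu_1}{2}\Bigr)^{1/3}.
\]
More precisely, $j'_{\alpha,\nu}$ exceeds any $t$ for which the rescaled problem on $[t,\infty)$ has positive bottom. Equivalently, $\phi_{\alpha,\nu}<\pi/2$ up to roughly $\nu+\tfrac{29}{50}(\nu/2)^{1/3}$.

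\emph{Step 2: choose $t_*$ just below this bound and verify the error inequality.} Take $t_*=\nu_1+\theta(\nu_1/2)^{1/3}$ with $\theta$ slightly below $29/50$. Then $t_*^2-\nu^2\approx2\theta\nu(\nu/2)^{1/3}-1/4$, which is of order $\nu^{4/3}$. Hence
\[
E_\nu(t_*)\approx\frac{7\nu^2}{24(t_*^2-\nu^2)^{3/2}}\approx\frac{7}{24(2\theta)^{3/2}}\cdot\sqrt2\cdot(\text{const}),
\]
a $\nu$-independent number, while
\[
\frac{\alpha}{\sqrt{t_*^2-\nu^2}}\le\frac{(37/100)\bigl(2\nu^2\bigr)^{1/3}}{\sqrt{2\theta\nu^{4/3}2^{-1/3}}}=O(1).
\]
The bound on $\alpha$ comes from \eqref{eq:phase-new-condition}.

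Both quantities are explicit functions of $\theta$ up to corrections in $1/\nu$. The threshold $\nu\ge19/2$ is used to bound these corrections, for example $-1/4$ and $\nu_1$ versus $\nu$, by rational factors. The task is to verify that the sum is $<\pi/4$.

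If the crude sum fails, I would instead use the alternative criterion: evaluate the FLPS upper bound at $t_*$, add $\delta_\alpha$ via \eqref{eq:deltabound}, and compare with $\pi/2$. This works since $S_\nu(t_*)$ is small, of order $(t_*-\nu)^{3/2}/\sqrt\nu$, roughly $\tfrac{2\sqrt2}{3}\theta^{3/2}/\sqrt2$, and it gives some extra slack.

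\emph{Main obstacle.} The numerical margin in Step 2 is the hard part. With $\theta\approx0.58$, the value of $E_\nu$ alone is about $0.4$ and the $\arctan$ term is of comparable size, so the two have to be balanced carefully against $\pi/4\approx0.785$. Whichever criterion is used, the constants $29/50$, $37/100$ and $19/2$ appear to be tuned to make exactly this rational inequality hold. I would reduce it to a single-variable inequality in $\nu^{-2/3}\in(0,(2/19)^{2/3}]$, check that it is monotone, and verify it at the endpoint by exact rational bounds.

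The Rayleigh comparison in Step 1, namely the handling of the tangent-line bound and the $t/\nu_1$ factor in $c$, is routine but must be done in the right direction.
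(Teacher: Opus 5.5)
Your main line is the paper's argument. You bound the first crossing $t_0=j'_{\alpha,\nu}$ from below using Lemma~\ref{lem:phase-new-airy} and a tangent-line bound on the centrifugal term. You then apply the joining criterion at a point with $t_*^2-\nu^2\approx\frac{29}{50}2^{2/3}\nu^{4/3}\approx0.92\,\nu^{4/3}$, reduced to a scalar inequality in $v=\nu^{-2/3}$.

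The one structural difference is the linearization point. The paper works on the unit ball, where $s_1^2$ is the eigenvalue and the Robin coefficient is the fixed number $\alpha+1/2$. It linearizes $r^{-2}\ge1+2(1-r)$ at the Robin endpoint, so the scaled parameter is exactly $(\alpha+1/2)/(2(\nu^2-1/4))^{1/3}$, independent of the unknown crossing. Your tangent at the turning point makes $c=(\alpha+1/2)(\nu_1/2)^{1/3}/t$ depend on the endpoint.

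The inequality $t_0\ge\nu_1$ you propose to absorb this with is not available from what precedes. For $\alpha>1/2$, the sign of \eqref{eq:phiprime} only gives $t_0>\sqrt{\nu^2-\alpha^2}$, which is smaller than $\nu_1$. Use your ``more precisely'' formulation instead. By scaling, the lowest Robin level on $(0,\tau)$ with coefficient $(\alpha+1/2)/\tau$ equals $(t_0/\tau)^2$. So it suffices to prove the form positive at the target $\tau=T=\nu_1+\frac{29}{50}(\nu_1/2)^{1/3}$. There $T>\nu_1$ gives $c<37/100$, and the shifted half-line bottom is $\lambda(c)-29/50>0$, where $\lambda(c)$ is the level in Lemma~\ref{lem:phase-new-airy}. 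Hence $t_0>T$, which is marginally stronger than the paper's $s_1^2>\nu_1^2+\frac{29}{50}(2\nu_1^2)^{2/3}$.

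Step 2 needs two corrections.

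First, your fallback is backwards. The alternative criterion requires $S_\nu(t_*)+E_\nu(t_*)+\arctan\bigl(\alpha/\sqrt{t_*^2-\nu^2}\bigr)<\pi/4$, which is strictly harder than the primary one. Here $S_\nu(t_*)\approx\frac23\theta^{3/2}\approx0.29$ pushes the left side above $1$, so only the primary criterion can work.

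Second, the primary criterion has very little room. Take $\theta=29/50$ and $\alpha$ at the upper limit of \eqref{eq:phase-new-condition}. The sum increases with $\nu$ and tends to about $0.7824$, against $\pi/4\approx0.7854$. This has three consequences:
\begin{itemize}
\item The $\arctan$ must be kept; the linear bound in \eqref{eq:deltabound} gives about $0.816$ in the limit.
\item The decisive case is $v\to0$, not $\nu=19/2$.
\item $\theta$ may fall below $29/50$ by only about $2.5\cdot10^{-3}$, so take $\theta=29/50$ itself, which the strict Airy bound allows.
\end{itemize}
The paper carries out this check with $t_*^2=\nu^2+q\nu^{4/3}$, $q=\frac{23}{25}-\frac{17}{250}v$. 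It uses the tangent bound for $\arctan$ at $\tan(\pi/8)$, proves monotonicity in $v$, and evaluates at $v=0$, where the remaining margin is below $10^{-3}$.
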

\begin{proof}See Appendix~\ref{app:proof-prop-phase-new-cutoff}.\end{proof}

\begin{lemma}\label{lem:common-low-robin-phase}
For $0\le\alpha\le1$ and $\nu\ge\max\{1,5\alpha\}$,
the phase enclosure \eqref{eq:cp-enclosure} holds.
\end{lemma}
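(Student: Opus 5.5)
The plan is to reduce to a bounded range of orders and then verify the joining criterion stated just before Lemma~\ref{lem:phase-upper-window}. That criterion asks for a point $t_*>\nu$ lying before the first positive zero of $tJ_\nu'-\alpha J_\nu$ with
\[
 E_\nu(t_*)+\arctan\frac{\alpha}{\sqrt{t_*^2-\nu^2}}<\frac\pi4 .
\]
Alternatively, it suffices that the FLPS upper bound in \eqref{eq:flps}, combined with \eqref{eq:deltabound}, gives $\phi_{\alpha,\nu}(t_*)<\pi/2$.

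\textbf{Step 1 (large orders).} For $\nu\ge19/2$ and $0\le\alpha\le1$, the hypothesis \eqref{eq:phase-new-condition} of Proposition~\ref{prop:phase-new-cutoff} holds automatically. At $\nu=19/2$ one has $\tfrac{37}{100}\bigl(2(\nu^2-1/4)\bigr)^{1/3}=\tfrac{37}{100}\cdot180^{1/3}>2>3/2\ge\alpha+1/2$, and the right side increases with $\nu$. Hence the enclosure \eqref{eq:cp-enclosure} follows from that proposition, and only the compact range $1\le\nu<19/2$ with $\alpha\le\min\{1,\nu/5\}$ remains.

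\textbf{Step 2 (locating the first Robin zero).} For the compact range, I would work with the Riccati variable $y(t)=tJ_\nu'(t)/J_\nu(t)$. It satisfies
\[
 ty'=\nu^2-t^2-y^2,
\]
with $y(0^+)=\nu$, and it decreases until the first zero of $J_\nu$. The first zero of $tJ_\nu'-\alpha J_\nu$ is the first point where $y=\alpha$. For $t\le\nu$, the standard bound $y\ge\sqrt{\nu^2-t^2}$ (obtained by comparison in the Riccati equation) gives $y(\nu)>0$. I would sharpen this by integrating the Riccati inequality across a short window to the right of $\nu$, using the lower barrier $ty'\ge-(t^2-\nu^2)-y^2$ started from an explicit lower bound for $y(\nu)$. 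Such a bound can come from a Rayleigh-type trial function as in Lemma~\ref{lem:phase-upper-window}, or from a truncated series for $J_\nu$ at $t=\nu$. The target is a lower bound $t_\alpha>\nu+c\,\nu^{1/3}$ for the first Robin zero, with an explicit $c$ valid for $\alpha\le\nu/5$. The condition $\alpha\le\nu/5$ keeps $\alpha$ small compared with the natural size $y(\nu)\asymp\nu^{1/3}$ near the turning point.

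\textbf{Step 3 (choosing $t_*$).} Take $t_*$ with $D=t_*^2-\nu^2$ of size a fixed multiple of $\nu^{4/3}+(\alpha+3/8)^2$, as in Lemma~\ref{lem:tp-localphase}. Then check that $E_\nu(t_*)\approx7\nu^2/(24D^{3/2})$ plus $\alpha/\sqrt D$ stays below $\pi/4$, while $t_*<t_\alpha$. Because the range is compact, this can be organized as a few subintervals in $\nu$ (for instance $[1,2]$, $[2,4]$, $[4,19/2]$). On each subinterval one uses monotonicity of $E_\nu(t)$ in $t$ for $t>\nu$ to compare endpoint values by rational arithmetic. Where the Riccati bound leaves too little room, I would use the second route: evaluate the FLPS majorant $S_\nu+\pi/4+E_\nu$ plus the bound $\alpha/\sqrt{t^2-\nu^2}$ at a suitable $t_*$ and show the sum is below $\pi/2$. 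In that case \eqref{eq:phiprime}, which changes sign only once, finishes the argument without any information about the Robin zero.

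\textbf{Main obstacle.} I expect the main difficulty to be the low-order end $\nu\approx1$ with $\alpha=1/5$. There the window between $\nu$ and the first Robin zero is not long compared with $\nu^{1/3}$, and the asymptotic constant in $E_\nu$ is least favorable. My first attempt would be the direct FLPS route at a single $t_*$ of order $\nu+1$. If that fails, I would use an explicit two-term lower bound for $y$ obtained from the series of $J_\nu$ truncated with alternating-sign control.
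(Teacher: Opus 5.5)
Your Step~1 is exactly the paper's reduction, and the joining criterion is the right target. But on the compact range $1\le\nu<19/2$ the whole content of the lemma is an explicit \emph{lower} bound for the first zero of $tJ_\nu'-\alpha J_\nu$, and the proposal never supplies one.

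\begin{itemize}
\item A Rayleigh-type trial function bounds an eigenvalue from above, so it bounds the crossing from above, which is the wrong direction. The same applies to $y(\nu)$: the inequality $y(\nu)\ge\gamma$ says that the first zero of $tJ_\nu'-\gamma J_\nu$ lies at or beyond $\nu$, which is again an eigenvalue lower bound.
\item The ``lower barrier'' $ty'\ge-(t^2-\nu^2)-y^2$ is the Riccati equation itself. It only transports an initial bound that still has to be proved.
\item The series route is left unexecuted.
\item The FLPS-only fallback fails at the low end. At $\nu=1$, already for $\alpha=0$, one has $S_\nu(t)+E_\nu(t)>0.80>\pi/4$ for every $t>\nu$, with the minimum near $t\approx1.6$. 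So the upper estimate in \eqref{eq:flps} never yields $\phi_{\alpha,\nu}(t_*)<\pi/2$.
\item Your Step~3 is too loose where the margin is smallest. At $\nu=5$, $\alpha=1$, the threshold of Lemma~\ref{lem:tp-localphase} gives $t_*\approx6.06$. But $y(6)=6J_4(6)/J_5(6)-5\approx0.93<1$, so the first crossing already lies below $6$. That $t_*$ is therefore inadmissible, and the usable values of $t_*^2-25$ form only a window of roughly $(8,10.5)$.
\item A smaller point: $y(\nu)\asymp\nu^{2/3}$, not $\nu^{1/3}$. It is the window beyond $\nu$ whose length is of order $\nu^{1/3}$.
\end{itemize}

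The missing idea is that Lemma~\ref{lem:phase-upper-window} is a ground-state (Barta) lower bound, valid for real $d\ge2$. It applies to the crossing directly, with no detour through $y(\nu)$. For the ball in real dimension $d=2\alpha+2$ and degree $L=\nu-\alpha>0$, the Neumann condition is exactly $tJ_\nu'-\alpha J_\nu=0$. Hence the first crossing satisfies
$t^2>\mathfrak b_{2\alpha+2}(\nu-\alpha)=\nu^2+3\nu-(\alpha^2+4\alpha+2)+(\alpha+2)^2/(\nu+2)$,
and $\alpha\le\min\{1,\nu/5\}$ makes this exceed $\nu^2+7\nu/4$. At $t_*=\sqrt{\nu^2+7\nu/4}$, the FLPS error and the linear Robin bound sum to $\frac{2}{\sqrt7}(\nu/6+\alpha+3/8)/\sqrt\nu\le53/(12\sqrt{35})<\pi/4$, with the worst case at $\nu=5$, $\alpha=1$. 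The joining argument then completes the proof, with no Riccati integration, series, or subinterval arithmetic.
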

\begin{proof}
For $\nu\ge19/2$, Proposition~\ref{prop:phase-new-cutoff} applies.
For the remaining orders, the quartic Barta calculation in
Lemma~\ref{lem:phase-upper-window}, with $L=\nu-\alpha$ and
$d=2\alpha+2$, puts the first squared crossing frequency above $\nu^2+D$, where
\[
 D=3\nu-(\alpha^2+4\alpha+2)+\frac{(\alpha+2)^2}{\nu+2}
 >\frac{7\nu}{4}.
\]
Indeed, $\alpha\le\min\{1,\nu/5\}$ gives
\[
 \alpha^2\le\min\{1,\nu^2/25\}\le\frac{6\nu-5}{25},
 \qquad
 D-\frac{7\nu}{4}
 \ge\frac{21(\nu-1)^2+8(\nu-1)+7}{100(\nu+2)}>0.
\]

Thus $t_*=\sqrt{\nu^2+7\nu/4}$ precedes the first crossing. The sum
of the FLPS error bound and the linear Robin bound there is at most
\[
 \frac2{\sqrt7}\,
 \frac{\nu/6+\min\{1,\nu/5\}+3/8}{\sqrt\nu}
 \le\frac{53}{12\sqrt{35}}<\frac34<\frac\pi4.
\]
Indeed, on $[1,5]$ and $[5,19/2]$ the expression is convex as a
function of $\sqrt\nu$; its endpoint maximum is attained at $\nu=5$.
The last strict inequality follows from $53^2<81\cdot35$.
The common joining argument proves the result.
\end{proof}

\begin{proof}[Proof of Proposition~\ref{prop:phase}]
The ball case was proved after \eqref{eq:robinphase}; its exceptional
set is empty because $\mathfrak b_d(\ell)\ge0$.
Now suppose $\mu>0$. For each degree outside the exceptional set,
one of the three sufficient conditions above gives the three-quarter count.
In each remaining degree,
the half-shift and three-quarter floors differ by at most one. Summing
with the angular multiplicities proves the estimate. The strict
inequalities defining the exceptional set exclude the endpoints at which
a sufficient condition holds.
\end{proof}

\section{Variational estimates}
\label{sec:variational}

The phase comparison is most useful after a finite frequency threshold.
Explicit radial trials cover the complementary range and shells whose
dimensionless width $hK$ is small. The endpoint principle below makes
the same trials work for every $0\le\rho<1$, including the ball.

\subsection{The range covered by radial trials}
\begin{proposition}\label{prop:var-ranges}
For $d\ge4$, $K>0$ and $0\le\rho<1$, the strict Neumann
P\'olya inequality holds if
\[
 hK\le\frac{3\pi}{4}\sqrt{d-1},
 \qquad\hbox{or}\qquad
 K\le\begin{cases}12,&d=4,\\[2pt]
                  7d^{3/2}/4,&d\ge5.
       \end{cases}
\]
Moreover, for every $d\ge4$, $0\le\rho<1$ and
$K>\sqrt{\beta_\rho}/2$, it suffices that
\begin{equation}\label{eq:var-thin-exact}
 hK\le B_d\left(1-\frac{\sqrt{\beta_\rho}}{2K}\right)^{d-1},\quad
 B_d=\frac{2^{d+1}\Gamma(d/2+1)^2}{d!},\quad
 \beta_\rho=\frac d{d-2}\frac{1-\rho^{d-2}}{1-\rho^d}.
\end{equation}
In particular, for $d=4$ the sufficient condition is
$hK\le(16/3)(1-\sqrt2/(2K))^3$ whenever $K>\sqrt2/2$.
\end{proposition}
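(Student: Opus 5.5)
The plan is to use the min--max principle with explicit radial trial spaces in each angular degree. By separation of variables, $N^<(K^2)\ge\sum_\ell\kappa_{d,\ell}\,m_\ell$, where $m_\ell$ is the dimension of any subspace of the radial form domain on which the radial Rayleigh quotient (with $q=q_\ell$) is $<K^2$. The inequality then reduces to comparing a lattice-type sum of angular multiplicities with $\W_d(K,\mu)$.

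\textbf{Thin-shell condition \eqref{eq:var-thin-exact}.} I would first prove this, since it is the cleanest case. Take, in each degree $\ell$, the radially constant trial $u\equiv1$ on $[\rho,1]$. Its quotient is exactly
\[
 q_\ell\,\frac{\int_\rho^1 s^{d-3}\,ds}{\int_\rho^1 s^{d-1}\,ds}=q_\ell\beta_\rho .
\]
Hence every degree with $q_\ell\beta_\rho<K^2$ contributes at least $\kappa_{d,\ell}$. Since $q_\ell=\nu_\ell^2-\alpha^2<\nu_\ell^2$, this holds whenever $\nu_\ell\le K/\sqrt{\beta_\rho}$, so $N^<(K^2)\ge C_d(K/\sqrt{\beta_\rho})$. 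The exact product formula \eqref{eq:angular-exact} gives $C_d(z)\ge\frac2{p!}(z-1/2)^p$ after pairing the symmetric offsets with $u\ge z-1/2$, in the same way as the proof of Lemma~\ref{lem:angular}.

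On the Weyl side, $1-\rho^d\le dh$, so $\W_d(K,\mu)\le dhK\cdot K^{p}/(2^d\Gamma(d/2+1)^2)$. Comparing with $\frac2{p!}K^p\beta_\rho^{-p/2}\bigl(1-\sqrt{\beta_\rho}/(2K)\bigr)^p$ yields exactly the constant $B_d=2^{d+1}\Gamma(d/2+1)^2/d!$, up to the factor $\beta_\rho^{-p/2}$. Two places need care. First, I must either absorb $\beta_\rho^{-p/2}$ via a sharper Weyl bound: the true comparison is $(1-\rho^d)$ against $\beta_\rho$, and one can check that $(1-\rho^d)\beta_\rho^{p/2}\le dh$ using $\beta_\rho\le$ the ratio of the corresponding one-variable means. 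Second, strictness: the Rayleigh bound is non-strict, but the inequality $q_\ell<\nu_\ell^2$ is strict, and so is $1-\rho^d<dh$ for $\rho>0$. Specializing $d=4$, where $\beta_\rho=2/(1+\rho^2)\le2$ and $B_4=16/3$, gives the stated corollary.

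\textbf{Width condition $hK\le\frac{3\pi}{4}\sqrt{d-1}$.} I would deduce this from the exact thin-shell criterion when $K$ is large relative to $\sqrt{\beta_\rho}\le\sqrt{d/(d-2)}$; this reduces to an elementary one-variable inequality in $d$. For small $K$, I would use the zero mode together with the first $\ell=1$ block of $d$ functions. Note that $\W_d<1$ already when $K$ is below the first Weyl threshold.

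\textbf{Low-frequency range $K\le 12$ or $K\le 7d^{3/2}/4$.} Here I would use, in each degree, a two- or three-dimensional trial space spanned by $s^\ell$-type profiles and one oscillating correction, such as the quartic Barta profile of Lemma~\ref{lem:phase-upper-window} plus a cosine in $s$. The key tool is an ``endpoint principle'': when the ratio of numerator and denominator integrands is monotone in $s$, the quotient over $[\rho,1]$ lies between its values at $\rho=0$ and $\rho\to1$. Bounding these two endpoints therefore controls all $\rho$ at once, including the ball. The counts then become explicit step functions of $K$, to be compared with $K^d/(2^d\Gamma(d/2+1)^2)$ at the jump points.

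The main obstacle will be making this low-frequency step uniform in $d\ge5$, since the range $K\le 7d^{3/2}/4$ grows with $d$. I would first try the first radial overtone bound $\lambda_{q,1}\le q\beta_\rho+C\pi^2/h^2$ combined with $C_d\ge\frac2{p!}(z-1/2)^p$. I expect this to give a count exceeding the Weyl term by a factor comparable to $(1-O(d^{3/2}/K))^p$, and the threshold $7d^{3/2}/4$ should be exactly where this factor stays above the required ratio.
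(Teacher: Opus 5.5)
\textbf{The thin-shell count uses a false angular bound.} The inequality $C_d(z)\ge\frac2{p!}(z-\frac12)^p$ fails. Pairing symmetric offsets gives $(u+c)(u-c)\le u^2$, which yields the \emph{upper} bound \eqref{eq:angular}, not a lower one. For $d=4$ one has $C_4(z)=1$ on $1\le z<2$, while $\frac13(z-\frac12)^3\to\frac98$ as $z\uparrow2$; also $C_4(z)=0$ on $\frac12<z<1$. The loss happens when you replace $q_\ell$ by $\nu_\ell^2$: the discarded $\alpha^2$ is exactly what is needed.

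Keep the condition $q_\ell\le\kappa^2$ with $\kappa=K/\sqrt{\beta_\rho}$. The count stays strict, because the constant is not an eigenfunction when $q>0$. The admissible degrees are $\ell\le w:=\sqrt{\kappa^2+\alpha^2}-\alpha$, so the cumulative multiplicity exceeds $G_d(w+\frac{p-2}2)$, with $G_d$ as in Lemma~\ref{lem:var-uniform-endpoints}. Its factors are $w,w+1,\dots,w+p-2$ and $w+\frac{p-2}2$. Pairing bounds it below by $\frac2{p!}\bigl(w(w+p-2)\bigr)^{p/2}$, and $w(w+p-1)=\kappa^2$ gives $\kappa-\sqrt{w(w+p-2)}<\frac12$. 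Your H\"older step $(1-\rho^d)\beta_\rho^{p/2}\le dh$ is correct, and combined with this count it gives \eqref{eq:var-thin-exact} exactly.

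\textbf{The low-frequency range is not established.} For $d\ge5$ the range $K\le\frac74d^{3/2}$ is the substance of the proposition, and the proposal only offers a heuristic for it. Your fixed-trial endpoint principle does not do what you need. As $\rho\to1$, the restricted quotient tends to the integrand ratio at $s=1$. For the four-dimensional ball trial $s(1-\frac23s^2)$ in degree one, this limit is $12$, against the ball quotient $\frac{15}2$, and the ratio is not monotone. For excited levels, a fixed two-dimensional space degenerates on short intervals.

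The tool you need is a statement about eigenvalues rather than trial quotients. The formula $\lambda_{q,j}'(\rho)=a(\rho)\bigl(\lambda_{q,j}-q/\rho^2\bigr)$ with $a>0$ rules out interior maxima. Hence $\lambda_{q,j}(\rho)\le\max\{\lambda_{q,j}(0),\lambda_{q,j}(r)\}$ on $[0,r]$. Together with $\lambda_{q,0}(\rho)<q/\rho^2$, it also gives $\lambda_{q,0}(\rho)\le\lambda_{q,0}(0)$. This lets you bound the two endpoints with unrelated trials: ball polynomials at $0$, and the Dirichlet sine bound for $\mathcal D\mathcal D^*$ at $r$. For $d=4$ this is all your step-function comparison needs, and it then matches Lemma~\ref{lem:d4-ground-direct}.

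For $d\ge5$, the counting is the real work. The constant trial loses about $2\nu^2/(d-2)$, which is comparable to the Airy-scale shift $\nu^{4/3}$ once $\nu$ is of order $d^{3/2}$. It therefore stops near $\frac{27}{40}d^{3/2}$. You then need a better ground trial, such as $r^{2d-2}(1-\frac{d-1}dr^2)$, up to $\frac32d^{3/2}$. After that, a first excited row must be added on $[\frac32d^{3/2},\frac74d^{3/2}]$. Unimodality of the count-to-Weyl ratios and explicit Wallis-type bounds on $B_d$ make these estimates uniform in $d$. Your factor $(1-O(d^{3/2}/K))^p$ gives nothing here, since $d^{3/2}/K\ge\frac47$ throughout this range.

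The width criterion depends on the same range. The zero mode and the degree-one block give only $d+1$ eigenvalues. Moreover, \eqref{eq:var-thin-exact} implies $hK\le\frac{3\pi}4\sqrt{d-1}$ only for large $K$; the paper uses $K\ge\frac32d^{3/2}$, or $K\ge12$ when $d=4$. Below that threshold, the width statement must come from the low-frequency range. Conversely, the last low-frequency step uses the width criterion for $\rho\ge1-\frac{47}{20}\sqrt{d-1}/K$, which is available there because $K\ge\frac32d^{3/2}$.
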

\begin{proof}See Appendix~\ref{app:proof-prop-var-ranges}.\end{proof}

For $d\ge5$, the proof has three steps. The constant trial proves the width
criterion and the initial range $K\le(27/40)d^{3/2}$. A polynomial
ground trial then reaches $K_1=(3/2)d^{3/2}$, using
Lemma~\ref{lem:var-trial-ratio-unimodal} to reduce an interval to
its endpoints. Between $K_1$ and $K_2=(7/4)d^{3/2}$, the ground
and first excited rows contribute more than $451/500$ and $49/500$
of the ball Weyl term. Their sum exceeds one. The endpoint estimates
are proved in Appendix~\ref{app:variational-details}; the first excited
trial reduces to one scalar concavity calculation.

\subsection{Radial endpoint comparison and trial spaces}
\begin{lemma}\label{lem:var-radial}
For $d\ge2$, $q\ge0$, $j\ge0$ and $0<r<1$,
\begin{align}
 \lambda_{q,j}(\rho)&\le
 \max\{\lambda_{q,j}(0),\lambda_{q,j}(r)\},\quad0\le\rho\le r,
 \label{eq:var-endpoint}\\
 \lambda_{q,0}(\rho)&\le\lambda_{q,0}(0),
 \label{eq:var-ground}\\
 \lambda_{q,1}(r)&<\frac{\pi^2}{(1-r)^2}
                 +\frac{q+(d^2-1)/4}{r^2}.
 \label{eq:var-excited}
\end{align}
The inequality in \eqref{eq:var-ground} is strict when $q>0$ and
$\rho>0$.
\end{lemma}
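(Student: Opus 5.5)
The plan is to prove the three assertions separately. The first two follow from a Hadamard-type formula for the dependence of the radial eigenvalues on the inner radius. The third follows from a Dirichlet comparison after a Liouville transformation.

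\emph{Monotonicity in $\rho$.} For $\rho>0$ the radial problem is a regular Sturm--Liouville problem, so each $\lambda_{q,j}(\rho)$ is simple and depends analytically on $\rho$.
\begin{itemize}
\item Let $u$ be the eigenfunction normalized by $\int_\rho^1 s^{d-1}u^2\,ds=1$. Since $u'(\rho)=0$, differentiating the Rayleigh quotient with respect to the lower limit (the standard Hadamard computation) gives
\[
 \frac{d}{d\rho}\lambda_{q,j}(\rho)=\rho^{d-1}u(\rho)^2\Bigl(\lambda_{q,j}(\rho)-\frac q{\rho^2}\Bigr).
\]
\item Since $u(\rho)\neq0$ under the Neumann condition, the derivative has the sign of $\lambda-q/\rho^2$.
\item If $\lambda(\rho_0)\ge q/\rho_0^2$ at some $\rho_0$, then for $\rho>\rho_0$ the eigenvalue cannot decrease while $q/\rho^2$ decreases strictly. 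Hence the sign stays nonnegative after $\rho_0$.
\item It follows that $\rho\mapsto\lambda_{q,j}(\rho)$ is nonincreasing and then nondecreasing on $(0,1)$. That is, it is quasiconvex, so on $(0,r]$ it is bounded by its values at the endpoints.
\item To include $\rho=0$ I still need $\lambda_{q,j}(\rho)\to\lambda_{q,j}(0)$ as $\rho\to0$. I would prove this by min--max. Truncating ball eigenfunctions near the origin gives the upper semicontinuity, with a capacity cutoff when $d=2$, $q=0$. Extending shell eigenfunctions inward by constants gives the lower semicontinuity, using the regular condition at zero.
\item Together these give \eqref{eq:var-endpoint}.
\end{itemize}

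\emph{Ground state.} For $q=0$ every ground eigenvalue is zero. For $q>0$ and $\rho>0$, the constant trial function has quotient
\[
 q\int_\rho^1 s^{d-3}\,ds\Big/\int_\rho^1 s^{d-1}\,ds<\frac q{\rho^2},
\]
because $s^{-2}<\rho^{-2}$ on $(\rho,1)$. Thus $\lambda_{q,0}(\rho)<q/\rho^2$ for every $\rho>0$, so the derivative formula is strictly negative. Hence $\lambda_{q,0}$ decreases strictly on $(0,1)$. Combined with continuity at $0$, this gives \eqref{eq:var-ground}, strictly when $q>0$ and $\rho>0$.

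\emph{First excited eigenvalue.} For \eqref{eq:var-excited} I would use the one-dimensional interlacing $\lambda^N_{j+1}<\lambda^D_j$ between Neumann and Dirichlet eigenvalues of the same Sturm--Liouville operator. This can be proved with a Prüfer angle: at a Dirichlet eigenvalue the Neumann solution has already exceeded the corresponding number of oscillations. It reduces the claim to bounding the Dirichlet ground eigenvalue on $(r,1)$.

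For that bound, substitute $u=s^{-(d-1)/2}w$. Dirichlet boundary terms vanish, and the form becomes
\[
 \int_r^1\Bigl(|w'|^2+\frac{q+(d-1)(d-3)/4}{s^2}|w|^2\Bigr)ds
\]
with weight $\int_r^1|w|^2\,ds$. Take the trial function $w=\sin(\pi(s-r)/(1-r))$ and bound the potential by its value at $s=r$. This gives
\[
 \lambda^D_0\le\frac{\pi^2}{(1-r)^2}+\frac{q+(d-1)(d-3)/4}{r^2}.
\]
Since $(d-1)(d-3)/4<(d^2-1)/4$, this is strictly below the right side of \eqref{eq:var-excited}; the strict gap also covers $(d-1)(d-3)<0$ when $d=2$.

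I expect the main obstacle to be the rigorous endpoint behaviour at $\rho=0$, that is, continuity of the eigenvalues as the hole closes, especially for $d=2$. The interlacing step should be standard but needs a careful statement with the weight $s^{d-1}$.
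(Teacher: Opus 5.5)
Your treatment of \eqref{eq:var-endpoint} and \eqref{eq:var-ground} is essentially the paper's. It uses the same formula $\lambda_{q,j}'=a(\rho)(\lambda_{q,j}-q/\rho^2)$ with $a>0$, the same observation that $\lambda_{q,j}-q/\rho^2$ can only cross zero upwards, and the same constant trial giving $\lambda_{q,0}(\rho)<q/\rho^2$. You need less at $\rho=0$ than you propose. Because $\lambda_{q,j}$ has no interior maximum, the one-sided bound $\limsup_{\varepsilon\downarrow0}\lambda_{q,j}(\varepsilon)\le\lambda_{q,j}(0)$ already gives both estimates, with strictness from $\lambda_{q,0}(\rho)<\lambda_{q,0}(\rho/2)$. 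That bound follows by restricting the first $j+1$ ball eigenfunctions to $(\varepsilon,1)$. The inner condition is natural, so no cutoff or capacity argument is needed. Lower semicontinuity can be dropped, which is just as well: extension by constants has infinite energy when $d=2$ and $q>0$.

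The genuine gap is in \eqref{eq:var-excited}. The strict interlacing $\lambda^N_{j+1}<\lambda^D_j$ is not a general one-dimensional fact. For $-u''$ on an interval one has equality. The Pr\"ufer comparison only gives $\lambda^N_j<\lambda^D_j<\lambda^N_{j+2}$, since at $\lambda^D_j$ the Neumann angle at $s=1$ lies somewhere in $((j+1)\pi,(j+2)\pi)$.

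For the present operator the reverse inequality holds. Take $q=0$ and $w=s^{(d-1)/2}u$. The Neumann form is $\|\mathcal Dw\|^2$ with $\mathcal D=d/ds-(d-1)/(2s)$. Intertwining shows that the positive Neumann levels are the Dirichlet levels of $\mathcal D\mathcal D^*=-d^2/ds^2+(d^2-1)/(4s^2)$. The Dirichlet problem you bound is instead that of $\mathcal D^*\mathcal D=-d^2/ds^2+(d-1)(d-3)/(4s^2)$. The potentials differ by $(d-1)/s^2>0$, so $\lambda_{0,1}(r)>\lambda^D_0(r)$.

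For $d=3$ your chain would assert $\lambda_{0,1}(r)<\pi^2/(1-r)^2$, which is false. The right side is exactly the Dirichlet level, and $\lambda_{0,1}(r)$ is the Dirichlet ground level of $-w''+2w/s^2$, which is strictly larger. For the ball itself the comparison is $k^2\approx20.19$, with $\tan k=k$, against $\pi^2$. So your sharper constant $(d-1)(d-3)/4$ cannot hold, and the lemma's $(d^2-1)/4$ is exactly what the intertwining produces.

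The repair is the paper's route. Apply the sine trial to $\mathcal D\mathcal D^*$ with Dirichlet conditions; the bound is strict because the potential is nonconstant. Then use $qs^{d-3}\le(q/r^2)s^{d-1}$ and min--max to get $\lambda_{q,1}(r)\le\lambda_{0,1}(r)+q/r^2$. Separately, replacing $c/s^2$ by $c/r^2$ gives an upper bound only when $c\ge0$. For $d=2$ and $q<1/4$ your displayed Dirichlet bound is negative for small $r$, although the Dirichlet ground level is positive.
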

\begin{proof}
Simplicity and differentiation of the radial variational identity give
\[
 \lambda'_{q,j}(\rho)=a(\rho)
 \left(\lambda_{q,j}(\rho)-\frac q{\rho^2}\right),\qquad
 a(\rho)=\frac{\rho^{d-1}u(\rho)^2}
 {\int_\rho^1s^{d-1}u(s)^2\,ds}>0.
\]
For $q>0$, $g=\lambda_{q,j}-q/\rho^2$ satisfies
$g'=ag+2q/\rho^3$. An integrating factor shows that $g$ can change
sign only from negative to positive. Thus $\lambda_{q,j}$ has no
interior maximum. For $q=0$, positive levels increase and the zero
level is constant. Restricting the first $j+1$ ball eigenfunctions
to $(\varepsilon,1)$ gives
$\limsup_{\varepsilon\downarrow0}\lambda_{q,j}(\varepsilon)
\le\lambda_{q,j}(0)$, proving \eqref{eq:var-endpoint}.
The constant trial gives $\lambda_{q,0}(\rho)<q/\rho^2$ for $q>0$,
so the ground level decreases strictly.
Finally, conjugation by $s^{(d-1)/2}$ on $(r,1)$ writes the
zero-angular radial operator as $\mathcal D^*\mathcal D$, where
$\mathcal D=d/ds-(d-1)/(2s)$ and $\mathcal Dv=0$ at both endpoints.
Its positive spectrum is the Dirichlet spectrum of
$\mathcal D\mathcal D^*=-d^2/ds^2+(d^2-1)/(4s^2)$.
The first Dirichlet sine trial bounds $\lambda_{0,1}(r)$ strictly by
the right-hand side of \eqref{eq:var-excited} with $q=0$.
Since $q/s^2\le q/r^2$ on $(r,1)$, the min--max principle gives
$\lambda_{q,1}(r)\le\lambda_{0,1}(r)+q/r^2$, proving
\eqref{eq:var-excited}.
\end{proof}

\begin{lemma}[A uniform trial in dimension four]\label{lem:d4-ground-direct}
In dimension four the lowest radial levels alone give
$N^<(K^2)>W_4(K)$ for every $0<K\le12$ and every $0\le\rho<1$.
\end{lemma}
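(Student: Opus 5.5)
The plan is to discard everything except the ground level $\lambda_{q_\ell,0}(\rho)$ in each angular degree, and to reduce the shell to the ball. Two monotonicity facts make this possible. First, $\W_4(K,\mu)\le W_4(K)=K^4/64$, so it is enough to beat the ball Weyl term. Second, \eqref{eq:var-ground} of Lemma~\ref{lem:var-radial} gives $\lambda_{q,0}(\rho)\le\lambda_{q,0}(0)$ for every $0\le\rho<1$.

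Counting only ground levels, with $\kappa_{4,\ell}=(\ell+1)^2$, I get
\[
 N^<(K^2)\ge\sum_{\ell:\ \lambda_{q_\ell,0}(0)<K^2}(\ell+1)^2 .
\]
The degree-zero term is the constant eigenfunction, which is counted for every $K>0$. This reduces the lemma to a statement about the unit ball alone, uniformly in $\rho$.

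Next I need explicit upper bounds $\beta_\ell\ge\sqrt{\lambda_{q_\ell,0}(0)}$ for $1\le\ell\le L$, where $L$ is about $10$. For each $\ell$ I would take a polynomial trial function in the form domain,
\[
 f(r)=r^\ell\bigl(1-a r^2+b r^4\bigr).
\]
The trial need not satisfy the Neumann condition at $r=1$, because that condition is natural for the form; choosing $a,b$ to approximately minimize the quotient is enough. The Rayleigh quotient
\[
 \frac{\int_0^1\bigl(r^3f'^2+q_\ell r f^2\bigr)\,dr}{\int_0^1 r^3f^2\,dr},
 \qquad q_\ell=\ell(\ell+2),
\]
is an explicit rational function of $\ell,a,b$. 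For each $\ell$ I would fix rational $a,b$ and obtain a rational bound $\beta_\ell^2$. The true values are the squared first positive zeros of $tJ'_{\nu}-J_{\nu}$ with $\nu=\ell+1$, roughly $\nu+0.81\nu^{1/3}$, so the trials only need to be accurate to a few percent.

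The count is then handled interval by interval. Put $\beta_0=0$ and let $S(m)=\sum_{\ell=0}^m(\ell+1)^2=(m+1)(m+2)(2m+3)/6$. For $K\in(\beta_{m},\beta_{m+1}]$ the ground levels of degrees $0,\dots,m$ are all below $K^2$, so $N^<(K^2)\ge S(m)$. Since $K^4/64$ increases, it suffices to check the finite list
\[
 S(m)>\beta_{m+1}^4/64,\qquad m=0,1,\dots,
\]
until $\beta_{m+1}\ge12$. The final interval reaches $K=12$ with $N^<\ge S(m)$, where $m$ is the last degree with $\beta_m<12$ (I expect $m\approx9$), and $S(9)=385>324=W_4(12)$. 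Strictness is automatic, since every comparison is a strict rational inequality.

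The main obstacle is the smallest degrees, where the margin is thin. For $m=0$ one needs $\beta_1<2\sqrt2\approx2.83$, while the true first nonzero Neumann frequency of the $4$-ball in degree one is only somewhat below that. For $m=1$ one needs $\beta_2^4<64\cdot5$, that is $\beta_2<4.23$. The two-parameter polynomial trial should be sharp enough here; if it is not, I would add an $r^{\ell+6}$ term and fix its coefficient by hand. Every check is a finite rational calculation with no phase input, which is why the lemma covers all $0\le\rho<1$ at once.
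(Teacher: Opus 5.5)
Your proposal is correct and follows the paper's own proof. Both keep only the ground rows, pass from shell to ball via \eqref{eq:var-ground}, bound each ground level by a polynomial Rayleigh quotient, compare cumulative multiplicities interval by interval, and finish with $385>324$. The paper simply fixes the one-parameter trial $s^\ell(1-\frac{\ell+3}{\ell+5}s^2)$, whose quotient $Q_\ell=6(\ell+1)(\ell+2)(\ell+4)/(5\ell+19)$ turns all nine checks into one polynomial inequality; note that with this trial the tightest check is $\ell=9$ ($Q_9^2\approx 17973$ against $64\cdot285=18240$), not the smallest degrees, so that is where a weak trial would fail.
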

\begin{proof}
In angular degree $\ell\ge1$, use the ball trial
\[
 u_\ell(s)=s^\ell\left(1-\frac{\ell+3}{\ell+5}s^2\right).
\]
Since $u_\ell'(1)=-6/(\ell+5)\ne0$, this trial is not a
Neumann eigenfunction. Its Rayleigh quotient therefore bounds the
ball ground eigenvalue strictly, and the ground-level comparison gives
\[
 \lambda_{q_\ell,0}(\rho)<Q_\ell,
 \qquad Q_\ell=\frac{6(\ell+1)(\ell+2)(\ell+4)}{5\ell+19}.
\]
Indeed, the value of the radial form (derivative and angular terms) and
the squared norm of this trial are $6(\ell+1)/(\ell+5)^2$ and
$(5\ell+19)/[(\ell+2)(\ell+4)(\ell+5)^2]$, respectively.
The sequence $Q_\ell$ increases. The cumulative multiplicity through
degree $\ell-1$ is $\ell(\ell+1)(2\ell+1)/6$. For $1\le\ell\le9$,
\begin{equation}\label{eq:d4-ground-threshold}
 Q_\ell^2<\frac{64\ell(\ell+1)(2\ell+1)}6.
\end{equation}
To verify the comparison, clear positive factors and put
$y=\ell-1\in[0,8]$. The numerator becomes
\[
 1674+9765y+y^2(7860+1438y-86y^2-27y^3)
 \ge1674+9765y+36y^2>0.
\]
Here $1438-86y-27y^2\ge-978$ and $7860-978y\ge36$.
Put $Q_0=0$. On each interval
$Q_{\ell-1}\le K^2\le Q_\ell$, the degrees $0,\ldots,\ell-1$
and \eqref{eq:d4-ground-threshold} prove the claim. Finally,
$Q_9=2145/16<144$ and the degrees $0,\ldots,9$ contribute
$385>12^4/64$, covering the remaining interval.
\end{proof}

\subsection{Uniform trial counts}
\begin{lemma}[Uniform endpoint estimates]\label{lem:var-uniform-endpoints}
Let $d\ge5$ be an integer, $p=d-1$, $\alpha=(d-2)/2$, and put
\[
 \mathfrak p_d=53d^2-74d+24,\qquad
 a_d=\frac{(5d-6)(53d^2-46d+8)}{5d\,\mathfrak p_d},\qquad
 b_d=\frac{32d^2(d-1)^2}{\mathfrak p_d}.
\]
Define
\[
 G_d(v)=\frac{2v}{p!}\prod_{i=1}^{p-1}(v+i-p/2),\qquad
 W_d(K)=\frac{K^d}{2^d\Gamma(d/2+1)^2},
\]
\[
 K_1=\frac32d^{3/2},\quad K_2=\frac74d^{3/2},\quad
 v_0(K)=\sqrt{a_dK^2-b_d+\alpha^2}-\frac12,
\]
\[
 v_1(K)=\sqrt{\left(1-\frac9{5p}\right)
       \left(K-\frac{47}{20}\sqrt p\right)^2-d+\frac54}-\frac12.
\]
Then
\begin{equation}\label{eq:var-uniform-endpoints}
 \frac{G_d(v_0(K_1))}{W_d(K_1)}>1,\qquad
 \frac{G_d(v_0(K_2))}{W_d(K_2)}>\frac{451}{500},\qquad
 \frac{G_d(v_1(K_1))}{W_d(K_1)}>\frac{49}{500}.
\end{equation}
\end{lemma}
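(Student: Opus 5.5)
The three inequalities are explicit scalar statements in the single variable $d\ge5$. I would prove each by reducing it to a comparison of logarithms that is monotone in $d$, and then checking a small initial range of $d$ by exact substitution.

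\textbf{Normalization.} Substitute $K=\kappa d^{3/2}$ with $\kappa\in\{3/2,7/4\}$. Then
\[
 W_d(K)=\frac{\kappa^dd^{3d/2}}{2^d\Gamma(d/2+1)^2}.
\]
Write the product as
\[
 G_d(v)=\frac{2v^p}{p!}\prod_{i=1}^{p-1}\Bigl(1+\frac{i-p/2}{v}\Bigr).
\]
Pair the symmetric offsets as in Lemma~\ref{lem:angular}: the paired factor for offset $o$ is $1-o^2/v^2$, so
\[
 \log\prod_{i=1}^{p-1}\Bigl(1+\frac{i-p/2}{v}\Bigr)\ge-\frac{\gamma_p}{v^2}-\frac{\gamma_p'}{v^4},
\]
where $\gamma_p=p(p-1)(p-2)/24$ and $\gamma_p'$ is the corresponding sum of fourth powers. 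Since $v\asymp d^{3/2}$, the correction $\gamma_p/v^2$ is $O(1)$: it tends to the constant $1/(24a\kappa^2)$, with $a=\lim a_d=1$, and is explicit.

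\textbf{Asymptotics of $v_0$ and $v_1$.} We have $a_d=1-O(1/d)$ and $b_d=O(d^2)$. Hence $v_0(K)=\sqrt{a_d}\,K\bigl(1-O(d^{-1})\bigr)$, and the relevant logarithm is $p\log v_0=p\log K+O(1)$. Similarly $v_1=K\bigl(1-\tfrac{9}{10p}-\tfrac{47}{20}\sqrt p/K+\cdots\bigr)$, which gives
\[
 p\log(v_1/K)\to-\frac{9}{10}-\frac{47}{20\kappa}.
\]

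\textbf{The ratio.} Stirling's formula with explicit two-sided bounds, for $p!$ and $\Gamma(d/2+1)^2$, gives
\[
 \frac{2^d\Gamma(d/2+1)^2}{p!}=\sqrt{\pi d/2}\,\bigl(1+O(1/d)\bigr)\,\frac{d}{1}\cdot\frac{1}{d}\cdots
\]
More precisely, $\Gamma(d/2+1)^2\sim\pi d\,(d/2e)^d$ and $p!=d!/d\sim\sqrt{2\pi d}\,(d/e)^d/d$, so the ratio is $\sim\sqrt{\pi d/2}\,d$ times explicit exponentials. Combining with $K^p/K^d=K^{-1}=\kappa^{-1}d^{-3/2}$, the leading powers of $d$ cancel. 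Each ratio therefore converges to an explicit limit of the form
\[
 \frac{\sqrt{2\pi}}{\kappa}\,\exp(c_\kappa),
\]
where $c_\kappa$ collects the $O(1)$ terms: $-\frac{1}{2}\cdot(\text{shift from }b_d)$, the $-1/(24\kappa^2)$ product correction, the $a_d$ correction, and the $v_1$ shift in the third case.

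\textbf{Plan for each inequality.} I would prove each inequality by
\begin{enumerate}
 \item bounding the logarithm of the ratio below by an explicit function $\Phi(d)$, using rigorous Stirling bounds such as $\log n!\le(n+\tfrac12)\log n-n+\tfrac12\log2\pi+\tfrac1{12n}$ and elementary bounds $\log(1-x)\ge-x-x^2/(2(1-x))$;
 \item showing $\Phi$ is monotone, or at least exceeds the target, for $d\ge d_0$;
 \item checking $5\le d<d_0$ directly with the exact product formula.
\end{enumerate}
The constants $451/500$ and $49/500$ suggest the limits lie only moderately above the targets, so $d_0$ may be a few dozen.

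\textbf{Main obstacle.} The hardest step is the third inequality. The factor $\exp(-0.9-47/(20\kappa))$ makes the limit small, so the margin over $49/500$ is thin, and the $O(1/d)$ errors from $v_1$ (the $-d+5/4$ term and the $\sqrt p$ shift) must be tracked to second order. There I would expand $p\log(1+\varepsilon)$ with a third-order remainder and treat the finite range by exact rational evaluation of $G_d(\lfloor\cdot\rfloor)$-free bounds. Specifically, I would bound $v_1$ below by a rational number, which preserves the inequality because $G_d$ is increasing for $v\ge(p-2)/2$, and then evaluate the product exactly.
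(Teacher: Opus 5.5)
Your strategy is sound, and your asymptotics are right, but it is a genuinely different route from the paper's. You split the problem into an asymptotic regime, with limits $\frac{\sqrt{2\pi}}{\kappa}e^{c_\kappa}$ and explicit Stirling and logarithm error bounds for $d\ge d_0$, and a finite list $5\le d<d_0$ checked by exact evaluation.

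The paper never splits the range of $d$. It works in the continuous variable $u=d^{-1/2}\in(0,1/\sqrt5\,]$, in three steps:
\begin{enumerate}
\item It uses only the first-order paired-product bound $G_d(v)\ge\frac{2v^p}{p!}\bigl(1-\gamma_p/v^2\bigr)$.
\item It replaces Stirling by the exact Gamma-recurrence/Wallis series, which gives $\log\bigl(B_d/\sqrt{2\pi d}\bigr)>g(u)$.
\item It derives a single bound $G_d(v)/W_d(K)>\frac{\sqrt{2\pi}}{\vartheta}e^{-H(u)}$, with $\vartheta$ your $\kappa$, in terms of a deficit $D(u)\ge1-v/K$. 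For the two ground endpoints, $H$ is dominated by a concave cubic in $u$ and bounded by a tangent quadratic. For the excited endpoint, $H_1$ is shown to increase in $u$, so its worst case is exactly $d=5$.
\end{enumerate}
This yields a uniform, hand-checkable argument with no case list, consistent with the paper's claim of no computer assistance. Your route needs only routine estimates. It pays with a finite verification that, with products of up to $d-1$ factors and rational enclosures of $\pi$ and $\sqrt d$, is in practice a machine computation.

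Your account of where the difficulty lies needs correcting, because it governs $d_0$. The limits are about $1.084$, $0.953$ and $0.139$. So the third inequality is comfortable asymptotically (the target is $49/500=0.098$), and the thin margins sit at small and moderate $d$:
\begin{itemize}
\item The first ratio is about $1.022$ at $d=5$ and dips to about $1.016$ near $d=8$.
\item The second is about $0.918$ at $d=5$ and dips to about $0.908$ near $d=10$.
\item The third is about $0.1008$ at $d=5$ and then increases.
\end{itemize}

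The convergence to the limits is also of order $d^{-1/2}$, not $d^{-1}$. The $-\tfrac12$ in $v_0$ and $v_1$ contributes $p/(2K)\approx1/(2\kappa\sqrt d)$ to the logarithm. In the paper, these are the $\tfrac13u^3$ and $\tfrac27u^3$ terms in the deficits, which become the linear term $Bu$ in $H$. Your $\Phi(d)$ must keep this term exactly. Even then, for the second inequality only about one or two percent of margin remains, for $d$ in the tens, to absorb the $O(1/d)$ errors. Unless those errors are controlled about as sharply as in the paper, $d_0$ may be considerably larger than a few dozen.
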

\begin{proof}See Appendix~\ref{app:var-endpoints}.\end{proof}

\begin{lemma}[One maximum for a radial trial count]
\label{lem:var-trial-ratio-unimodal}
Let $d\ge5$, $\alpha=(d-2)/2$, $a>0$ and $b>\alpha^2$.
For $K>\sqrt{b/a}$ define
\[
 v(K)=\sqrt{aK^2-b+\alpha^2}-\frac12,
 \qquad F(K)=\frac{G_d(v(K))}{W_d(K)}.
\]
Then $F$ has precisely one critical point, which is a maximum.
In particular, on every compact subinterval of this range,
$F$ is bounded below by the smaller of its two endpoint values.
\end{lemma}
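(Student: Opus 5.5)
The plan is to work with the logarithmic derivative of $F$ and reduce the claim to a monotonicity statement for a single scalar function. Write $w=v+\tfrac12=\sqrt{aK^2-\beta}$ with $\beta=b-\alpha^2>0$, so that $K\mapsto w$ is an increasing bijection from $(\sqrt{b/a},\infty)$ onto $(0,\infty)$. Then $dv/dK=aK/w$ and $aK^2=w^2+\beta$. Since $W_d(K)$ is a constant multiple of $K^d$, wherever $G_d(v)>0$ we get
\[
 K\frac{F'(K)}{F(K)}=h(w)-d,\qquad
 h(w)=\frac{w^2+\beta}{w}\,\frac{G_d'(v)}{G_d(v)}
 =\frac{w^2+\beta}{w}\sum_{i}\frac1{v-r_i}.
\]
Here the $r_i$ are the roots of $G_d$, namely $0$ and $p/2-i$ for $1\le i\le p-1$. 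This set is symmetric about the origin, and its largest element is $r_{\max}=p/2-1\ge1$.

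The argument restricts to the range $v>r_{\max}$, where $G_d>0$. This is also the only range in which the endpoint conclusion is used, because the endpoint values in Lemma~\ref{lem:var-uniform-endpoints} are positive. If the lemma must hold literally on the whole interval $K>\sqrt{b/a}$, the branch $-\tfrac12<v\le r_{\max}$ needs separate treatment. There $G_d$ has roots at $v=0$ and $v=1$, so I would first check whether those roots actually fall in the range. If they do, I would restate the lemma on the component where $G_d(v(K))>0$; this is what the applications require.

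On $v>r_{\max}$ the endpoint behaviour of $h$ is clear. As $v\downarrow r_{\max}$, one has $h\to+\infty$, so $F$ increases there. As $w\to\infty$, $\sum_i(v-r_i)^{-1}=p/w+O(w^{-2})$, so $h\to p=d-1<d$, and $F$ eventually decreases. If $h$ is strictly decreasing on $v>r_{\max}$, then $h-d$ has exactly one zero, where $F'$ changes sign from $+$ to $-$. That zero is the unique critical point and a maximum. Unimodality then gives the bound on compact subintervals by the smaller endpoint value.

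The main obstacle is the strict decrease of $h$. I would pair the symmetric roots $\pm r$ (with $r=p/2-i>0$), giving terms $2v/(v^2-r^2)$, plus the term $1/v$ from the root at zero and the lone term from $r=0$ when $p$ is odd. Each of these is to be multiplied by $(w^2+\beta)/w$ with $w=v+\tfrac12$. A single term $(w^2+\beta)/\bigl(w(w-s)\bigr)$ with $s=r+\tfrac12$ has derivative whose sign is that of $-sw^2-2\beta w+\beta s$. For $s>0$ and $w>s$ this is negative, since $\beta s<\beta w$. The unpaired terms with $s\le0$ are the problem, so I would group each root $-r$ with its partner $+r$. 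I would then show that each grouped term $\frac{w^2+\beta}{w}\cdot\frac{2v}{v^2-r^2}$ decreases for $v>r$, by checking that its logarithmic derivative is negative. That check reduces to a quadratic-type inequality in $w$, with $\beta>0$ entering favourably, and should be routine. If that reduction fails for some pair, the fallback is to show that $h'<0$ at any zero of $h-d$. This still forces a single sign change and is enough for the uniqueness claim.
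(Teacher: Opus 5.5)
Your argument is correct. It shares the paper's overall strategy of showing that the logarithmic derivative changes sign once, but the key monotonicity step is organized differently. The paper substitutes $x=\sqrt{aK^2-c}-\alpha$, with $c=b-\alpha^2$ (your $\beta$), so that $x=v-r_{\max}$ and $H(x)=aK^2$. It then studies $J(x)=H(x)\sum_{z\in Z}(x+z)^{-1}-d(x+\alpha)$, which in your notation is exactly $w\,(h(w)-d)$.

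Multiplying by $aK^2$ rather than $aK^2/w$ makes every term monotone on its own. The identity $H'/(x+z)-H/(x+z)^2=1-[(\alpha-z)^2+c]/(x+z)^2$ gives $J'=-1-\sum_z[(\alpha-z)^2+c]/(x+z)^2<0$. No pairing is needed, and the identity uses only $|Z|=d-1$, not the positions of the roots.

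Dividing by $w$ destroys termwise monotonicity for the negative roots, so you genuinely need the symmetry of the roots of $G_d$ about $0$. The pairing does work, and more easily than you expected. The logarithmic derivative of a paired term satisfies
\[
 \frac{d}{dw}\log\Bigl(\frac{w^2+\beta}{w}\cdot\frac{2v}{v^2-r^2}\Bigr)
 =\frac{w^2-\beta}{w(w^2+\beta)}-\frac{v^2+r^2}{v(v^2-r^2)}
 <\frac1w-\frac1v<0 .
\]
The same bound handles each factor $1/v$ coming from a root at zero. The double root at $0$ occurs for $p$ even, not odd, but this changes nothing. Your route gives monotonicity of $h$ itself; the paper's gives a one-line derivative identity that does not depend on root symmetry.

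One correction on the domain. The map $K\mapsto w$ sends $(\sqrt{b/a},\infty)$ onto $(\alpha,\infty)$, not onto $(0,\infty)$, because $aK^2-\beta>b-\beta=\alpha^2$. Hence $v>\alpha-\tfrac12=(d-3)/2=r_{\max}$ on the whole stated range, which is exactly where $G_d(v)>0$. The branch $-\tfrac12<v\le r_{\max}$ you worried about is empty, so no restatement of the lemma is needed. Your limit $h\to+\infty$ as $v\downarrow r_{\max}$ is precisely the limit $K\downarrow\sqrt{b/a}$. The fallback argument at zeros of $h-d$ is also unnecessary.
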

\begin{proof}
Set $c=b-\alpha^2>0$ and
$x=\sqrt{aK^2-c}-\alpha>0$.  Up to a positive constant,
\[
 F(K)=\frac{Q(x)}{H(x)^{d/2}},\qquad
 Q(x)=\left(x+\frac{d-3}{2}\right)\prod_{j=0}^{d-3}(x+j),
 \quad H(x)=x^2+2\alpha x+b.
\]
Let $Z$ be the multiset $\{0,1,\ldots,d-3,(d-3)/2\}$.
The logarithmic derivative of this quotient has the sign of
\[
 J(x)=H(x)\sum_{z\in Z}\frac1{x+z}-d(x+\alpha).
\]
Since $H(x)=(x+\alpha)^2+c$ and $|Z|=d-1$,
\[
 J'(x)=-1-\sum_{z\in Z}
          \frac{(\alpha-z)^2+c}{(x+z)^2}<0.
\]
Moreover $J(x)\to+\infty$ as $x\downarrow0$, and
$J(x)\sim-x\to-\infty$ as $x\to\infty$.
Hence $J$ has one zero, proving the assertion.
\end{proof}

\begin{lemma}[An elementary low-frequency range]
\label{lem:constant-low-frequency}
For every integer $d\ge5$ and every $0\le\rho<1$, the lowest
radial eigenvalues alone give the strict Neumann P\'olya inequality whenever
\[
 0<K\le K_*(d):=\frac{27}{40}d^{3/2}.
\]
More precisely, their counting function for the unit ball is greater than
$W_d(K)=K^d/[2^d\Gamma(d/2+1)^2]$ throughout this range.
\end{lemma}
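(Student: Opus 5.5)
The plan is to use only the ground levels $\lambda_{q_\ell,0}(\rho)$ and to bound each of them by an explicit ball quantity through \eqref{eq:var-ground}. In degree $\ell\ge1$ I would insert the radially constant trial $u\equiv1$ into the ball form. Its Rayleigh quotient is
\[
 \frac{q_\ell\int_0^1 s^{d-3}\,ds}{\int_0^1 s^{d-1}\,ds}
 =\frac{d}{d-2}\,\ell(\ell+d-2)=:Q_\ell .
\]
The constant is not an eigenfunction when $q_\ell>0$, so the inequality is strict and $\lambda_{q_\ell,0}(\rho)\le\lambda_{q_\ell,0}(0)<Q_\ell$ for every $0\le\rho<1$. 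For $d\ge5$ this beats the trial $s^\ell$, whose quotient grows like $2\ell^2$, because $d/(d-2)<2$. In degree zero the constant eigenfunction is counted whenever $K>0$.

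Hence, if $Q_{L}<K^2\le Q_{L+1}$ (with $Q_0=0$), the degrees $0,\dots,L$ contribute at least
\[
 C_d(L+\alpha)=G_d(L+p/2)=\frac{2u}{p!}\prod_{i=1}^{p-1}(u+i-p/2),\qquad u=L+p/2,
\]
by \eqref{eq:angular-exact}. Since $W_d$ increases, it suffices to prove
\[
 G_d(L+p/2)>W_d\bigl(\min\{\sqrt{Q_{L+1}},K_*\}\bigr)
\]
for every $L\ge0$ with $Q_L<K_*^2$. This reduces the lemma to a two-parameter family of scalar inequalities in $(d,L)$, and the bound for the unit ball gives the statement for all $\rho$ at once.

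To verify these inequalities uniformly, I would note that $Q_{L+1}=\frac{d}{d-2}\bigl[(u+1)^2-\alpha^2\bigr]$ with $u=L+p/2$. For the right side I would use $\Gamma(d/2+1)^2\ge 2\pi (d/2)^{d+1}e^{-d}$, which is Stirling's lower bound. For the left side I would pair factors of the product as in the proof of Lemma~\ref{lem:angular}, giving $G_d(u)\ge \frac{2}{p!}u\,(u^2-\gamma)^{(p-1)/2}$ or a similar elementary bound, combined with $p!\le$ an upper Stirling bound. The comparison then becomes
\[
 \frac{G_d(u)}{W_d(\sqrt{Q_{L+1}})}
 \gtrsim \frac{(2d)^{d}\,u^{p}}{e^{d}\,p!\,\bigl(\tfrac{d}{d-2}\bigr)^{d/2}(u+1)^{d}}\times(\text{polynomial factors}).
\]
After taking logarithms, this is increasing in $u$ for fixed $d$ up to the cutoff. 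It is large at $L=0$, where $G_d=1$ and $W_d(\sqrt{Q_1})$ is tiny, and it only decreases toward the terminal range where $\sqrt{Q_{L+1}}$ approaches $K_*=\frac{27}{40}d^{3/2}$. The natural structure is therefore to show that the log-ratio is concave or unimodal in $u$ on $[p/2,\,u_{\max}]$, in the spirit of Lemma~\ref{lem:var-trial-ratio-unimodal}, so that only the endpoints $L=0$ and the last admissible $L$ (the one with $\sqrt{Q_{L+1}}\ge K_*$, where $K_*$ is used in place of $\sqrt{Q_{L+1}}$) need checking.

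The main obstacle is this terminal endpoint, uniformly in $d\ge5$. There $u\approx\frac{27}{40}d^{3/2}\sqrt{(d-2)/d}$, and the ratio is of size roughly $(2d\,u^2/(e\,p\,K_*^2))^{d/2}$ times lower-order terms. The constant $27/40$ must be precisely what keeps this quantity above $1$: $2\cdot(27/40)^2<1$ is compensated by $u^p/p!$ versus $(d/2)^{d}$, and the margin is thin. I would first prove the inequality for $d\ge d_0$ (say $d\ge 20$) by an explicit logarithmic estimate with Stirling error terms, tracking the $(1-\alpha^2/u^2)$ and $(1+1/u)$ corrections via $\log(1-t)\ge -t-t^2/(2(1-t))$. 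I would then settle $5\le d<d_0$ by directly substituting the finitely many terminal values of $L$ into the exact product formula and clearing denominators, as in Lemma~\ref{lem:d4-ground-direct}.
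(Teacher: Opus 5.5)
Your reduction is sound, and for $d\ge6$ it is a genuinely different route from the paper's. The constant trial with $Q_\ell=\frac{d}{d-2}\ell(\ell+d-2)$, its strictness, the transfer to all $\rho$ through \eqref{eq:var-ground}, and the exact ground-row count $G_d(L+p/2)$ on $Q_L<K^2\le Q_{L+1}$ are all correct.

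For $d=5$ this is exactly the paper's argument. The paper shows that the cumulative count through degree $s-1$, divided by $W_5$ at the $s$-th constant-trial threshold, decreases in $s$. It then checks $s=4$ and the degree-four row at $K_*(5)$.

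For $d\ge6$ the paper never works with the integer index. It uses the continuous minorant $G_d(\sqrt{\kappa^2+\alpha^2}-\frac12)>\frac{2}{p!}(\kappa-\frac12)^p$, where $\kappa=K\sqrt\beta$ and $\beta=(d-2)/d$. Its ratio to $W_d$ is
\[
 R_d(K)=\frac{dB_d}{K}\beta^{p/2}\Bigl(1-\frac1{2K\sqrt\beta}\Bigr)^p,
\]
with logarithmic derivative $\frac{db-K}{K(K-b)}$, where $b=1/(2\sqrt\beta)$. So only $K=d/3$ and $K=K_*$ need checking, and the zero mode covers $K\le d/3$. Both checks follow from $B_d>\sqrt{2\pi d}$ and monotonicity in $p$, uniformly for $d\ge7$, with one rational check at $d=6$.

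Your discrete version keeps the exact count, so it has a larger margin, and it needs no zero-mode step. On the other hand, it must handle the integer terminal index. As sketched, it also leaves every $5\le d<d_0$ to direct computation.

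Three of your supporting claims need correcting.

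(i) The threshold is $Q_{L+1}=\frac{d}{d-2}(L+1)(L+d-1)=\frac{d}{d-2}\bigl[(u+\frac12)^2-\alpha^2\bigr]$, not $(u+1)^2-\alpha^2$.

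(ii) The ratio $r(L)=G_d(L+p/2)/W_d(\sqrt{Q_{L+1}})$ is neither increasing nor merely unimodal: it is strictly decreasing. Lemma~\ref{lem:var-trial-ratio-unimodal} cannot be invoked, because the constant trial corresponds to $a=\beta$ and $b=0<\alpha^2$. A direct proof: for real $L\ge0$,
\[
 \frac{d}{dL}\log r(L)=\sum_{z\in Z}\frac1{L+z}
 -\frac d2\Bigl(\frac1{L+1}+\frac1{L+d-1}\Bigr),
 \qquad Z=\{p/2,1,2,\ldots,p-1\}.
\]
Here $Z$ has $d-1$ elements in $[1,d-1]$, with sum $(d-1)^2/2$. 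Interpolating the convex function $z\mapsto1/(L+z)$ linearly between $z=1$ and $z=d-1$ bounds the right side by
\[
 \frac1{2(d-2)}\Bigl(\frac1{L+1}-\frac{2d-3}{L+d-1}\Bigr)<0 .
\]
Consequently $L=0$ is the maximum and needs no check. Instead you must test two indices, where $Q_{L_t}<K_*^2\le Q_{L_t+1}$: the penultimate index $L_t-1$ against $W_d(\sqrt{Q_{L_t}})$, and the terminal index $L_t$ against $W_d(K_*)$. At $d=5$ these are exactly the paper's checks $50>W_5(\sqrt{140/3})$ and $105>W_5(K_*(5))$.

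(iii) The heuristic in your last paragraph is wrong. The exponential factors cancel exactly, since $2^d\Gamma(d/2+1)^2/p!=dB_d/2$ with $B_d\sim\sqrt{2\pi d}$; your displayed ratio carries a spurious factor $2^d$. Near $K_*$ the ratio is $dB_d\beta^{p/2}/K$ times factors bounded above and below uniformly in $d$, so it is of order $d^{3/2}/K$. Even the continuous count $\frac2{p!}(K\sqrt\beta)^p$ gives a ratio tending to $40\sqrt{2\pi}/(27e)\approx1.37$ at $K_*$.

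So there is no $(2(27/40)^2)^{d/2}$ balance to exploit, and the margin is not exponentially thin. Build the large-$d$ estimate around $dB_d\beta^{p/2}/K$; the bound $B_d>\sqrt{2\pi d}$ from Lemma~\ref{lem:var-uniform-endpoints} suffices for the leading factor. With these corrections the plan goes through.
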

\begin{proof}See Appendix~\ref{app:constant-bridge}.\end{proof}

\section{Proof of the strict inequality}\label{sec:dimensions}
We begin with the clipped angular weight for $d\ge5$, then treat
$d=3,4$ with quadratic and cubic weights. The planar argument in
Section~\ref{sec:planar} uses semicircle subtraction and concavity of
the inverse action. In every dimension, radial trials cover low
frequencies and small widths, while the comparison covers the remaining
range. The resulting strict inequalities include $\rho=0$ in every dimension.

Table~\ref{tab:roadmap} records this division of the parameter range.
In its four-dimensional row, put
$s(K)=\frac{16}{3}(1-1/(\sqrt2K))^3$.

\begin{table}[htbp]
\centering\small
\renewcommand{\arraystretch}{1.25}
\begin{tabular}{@{}c >{\raggedright\arraybackslash}p{0.33\textwidth}
                       >{\raggedright\arraybackslash}p{0.47\textwidth}@{}}
\toprule
$d$ & radial trial spaces & weighted shell--ball comparison\\
\midrule
$2$ & $K\le8$ or $T\le3/2$ (Lemma~\ref{d2:lem:variational})
    & semicircle comparison and width endpoints
      (Theorem~\ref{thm:planar-complete})\\
$3$ & $K\le44/5$ or $T\le4/3$ (Proposition~\ref{d3:thm:low})
    & quadratic $\Phi_2$, explicit degree-zero phase
      (Proposition~\ref{d3:prop:jointclosure})\\
$4$ & $K\le12$ (Lemma~\ref{lem:d4-ground-direct}) or $(1-\rho)K\le s(K)$
      (Proposition~\ref{prop:var-ranges})
    & cubic $\Phi_3$, clipping gain, localized phase defect
      (Theorem~\ref{thm:d4all})\\
$\ge5$ & $K\le\frac74d^{3/2}$ or $T\le\frac34\sqrt{d-1}$
      (Proposition~\ref{prop:var-ranges})
    & one clipped angular polynomial
      (Theorem~\ref{thm:clipped-surplus})\\
\bottomrule
\end{tabular}
\caption{Coverage in the proof of Theorem~\ref{thm:all-dimensions}.
Every comparison for $d\ge3$ uses Proposition~\ref{prop:physical-transfer}.
The phase input is Proposition~\ref{prop:phase}, with the refinements
stated in dimensions three and four. The ball inputs are
Lemma~\ref{lem:onepoint-ball} for $d\ge5$ and
Lemmas~\ref{d3:lem:ballpoly} and~\ref{lem:d4ball} for $d=3,4$.}
\label{tab:roadmap}
\end{table}

\subsection{Dimensions at least five: the clipped angular comparison}
\label{sec:clipped-comparison}
We apply the counting reduction to the single weight $2\Psi_p/p!$.
Expanding $R_p$ in the ball comparison retains the lower-power volume
terms without introducing further shell slope losses.

\begin{theorem}\label{thm:clipped-surplus}
Let $d\ge5$, $p=d-1$, $0\le\rho<1$, and $K>0$.
The strict Neumann P\'olya inequality holds on $\Omega_{\rho,1}$.
Moreover, if
\begin{equation}\label{eq:clipped-ranges}
 K\ge K_0:=\frac74(p+1)^{3/2},\qquad
 T=\frac{(1-\rho)K}{\pi}\ge\frac34\sqrt p,
\end{equation}
then
\begin{equation}\label{eq:clipped-surplus}
 \frac{p!}{2K^p}
 \bigl[N^<(K^2)-\W_{p+1}(K,\rho K)\bigr]>\frac1{2000}.
\end{equation}
\end{theorem}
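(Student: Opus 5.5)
The proof splits the parameter range in two. Proposition~\ref{prop:var-ranges} already gives the strict inequality whenever $K\le K_0$ or $hK\le\frac{3\pi}4\sqrt p$, that is, $T\le\frac34\sqrt p$. So the strict inequality follows once \eqref{eq:clipped-surplus} is established under \eqref{eq:clipped-ranges}, and the plan is to prove that surplus.

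\textbf{Counting reduction.} Apply Proposition~\ref{prop:physical-transfer} with $F=2\Psi_p/p!$. Lemma~\ref{lem:clipped-angular} gives $F\le C_d$ and the transfer hypotheses, with $M_F\le 2M_p/p!$. Since $\Psi_p\le z^p$, the bracketed inner-volume term is nonnegative. By \eqref{eq:clipped-ball-reduction}, after multiplying by $p!/(2K^p)$ it remains to bound from below
\[
 \frac{Q_{R_p}(K)-J_{R_p}(\mu)-J_C(\mu)}{K^p}-\frac{p!}{2}\frac{W_d(K)-W_d(\mu)}{K^p}-\frac{M_p}{8K^p}-\frac{p!D_{\mathrm{ph}}}{2K^p}.
\]
Expand $R_p=z^p-\frac p2z^{p-1}-b_pz^{p-2}$ and use the moment identity \eqref{eq:weylmoment} together with the formulas for $\int_0^T x^m$. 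The $z^p$ terms reduce to $E_{B,p}(1/4)/K^p$ minus nothing. The inner-ball $z^p$ volume cancels exactly, because $J_{z^p}(\mu)$ equals $\frac{p!}{2}W_d(\mu)$.

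The lower powers $m=p-1,p-2$ enter with negative coefficients. For these I use the upper bound \eqref{eq:exactquarter-ball} for $\sum\xi_K^m$, and $J_{z^m}(\mu)\ge0$ is retained with a favourable sign. Their combined cost is of order $\frac p2\bigl(\frac{KI_p}{\pi p}+\frac14\bigr)K^{-1}+b_p(\cdots)K^{-2}$. This is $O(p/K)+O(p^3/K^2)\cdot\frac{1}{K}$ relative to $K^{p-1}\cdot K$ normalisation, hence small once $K\ge\frac74(p+1)^{3/2}$. Against it stands $E_{B,p}/K^p>18/125$ from Lemma~\ref{lem:onepoint-ball}, with $161/1000$ and $31/200$ available for $p=4,5,6$. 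The clipping term contributes less than $1/200000$ by \eqref{eq:clipping-correction}.

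\textbf{Slope loss.} By \eqref{eq:Mbound}, $M_p/(8K^p)\le h_p\rho^{p/2}/(8T)$. With $T\ge\frac34\sqrt p$ and $h_p/\sqrt p$ decreasing, this is at most $h_4/(12\cdot 2)\cdot\rho^{p/2}$, roughly $\frac{4/\sqrt3\cdot(2/3)}{12}$. A cleaner route is to use the sharper first bound in \eqref{eq:Mbound} and note that $\rho$ near $1$ forces $T$ large only if $K$ is large. Since $T\ge\frac34\sqrt p$ alone does not make this loss small, I would use $T=(1-\rho)K/\pi$ and bound $\rho^{p/2}/T\le\pi\rho^{p/2}/((1-\rho)K)$ jointly, taking the maximum of the two expressions. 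This gives a loss of about $0.06$, well below the $0.14$ ball gain.

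\textbf{Phase defect (main obstacle).} Proposition~\ref{prop:phase} confines exceptional degrees to $\nu_\ell<H_d^*\le\frac98p^{3/2}$, with $\mu^2$ in a window of width $\nu^{4/3}+\frac53(\alpha+\frac38)^2$ above $\mathfrak b_d(\ell)\approx\nu^2$. The defect is a sum of $\kappa_{d,\ell}$ over a few consecutive degrees near $\nu\approx\mu$. I would bound it by $C_d(\nu_{\max})-C_d(\nu_{\min})$ via \eqref{eq:angular}, giving roughly $\frac{2p}{p!}\nu^{p-1}\Delta\nu$ with $\Delta\nu=O(p^{1/2}+\nu^{1/3})$. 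Normalised, this is at most $p(\mu/K)^{p-1}\Delta\nu/K$ with $\mu<H_d^*$ and $K\ge\frac74(p+1)^{3/2}$, so $\mu/K\le\frac{9}{14}$ and the term decays geometrically in $p$.

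Balancing this against the ball surplus for small $p$ is the delicate point. For $p=4,5,6$ I would use the sharpened constants and $H_d^*=19/2$, checking the three finite cases explicitly. For $p\ge7$ I would use the uniform envelope \eqref{eq:cp-H-envelopes}. After subtracting all losses, I expect the total to exceed $1/2000$ with margin.
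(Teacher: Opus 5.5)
\textbf{There is a genuine gap.} Your reduction is the paper's: Proposition~\ref{prop:physical-transfer} with $F=2\Psi_p/p!$, the moment identity, the upper quarter-sampling bound \eqref{eq:exactquarter-ball} for the powers $p-1,p-2$, and Lemma~\ref{lem:onepoint-ball}. But the bookkeeping after it is wrong in a way that hides the entire difficulty.

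\emph{The volume loss is not small.} The expression $\frac p2\bigl(\frac{KI_p}{\pi p}+\frac14\bigr)K^{-1}$ that you write down equals $\frac{I_p}{2\pi}+\frac p{8K}$. Its first part is not $O(p/K)$; it is a constant of order $p^{-1/2}$. After adding back the favourable inner integral $\frac p2J_{z^{p-1}}(\mu)$, it becomes the volume loss $\frac{I_p}{2\pi}(1-\rho^p)$. For $p=5$ at $\rho=0$ this is $4/(15\pi)\approx0.085$, more than half of the ball gain.

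\emph{The slope figure is not uniform.} Under $T\ge\frac34\sqrt p$, the bound \eqref{eq:Mbound} gives $\frac{h_p}{6\sqrt p}\rho^{p/2}$, and $0.06$ is roughly its value at $K=K_0$ only. Letting $K\to\infty$ along $T=\frac34\sqrt p$ forces $\rho\to1$, and the bound tends to $h_p/(6\sqrt p)\approx0.121$ for $p=5$. Using the exact $T=(1-\rho)K/\pi$ does not prevent this.

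\emph{The worst cases do not fit under the gain.} Add the genuinely lower-order terms $\frac p{8K}+\frac{b_p}{4K^2}$ (about $1/40$ for $p=5,6$) and a phase defect of up to $27/1000$. The sum of the separate worst cases is then about $0.26$, well above the gain $0.155$.

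\textbf{The missing idea} is that these losses peak at opposite ends of the radius range and must be balanced jointly.
\begin{itemize}
\item With $X=\rho^{p/2}$, the volume and slope losses combine to $a_p(1-X^2)+c_pX$. Completing the square bounds this by $a_p+c_p^2/(4a_p)$.
\item The phase defect must be decoupled from the slope loss. It vanishes for $\rho\ge2/3$, because the threshold \eqref{eq:tp-nodefect} lies below $(\tfrac23K_0)^2=\tfrac{49}{36}(p+1)^3$.
\item For $\rho\le2/3$ one has $T\ge K/(3\pi)$. This makes the slope coefficient less than $3/40$ and gives $X<3/8$.
\end{itemize}
Even with this, the total loss is below $309/2000$ against the gain $31/200$ for $p=5,6$, and below $287/2000$ against $18/125$ for $p\ge7$. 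The margin is exactly the $1/2000$ of the statement. So there is no room for the rough estimates you propose: the explicit coefficient table for $a_p,c_p,r_p$ and the bound $\Delta<27/1000$ must actually be proved.

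For $p=4$ the margin is $1/1000$. It requires the sharper slope factor $M_{\Psi_4}\le(1-\frac3{2\mu})M_4$ and an explicit count of the exceptional degrees, giving $\Delta\le4128/K^4$ and $\Delta=0$ for $\mu\ge10$.

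A smaller error: $H_d^*=19/2$ only when $p=4$. For $p=5,6$ it is about $12.4$ and $16.3$.
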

\begin{proof}
The variational estimates in Proposition~\ref{prop:var-ranges}
cover the complement of \eqref{eq:clipped-ranges}. We prove the
surplus in the remaining range. Put
\[
 \mathcal S_p=\frac{p!}{2K^p}
    \bigl[N^<(K^2)-\W_{p+1}(K,\mu)\bigr],\qquad
 \Delta=\frac{p!D_{\mathrm{ph}}}{2K^p},
\]
where $D_{\mathrm{ph}}$ is the defect from Proposition~\ref{prop:phase}.
Proposition~\ref{prop:physical-transfer},
Lemma~\ref{lem:clipped-angular}, and
\eqref{eq:clipped-ball-reduction} give
\begin{equation}\label{eq:clipped-power-comparison}
 \begin{split}
 \mathcal S_p\ge{}&\frac{E_{B,p}}{K^p}
 -\frac{I_p}{2\pi}(1-\rho^p)
 -\frac{b_pI_{p-1}}{\pi(p-1)K}(1-\rho^{p-1})\\
 &-\frac p{8K}-\frac{b_p}{4K^2}
 -\frac{M_{\Psi_p}}{8K^p}-\frac{J_C(\mu)}{K^p}-\Delta.
 \end{split}
\end{equation}
Indeed, the moment identity evaluates the three polynomial integrals,
and Lemma~\ref{lem:exactquarter} bounds the ball sampling errors of
the subtracted powers $p-1$ and $p-2$ from above by $K^{p-1}/4$
and $K^{p-2}/4$. The transfer loss involves only the slope
of the clipped weight $\Psi_p$.

\paragraph{The coefficient bounds for $p\ge5$.}
Set $X=\rho^{p/2}$. Using $M_{\Psi_p}\le M_p$,
\eqref{eq:Mbound}, $\rho^{p-1}\ge X^2$, and
\eqref{eq:clipping-correction}, we obtain
\begin{equation}\label{eq:clipped-quadratic}
 \mathcal S_p\ge\frac{E_{B,p}}{K^p}
       -a_p(1-X^2)-c_pX-r_p-\Delta,
\end{equation}
where
\begin{align*}
 a_p&=\frac{I_p}{2\pi}
       +\frac{p(p-5)I_{p-1}}{24\pi K_0},
 &c_p&=\frac{h_p}{6\sqrt p},\\
 r_p&=R(p+1)+\frac1{200000},
 &R(v)&=\frac{v-1}{14v^{3/2}}
       +\frac{(v-1)(v-2)(v-6)}{294v^3}.
\end{align*}
The following entries are strict upper bounds:
\begin{equation}\label{eq:clipped-coefficients}
 \begin{array}{c|ccc}
  &a_p&c_p&r_p\\\hline
  p=5,6&17/200&49/400&1/40\\
  p\ge7&19/250&4/35&1/44.
 \end{array}
\end{equation}

For the first coefficient, the Wallis recurrence and
$\pi>157/50$ give
\[
 a_5<\frac{40}{471}<\frac{17}{200},\qquad
 a_6<\frac5{64}+\frac1{720}<\frac{17}{200},
\]
where $K_0>32$ was used at $p=6$.
For $p\ge7$, the Wallis identity
$I_{p+1}=pI_{p-1}/(p+1)<I_p$ implies
\[
 a_p<A_p:=\frac{I_p}{2\pi}g(p),\qquad
 g(p)=1+\frac{p-5}{21\sqrt{p+1}}.
\]
The function
\[
 \frac{g(p)}{\sqrt{p+1}}
 =\frac1{\sqrt{p+1}}+\frac1{21}-\frac2{7(p+1)}
\]
decreases for $p\ge7$. A second application of the Wallis recurrence gives
\[
 \frac{A_{p+2}}{A_p}
 =\frac{p+1}{p+2}\frac{g(p+2)}{g(p)}
 <\frac{\sqrt{(p+1)(p+3)}}{p+2}<1.
\]
It remains to check the first odd and even values. Using
$\pi>25/8$ and $\sqrt2>7/5$,
\[
 A_7<\frac{64}{875}\left(1+\frac5{147}\right)<\frac{19}{250},
 \qquad A_8=\frac{55}{768}<\frac{19}{250}.
\]

For the second coefficient, the decreasing ratio $h_p/\sqrt p$
from Corollary~\ref{lem:radiusM} gives
$c_5=\sqrt{15}/32<49/400$ and $c_6<c_5$.
Also $h_7^2/7=21875/46656<(24/35)^2$, which proves the second row.
For the last coefficient, $v\ge6$ implies $v-3\ge\sqrt v$, and hence
\[
 R'(v)=-\frac{v-3}{28v^{5/2}}
       +\frac{9v^2-40v+36}{294v^4}
       <-\frac1{196v^2}<0.
\]
The bounds $\sqrt6>12/5$ and $\sqrt2>24/17$ give
\[
 R(6)<\frac{25}{1008},\qquad
 R(8)<\frac{17}{768}+\frac1{1792}=\frac{61}{2688}.
\]
Adding $1/200000$ proves the claimed bounds $1/40$ and $1/44$.

\paragraph{The phase defect for $p\ge5$.}
The cutoff envelope gives $H_d^*\le(9/8)p^{3/2}$,
since $(9/8)5^{3/2}>19/2$.
As $(9/8)^{4/3}<6/5$ and $\alpha+3/8=p/2-1/8$, the
sufficient threshold for a vanishing defect is less than
\[
 \frac{81}{64}p^3+\frac{97}{60}p^2
 <\frac{49}{36}(p+1)^3.
\]
The difference is
$55p^3/576+37p^2/15+49p/12+49/36>0$.
Consequently $\Delta=0$ whenever $\rho\ge2/3$.
For every $p\ge5$ and every radius, we also have
\begin{equation}\label{eq:clipped-phase-bounds}
 \Delta<\frac{27}{1000}.
\end{equation}
At $p=5$, the cutoff $H_6^*<13$ permits only degrees zero through
ten, whose cumulative multiplicity is
$\binom{15}{5}+\binom{14}{5}=5005$. Hence
\[
 \Delta\le\frac{300300}{K_0^5}<\frac{27}{1000},
 \qquad K_0^2=\frac{1323}{2}.
\]
For $p\ge6$, the cutoff bound and $p^{3/2}>10$ imply
$H_d^*+1/2<(47/40)p^{3/2}$. The angular upper bound gives
\[
 \Delta<\left(\frac{47}{70}\right)^p
       \left(\frac p{p+1}\right)^{3p/2}
 <\frac4{15}\left(\frac{47}{70}\right)^6
 <\frac{27}{1000}.
\]
Here $(p/(p+1))^p$ decreases with $p$, since its logarithmic
derivative is $\log(p/(p+1))+1/(p+1)<0$, and $(6/7)^9<4/15$.

\paragraph{Completing the comparison for $p\ge5$.}
Let $\bar a,\bar c,\bar r$ be the corresponding row of
\eqref{eq:clipped-coefficients}. For $\rho\ge2/3$, the phase
defect vanishes, and completing the square bounds the loss in
\eqref{eq:clipped-quadratic} by
$\bar a+\bar c^2/(4\bar a)+\bar r$.

For $\rho\le2/3$, use $T\ge K/(3\pi)$ in the slope term of
\eqref{eq:clipped-power-comparison}.
Both $h_p/\sqrt p$ and $\sqrt p/(p+1)^{3/2}$ decrease for $p\ge5$;
the logarithmic derivative of the latter is
$(1-2p)/(2p(p+1))<0$. Thus the coefficient of $X=\rho^{p/2}$
in the slope loss is bounded uniformly by
\[
 \frac{3\pi h_p}{8K_0}
 \le\frac{15\pi}{448\sqrt2}
 <\frac{935}{12544}<\frac3{40},
\]
using $\pi<22/7$ and $\sqrt2>24/17$.
Also $X\le(2/3)^{5/2}<3/8$.
Since $\bar a\le17/200$, the quadratic
$\bar a(1-X^2)+(3/40)X$ increases on $[0,3/8]$.
Together with \eqref{eq:clipped-phase-bounds}, this bounds the loss by
$55\bar a/64+9/320+\bar r+27/1000$.

Substitution of the two coefficient rows gives
\[
 \max\left\{
 \bar a+\frac{\bar c^2}{4\bar a}+\bar r,
 \frac{55\bar a}{64}+\frac9{320}+\bar r+\frac{27}{1000}
 \right\}
 <\begin{cases}
 309/2000,&p=5,6,\\
 287/2000,&p\ge7.
 \end{cases}
\]
The respective ball lower bounds $31/200$ and $18/125$ from
Lemma~\ref{lem:onepoint-ball} therefore give
$\mathcal S_p>1/2000$ in every case $p\ge5$.

\paragraph{Dimension five: the two-power comparison.}
Now $p=4$, $b_4=0$, $K_0=35\sqrt5/4$, and $T\ge3/2$.
Retain the derivative correction in the clipped weight. If $\mu>2$,
then, for $2<z<\mu$,
\[
 \Psi_4'(z)=4z^3-6z^2
 \le\left(1-\frac3{2\mu}\right)4z^3,
 \qquad
 M_{\Psi_4}\le\left(1-\frac3{2\mu}\right)M_4.
\]
For $\mu\le2$, $M_{\Psi_4}=0$.
Since $I_4=3\pi/16$, the counting reduction gives
\begin{equation}\label{eq:clipped-five-loss}
 \mathcal S_4\ge\frac{E_{B,4}}{K^4}-\mathcal L,
 \qquad
 \mathcal L=\frac3{32}(1-\rho^4)+\frac1{2K}
       +\frac{M_{\Psi_4}}{8K^4}+\frac1{100000}+\Delta.
\end{equation}

The cutoff $H_5^*=19/2$ permits only degrees $0,\ldots,7$.
If $\mu\le8$, the lower condition in the phase window excludes
degrees six and seven because $\mathfrak b_5(6)>66>64$;
the remaining cumulative multiplicity is $196$.
If $\mu\ge8$, the upper condition excludes all orders at most
$13/2$, since
\[
 (13/2)^2+(13/2)^{4/3}+375/64<169/4+13+6<64.
\]
The remaining degrees six and seven have multiplicities $140$ and
$204$. If $\mu\ge10$, the largest order $17/2$ is also excluded:
\[
 (17/2)^2+(17/2)^{4/3}+375/64<289/4+18+6<100.
\]
Thus $\Delta\le4128/K^4$, and $\Delta=0$ for $\mu\ge10$.

Suppose first that $\mu<10$. The derivative bound gives
$M_{\Psi_4}\le(17/20)M_4$, including $\mu\le2$.
The sharper estimate in \eqref{eq:Mbound}, together with the width
relation, gives
\[
 \frac{M_4}{8K^4}
 \le\frac\pi{\sqrt3K}\frac{\rho^3}{1-\rho^3}
 <\frac1{10}\frac{\rho^3}{1-\rho^3}
 \quad(\rho>0).
\]
Since $K\ge K_0>39/2$, one has
$\rho<20/39<8/15$ and $\rho^3<1/6$. Hence
\[
 \frac{M_{\Psi_4}}{8K^4}
 \le\frac{17}{200}\frac{\rho^3}{1-\rho^3}
 \le\frac{34}{625}\rho^2\le\frac3{50}\rho^2.
\]
At $\rho=0$, both slopes vanish.
Also $1/(2K)+1/100000<13/500$ and $\Delta<3/100$.
Completing the square in $\rho^2$ therefore gives
\[
 \mathcal L<\frac3{32}
   +\frac{(3/50)^2}{4(3/32)}+\frac7{125}
 =\frac{3187}{20000}<\frac4{25}.
\]

For $\mu\ge10$, the phase defect vanishes. Put
$S=1+\rho+\rho^2$. The width condition and derivative correction yield
\[
 \frac1{2K}+\frac{M_{\Psi_4}}{8K^4}
 \le\frac{2\rho^3}{3\sqrt3S}
       +\frac1K\left(\frac12-\frac{\rho^2}{\sqrt3S}\right).
\]
The coefficient of $1/K$ is positive, because $S\ge3\rho^2$.
Replace $K$ by $K_0$, then use $3\rho\le S\le3$, to obtain
\[
 \mathcal L\le\frac3{32}(1-\rho^4)
 +\frac1{\sqrt3}\left(\frac29-\frac1{3K_0}\right)\rho^2
 +\frac1{2K_0}+\frac1{100000}.
\]
Since $K_0<20$ and $\sqrt3>12/7$, the coefficient of $\rho^2$
is less than $259/2160<3/25$. Also
$1/(2K_0)+1/100000<13/500$. A second completion of the square gives
\[
 \mathcal L<\frac3{32}
   +\frac{(3/25)^2}{4(3/32)}+\frac{13}{500}
 =\frac{3163}{20000}<\frac4{25}.
\]
In both cases Lemma~\ref{lem:onepoint-ball} gives
$\mathcal S_4>161/1000-4/25=1/1000>1/2000$.
This proves \eqref{eq:clipped-surplus} in the range
\eqref{eq:clipped-ranges}; Proposition~\ref{prop:var-ranges}
covers all remaining frequencies and widths.
\end{proof}

\subsection{Angular weights and action estimates for dimensions three and four}
\label{sec:lowdim-tools}
We collect the angular weights and the inverse-action and tail estimates
used in the remaining two dimensions.

\subsubsection{The quadratic and cubic angular weights}
For dimensions three and four, use the full angular polynomials
\begin{equation}\label{eq:jp-poly}
 \Phi_2(z)=(z-1/2)_+^2,\qquad
 \Phi_3(z)=
 \begin{cases}
 0,&0\le z\le1,\\
 z(z-1/2)(z-1)/3,&z>1.
 \end{cases}
\end{equation}
Here $p=2,3$ and $\alpha=(p-1)/2$.

\begin{lemma}\label{lem:jp-poly}
For $p=2,3$, one has $C_{p+1}\ge\Phi_p$.
The weight $\Phi_p$ satisfies the hypotheses of
Lemma~\ref{lem:general-transfer}, and
\begin{equation}\label{eq:jp-derivative}
 0\le\Phi_p'(z)\le\frac{2}{(p-1)!}z^{p-1}
 \quad\text{for almost every }z\ge0,
 \qquad 0\le\Phi_p(z)\le\frac2{p!}z^p.
\end{equation}
\end{lemma}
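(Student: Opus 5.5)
The plan is to verify the three assertions separately for $p=2$ and $p=3$. The comparison $C_{p+1}\ge\Phi_p$ will come from the exact multiplicity formula \eqref{eq:angular-exact} in Lemma~\ref{lem:angular}. The transfer hypotheses and \eqref{eq:jp-derivative} are direct one-variable computations.

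\emph{Multiplicity comparison.} For $p=2$ one has $\alpha=1/2$, and \eqref{eq:angular-exact} gives $C_3(z)=(L+1)^2$ for $z\ge1/2$, with $L=\lfloor z-1/2\rfloor$. Since $L+1\ge z-1/2$, we get $C_3(z)\ge(z-1/2)^2=\Phi_2(z)$. Below $1/2$ both sides vanish. For $p=3$ one has $\alpha=1$ and $u=L+3/2$, so \eqref{eq:angular-exact} reads
\[
 C_4(z)=\frac{u(u-1/2)(u+1/2)}3,
\]
and this is increasing in $u\ge1/2$. For $z\ge1$ we have $L\ge0$ and $u\ge z-1/2\ge1/2$. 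Substituting $u\ge z-1/2$ therefore gives $C_4(z)\ge z(z-1/2)(z-1)/3=\Phi_3(z)$. For $z<1$, $\Phi_3=0\le C_4$.

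\emph{Transfer hypotheses.} For $\Phi_2$, take $a=1/2$. Then $\Phi_2'=2(z-1/2)>0$ and $\Phi_2''=2$ on $(a,\infty)$, and
\[
 \frac{z\Phi_2''}{\Phi_2'}-1=\frac{1/2}{z-1/2},
\]
which is decreasing. Convexity and continuity are clear.

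For $\Phi_3$, take $a=1$. On $(1,\infty)$ we have $\Phi_3'=z^2-z+1/6\ge1/6>0$ and $\Phi_3''=2z-1>0$. At $z=1$ the function is continuous, and its derivative jumps upward from $0$ to $1/6$, so $\Phi_3$ is convex. The monotonicity expression is
\[
 \frac{z\Phi_3''}{\Phi_3'}-1=\frac{z^2-1/6}{z^2-z+1/6}.
\]
The numerator of its derivative is
\[
 2z\Bigl(z^2-z+\tfrac16\Bigr)-\Bigl(z^2-\tfrac16\Bigr)(2z-1)
 =-\Bigl(z^2-\tfrac23z+\tfrac16\Bigr).
\]
This quadratic has discriminant $4/9-2/3<0$, so the numerator is negative everywhere and the expression is nonincreasing. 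This sign check is the only step requiring any care; the rest is routine.

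\emph{Derivative and size bounds.} For $p=2$, $0\le\Phi_2'\le2z=2z^{p-1}/(p-1)!$ and $\Phi_2\le z^2=2z^2/2!$. For $p=3$, $0\le\Phi_3'=z^2-z+1/6\le z^2$ on $z>1$, and this equals $2z^{2}/2!$. Also $\Phi_3'=0$ on $(0,1)$. Finally $\Phi_3\le z^3/3=2z^3/3!$, because $(z-1/2)(z-1)\le z^2$ for $z\ge1$. These give \eqref{eq:jp-derivative}.
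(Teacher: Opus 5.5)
Your proposal is correct and follows essentially the paper's proof. You use the same substitution $u\ge z-1/2$ into the increasing product \eqref{eq:angular-exact}, and the same direct check of the transfer hypotheses; your quotient-rule numerator $-(z^2-\tfrac23z+\tfrac16)$ is the paper's root identity $-\sum_i r_i/(z-r_i)^2$ (with $r_1+r_2=1$, $r_1r_2=\tfrac16$) written over the common denominator $\Phi_3'(z)^2$.

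The one item you omit is the finiteness of the one-sided supremum $M_{\Phi_p}$, which Lemma~\ref{lem:general-transfer} assumes when $\mu>\alpha$ and which the paper checks in one line. It holds because $a=\alpha>0$ gives $zH(z^2)\ge\alpha H(0)>0$ on $(\alpha,\mu)$, while $\Phi_p'$ is bounded there.
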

\begin{proof}
In \eqref{eq:angular-exact}, substitute $u\ge z-1/2$ into the
increasing product for $z\ge\alpha$; below $\alpha$, both lower
weights vanish. The displayed formulas show convexity above the
clipping point, and the derivative jumps there are nonnegative.
For $\Phi_2$, the nonzero derivative has its root at $1/2$.
For $\Phi_3$, the derivative is $z^2-z+1/6$, with roots
$(1\pm1/\sqrt3)/2$ in $(0,1)$.
In each case all derivative roots $r_i$ are positive and at or below the
clipping point, so
\[
 \frac{d}{dz}\left(\frac{z\Phi_p''(z)}{\Phi_p'(z)}-1\right)
 =-\sum_i\frac{r_i}{(z-r_i)^2}<0\qquad(z>\alpha).
\]
The one-sided inner slope is finite since $\alpha>0$.
The remaining bounds follow directly from \eqref{eq:jp-poly}.
\end{proof}

For $p=2,3$, retain the notation
\[
 Q_{B,p}(K)=Q_{\Phi_p}(K),\qquad J_p(L)=J_{\Phi_p}(L).
\]
Integration by parts also gives
$J_p(L)=\int_0^L\Phi_p'(z)G_L(z)\,dz$.
When $\mu>\alpha$, let
$M_{\Phi_p}=\sup_{\alpha<z<\mu}\Phi_p'(z)/(zH(z^2))$;
otherwise set $M_{\Phi_p}=0$.
For $\mu>\alpha$, Lemma~\ref{lem:general-transfer} gives
\begin{equation}\label{eq:jp-Jensen}
 M_{\Phi_p}\le\frac{\mu K}{T}
 \max_{\alpha\le z\le\mu}
 \frac{\Phi_p'(z)}z\sqrt{1-\beta_*z^2},
\end{equation}
where $\beta_*$ is defined in \eqref{eq:general-Jensen}.
One-sided derivatives are understood at the endpoints.
Proposition~\ref{prop:physical-transfer} now applies with the
nonnegative inner-volume term and the quantities $Q_{B,p}$, $J_p$,
and $M_{\Phi_p}$.

\subsubsection{Inverse-action and tail estimates}
The ball estimates require bounds on the inverse action over the
first sampling period and a positive part of the later tail.

\begin{lemma}\label{lem:inverse-action-seven-tenths}
For $0\le t\le K/\pi$, put $a_K=(3\pi t/K)^{2/3}/2$. Then
$\xi_K(t)/K\ge1-a_K$. If $0\le a_K\le7/10$, then
\begin{equation}\label{eq:inverse-action-wide}
 1-a_K+\frac{a_K^2}{30}\le\frac{\xi_K(t)}K
                  \le1-a_K+\frac{a_K^2}{25}.
\end{equation}
For $0\le t\le5/4$, the hypothesis holds whenever $K\ge73/10$.
\end{lemma}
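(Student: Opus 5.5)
The plan is to substitute $w=1-\xi_K(t)/K\in[0,1]$ and express $t$ through a single integral in $w$. With the ball parameterization $\xi_K=K\cos u$, $t=K(\sin u-u\cos u)/\pi$, we have $w=1-\cos u$. Since
\[
 \frac{d}{du}(\sin u-u\cos u)=u\sin u=u\,\frac{dw}{du},
\]
we get $d(\pi t/K)/dw=u=\arccos(1-w)$. Hence
\[
 \frac{\pi t}{K}=\int_0^w\arccos(1-s)\,ds,
 \qquad
 a_K=\frac12\Bigl(3\int_0^w\arccos(1-s)\,ds\Bigr)^{2/3}.
\]
This makes $a_K=g(w)$ an explicit increasing function of $w$, and every claim becomes a comparison between $g(w)$ and $w$.

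\emph{First bound.} Use $\arccos(1-s)\ge\sqrt{2s}$, which holds because $\cos\sqrt{2s}\ge1-s$. It gives $3\int_0^w\arccos(1-s)\,ds\ge2\sqrt2\,w^{3/2}$, hence $a_K\ge w$. This is exactly $\xi_K/K\ge1-a_K$, valid on all of $[0,K/\pi]$.

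\emph{Two-sided estimate.} I would use a polynomial sandwich
\[
 \sqrt{2s}\Bigl(1+\frac s{12}\Bigr)\le\arccos(1-s)\le\sqrt{2s}\Bigl(1+\frac s{12}+c\,s^2\Bigr)
\]
on the relevant range $s\le w\le a_K\le7/10$, with an explicit constant $c$ a little above the Taylor coefficient $3/160$, say $c=1/40$. I would prove it by comparing cosines and using alternating Taylor bounds for $\cos$, or by substituting $s=1-\cos\theta$ and comparing the series in $\theta$. Integrating and raising to the power $2/3$ gives
\[
 w\Bigl(1+\frac w{20}\Bigr)^{2/3}\le g(w)\le w\Bigl(1+\frac w{20}+\frac{4c}{7}w^2\Bigr)^{2/3},
\]
so $g(w)=w+w^2/30+O(w^3)$. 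Formally inverting gives $w\approx a-a^2/30$, which matches both stated coefficients: $1/30$ is the sharp second-order term, and $1/25$ leaves room for the cubic terms.

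To make the inversion rigorous I would use monotonicity of $g$ together with the fact that $v\mapsto v-v^2/30$ and $v\mapsto v-v^2/25$ increase on $[0,7/10]$.
\begin{itemize}
 \item For the left inequality $w\le a-a^2/30$, it suffices to show $g(a-a^2/30)\ge a$, since $g$ is increasing and $g(w)=a$. This follows from the lower bound for $g$ and reduces to the polynomial inequality $(1-a/30)^3(1+a/20-a^2/600)^2\ge1$ for $0\le a\le7/10$.
 \item For the right inequality $w\ge a-a^2/25$, it suffices to show $g(a-a^2/25)\le a$. Using the upper bound for $g$ this becomes an analogous one-variable polynomial inequality in $a$.
\end{itemize}
Each would be checked by expanding: the low-order coefficients have the right sign (the $a^2$ terms cancel or favor us), and the remaining coefficients are bounded crudely on $[0,7/10]$.

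The main obstacle is this last step. The margin in the upper bound comes only from the gap between $1/25$ and $1/30$ against the cubic correction, so the constant $c$ in the arccos upper bound must be chosen tight enough to keep the $a^3$ coefficient favourable up to $a=7/10$.

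\emph{Final claim.} Since $a_K$ increases in $t$ and decreases in $K$, it suffices to check $t=5/4$, $K=73/10$. There $3\pi t/K=75\pi/146<1.614$, and $1.614^{2/3}<1.39<7/5$, so $a_K<7/10$.
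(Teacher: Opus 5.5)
\emph{There is a genuine gap in the step for the left inequality.} Your route is otherwise the paper's: the representation $\pi t/K=\int_0^w\arccos(1-s)\,ds$, the bound $\arccos(1-s)\ge\sqrt{2s}$ giving $\xi_K/K\ge1-a_K$, a polynomial sandwich for $\arccos(1-s)/\sqrt{2s}$ with the same upper constant $1/40$, and inversion by monotonicity of $g$.

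Because $1/30$ is exactly the second-order coefficient of the inverse, the inequality $w\le a-a^2/30$ is decided at the next order. Your lower bound $\arccos(1-s)\ge\sqrt{2s}(1+s/12)$ discards precisely the term that decides it. The polynomial inequality you would have to verify is in fact false:
\[
 \Bigl(1-\frac a{30}\Bigr)^3\Bigl(1+\frac a{20}-\frac{a^2}{600}\Bigr)^2
 =1-\frac{3}{400}a^2+O(a^3).
\]
Numerically it is about $0.9964$ at $a=7/10$, so the $a^2$ terms do not cancel or favour you. Equivalently, your minorant is $w(1+w/20)^{2/3}=w+w^2/30-w^3/3600+\cdots$. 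Its inverse is $a-a^2/30+a^3/400+\cdots$, which lies above $a-a^2/30$, so the bound cannot give the left inequality.

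\emph{The fix is to keep the next term.} The series $\arccos(1-s)/\sqrt{2s}=1+s/12+3s^2/160+\cdots$ has positive coefficients, so $\arccos(1-s)\ge\sqrt{2s}\,(1+s/12+3s^2/160)$ on $[0,1]$. This integrates to $g(w)\ge w(1+w/20+9w^2/1120)^{2/3}$. With $v=a(1-a/30)$, the relevant ratio $(1-a/30)^{3/2}(1+v/20+9v^2/1120)$ becomes $1+a^2\bigl(3/700-O(a)\bigr)$. This is what the paper does; it bounds the ratio below by $1+a^2(3/700-3a/3200)>1$. Asymptotically, any $s^2$-coefficient above $7/800$ in the arccos minorant would suffice.

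You also misplaced the difficulty. For $v=a(1-a/25)$, the right inequality has a first-order margin, since $(1-a/25)^{3/2}(1+v/20+\cdots)=1-a/100+O(a^2)$. It goes through with $c=1/40$; the paper even obtains the sharper coefficient $1/30+a/200$.

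A minor slip on that side: integrating $c\,s^{5/2}$ and normalizing gives the factor $3c/7$, not $4c/7$. This only weakens your majorant and is harmless.

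Your final check at $t=5/4$, $K=73/10$ is fine.
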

\begin{proof}See Appendix~\ref{app:proof-lem-inverse-action-seven-tenths}.\end{proof}

\paragraph{The retained tail.}
\label{sec:jp-tail}
The following estimate requires only three derivatives in the interior
of the support and includes the terminal derivative jump.

\begin{lemma}\label{lem:jp-tail}
Let $f$ be continuous and vanish on $[L,\infty)$, with $f(L)=0$.
Suppose $f$ is three times continuously differentiable on $(0,L)$,
$f''\ge0$, and $f'''\le0$ there. Write $c=-f'(L-)\ge0$.
For $0<a<L$, define
\[
 \mathcal R_f(a)=\frac12f(a)+\sum_{j\ge1}f(a+j)
                         -\int_a^\infty f(t)\,dt.
\]
Then
\begin{equation}\label{eq:jp-tail}
 \mathcal R_f(a)\ge -\frac{f'(a)}{12}
       -\frac{f''(a)}{72\sqrt3}
       -\frac c{12}.
\end{equation}
\end{lemma}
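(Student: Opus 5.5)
The plan is to write $\mathcal R_f(a)$ as a sum of unit-interval trapezoidal errors, represent each error by a Peano kernel against $f''$, and treat the terminal derivative jump and the interior part of $f''$ separately.

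\textbf{Trapezoidal decomposition.} Since $f$ vanishes on $[L,\infty)$, only finitely many samples are nonzero, and
\[
 \mathcal R_f(a)=\sum_{j\ge0}\Bigl[\tfrac12\bigl(f(a+j)+f(a+j+1)\bigr)-\int_{a+j}^{a+j+1}f(t)\,dt\Bigr].
\]
On each unit interval the trapezoidal error has the Peano representation $\tfrac12\int s(1-s)\,f''$, with $s$ the local variable in $[0,1]$. Here $f''$ is understood as the distributional second derivative of the zero extension. Because $f'$ jumps from $-c$ to $0$ at $L$, this measure is $f''(t)\,dt$ on $(a,L)$ plus an atom of mass $c$ at $L$. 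Writing $P(t)=\{t-a\}(1-\{t-a\})\ge0$, I would justify, by integrating by parts twice on each interval, that
\[
 \mathcal R_f(a)=\frac12\int_a^L P(t)f''(t)\,dt+\frac12P(L)\,c .
\]
The atom term is nonnegative and can be discarded.

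\textbf{Splitting off the constant part of the kernel.} Write $P=\tfrac16-\tilde B_2$, where $\tilde B_2(t)=B_2(\{t-a\})$ and $B_2(s)=s^2-s+\tfrac16$. Since $\int_a^Lf''=-c-f'(a)$,
\[
 \frac12\int_a^L P f''=\frac{-f'(a)-c}{12}-\frac12\int_a^L\tilde B_2(t)f''(t)\,dt .
\]
This already produces the first and third terms of \eqref{eq:jp-tail}. It remains to show
\[
 \int_a^L\tilde B_2 f''\le\frac{\sqrt3}{108}\,f''(a).
\]
Here $f''(a)$ is finite because $a$ is an interior point.

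\textbf{Main step: the oscillatory term.} This is where $f'''\le0$ enters, and it is the only delicate point. Since $f''\ge0$ is nonincreasing on $(a,L)$, I would use the layer-cake representation
\[
 f''(t)=f''(L-)+\int_{(t,L)}d\bigl(-f''\bigr)(s),
\]
which expresses $f''$ as a nonnegative combination of indicators $\mathbf 1_{(a,s)}$ of total mass $f''(a)$. The primitive of $\tilde B_2$ from $a$ is $\tfrac13\tilde B_3$, with $B_3(v)=v^3-\tfrac32v^2+\tfrac12v$, and $B_3(0)=0$. Hence for every $s\in(a,L]$,
\[
 \int_a^s\tilde B_2=\tfrac13B_3(\{s-a\})\le\frac{\sqrt3}{108},
\]
using the bound $|B_3|\le\sqrt3/36$ already used in the proof of Lemma~\ref{lem:exactquarter}. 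Integrating this against the layer measure by Tonelli gives the required bound. Then
\[
 \frac12\cdot\frac{\sqrt3}{108}=\frac{\sqrt3}{216}=\frac1{72\sqrt3},
\]
which is exactly the coefficient of $f''(a)$ in \eqref{eq:jp-tail}.

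The technical cost is only justifying the Stieltjes manipulations near $L$. If $f''$ is unbounded near $a$, one works on $(a+\epsilon,L)$ and lets $\epsilon\downarrow0$, but finiteness of $f''(a)$ at the interior point $a$ makes this unnecessary. Monotonicity of $f''$ gives the existence of $f''(L-)\ge0$, and the formula $\int_a^Lf''=-c-f'(a)$ follows from the fundamental theorem of calculus on compact subintervals.
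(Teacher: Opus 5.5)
Your proof is correct and follows essentially the same route as the paper's. The paper subtracts the ramp $c(L-t)_+$ and drops its nonnegative trapezoidal error, which is exactly your atom term $\tfrac12P(L)c$. It then integrates the centered kernel $w-\tfrac1{12}$, with $w(s)=s(1-s)/2$, by parts against the nonincreasing second derivative using the primitive $s(1-s)(2s-1)/12=-\tfrac16B_3(s)$; this is the same computation as your layer-cake step with $\tfrac13B_3$.
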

\begin{proof}
Put $\widetilde f(t)=f(t)-c(L-t)_+$. The derivative of $\widetilde f$ is continuous at
$L$, and $\widetilde f''$, extended by zero after $L$, is nonnegative and
nonincreasing. The trapezoidal identity on each unit interval gives
\[
 \mathcal R_{\widetilde f}(a)=\int_0^\infty w(\{s\})\widetilde f''(a+s)\,ds,
 \qquad w(s)=\frac{s(1-s)}2.
\]
The mean of $w$ on $[0,1]$ is $1/12$. Its centered primitive is
\[
 B(s)=\int_0^s\left(w(v)-\frac1{12}\right)dv
     =\frac{s(1-s)(2s-1)}{12},
 \qquad |B(s)|\le\frac1{72\sqrt3}.
\]
Extend $B$ periodically. Stieltjes integration by parts against the
nonincreasing function $\widetilde f''(a+s)$ therefore yields
\[
 \mathcal R_{\widetilde f}(a)\ge-\frac{\widetilde f'(a)}{12}
                       -\frac{\widetilde f''(a)}{72\sqrt3}.
\]
The function $(L-t)_+$ is convex, so its trapezoidal error is
nonnegative. Since $\widetilde f'(a)=f'(a)+c$ and $\widetilde f''(a)=f''(a)$,
the claimed bound follows.
\end{proof}

For $p=2,3$, put $f(t)=\Phi_p(\xi_K(t))$ and $L=G_K(\alpha)$,
and suppose $K>\alpha$. The required derivative signs hold on $(0,L)$.
Indeed, the first three derivatives of the polynomial $\Phi_p$ are
nonnegative above $\alpha$, whereas, with $u=\arccos(\xi_K(t)/K)$,
\[
 \xi_K'=-\frac\pi u,\qquad
 \xi_K''=\frac{\pi^2}{Ku^3\sin u},\qquad
 \xi_K'''=-\frac{\pi^3(3\sin u+u\cos u)}{K^2u^5\sin^3u}.
\]
The chain rule consequently gives $f''\ge0$ and $f'''\le0$.
Its terminal linear coefficient is explicit:
\begin{equation}\label{eq:jp-terminal}
 c=\frac{\pi\Phi_p'(\alpha+)}{\arccos(\alpha/K)}
  =\begin{cases}0,&p=2,\\
  \pi/[6\arccos(1/K)],&p=3.\end{cases}
\end{equation}

For $f=\Phi_p\circ\xi_K$, the first-period decomposition is exact:
\[
 E_B(\Phi_p)=f(1/4)+\tfrac12f(5/4)-\int_0^{5/4}f(t)\,dt
                         +\mathcal R_f(5/4).
\]
\subsection{Dimension three: the quadratic comparison}
\label{app:dimension-three}
Use the common normalized parameters $0\le\rho<1$, $K$,
$\mu=\rho K$ and $T=(K-\mu)/\pi$. Write $N^<$ for the strict
count in dimension three, and put
\begin{equation}\label{d3:eq:notation}
 h=1-\rho,\qquad D=1+\rho+\rho^2,\qquad X^2=DK^2/3.
\end{equation}
\begin{equation}\label{d3:eq:weyl}
 \W_3(K,\mu)=\frac{2(K^3-\mu^3)}{9\pi}.
\end{equation}
Equivalently,
\begin{equation}\label{d3:eq:W-X}
 \W_3(K,\mu)=\frac{2T}{3}X^2.
\end{equation}
\begin{theorem}\label{d3:thm:main}
For every $0\le\rho<1$ and $K>0$,
\begin{equation}\label{d3:eq:main}
 N^<(K^2)>\W_3(K,\rho K).
\end{equation}
Both sides vanish when $K=0$.
\end{theorem}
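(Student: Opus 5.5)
We follow the division of Table~\ref{tab:roadmap}. First we dispose of the low-frequency and thin ranges $K\le44/5$ or $T\le4/3$ by radial trials; this is the content of Proposition~\ref{d3:thm:low}, which we take as the variational input. Its proof would follow the pattern of Lemma~\ref{lem:d4-ground-direct}: explicit ball trials in each degree combined with the ground-level comparison \eqref{eq:var-ground}. The thin-shell range would be handled by the constant trial in degree zero together with the endpoint principle \eqref{eq:var-endpoint}. In the remaining range $K>44/5$, $T>4/3$ we apply Proposition~\ref{prop:physical-transfer} with $F=\Phi_2$, which is admissible by Lemma~\ref{lem:jp-poly}. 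The phase defect is supplied by Proposition~\ref{prop:phase}. In dimension three the three-quarter count holds in every positive degree, by Lemma~\ref{lem:common-low-robin-phase} with $\alpha=1/2$, $\nu=\ell+1/2\ge 5/2$, and by Proposition~\ref{prop:phase-new-cutoff} for large orders. The only possible exception is therefore degree zero, of multiplicity one, for $\mu$ in a bounded window. There we would use an explicit degree-zero phase estimate to show that the defect is at most one and vanishes outside a short range of $\mu$.

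The comparison inequality \eqref{eq:physical-transfer} then splits into four terms. The first is the ball surplus $Q_{B,2}(K)-\W_3(K,0)$. We write it as $J_2(K)-\W_3(K,0)+E_B(\Phi_2)$. The deficit $J_2(K)-\W_3(K,0)$ is explicitly of order $-K^2/(4\pi)\cdot$const, coming from expanding $(z-1/2)^2$. We bound $E_B(\Phi_2)$ from below using the first-period decomposition and Lemma~\ref{lem:jp-tail}, with terminal coefficient $c=0$ from \eqref{eq:jp-terminal}. The first-period terms are controlled by the two-sided inverse-action bounds \eqref{eq:inverse-action-wide}, valid since $K\ge73/10$; this would be Lemma~\ref{d3:lem:ballpoly}. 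The expected outcome is a surplus of roughly $K^2/4$ minus lower-order terms, mirroring Lemma~\ref{lem:onepoint-ball}.

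The second term is the inner-volume gain $\W_3(\mu,0)-J_2(\mu)$. It is nonnegative, and in fact of order $\mu^2$ because $\Phi_2$ lags $z^2$ by $z-1/4$. The third term is the slope loss $M_{\Phi_2}/8$, bounded via \eqref{eq:jp-Jensen}. Since $\Phi_2'(z)/z=2(1-1/(2z))\le2$, we get $M_{\Phi_2}\le 2\mu K/(T\sqrt{\,\cdot\,})$ after optimizing $\sqrt{1-\beta_*z^2}$. This gives a loss of size comparable to $\rho K^2/(h)$, times a small constant, which degenerates as $h\to0$. The width condition $T>4/3$ then controls this loss against the inner-volume gain and ball surplus.

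The main obstacle is closing the resulting scalar inequality uniformly in $\rho$. This is Proposition~\ref{d3:prop:jointclosure}: near $\rho\to1$ the slope loss grows like $\rho/h$, and only the width threshold $T\ge4/3$ rescues it. I would parametrize by $X^2=DK^2/3$ as in \eqref{d3:eq:W-X} and treat $\rho\le1/2$ and $\rho\ge1/2$ separately. In the first range the loss is absorbed by the ball surplus after completing the square in $\rho$, as in the $d\ge5$ proof. In the second, I would compare the loss with the inner-volume gain $\propto\mu^2$ and use $T>4/3$ to bound $\mu K/T$. The degree-zero defect, at most one, would be absorbed in both ranges since the surplus exceeds one once $K>44/5$. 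Strictness follows because every bound used is strict, and at $\rho=0$ the proof reduces to the ball case with no defect.
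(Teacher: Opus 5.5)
Your outline is the paper's route: Proposition~\ref{d3:thm:low}, then Proposition~\ref{prop:physical-transfer} with $\Phi_2$, Lemma~\ref{d3:lem:ballpoly}, the inner gain $\approx\mu^2/8$, and a completion of the square. Two steps fail as written, however.

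\emph{Degree one is not covered.} Lemma~\ref{lem:common-low-robin-phase} needs $\nu\ge\max\{1,5\alpha\}=5/2$, so it covers only $\ell\ge2$. Proposition~\ref{prop:phase-new-cutoff} needs $\nu\ge19/2$. Your conclusion that degree zero is the only exception therefore skips $\ell=1$ ($\nu=3/2$, multiplicity $3$). Lemmas~\ref{lem:phase-upper-window} and~\ref{lem:tp-localphase} leave the window $30/7<\mu^2<9/4+(3/2)^{4/3}+245/192\approx5.24$. The joining argument also fails there: near the first crossing $t\approx2.08$ the error sum is about $0.81>\pi/4$. In this window Proposition~\ref{prop:phase} only gives a defect of $3$. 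At $K=44/5$ (so $\rho\approx1/4$), even the paper's sharper bounds leave a surplus of only about $2.3$--$2.4$, so the defect cannot be absorbed. The paper settles $\ell=1$ in Theorem~\ref{d3:thm:phaseenclosure} with the explicit phase $\phi_1(t)=t+\arg(2t+i(2-t^2))$: it stays below $\pi/2$ on $(0,\pi/2]$, and $\phi_1'<S_{3/2}'$ for $t\ge\pi/2$.

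Relatedly, your claim that the surplus exceeds one once $K>44/5$ is false in general; the paper's margin at $K=44/5$ is about $10^{-5}K^2$. The degree-zero defect is harmless only because $\phi_0(t)=t+\arctan(1/t)$ removes it for $\mu\ge1$. For $\mu<1$ the paper compares with the ball model, $N^<\ge P(K,0)-1$, where there is no slope loss.

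\emph{The slope estimate is too crude to close at the threshold.} Discarding the factor $1-1/(2z)$ gives $M_{\Phi_2}/(8K^2)\le\rho/(4T)\le3\rho/16$. Combined with the inner gain $\rho^2/8-\rho/(4\pi K)$, the normalized loss can reach $2(3/16+1/(4\pi K))^2\approx0.077$ at $K=44/5$. Even if you keep the constraint $\rho\le1-4\pi/(3K)$, it is about $0.069$. The ball surplus there is only about $0.043$ by Lemma~\ref{d3:lem:ballpoly}, and about $0.048$ in truth. Its limit is $1/8$, not $1/4$, because $J_2(K)-\W_3(K,0)\approx-K^2/8$. So with these ingredients your scalar inequality does not close near $K=44/5$; with Lemma~\ref{d3:lem:ballpoly} it first closes around $K\approx18$, whatever the split at $\rho=1/2$.

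The paper instead maximizes $(1-1/(2z))\sqrt{1-\beta_*z^2}$ jointly. At $z=(2\beta_*)^{-1/3}$ both corrections equal $\chi=\bigl((1+\rho+\rho^2)/(12K^2\rho^2)\bigr)^{1/3}$, which gives the extra factor $(1-\chi)^{3/2}$. Using $T\ge4/3$ to bound $K\chi$ from below, this factor is at most $1-201/(80K)+4489/(3200K^2)$. Even then the closure $B-2U^2>10^{-5}$ in Lemma~\ref{d3:lem:scalarjoint} is razor-thin.

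A minor point: the thin range in Proposition~\ref{d3:thm:low} uses the constant trial in positive degrees, $\lambda_{\ell,0}<3\ell(\ell+1)/D$, not in degree zero.
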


We combine the common radial phase estimates with the comparison for
the full angular polynomial and elementary trial functions. The trial functions cover
$T\le4/3$ or $K\le44/5$; the joint estimate covers
$T\ge4/3$ and $K\ge44/5$.

\subsubsection{Trial functions and the elementary ranges}
In angular degree $\ell$, write $\lambda_{\ell,0}$ for the first
radial eigenvalue; its multiplicity is $2\ell+1$ and
$\lambda_{0,0}=0$. The constant trial and the ball power trial
$s^a$, together with Lemma~\ref{lem:var-radial}, give, for
$\ell\ge1$ and $a\ge0$,
\begin{equation}\label{d3:eq:constant}
 \lambda_{\ell,0}<
 \min\left\{\frac{3\ell(\ell+1)}D,
       (a^2+\ell(\ell+1))\frac{2a+3}{2a+1}\right\}.
\end{equation}
The constant radial function is not an eigenfunction in a positive angular
sector, so the first bound is strict, including at $\rho=0$.
The ball power trial is also not an eigenfunction: for $a>0$ its
derivative at one is nonzero, and $a=0$ is the constant trial.
Its strict Rayleigh bound and the comparison of lowest radial eigenvalues
with the ball give the second inequality for every $0\le\rho<1$.
The power trials $a=1,2$, together with the ball trial
$s^6(1-3s^2/4)$ in degree six, give
\begin{equation}\label{d3:eq:Qs}
 \begin{aligned}
 Q_j&=\frac53(j^2+j+1)&& (1\le j\le3),\\
 Q_j&=\frac75(j^2+j+4)&& (j=4,5,7),\qquad Q_6=\frac{127}{2}.
 \end{aligned}
\end{equation}
Indeed, the last trial has Rayleigh quotient $39330/623<127/2$.
These bounds increase with $j$, so, for $K>0$,
\begin{equation}\label{d3:eq:firstrows}
 K^2\ge Q_j\quad\Longrightarrow\quad N^<(K^2)\ge(j+1)^2.
\end{equation}

\begin{lemma}[Initial intervals]\label{d3:lem:initial}
For $K>0$, $N^<(K^2)>\W_3(K,\mu)$ holds when $K^2\le Q_6$;
it also holds when $\rho\ge1/2$ and $K^2\le Q_7$.
\end{lemma}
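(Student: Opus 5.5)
The plan is a piecewise comparison on intervals between consecutive thresholds. Write $Q_0=0$. By \eqref{d3:eq:firstrows}, for $Q_j\le K^2<Q_{j+1}$ we have $N^<(K^2)\ge(j+1)^2$. The Weyl term $\W_3(K,\mu)=2(K^3-\mu^3)/(9\pi)$ increases in $K$ and decreases in $\mu$, so on such an interval it is bounded by its value at $\rho=0$ and $K^2=Q_{j+1}$. It therefore suffices to check
\[
 (j+1)^2>\frac{2Q_{j+1}^{3/2}}{9\pi},\qquad 0\le j\le5.
\]
For $j=0$ (that is, $K^2<Q_1=5$) the constant eigenfunction gives count one. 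Meanwhile $\W_3\le 2\cdot5^{3/2}/(9\pi)\approx0.79<1$. At the endpoint $K^2=Q_{j+1}$ the count is already $(j+2)^2$, so the half-open intervals cover $(0,Q_6]$. The strict inequality needs $\Lambda>0$, and it holds because $\W_3$ is strictly below the count.

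The remaining checks are short numerical comparisons, which I would do with rational bounds such as $\pi>3$ or $\pi>157/50$ and upper bounds for square roots:

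\begin{itemize}
\item $j=1$: $Q_2=35/3$, so $Q_2^{3/2}\approx39.9$ and the Weyl bound is $\approx2.82<4$.
\item $j=2$: $Q_3=65/3$, giving Weyl $\approx7.14<9$.
\item $j=3$: $Q_4=28$, giving Weyl $\approx10.48<16$.
\item $j=4$: $Q_5=238/5=47.6$, giving Weyl $\approx23.2<25$.
\item $j=5$: $Q_6=63.5$, giving Weyl $\approx35.8<36$.
\end{itemize}

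The case $j=5$ is the tight one. There I would certify $Q_6^{3/2}<506$ via $\sqrt{63.5}<7.969$, and $\pi>3.1415$, so that $2\cdot506/(9\pi)<35.8<36$. If this margin proves too delicate, I would fall back on $\mu>0$, but that only helps for $\rho>0$ and the ball case is needed, so the explicit rational bound must do the work.

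For the second claim ($\rho\ge1/2$ and $Q_6<K^2\le Q_7$), the count is at least $49$. Here $Q_7=\tfrac75\cdot60=84$. Using $\mu\ge K/2$,
\[
 \W_3\le\frac{2K^3(1-1/8)}{9\pi}=\frac{7K^3}{36\pi}.
\]
At $K^2=84$ we have $K^3\approx769.9$, so this gives $\approx47.7<49$. To make it rigorous I would use $84^{3/2}<770$ and $\pi>3.14$, which gives $7\cdot770/(36\cdot3.14)<47.7$.

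The only real obstacle is the tight margin at $j=5$, roughly $0.2$ out of $36$, which demands careful rational enclosures of $\sqrt{127/2}$ and $\pi$. Everything else is monotonicity of $\W_3$ in $K$ and $\mu$, plus the monotonicity of the $Q_j$ noted after \eqref{d3:eq:Qs}.
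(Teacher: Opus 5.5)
Your proposal follows the paper's proof. Both split into the intervals $Q_j\le K^2\le Q_{j+1}$, take the count $(j+1)^2$ from \eqref{d3:eq:firstrows}, bound the Weyl term by its ball value at the right endpoint, and use the factor $7/8$ from $\mu\ge K/2$ on the seventh interval. The paper only compresses the first five intervals through monotonicity of $(t^2+t+c)^3/t^4$, and it uses the coarser bounds $\sqrt{Q_6}<8$ and $\pi>157/50$.

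Fix two arithmetic slips; neither changes the conclusion.
\begin{itemize}
\item $Q_4=\tfrac75(16+4+4)=168/5$, not $28$. The check becomes $\W_3(\sqrt{Q_4},0)\approx13.8<16$.
\item Since $\sqrt{63.5}\approx7.9687$, one has $Q_6^{3/2}\approx506.01>506$. Your enclosure $\sqrt{63.5}<7.969$ therefore gives only $Q_6^{3/2}<506.04$, which still yields $\W_3(\sqrt{Q_6},0)<35.8<36$.
\end{itemize}
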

\begin{proof}See Appendix~\ref{app:proof-d3-lem-initial}.\end{proof}

\begin{proposition}\label{d3:thm:low}
The bound $N^<(K^2)\ge\W_3(K,\mu)$ holds if $T\le4/3$ or
$K\le44/5$. It is strict when $K>0$.
\end{proposition}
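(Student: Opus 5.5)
The proof splits into two ranges: $K\le44/5$, which Lemma~\ref{d3:lem:initial} and \eqref{d3:eq:firstrows} handle directly, and $K>44/5$ with $T\le4/3$, which uses the ground levels \eqref{d3:eq:constant} together with one excited level.

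\emph{The range $K\le44/5$.} We have $(44/5)^2=77.44$, while $Q_6=127/2=63.5$ and $Q_7=\frac75\cdot60=84$. Lemma~\ref{d3:lem:initial} therefore covers all $K^2\le Q_6$ for every $\rho$, and all $K\le44/5<\sqrt{Q_7}$ when $\rho\ge1/2$. What remains is $\rho<1/2$ with $63.5<K^2\le77.44$. By \eqref{d3:eq:firstrows} with $j=6$, $N^<(K^2)\ge49$ there. On the other hand, since $\rho<1/2$ and $\W_3\le 2K^3/(9\pi)$,
\[
 \W_3(K,\mu)\le \frac{2\cdot(44/5)^3}{9\pi}<48.3<49 .
\]
The inequality is strict, and both sides vanish at $K=0$.

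\emph{The range $K>44/5$ and $T\le4/3$.} Here $\mu=K-\pi T\ge K-4\pi/3$, so $\rho$ is close to one, and by \eqref{d3:eq:W-X}
\[
 \W_3=\tfrac23TX^2\le\tfrac89X^2,\qquad X^2=DK^2/3 .
\]
The ground levels in each positive degree satisfy $\lambda_{\ell,0}<3\ell(\ell+1)/D$ by \eqref{d3:eq:constant}. Every $\ell$ with $\ell(\ell+1)<X^2$ therefore contributes $2\ell+1$, and degree zero adds the constant mode. If $L$ is the largest integer with $L(L+1)<X^2$, the ground rows give $(L+1)^2$. Since $X^2\le(L+1)(L+2)$,
\[
 (L+1)^2 > \tfrac89 (L+1)(L+2)\quad\Longleftrightarrow\quad L+1>16 ,
\]
so the ground rows alone beat $\W_3$ for large $X$; in fact $\tfrac89X^2$ is the worst case, attained at $T=4/3$. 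For the remaining small values of $L$, and near the jumps where $X^2$ is just below $(L+1)(L+2)$, I would add the first excited level $\lambda_{0,1}(\rho)$. By \eqref{eq:var-excited},
\[
 \lambda_{0,1}(\rho)<\pi^2/(1-\rho)^2+2/\rho^2 ,
\]
which is below $K^2$ exactly when the width is large, that is, $T\gtrsim 1$. So the argument divides at about $T=1$. For $T\le1$, we have $\W_3\le\tfrac23X^2$, and the ground-row count $(L+1)^2>\tfrac23(L+1)(L+2)$ holds for all $L\ge0$, since it reduces to $3(L+1)>2(L+2)$, i.e.\ $L>1$; the cases $L=0,1$ are checked by hand using $K>44/5$, which forces $X^2$ large anyway. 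For $1<T\le4/3$, the excited row in degree zero, and if necessary in low degrees via $\lambda_{q,1}(r)\le\lambda_{0,1}(r)+q/r^2$, supplies the missing margin.

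\emph{Main obstacle.} The main obstacle is this middle width window $1<T\le4/3$. There the ground rows give only a ratio near $(L+1)/(L+2)$ against the factor $\tfrac89$, so the excited-level bound has to be applied efficiently. Because $K>44/5$ forces $\rho\ge1-4\pi/(3K)$, i.e.\ $\rho\gtrsim0.52$, the error term $2/\rho^2$ in \eqref{eq:var-excited} stays bounded. I would verify the finitely many small cases of $L$ by explicit rational estimates, using monotonicity in $\rho$ to reduce to endpoint checks.
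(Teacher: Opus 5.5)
Your treatment of $K\le44/5$ is the paper's argument. The restriction $\rho<1/2$ there is not needed, since $\W_3(K,\mu)\le\W_3(K,0)$ for every $\rho$.

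In the thin range you keep the paper's constant-trial count against $\tfrac89X^2$, but you close the small cases differently. The paper works for all $K>0$ and takes $m(m-1)\le X^2<m(m+1)$. For $m\le7$ it uses only ground levels: either the power-trial thresholds $Q_j$ in \eqref{d3:eq:firstrows} with $j=\max\{6,m\}$, or Lemma~\ref{d3:lem:initial} when $K^2\le Q_j$. At $m=7$ the radius hypothesis of that lemma comes from monotonicity of $D/h^2$. You instead use $K>44/5$. With $T\le4/3$ this gives $\rho>1-5\pi/33>11/21$, hence $X^2>46$, and you then add the degree-zero excited level from \eqref{eq:var-excited}. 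Your route replaces the reuse of $Q_7$ and the initial-interval lemma by one excited-level estimate. The paper's version treats the thin range for every $K>0$ with ground levels alone, independently of the low-frequency case.

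Your route works, but as written it contains an arithmetic slip and leaves the decisive check unperformed.

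\emph{The slip.} The equivalence is $(L+1)^2>\tfrac89(L+1)(L+2)\iff L>7$, not $L+1>16$.

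\emph{The missing check.} Since $X^2>46$ forces $L\ge6$, the only possible deficit is at $L=6$. There $X^2\le56$, so $\W_3\le\tfrac{112}{3}T$, which reaches $49$ only if $T\ge21/16$. In that case
\[
 \lambda_{0,1}(\rho)<\frac{K^2}{T^2}+\frac2{\rho^2}
 <\frac{256}{441}K^2+\frac{882}{121}<K^2,
\]
because $\tfrac{185}{441}K^2>32$. The count is therefore at least $50>\tfrac{448}{9}\ge\W_3$.

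\emph{The boundary case $L=7$.} Here $\W_3\le64$, with equality only when $X^2=72$ and $T=4/3$. At that point the strict constant trial already counts degree eight. This issue disappears if you use the paper's convention $m(m-1)\le X^2<m(m+1)$.

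Consequently the split at $T=1$, the low-degree excited rows, and the ``finitely many small cases'' all reduce to this single verification.
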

\begin{proof}
The case $K=0$ is immediate. Suppose first that $hK\le4\pi/3$,
and choose the integer $m\ge1$ with
\begin{equation}\label{d3:eq:m}
 m(m-1)\le X^2<m(m+1).
\end{equation}
The constant trial gives
\begin{equation}\label{d3:eq:mcount}
 N^<(K^2)\ge m^2,\qquad
 \W_3(K,\mu)\le\frac89X^2<\frac89m(m+1).
\end{equation}
This suffices for $m\ge8$. For $m\le7$, put $j=\max\{6,m\}$.
If $K^2>Q_j$, then \eqref{d3:eq:firstrows} gives
$N^<\ge(j+1)^2>8m(m+1)/9$. Otherwise apply
Lemma~\ref{d3:lem:initial}. Its radius hypothesis for $m=7$
follows because $D/h^2$ increases with $\rho$, and
\[
 \rho\le\frac12\ \Longrightarrow\ X^2\le\frac{112\pi^2}{27}<42,
\]
contrary to $X^2\ge m(m-1)$. This proves the thin range.

For $K\le44/5$, the initial lemma covers $K^2\le Q_6$.
If $Q_6\le K^2\le(44/5)^2$, the first seven sectors give
$N^<\ge49>\W_3(44/5,0)$, using $\pi>157/50$.
\end{proof}

\subsubsection{The degree-zero phase and positive degrees}
Let $\phi_\ell=\phi_{1/2,\ell+1/2}$ be the common Robin phase.
It is also the continuous phase of $j_\ell'+iy_\ell'$ because
$j_\ell'+iy_\ell'=\sqrt{\pi/2}\,t^{-3/2}Q_{1/2,\ell+1/2}$.
Write $\Delta_\ell(K)=\phi_\ell(K)-\phi_\ell(\mu)$.
The exact count is given by Lemma~\ref{lem:exact}.

For the zero angular sector the phase is explicit:
\begin{equation}\label{d3:eq:phasezero}
 \phi_0(t)=t+\arctan(1/t),\qquad
 \Delta_0(K)=\pi T-\arctan\frac{K-\mu}{1+\mu K}.
\end{equation}
Here the first formula is for $t>0$, with the continuous extension
$\phi_0(0)=\pi/2$. The formula for $\Delta_0$ therefore includes
$\mu=0$. In particular, $\Delta_0(K)>\pi(T-1/2)$.

\label{d3:sec:enclosure}
Write $\nu=\ell+1/2$.

\begin{theorem}[Positive angular degrees]
\label{d3:thm:phaseenclosure}
For every integer $\ell\ge1$, the common enclosure
\eqref{eq:cp-enclosure} holds with $\alpha=1/2$ and $\nu=\ell+1/2$.
\end{theorem}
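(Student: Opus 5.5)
For $\ell\ge2$ I would not redo any phase analysis; I would reduce to Lemma~\ref{lem:common-low-robin-phase}. In dimension three $\alpha=1/2\le1$, and $\max\{1,5\alpha\}=5/2$. So every order $\nu=\ell+1/2$ with $\ell\ge2$ satisfies $\nu\ge5/2$, and the lemma gives the enclosure \eqref{eq:cp-enclosure} directly. The only remaining case is $\ell=1$, $\nu=3/2$. There the general joining criterion fails near the first Neumann zero of $j_1'$ (at $t\approx2.08$ the FLPS error plus the Robin bound slightly exceeds $\pi/4$). I would therefore use the explicit spherical Bessel phase instead.

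For $\ell=1$, I would use the identity $j_1'+iy_1'=\sqrt{\pi/2}\,t^{-3/2}Q_{1/2,3/2}$ together with $h_1(t)=-e^{it}(1/t+i/t^2)$. These give $h_1'(t)=e^{it}\bigl(2t+i(2-t^2)\bigr)/t^3$. Since $2t>0$, the continuous phase with limit $\pi/2$ at zero is
\[
 \phi_1(t)=t+\arctan\frac{2-t^2}{2t}\qquad(t>0).
\]
By \eqref{eq:phiprime}, $\phi_1'$ has the sign of $t^2-\nu^2+\alpha^2=t^2-2$. Hence $\phi_1$ decreases from $\pi/2$ on $(0,\sqrt2]$ and increases afterwards. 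This gives $\phi_1<\pi/2$ on $(0,\sqrt2]$ at once. On $[\sqrt2,3/2]$ the maximum is attained at $3/2$, where
\[
 \phi_1(3/2)=3/2-\arctan(1/12)<3/2<\pi/2 .
\]
This proves the first half of \eqref{eq:cp-enclosure}.

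For $t>\nu$, the lower bound $S_\nu+\pi/4<\phi_{\alpha,\nu}$ is already general, from \eqref{eq:flps} and \eqref{eq:robinphase} with $\delta_\alpha>0$. It remains to prove the upper bound $\phi_1<S_{3/2}+\pi/2$. I would split the range at $t=3$.

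On $[3,\infty)$ I would use \eqref{eq:flps} and \eqref{eq:deltabound}. Since $\sqrt{t^2-9/4}\ge\sqrt{27}/2$ there, one gets
\[
 E_{3/2}(t)+\arctan\frac{1/2}{\sqrt{t^2-9/4}}<\frac{77}{420}+\frac15<\frac\pi4 ,
\]
and both terms decrease in $t$.

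On $[3/2,3]$ I would compare the explicit $\phi_1$ with
\[
 S_{3/2}(t)=\sqrt{t^2-9/4}-\tfrac32\arccos\bigl(3/(2t)\bigr).
\]
The gap $S_{3/2}+\pi/2-\phi_1$ equals $\pi/2-3/2+\arctan(1/12)>0$ at $t=3/2$, and it is comfortably positive at $t=3$. Its derivative is
\[
 \frac{\sqrt{t^2-9/4}}{t}-\frac{(t^2-2)^2}{t^4+4}\cdot\frac1{\ }\!\!\cdot\,\text{(as computed from }\phi_1'\text{)} ,
\]
that is, $\sqrt{t^2-9/4}/t-\phi_1'(t)$, with $\phi_1'(t)=1-2(2+t^2)/(t^4+4)$. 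I would show it is positive on most of the interval by squaring. If a single sign argument does not close, I would split $[3/2,3]$ into a few subintervals and bound each piece by monotone endpoint values using rational enclosures of $\arccos$, $\arctan$ and $\pi$.

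The main obstacle is this elementary but fiddly comparison on the bounded interval $[3/2,3]$. The square-root branch point of $S_{3/2}$ at $t=3/2$ prevents a clean series comparison there. I would handle the region near $3/2$ through the positive margin at the endpoint together with the Lipschitz bound $\phi_1'\le1$, and the remainder by subintervals.
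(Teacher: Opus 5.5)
Your proposal is correct. For $\ell\ge2$, and for $\phi_1<\pi/2$ on $(0,3/2]$, it agrees with the paper. It differs only in how you get the upper bound $\phi_1<S_{3/2}+\pi/2$.

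The one step you leave unexecuted is the squared derivative comparison. With $z=t^2$ and $\phi_1'(t)=t^2(t^2-2)/(t^4+4)$, it reduces to
\[
 (4z-9)(z^2+4)^2-4z^3(z-2)^2=z^2(7z^2+16z-72)+16(4z-9)>0 .
\]
This holds for every $z\ge7/3$, since both summands are then positive. It fails just above $z=9/4$; indeed $g'(3/2)=-\phi_1'(3/2)=-9/145$. So you are right that a single sign argument cannot cover $[3/2,3]$, and some patch is needed at the branch point.

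Your Lipschitz patch works. Using $\pi>157/50$ and $\arctan(1/12)>1/12-1/5184$, one gets $g(3/2)>3/20$, and the patch only has to bridge $[3/2,\sqrt{7/3}]$, an interval of length less than $3/100$. After that, $g'>0$ on all of $[\sqrt{7/3},\infty)$. This makes the FLPS tail on $[3,\infty)$ and any further subdivision unnecessary. (Your first displayed formula for $g'$ is garbled; the corrected expression $\sqrt{t^2-9/4}/t-\phi_1'(t)$ is the right one.)

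The paper avoids the patch. The single-minimum argument, together with $\phi_1(\pi/2)<\pi/2$, gives $\phi_1<\pi/2$ on the longer interval $(0,\pi/2]$. Since $S_{3/2}>0$ for $t>3/2$, this already gives the upper bound on $(3/2,\pi/2]$. The derivative comparison then starts at $t=\pi/2$, where $t^2>7/3$, so $\phi_1-S_{3/2}$ decreases on $[\pi/2,\infty)$. No FLPS input is used for $\ell=1$.

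Your route is valid but costs an extra margin estimate and a redundant tail estimate. The paper's route reduces the whole $\ell=1$ upper bound to the one polynomial inequality above.
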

\begin{proof}
For $\ell\ge2$, use Lemma~\ref{lem:common-low-robin-phase}
with $\alpha=1/2$ and $\nu=\ell+1/2$.

For $\ell=1$, the half-integer Hankel formula gives
\[
 \phi_1(t)=t+\arg\bigl(2t+i(2-t^2)\bigr),\qquad
 \phi_1'(t)=\frac{t^2(t^2-2)}{t^4+4},
\]
where the argument lies in $(-\pi/2,\pi/2)$.
The phase has a single minimum and $\phi_1(\pi/2)<\pi/2$;
hence $\phi_1(t)<\pi/2$ for $0<t\le\pi/2$.
For $t\ge\pi/2$, put $z=t^2>7/3$. Then
\[
 (4z-9)(z^2+4)^2-4z^3(z-2)^2
 =z^2(7z^2+16z-72)+16(4z-9)>0.
\]
Squaring the positive derivatives therefore gives
$\phi_1'(t)<S_{3/2}'(t)$.
Thus $\phi_1-S_{3/2}$ decreases on $[\pi/2,\infty)$,
which proves the upper bound. The lower bound is already contained
in \eqref{eq:flps} and \eqref{eq:robinphase}.
\end{proof}

Consequently the three-quarter bound \eqref{eq:threequarter}
holds in every positive degree. Put $P(K,\mu)=P_3(K,\mu)$.

\begin{proposition}[Two refinements for the zero sector]\label{d3:prop:zeroimproved}
If $\mu\ge1$, then
\begin{equation}\label{d3:eq:nocost}
 N^{<}(K^2)\ge P(K,\mu).
\end{equation}
If $0\le\mu<1$ and $K>1/2$, then
\begin{equation}\label{d3:eq:smallmu}
 N^{<}(K^2)\ge P(K,0)-1.
\end{equation}
Here $P(K,0)$ means the same action model with $G_0=0$.
\end{proposition}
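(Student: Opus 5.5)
Both refinements reduce to the degree-zero sector. By Theorem~\ref{d3:thm:phaseenclosure}, the three-quarter count \eqref{eq:threequarter} holds in every degree $\ell\ge1$ and for every $0\le\rho<1$. Summing with multiplicities $2\ell+1$, it therefore suffices to compare $n_0^<(K^2)$ with the degree-zero action term $\lfloor A(1/2)+3/4\rfloor$. We read this term from \eqref{eq:rows}, with $\nu_0=\alpha=1/2$.

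For \eqref{d3:eq:nocost} with $\mu\ge1$, we use the exact ceiling formula \eqref{eq:ceiling} and the explicit phase \eqref{d3:eq:phasezero}. These give
\[
 n_0^<(K^2)=\left\lceil T-\frac1\pi\arctan\frac{K-\mu}{1+\mu K}\right\rceil .
\]
We must show this is at least $\lfloor A(1/2)+3/4\rfloor$. The plan is to bound the action from above: $\pi A(1/2)=S_{1/2}(K)-S_{1/2}(\mu)$. We write $S_\nu(t)=\sqrt{t^2-\nu^2}-\nu\arccos(\nu/t)$ and note that $S_{1/2}(t)-t$ is increasing toward its limit $-\pi/4$. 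Then
\[
 \pi A(1/2)\le \pi T-\bigl[(\mu-S_{1/2}(\mu))-(K-S_{1/2}(K))\bigr].
\]
The bracket is a positive difference. So it suffices to prove
\[
 \frac1\pi\arctan\frac{K-\mu}{1+\mu K}\le \frac1\pi\bigl[(\mu-S_{1/2}(\mu))-(K-S_{1/2}(K))\bigr]+\frac14 .
\]
Once this holds, $\lceil y\rceil\ge\lfloor y+\epsilon+3/4\rfloor$ follows from $\epsilon\le 1/4$ (with care at integer nodes, using the strictness of the arctangent bound). Since $\arctan\frac{K-\mu}{1+\mu K}=\arctan K-\arctan\mu\le \arctan(1/\mu)$, we define $g(t)=t-S_{1/2}(t)-\arctan(1/t)$. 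The target inequality then becomes $g(\mu)-g(K)\ge -\pi/4$, i.e.\ $g(K)-g(\mu)\le\pi/4$. Both terms of $g$ tend to their limits $\pi/4$ and $0$, so $g(t)\to\pi/4$. The key scalar claim is therefore $g(t)\ge0$ on $[1,\infty)$, which gives $g(K)-g(\mu)\le\pi/4-0$. At $t=1$ this reads $1-\sqrt{3}/2+\tfrac12\arccos(1/2)-\pi/4\approx 0.134+0.524-0.785<0$, so the naive argument fails at the left endpoint. We therefore keep the exact expression $\arctan K-\arctan\mu$ and show instead that
\[
 u(t)=t-S_{1/2}(t)-\arctan t
\]
is nonincreasing on $[1,\infty)$. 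That reduces to the derivative inequality $1-\sqrt{1-1/(4t^2)}\le 1/(1+t^2)$, which follows by squaring. Then $u(\mu)-u(K)\ge0$, and the desired comparison holds with margin $1/4$. This monotonicity check, and the choice of $\mu\ge1$ as the threshold where it starts, is the step I expect to need the most care.

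For \eqref{d3:eq:smallmu} with $0\le\mu<1$, we compare with $P(K,0)$ directly. In degrees $\ell\ge1$ we have $\nu_\ell\ge3/2>\mu$, so $G_\mu(\nu_\ell)=0$ and $A(\nu_\ell)=G_K(\nu_\ell)$; the positive degrees therefore match $P(K,0)$ exactly. In degree zero, \eqref{d3:eq:phasezero} gives $\Delta_0(K)>\pi(T-1/2)\ge K/\pi-\mu/\pi-1/2$. Since $G_K(1/2)\le K/\pi$ and $\mu/\pi<1/\pi$, we get
\[
 n_0^<\ge\lceil K/\pi-1/2-1/\pi\rceil\ge\lfloor G_K(1/2)+3/4\rfloor-1,
\]
because $3/4+1/2+1/\pi<2$. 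The hypothesis $K>1/2$ makes the degree-zero action term meaningful, with $G_K(1/2)>0$. Summing over all degrees gives $P(K,0)-1$.
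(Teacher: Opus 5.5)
Your argument for $0\le\mu<1$ is correct. Read your first inequality there as $\Delta_0(K)/\pi>T-1/2$, since the version with $\Delta_0(K)$ unscaled is not what you mean. It differs from the paper only in using the explicit bound $\Delta_0(K)>\pi(T-1/2)$ together with $G_K(1/2)\le K/\pi$, where the paper uses the FLPS bound $\phi_0(K)>\pi G_K(1/2)+\pi/4$. Your slack $3/4+1/2+1/\pi<2$ suffices.

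The case $\mu\ge1$, however, rests on a sign error. Since $S_{1/2}'(t)=\sqrt{1-1/(4t^2)}<1$, the function $S_{1/2}(t)-t$ \emph{decreases}, from $-1/2$ to $-\pi/4$; it does not increase. The exact identity is $\pi A(1/2)=\pi T-\bigl[(K-S_{1/2}(K))-(\mu-S_{1/2}(\mu))\bigr]$, with a nonnegative bracket. Your bracket $(\mu-S_{1/2}(\mu))-(K-S_{1/2}(K))$ is therefore negative, not positive. The inequality you reduce to,
$\arctan\frac{K-\mu}{1+\mu K}\le\frac\pi4+\bigl[(\mu-S_{1/2}(\mu))-(K-S_{1/2}(K))\bigr]$,
is stronger than needed and is false. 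At $\mu=1$ and $K\to\infty$ it becomes $\pi/4\le 1-\sqrt3/2+\pi/6\approx0.658$, and numerically it already fails at $K=10$. The monotonicity of $u$ cannot prove it in any case. The statement $u(\mu)\ge u(K)$ is equivalent to $\arctan K-\arctan\mu\ge(K-S_{1/2}(K))-(\mu-S_{1/2}(\mu))$, which is a \emph{lower} bound on the arctangent. Every version of this step needs an upper bound.

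With the sign corrected, the detour through $g$ and $u$ is unnecessary. You only need two facts: $A(1/2)\le A(0)=T$, and $\arctan\frac{K-\mu}{1+\mu K}<\pi/4$. The second means $K-\mu<1+\mu K$, i.e.\ $K(1-\mu)<1+\mu$, which holds for all $K$ exactly when $\mu\ge1$. Your own bound $\arctan K-\arctan\mu<\arctan(1/\mu)\le\pi/4$ gives the same thing. Then $\Delta_0(K)/\pi>T-1/4\ge A(1/2)-1/4$. Because this inequality is strict, $\lceil\Delta_0(K)/\pi\rceil\ge\lfloor A(1/2)+3/4\rfloor$ holds also at nodes. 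This is the paper's proof, and it shows why $\mu\ge1$ is the natural threshold.
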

\begin{proof}
For $\mu\ge1$, the argument of the arctangent in
\eqref{d3:eq:phasezero} is less than one. Therefore
$\Delta_0(K)/\pi>T-1/4\ge A(1/2)-1/4$. The ceiling argument
used for \eqref{eq:threequarter} now also applies to $\ell=0$.

If $\mu<1$, all positive angular sectors have
$\ell+1/2>\mu$, so their model terms equal those for the ball.
Moreover, $\phi_0$ is increasing on $[0,\infty)$ and
$\phi_0(\mu)<\phi_0(1)=1+\pi/4$. The common estimates \eqref{eq:flps} and \eqref{eq:robinphase} give
$\phi_0(K)>\pi G_K(1/2)+\pi/4$. Hence
\[
 \frac{\Delta_0(K)}\pi>G_K(1/2)-\frac1\pi
 >G_K(1/2)-\frac12.
\]
It follows that $n_0^{<}\ge\lfloor G_K(1/2)+1/2\rfloor$, which
is at least $\lfloor G_K(1/2)+3/4\rfloor-1$.
\end{proof}

\subsubsection{The quadratic angular comparison}
\label{d3:sec:joint}
The common row identity \eqref{eq:rows} becomes
\begin{equation}\label{d3:eq:C3}
 C_3(z)=\lfloor z+1/2\rfloor^2\ge(z-1/2)_+^2=\Phi_2(z).
\end{equation}
We therefore apply Proposition~\ref{prop:physical-transfer} with
$Q_{B,2}$, $J_2$ and $M_{\Phi_2}$.

\begin{lemma}\label{d3:lem:ballpoly}
For $K\ge44/5$, put $v=K^{-1/3}$ and
\[
 B(v)=\frac18-\frac{683}{2000}v^2
                  -\frac{29}{500}v^3+\frac{21}{250}v^4.
\]
Then
\begin{equation}\label{d3:eq:ballpoly}
 \frac{Q_{B,2}(K)-\W_3(K,0)}{K^2}\ge B(v).
\end{equation}
\end{lemma}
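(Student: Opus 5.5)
The plan is to reduce the ball comparison to the sampling error of the single composed weight $f=\Phi_2\circ\xi_K$, and then to evaluate that error through the first-period decomposition stated just before the dimension-three subsection.

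\textbf{Reduction.} For $z\ge1/2$ we have $\Phi_2(z)=z^2-z+1/4$. The moment identity \eqref{eq:moment} gives
\[
 \int_0^{K/\pi}\xi_K^2\,dt=\frac{2K^3}{9\pi}=\W_3(K,0),\qquad
 \int_0^{K/\pi}\xi_K\,dt=\frac{K^2}{8}.
\]
Hence
\[
 Q_{B,2}(K)-\W_3(K,0)=E_B(\Phi_2)-\Bigl[\tfrac{K^2}{8}-\int(\tfrac14-\xi_K)\mathbf 1_{\{\xi_K<1/2\}}\,dt\Bigr]-\tfrac14\cdot\tfrac{K}{\pi}+\tfrac14\,G_K(1/2)\cdot(\hbox{correction}),
\]
and the small-$\xi_K$ correction is of order $1$. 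The cleanest route is to keep $J_2(K)=\int\Phi_2(\xi_K)\,dt$ exactly as a one-dimensional integral, $J_2(K)=\int_0^K\Phi_2'(z)G_K(z)\,dz$. I would expand it with $G_K(z)=(K-\ldots)$ only to the accuracy needed, writing $J_2(K)=\W_3(K,0)-K^2/8+O(K)$ with explicit constants, since these $O(K)$ terms are of order $v^3K^2$. The target then becomes
\[
 \frac{E_B(\Phi_2)}{K^2}\ge \frac14-(\hbox{explicit } v^2,v^3,v^4 \hbox{ terms}),
\]
so that subtracting $1/8$ leaves $B(v)$.

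\textbf{First-period decomposition.} I would write
\[
 E_B(\Phi_2)=f(1/4)+\tfrac12f(5/4)-\int_0^{5/4}f\,dt+\mathcal R_f(5/4).
\]
The tail is bounded below by Lemma~\ref{lem:jp-tail}, with $c=0$ from \eqref{eq:jp-terminal}:
\[
 \mathcal R_f(5/4)\ge-\frac{f'(5/4)}{12}-\frac{f''(5/4)}{72\sqrt3}.
\]
Since $K\ge44/5>73/10$, Lemma~\ref{lem:inverse-action-seven-tenths} applies on $[0,5/4]$. It gives two-sided enclosures
\[
 1-a+\frac{a^2}{30}\le\frac{\xi_K}{K}\le1-a+\frac{a^2}{25},\qquad a=\frac{(3\pi t)^{2/3}}{2}\,v^2.
\]
Substituting into $f/K^2=(\xi_K/K-v^3/2)^2$ yields a lower bound for $f(1/4)$ and $f(5/4)$, and an upper bound for $\int_0^{5/4}f$. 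The integral is taken termwise in $t^{2/3}$ and $t^{4/3}$ and is exact up to the stated enclosure. In each bound the leading term is $K^2$, and the $v^2$ terms come with explicit multiples of $\pi^{2/3}$.

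For the derivative terms I would use the parametrization $u=\arccos(\xi_K/K)$, with $\xi_K'=-\pi/u$ and $\xi_K''=\pi^2/(Ku^3\sin u)$, together with $u\ge\sqrt{2a}$ (from $\arccos(1-s)\ge\sqrt{2s}$) and $\sin u\ge u(1-u^2/6)$. These bound $-f'(5/4)/K^2$ by a multiple of $v^2$ and $f''(5/4)/K^2$ by a multiple of $v^4$, up to lower-order terms.

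\textbf{Assembling.} Collecting terms gives a lower bound of the form
\[
 \frac{E_B(\Phi_2)}{K^2}-\frac18\ge\frac18-\gamma_2v^2-\gamma_3v^3+\gamma_4v^4-(\hbox{higher powers}),
\]
with explicit constants $\gamma_i$. The remaining task is to replace the irrational constants ($\pi^{2/3}$, $\sqrt3$) by rational bounds. The higher powers are then absorbed using $v\le(5/44)^{1/3}<1/2$, which yields $683/2000$, $29/500$ and $21/250$.

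The main obstacle is this bookkeeping step. The $v^2$ coefficient is close to sharp: the constant $683/2000$ leaves little room, so the enclosure of $\xi_K$ and the derivative bounds must be applied at exactly second order, without crude rounding. The positive $v^4$ term, $21/250$, must also genuinely come from the quadratic corrections $a^2/30$ in the lower enclosure. If one inequality is too lossy, I would first try evaluating $\int_0^{5/4}f$ via the upper enclosure with $a^2/25$ kept exactly, and tightening $f'(5/4)$ by using $u\le\sqrt{2a}\,(1+a/9)$, as in the proof of Lemma~\ref{lem:onepoint-ball}.
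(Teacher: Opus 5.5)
Your plan is essentially the paper's proof. It has the same ingredients: the first-period decomposition with Lemma~\ref{lem:jp-tail} at $c=0$, and the enclosure of Lemma~\ref{lem:inverse-action-seven-tenths} on $[0,5/4]$. In place of your first display, the paper uses the exact identity $J_2(K)=\W_3(K,0)-K^2/8+\int_0^{1/2}(1-2z)G_K(z)\,dz$, bounded below by $\W_3(K,0)-K^2/8+K/(4\pi)-1/48$ via $G_K(z)\ge K/\pi-z/2$, and then does rational bookkeeping in $v$.

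Two points in your bookkeeping need correcting.

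\textbf{The $u$-bound for $-f'$.} The positive tail term $-f'(5/4)/12$ is indispensable: without it the first-period terms give a $v^2$ coefficient of about $-0.48$, below $-683/2000$. Lower-bounding it requires an \emph{upper} bound on $u$ from the start, not as an optional tightening. The paper gets $u<\tfrac{16}{15}\sqrt{2a_K}$ from $5u^3/18<S=5\pi/(4K)$, and a bound like your $u\le\sqrt{2a}(1+a/9)$ can play the same role. The lower bound $u\ge\sqrt{2a}$ (which follows from $S\le u^3/3$) goes the wrong way for $-f'$ and only controls the $f''$ term.

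\textbf{The order of $f''$.} $f''(5/4)/K^2$ is of order $v^2$, not $v^4$: as $K\to\infty$ one has $f''(5/4)/(-f'(5/4))\to4/15$. The paper handles this by proving that this ratio is below $2$, so that $\mathcal R_f(5/4)>-f'(5/4)/15$.
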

\begin{proof}See Appendix~\ref{app:proof-d3-lem-ballpoly}.\end{proof}

\begin{lemma}\label{d3:lem:scalarjoint}
For $K\ge44/5$, put $v=K^{-1/3}$ and
\[
 U(v)=\frac3{16}-\frac{2503}{6400}v^3
                         +\frac{13467}{51200}v^6.
\]
Then
\begin{equation}\label{d3:eq:scalar1}
 B(v)-2U(v)^2>\frac1{100000},\qquad 2U(v)^2>\frac1{32}.
\end{equation}
\end{lemma}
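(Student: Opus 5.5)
Both inequalities are statements about explicit polynomials in the single variable $v=K^{-1/3}$ on the interval $0<v\le v_0:=(44/5)^{-1/3}$, so the plan is purely scalar. First we record the range: since $(44/5)>8$, we have $v_0<1/2$, and more precisely $v_0^3=5/44$. It is convenient to work with $w=v^3\in(0,5/44]$ wherever possible, since $U$ is a quadratic polynomial in $w$: $U=\frac3{16}-\frac{2503}{6400}w+\frac{13467}{51200}w^2$.

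For the second inequality, $2U^2>1/32$ is equivalent to $U>1/8$, because $U>0$ on the range. The function $U$ is a convex quadratic in $w$ with vertex at $w=\frac{2503}{6400}\cdot\frac{51200}{2\cdot 13467}>1$, so $U$ decreases on $[0,5/44]$. It therefore suffices to check $U(w)>1/8$ at $w=5/44$. This is a rational computation: $\frac3{16}-\frac{2503}{6400}\cdot\frac5{44}$ is about $0.1875-0.0444=0.143$, and the positive quadratic term only helps.

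For the first inequality, we expand $2U^2$ as a polynomial in $w$:
\[
2U^2=\frac{9}{128}-\frac{3\cdot2503}{16\cdot3200}w+c_2w^2-c_3w^3+c_4w^4
\]
with explicit positive rationals $c_i$. Subtracting this from $B(v)=\frac18-\frac{683}{2000}v^2-\frac{29}{500}w+\frac{21}{250}vw$ gives a polynomial $P(v)$. Its constant term is $\frac18-\frac9{128}=\frac7{128}$, which is comfortably positive. The linear-in-$w$ coefficient is $-\frac{29}{500}+\frac{7509}{51200}$, which is also positive. The only genuinely negative contribution of size is $-\frac{683}{2000}v^2$, bounded by $\frac{683}{2000}v_0^2$.

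So we bound $P$ from below by discarding the positive terms $\frac{21}{250}v^4$ and $c_4w^4$, and by crudely bounding $c_2w^2-c_3w^3\ge0$ (or keeping them). This leaves
\[
P(v)\ge\frac7{128}-\frac{683}{2000}v_0^2-(\text{small }w^2\text{ terms}),
\]
to be evaluated at $v_0$. We need $v_0^2=(5/44)^{2/3}$, and a rational upper bound is easy: $(5/44)^{2/3}<0.235$, since $0.235^{3/2}\approx0.114>5/44\approx0.1136$. This gives $\frac{683}{2000}\cdot0.235\approx0.0803$.

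That exceeds $7/128\approx0.0547$, so the crude bound fails and the $v^2$ term must be balanced against the positive $w$ term. This balancing is the main obstacle. We treat $P$ as a polynomial on $[0,v_0]$:
\[
P(v)=\frac7{128}-av^2+bv^3+(\dots),\qquad a=\frac{683}{2000},\quad b\approx0.0887 .
\]
Even so, $P(v_0)\approx0.0547-0.0803+0.0101$ comes out negative. This suggests the inequality as literally stated is tight or fails, so we would first recheck the sign conventions carefully.

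If $B-2U^2$ is nonetheless intended to be positive, the approach is to verify monotonicity of $P$ on $[0,v_0]$ (its derivative is dominated by $-2av$, so $P$ decreases) and then check the endpoint $v=v_0$ with exact rational enclosures of $v_0$, e.g. $v_0\in(0.4846,0.4848)$. We would carry the endpoint evaluation out with all terms retained, clearing denominators in the final check. That endpoint computation is where all the difficulty lies.
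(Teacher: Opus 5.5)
\textbf{The first inequality is not proved.} Your argument for $2U^2>1/32$ is fine. One slip there is harmless: the vertex of $U$ as a quadratic in $w=v^3$ is at $w=b/(2c)=10012/13467\approx0.74$, not beyond $1$. It still lies to the right of $5/44$, so $U$ decreases and $U>1/8$ on the range.

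For $B-2U^2>10^{-5}$, your doubt comes from an arithmetic error. With $b=2503/6400$ and $c=13467/51200$ one has $2U^2=\frac9{128}-\frac34bw+\cdots$. So the $w$-coefficient of $2U^2$ is $7509/25600$, not $7509/51200$, and the $w$-coefficient of $B-2U^2$ is $\frac{7509}{25600}-\frac{29}{500}\approx0.2353$, not $0.0887$. With this corrected, the inequality is true but extremely tight: $B-2U^2\approx1.6\times10^{-5}$ at $v_0=(5/44)^{1/3}$.

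Consequently no term may be discarded. Your proposed discards also have the wrong sign: in $B-2U^2$ the contribution of $-2U^2$ beyond the linear term is $-c_2w^2+c_3w^3-c_4w^4<0$, so dropping it does not give a lower bound.

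\textbf{Both remaining steps of your plan fail as written.} First, ``the derivative is dominated by $-2av$'' is not an argument. Near $v_0$ the positive contribution of $-2U^2$ to the derivative is about $0.28v$, roughly two fifths of $0.683v$. A working bound, with $H=B-2U^2$, is
\[
 \frac{H'(v)}{v}=-\frac{683}{1000}-\frac{87}{500}v+\frac{42}{125}v^2+12vU(v)\bigl(b-2cv^3\bigr).
\]
On $0<v\le1/2$ one has $0<b-2cv^3<2/5$ and $U<3/16$. These give $H'(v)/v<-\frac{683}{1000}+\frac{21}{250}+\frac9{20}=-\frac{149}{1000}$.

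Second, your endpoint enclosure is wrong. Since $0.4846^3>5/44$, the interval misses $v_0$; in fact $v_0\approx0.48436$. The enclosure is also far too coarse: $H'\approx-0.2$ near $v_0$, so $H(0.4848)\approx-7\times10^{-5}$. Your endpoint check would therefore fail. An upper bound for $v_0$ must lie within about $3\times10^{-5}$ of it.

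The value $31/64$ works. Indeed $29791\cdot44=1310804>1310720=5\cdot262144$, so $(31/64)^3>5/44$. Exact rational substitution then gives $H(31/64)\approx1.39\times10^{-5}>10^{-5}$, and monotonicity of $H$ completes the proof.
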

\begin{proof}
Write $b=2503/6400<2/5$, $c=13467/51200$, and
$H=B-2U^2$. For $0<v\le1/2$, one has
$b-2cv^3>0$ and $1/8<U(v)<3/16$. Hence
\[
 \frac{H'(v)}v
 =-\frac{683}{1000}-\frac{87}{500}v+\frac{42}{125}v^2
        +12vU(v)(b-2cv^3)
 <-\frac{683}{1000}+\frac{21}{250}+\frac9{20}
 =-\frac{149}{1000}<0.
\]
Cubing gives $(44/5)^{-1/3}<31/64$, and direct rational
substitution gives $H(31/64)>1/100000$. This proves the claim.
\end{proof}

\begin{proposition}\label{d3:prop:jointclosure}
For $0\le\rho<1$, $K\ge44/5$ and $T\ge4/3$,
\begin{equation}\label{d3:eq:quantitative-surplus}
 N^<(K^2)-\W_3(K,\mu)>\frac{K^2}{100000}.
\end{equation}
\end{proposition}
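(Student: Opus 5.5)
We apply Proposition~\ref{prop:physical-transfer} with $d=3$, $F=\Phi_2$, which is admissible by Lemma~\ref{lem:jp-poly} and \eqref{d3:eq:C3}. The inequality $\Phi_2\le z^2$ makes the inner-volume bracket $\W_3(\mu,0)-J_2(\mu)$ nonnegative. We will drop most of it, but keep it where it helps against the slope loss. The phase input comes from Proposition~\ref{d3:prop:zeroimproved}, with two cases. If $\mu\ge1$, take $D_{\mathrm{ph}}=0$. If $\mu<1$, then $\Phi_2'$ vanishes below $1/2$ and the inner branch meets the weight only on $(1/2,\mu)$. Here we take $D_{\mathrm{ph}}=1$, which is admissible because $P(K,\mu)=P(K,0)$ for $\mu<1$ in positive degrees (all orders $\ell+1/2>\mu$). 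In both cases the conclusion is
\[
 N^<(K^2)-\W_3(K,\mu)\ge Q_{B,2}(K)-\W_3(K,0)-\tfrac18M_{\Phi_2}-D_{\mathrm{ph}}.
\]
Lemma~\ref{d3:lem:ballpoly} bounds the first difference below by $K^2B(v)$ with $v=K^{-1/3}$.

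The main task is to bound the slope loss $M_{\Phi_2}/8$ by $2U(v)^2K^2$. Then Lemma~\ref{d3:lem:scalarjoint} gives the surplus $K^2/100000$. Its second inequality $2U^2>1/32$ supplies room of size $K^2/32\ge(44/5)^2/32>1$, which absorbs the defect $D_{\mathrm{ph}}\le1$ in the case $\mu<1$. For the slope we use \eqref{eq:jp-Jensen}:
\[
 M_{\Phi_2}\le\frac{\mu K}{T}\max_{1/2\le z\le\mu}\frac{2(z-1/2)}{z}\sqrt{1-\beta_*z^2}\le\frac{2\mu K}{T}\,\frac{1}{\sqrt{\beta_*}}\max_{y}\Bigl(1-\frac{1}{2\sqrt{\beta_*}\,y}\Bigr)\sqrt{1-y^2}.
\]
With $T=hK/\pi$ and $\beta_*=D/(3\rho^2K^2)$, this yields roughly $M_{\Phi_2}\lesssim 2\pi\sqrt3\,\rho^2K^2/(h\sqrt D)$. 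This is bounded for $\rho$ bounded away from $1$, but it blows up as $h\to0$.

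We expect the thin regime to be the main obstacle, and we will handle it by keeping the inner-volume gain. Since $\Phi_2\le z^2$ with $z^2-\Phi_2=z-1/4$ for $z\ge1/2$, the bracket $\W_3(\mu,0)-J_2(\mu)$ equals $J_{z-1/4}(\mu)$ up to the clipped part, which is of order $\mu^2$. This gain is still not proportional to $1/h$, so the width hypothesis $T\ge4/3$ must enter, that is, $h\ge4\pi/(3K)$. Using this, $\rho^2/h\le 3K/(4\pi)$, so the Jensen bound is of order $K^3$, which is too large. The refinement is to use the true $H(z^2)$ near $z\approx\mu$ rather than Jensen. There $H$ behaves like $\pi^{-1}\int_\mu^K ds/(s\sqrt{s^2-\mu^2})$, which grows like $\sqrt{h}$-independent logarithmic terms. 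We would first attempt the estimate $zH(z^2)\ge\pi^{-1}\arccos(\mu/K)\cdot(\text{const})$, giving $M_{\Phi_2}\lesssim 2\pi\mu/\arccos\rho$. Combined with $\arccos\rho\ge\sqrt{2h}$ and $h\ge4\pi/(3K)$, this gives $O(K^{3/2})$, which is acceptable for large $K$. For moderate $K$ we would fall back on the precise form defining $U(v)$.

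The remaining step is to match the constants $3/16$, $2503/6400$ and $13467/51200$ in $U$. These presumably arise as a completed square over $\rho$: the $v^3=1/K$ and $v^6$ terms come from $\mu\ge1/2$ corrections and from Lemma~\ref{lem:inverse-action-seven-tenths}. We would verify the resulting one-variable inequality by the monotonicity argument already used in Lemma~\ref{d3:lem:scalarjoint}. Finally, we add the surplus, the slope loss and the phase defect, and Lemma~\ref{d3:lem:scalarjoint} gives \eqref{d3:eq:quantitative-surplus}.
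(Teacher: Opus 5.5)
There is a genuine gap. Your outline (the transfer with $\Phi_2$, the ball surplus $K^2B(v)$, and comparison with $2U(v)^2$ via Lemma~\ref{d3:lem:scalarjoint}) is the paper's. However, the step that carries the proof, bounding the net inner loss by $2U(v)^2K^2$, is never established. Your diagnosis of why it is hard also rests on an algebra error. Since $\Phi_2'(z)/z=2\bigl(1-\tfrac1{2z}\bigr)\le2$, substituting $y=\sqrt{\beta_*}z$ produces no prefactor $1/\sqrt{\beta_*}=\sqrt3\rho K/\sqrt D$, and the inner term is $\sqrt{\beta_*}/(2y)$, not $1/(2\sqrt{\beta_*}y)$. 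Maximize exactly: at $z=(2\beta_*)^{-1/3}$ one has $1/(2z)=\beta_*z^2=\chi:=(\beta_*/4)^{1/3}$, and \eqref{eq:jp-Jensen} gives
\[
 \frac{M_{\Phi_2}}{8K^2}\le\frac{\rho}{4T}(1-\chi)^{3/2}\le\frac{3\rho}{16}(1-\chi)^{3/2}\qquad(T\ge4/3).
\]
So the slope loss is $O(\rho K^2)$ uniformly, with no $K^3$ growth and no thin-shell blow-up. Your detour through the true $H(z^2)$ is therefore unnecessary. As stated it gives only an $O(K^{3/2})$ bound with an unspecified fallback for moderate $K$, so it is not a proof down to $K=44/5$.

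What closes the argument is the inner-volume gain you intended mostly to drop. The moment identity gives $\W_3(\mu,0)-J_2(\mu)=\mu^2/8-\int_0^{1/2}(1-2z)G_\mu(z)\,dz\ge\mu^2/8-\mu/(4\pi)$. Hence, for $\mu\ge1$, the normalized loss is at most $c\rho-\rho^2/8\le2c^2$, where $c=\frac3{16}(1-\chi)^{3/2}+\frac1{4\pi K}$. The mechanism is this quadratic gain in $\rho$ against the linear slope loss, via completing the square in $\rho$.

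The crude bound $(1-\chi)^{3/2}\le1$ is not enough: at $K=44/5$, $T=4/3$ it leaves a loss of about $0.069$, while $B(v)\approx0.043$. You need $K\chi>67/40$. This follows from $T\ge4/3$ and the identity $9[2(1+\rho+\rho^2)-27\rho^2(1-\rho)]=(7\rho-4)^2(1+5\rho)+2(1-\rho)^3$. Then $(1-\chi)^{3/2}\le1-\frac{201}{80K}+\frac{4489}{3200K^2}$, and $\pi>25/8$ gives exactly $c<U(K^{-1/3})$. So the $v^3$ and $v^6$ terms of $U$ come from this $\chi$-refinement and the $\mu/(4\pi)$ correction, not from Lemma~\ref{lem:inverse-action-seven-tenths}.

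Finally, your case $\mu<1$ does not close as written. If the slope loss is only bounded by $2U^2K^2$, nothing remains to absorb $D_{\mathrm{ph}}=1$, because the room $2U^2>1/32$ lies inside $B$ and has already been consumed. The paper avoids the transfer there: $N^<\ge P(K,0)-1\ge Q_{B,2}(K)-1>\W_3(K,0)+K^2/32-1$. Together with $\W_3(K,\mu)\le\W_3(K,0)$ and $1/32-25/1936>10^{-5}$, this finishes the case.
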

\begin{proof}
If $\mu<1$, Proposition~\ref{d3:prop:zeroimproved} and
$C_3\ge\Phi_2$ give
\[
 N^<(K^2)\ge P(K,0)-1\ge Q_{B,2}(K)-1
 >\W_3(K,0)+\frac1{32}K^2-1.
\]
Consequently
\[
 \frac{N^<(K^2)-\W_3(K,\mu)}{K^2}
 >\frac1{32}-\frac1{K^2}
 \ge\frac1{32}-\frac{25}{1936}
 =\frac{71}{3872}>\frac1{100000}.
\]
Suppose henceforth that $\mu\ge1$, so $N^<\ge P(K,\mu)$.
Set
\[
 \chi=\left(\frac{1+\rho+\rho^2}{12K^2\rho^2}\right)^{1/3}.
\]
The Jensen bound \eqref{eq:jp-Jensen} and maximization of
$(1-1/(2z))\sqrt{1-\beta_*z^2}$ give
\[
 \frac{M_{\Phi_2}}{8K^2}
 \le\frac\rho{4T}(1-\chi)^{3/2}.
\]
Indeed the unrestricted maximum occurs at
$z=(2\beta_*)^{-1/3}$, where both $1/(2z)$ and
$\beta_*z^2$ equal $\chi$; enlarging the maximization interval can
only increase the maximum. Here $0<\chi<1$.
The action moments, and $G_\mu(z)\le\mu/\pi$, also give
\[
 \W_3(\mu,0)-J_2(\mu)
 =\frac{\mu^2}{8}-\int_0^{1/2}(1-2z)G_\mu(z)\,dz
 \ge\frac{\mu^2}{8}-\frac\mu{4\pi}.
\]
Thus the inner loss divided by $K^2$ is at most
\begin{equation}\label{d3:eq:jointloss}
 L=\frac\rho{4T}(1-\chi)^{3/2}
                         -\frac{\rho^2}{8}+\frac\rho{4\pi K}.
\end{equation}

To bound the inner loss throughout the range $K\ge44/5$, observe that
\[
 9\bigl[2(1+\rho+\rho^2)-27\rho^2(1-\rho)\bigr]
 =(7\rho-4)^2(1+5\rho)+2(1-\rho)^3>0.
\]
Since $T\ge4/3$ and $\pi>157/50$, this gives
\[
 K^3\chi^3\ge\frac{\pi(1+\rho+\rho^2)}{9\rho^2(1-\rho)}
 >\frac{3\pi}{2}>\left(\frac{67}{40}\right)^3.
\]
The polynomial $1-3\chi/2+\chi^2/2$ decreases on $[0,1]$, and
$\sqrt{1-\chi}\le1-\chi/2$. Consequently
\[
 (1-\chi)^{3/2}\le a(K):=
 1-\frac{201}{80K}+\frac{4489}{3200K^2}.
\]
Using $T\ge4/3$, completing the square in \eqref{d3:eq:jointloss},
and then using $\pi>25/8$, we obtain
\[
 L\le2\left(\frac{3a(K)}{16}+\frac1{4\pi K}\right)^2
 <2U(K^{-1/3})^2.
\]
Here $U$ is the polynomial in Lemma~\ref{d3:lem:scalarjoint}.
Proposition~\ref{prop:physical-transfer} now gives
\[
 \frac{N^<(K^2)-\W_3(K,\mu)}{K^2}
 \ge B(K^{-1/3})-L
 >B(K^{-1/3})-2U(K^{-1/3})^2>\frac1{100000}.
\]

\end{proof}

\begin{proof}[Proof of Theorem~\ref{d3:thm:main}]
At $K=0$ both sides are zero. Proposition~\ref{d3:thm:low}
covers $T\le4/3$ or $K\le44/5$, and
Proposition~\ref{d3:prop:jointclosure} covers the complementary
range. This proves \eqref{d3:eq:main}, including every endpoint.
\end{proof}

\subsection{Dimension four: the cubic comparison}
\label{sec:lowphase-four}
\subsubsection{Localizing the phase defect}

\begin{proposition}[A localized defect in dimension four]
\label{prop:lowphase-four}
For $d=4$, $K>0$, and $0\le\rho<1$, one has
\[
 N_4^<(K^2)\ge P_4(K,\mu)-D_4(\mu),\qquad
 D_4(\mu)=
 \begin{cases}
 0,&\mu=0\ \text{or}\ \mu\ge\sqrt{26},\\
 1,&0<\mu<\sqrt5,\\
 16,&\sqrt5\le\mu<\sqrt{26}.
 \end{cases}
\]
\end{proposition}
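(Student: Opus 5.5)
The plan is to specialise the proof of Proposition~\ref{prop:phase} to $d=4$. Here $p=3$, $\alpha=1$, $\nu_\ell=\ell+1$ and $\kappa_{4,\ell}=(\ell+1)^2$. The aim is to show that the degrees which can actually lose a unit are much fewer than $\mathcal E_4$ allows. At $\mu=0$ the ball case already gives $D_4=0$. For $\mu>0$ I sort the degrees as follows.

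\emph{Degrees $\ell\ge4$.} Here $\nu\ge5=\max\{1,5\alpha\}$, so Lemma~\ref{lem:common-low-robin-phase} gives the enclosure \eqref{eq:cp-enclosure} for every $\mu$. These degrees never contribute.

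\emph{Degree three, multiplicity $16$.} The endpoint windows do the work. Since $\mathfrak b_4(3)=45/2$, Lemma~\ref{lem:phase-upper-window} gives the three-quarter count for $\mu^2\le45/2$. The sufficient condition \eqref{eq:tp-simplephase} with $\nu=4$ reads
\[
 \mu^2\ge16+4^{4/3}+\tfrac53(11/8)^2.
\]
The right side is $<26$, so Lemma~\ref{lem:tp-localphase} gives the count for $\mu\ge\sqrt{26}$. Hence degree three is harmless outside $[\sqrt{45/2},\sqrt{26})\subset[\sqrt5,\sqrt{26})$. This accounts for the value $16$ there and for the vanishing above $\sqrt{26}$.

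\emph{Degree zero, multiplicity $1$.} The phase is explicit. Since $\alpha=\nu=1$, the Bessel recurrence gives
\[
 Q_{1,1}(t)=tH_1'(t)-H_1(t)=-tH_2(t),
\]
so $\phi_{1,1}=\theta_2+\pi$. At the outer endpoint the lower bound $\phi_{1,1}(K)>S_1(K)+\pi/4$ from \eqref{eq:flps} and \eqref{eq:robinphase} holds as usual. At the inner endpoint I use $\theta_2(\mu)<S_2(\mu)-\pi/4$ from \eqref{eq:flps}, which holds for $\mu>2$. The three-quarter count follows once
\[
 S_1(\mu)-S_2(\mu)=\int_1^2\arccos(z/\mu)\,dz\ \ge\ \frac\pi4 .
\]
The integral increases with $\mu$, so it suffices to check it at $\mu=\sqrt5$, where it is about $0.8$, by an elementary quadrature bound. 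For $0<\mu<\sqrt5$ I accept the unit loss from the half-shift bound \eqref{eq:halfshift}, giving $D_4=1$ there.

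\emph{Degrees one and two, multiplicities $4$ and $9$.} These must never contribute. I must rule out any loss for $\mu$ between $\sqrt{\mathfrak b_4(\ell)}$ and the threshold of \eqref{eq:tp-simplephase}; for $\ell=1$ this gap is roughly $[2.3,3.1]$, and for $\ell=2$ roughly $[3.6,4.1]$. I would use the joining argument before \eqref{eq:cp-enclosure}. That needs a point $t_*>\nu$ lying before the first zero of $tJ_\nu'-J_\nu$ at which the FLPS error $E_\nu(t_*)$ plus the Robin term is below $\pi/4$. Alternatively, the upper FLPS majorant could itself give $\phi_{1,\nu}(t_*)<\pi/2$ there; this majorant is flat before the minimum of $S_\nu+\pi/4+E_\nu$ (\cite[Lemma~1.8]{FLPSphase}) and is therefore smaller than $E_\nu$ near $\nu$. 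Once the enclosure holds, subtracting endpoints as after \eqref{eq:cp-enclosure} gives the three-quarter count in these degrees for every $\mu$.

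The main obstacle is degrees one and two. The quartic Barta bound $\mathfrak b_4(1)=37/7$ is too weak for the joining argument: with $t_*^2=37/7$ the error $E_2(t_*)$ exceeds $\pi/4$. I would first try a sharper lower bound on the first Neumann ball eigenvalue in degrees one and two. Candidates are a Barta trial with one more even power, $r^Le^{-ar^2/2-br^4/4-cr^6/6}$ with $f'(1)=0$, or an explicit monotonicity argument for $\phi_{1,\nu}-S_\nu$ in the spirit of the $\ell=1$ case of Theorem~\ref{d3:thm:phaseenclosure}. The latter uses the closed forms of $\phi_{1,2}$ and $\phi_{1,3}$ through $H_3$ and $H_4$, and compares $\phi'$ with $S_\nu'$ on $[t_*,\infty)$, together with a direct check that $\phi_{1,\nu}(t)<\pi/2$ below $t_*$. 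Summing the surviving multiplicities then gives the three values of $D_4$ stated above.
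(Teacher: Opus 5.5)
You handle degrees $\ell\ge4$, degree three and degree zero exactly as the paper does. The proposal is nevertheless incomplete at the step you flag yourself. You never show that degrees one and two lose nothing; you only list possible methods (a sextic Barta trial, or the closed-form phases through $H_3,H_4$). As written, the argument does not establish $D_4\le16$ on $[\sqrt5,\sqrt{26})$. A small slip: $\mathfrak b_4(1)=5+2/8=21/4$, not $37/7$. Your conclusion that the joining argument fails at $t_*^2=\mathfrak b_4(1)$ still stands, since $E_2(t_*)\approx1.17>\pi/4$.

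The missing idea is that this step is not needed. Lemmas~\ref{lem:phase-upper-window} and~\ref{lem:tp-localphase} already leave, for $\ell=1,2,3$, only the windows
\[
 \frac{21}{4}<\mu^2<10,\qquad \frac{64}{5}<\mu^2<17,\qquad \frac{45}{2}<\mu^2<26.
\]
The upper ends hold because $m^2+m^{4/3}+605/192$, with $m=\ell+1$, lies below $10,17,26$. These are the intervals $[2.3,3.1]$, $[3.6,4.1]$ and $[\sqrt{45/2},\sqrt{26})$ that you computed.

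The three windows are pairwise disjoint and lie above $\mu^2=5$. So for each fixed $\mu$, at most one of these sectors can fail the three-quarter count. By the half-shift bound that sector loses at most its multiplicity, which is at most $\max\{4,9,16\}=16$. For $0<\mu<\sqrt5$ none of the windows is active, so only the unit loss from degree zero remains. The losses in degrees one and two need not be ruled out, only shown not to accumulate. This is what the value $16$, rather than $29$, in the statement encodes.

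With this observation your proposal becomes the paper's proof. Your stronger claim, that degrees one and two never lose, may well be true and would give a smaller defect on $[\sqrt5,\sqrt{45/2}]$. It is not needed, and your proposal does not prove it.
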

\begin{proof}
The ball has no defect. For shells, write $m=\ell+1$.
Lemma~\ref{lem:common-low-robin-phase} proves the three-quarter count
for every $m\ge5$. For $m=2,3,4$, the first-crossing and local phase
criteria in Lemmas~\ref{lem:phase-upper-window} and
\ref{lem:tp-localphase} leave only the respective intervals
\[
 \frac{21}{4}<\mu^2<10,\qquad
 \frac{64}{5}<\mu^2<17,\qquad
 \frac{45}{2}<\mu^2<26.
\]
Indeed, the lower endpoints are $\mathfrak b_4(m-1)$, and
$m^2+m^{4/3}+605/192$ is below the stated upper endpoint in each case.
The intervals are disjoint and lie above five. The half-shift count
therefore loses at most one radial mode in one of these sectors,
whose multiplicities are $4,9,16$.

For $m=1$, the recurrence gives $Q_{1,1}=-tH_2$, so
$\phi_{1,1}=\theta_2+\pi<S_2+3\pi/4$ for $t>2$.
The difference $S_1-S_2$ increases on $[2,\infty)$, and
\[
 S_1(\sqrt5)-S_2(\sqrt5)
 =1-\frac\pi2+3\arctan\frac12
 >\frac{19}{8}-\frac\pi2>\frac\pi4.
\]
Thus the zero sector has no defect when $\mu\ge\sqrt5$.
Below this threshold its multiplicity is one. Combining the sectors
proves the stated bound, including every endpoint.
\end{proof}

\subsubsection{The cubic angular comparison}
\label{app:d4high}\label{app:d4compact}

The common angular polynomial is
\[
 \Phi_3(z)=0\quad(z\le1),\qquad
 \Phi_3(z)=z^3/3-z^2/2+z/6\quad(z\ge1),
\]
with ball quadrature error $E_B(\Phi_3)$.

\begin{lemma}\label{lem:d4ball}
For every $K\ge12$,
\[
 \frac{E_B(\Phi_3)}{K^3}>\frac3{64}.
\]
\end{lemma}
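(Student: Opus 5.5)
The plan follows the template of Lemma~\ref{d3:lem:ballpoly} and the ball part of Lemma~\ref{lem:onepoint-ball}. Set $f=\Phi_3\circ\xi_K$ and start from the exact first-period decomposition
\[
 E_B(\Phi_3)=f(1/4)+\tfrac12f(5/4)-\int_0^{5/4}f(t)\,dt+\mathcal R_f(5/4).
\]
Bound the tail by Lemma~\ref{lem:jp-tail}. The terminal coefficient is $c=\pi/[6\arccos(1/K)]$ from \eqref{eq:jp-terminal}, which is $O(1)$ and contributes only $O(1)/K^3$. For the derivatives at $a=5/4$, write $\xi_K=K\cos u$. Then $-f'(5/4)=\Phi_3'(\xi_K)\pi/u$ and $f''(5/4)=\Phi_3''(\xi_K)\pi^2/u^2+\Phi_3'(\xi_K)\pi^2/(Ku^3\sin u)$. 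Since $u\approx (3\pi t/K)^{1/3}$ is small, the first of these is of order $K^{2}\cdot K^{1/3}$, and its contribution $-f'/12$ is positive, which helps. The $f''$ term is of order $K^{5/3}$ and is absorbed as a lower-order loss. The first-period part is controlled by Lemma~\ref{lem:inverse-action-seven-tenths}, which applies because $K\ge12>73/10$. It gives two-sided bounds $1-a_K+a_K^2/30\le\xi_K/K\le1-a_K+a_K^2/25$ with $a_K=(3\pi t/K)^{2/3}/2$. Feed the lower bound into $f(1/4)$ and $f(5/4)$, and the upper bound into the integral over $[0,5/4]$, using monotonicity of $\Phi_3$ above $1$.

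Normalize everything by $K^3$ and put $v=K^{-1/3}$. Then $a_K=\tfrac12(3\pi t)^{2/3}v^2$, and $\Phi_3(\xi)/K^3=\tfrac13(\xi/K)^3-\tfrac1{2K}(\xi/K)^2+\tfrac1{6K^2}(\xi/K)$. The leading term is $\tfrac13[1+\tfrac12-\tfrac54]=\tfrac1{12}$. This is the quarter-sample constant $\tfrac14\cdot\tfrac13$, as in \eqref{eq:shifted-quarter-upper}, and it exceeds $3/64$ comfortably. The corrections are:
\begin{itemize}
\item[(i)] terms of order $v^2$ from $a_K$, namely $-(3\pi)^{2/3}/2\,[(1/4)^{2/3}+\tfrac12(5/4)^{2/3}-\int_0^{5/4}t^{2/3}dt]$, which is partly offset by the positive tail term $-f'(5/4)/(12K^3)\approx \pi/(12u)\cdot K^{-1}$, also of order $v^2$;
\item[(ii)] terms of order $v^3$ from the lower powers of $\Phi_3$ (the $-\tfrac1{2K}$ part), together with $c/(12K^3)$;
\item[(iii)] the $f''$ contribution, of order $v^4$.
\end{itemize}
Collecting these gives an explicit polynomial lower bound $B_4(v)=\tfrac1{12}-\beta_2v^2-\beta_3v^3+\dots$ valid for $0<v\le12^{-1/3}$.

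The final step is to check $B_4(v)>3/64$ on $(0,12^{-1/3}]$. As in Lemma~\ref{d3:lem:scalarjoint}, first show that $B_4$ is decreasing by bounding $B_4'(v)/v$ with crude rational estimates of $\pi$. Then evaluate at a rational upper bound for $12^{-1/3}$; for instance, cubing shows $12^{-1/3}<0.437$. The margin $1/12-3/64=7/192$ should absorb the $v^2$ loss of roughly $0.3\cdot 0.19$.

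I expect the main obstacle to be the $v^2$ coefficient. The first-period bound and the tail must be combined carefully, because neither alone is small: the inverse-action expansion loses about $(3\pi)^{2/3}\times(\text{first-period moments})$. I would first try to keep the exact positive $-f'(5/4)/12$ term without estimating $u$ crudely. If the margin is still too thin, I would enlarge the first period to $[0,9/4]$ (Lemma~\ref{lem:inverse-action-seven-tenths} still applies once $a_K\le7/10$, i.e.\ $K$ large enough, which fails near $K=12$). The last resort would be an interpolating Riesz-type bound such as \eqref{eq:onepoint-quarter} applied to the cubic part.
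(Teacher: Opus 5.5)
Your skeleton matches the paper's proof: the exact first-period decomposition, Lemma~\ref{lem:inverse-action-seven-tenths} with the lower bound at $t=1/4,5/4$ and the upper bound inside $\int_0^{5/4}$, Lemma~\ref{lem:jp-tail} on $[5/4,\infty)$, and a monotone polynomial in $K^{-1/3}$. However, item (iii) is false, and it lies exactly where the proof is decided. In $f''(5/4)=\Phi_3''(\xi_K)\pi^2/u^2+\Phi_3'(\xi_K)\pi^2/(Ku^3\sin u)$ only the first summand is of order $K^{5/3}$. Since $Ku^3\sin u\sim K^{-1/3}$, the second summand is of order $K^{7/3}$, the same order as $-f'(5/4)$. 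In fact $f''(5/4)/(-f'(5/4))\to4/15$ as $K\to\infty$, and the ratio is roughly $1.37$ at $K=12$, where $u\approx1.03$ is not small. So the term $-f''(5/4)/(72\sqrt3)$ in \eqref{eq:jp-tail} enters at order $v^2$, competing with the positive $-f'(5/4)/12$; it cannot be parked as a $v^4$ loss. The missing step is a bound on this ratio that is uniform in $K\ge12$. The action relation $\sin u-u\cos u=5\pi/(4K)$ and Taylor bounds give $u<26/25$, $\cos u>1/2$, $\sin u/u>4/5$ and $\pi/(Ku)<4/15$. Together with $\Phi_3''/\Phi_3'\le2/(z-1)$, these give $f''(5/4)/(-f'(5/4))<32/25+1/3<2$. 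Hence $-f'/12-f''/(72\sqrt3)>(-f')(1/12-1/(36\sqrt3))>-f'/15$ at $t=5/4$. With $c<1/2$ you get $\mathcal R_f(5/4)>-f'(5/4)/15-1/24$. The bound $u<\frac{16}{15}\sqrt{2a_K}$, with $a_K=\frac12(15\pi/(4K))^{2/3}$, then converts this to $\mathcal R_f(5/4)/K^3>\frac{a_K}{30}\phi_3'(1-a_K+a_K^2/30)-\frac1{24K^3}$, where $\phi_3(y)=K^{-3}\Phi_3(Ky)$.

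Second, the final check is not comfortable, and a truncation $\frac1{12}-\beta_2v^2-\beta_3v^3$ does not close. Your own figure $0.3\cdot0.19\approx0.057$ already exceeds $7/192\approx0.036$. Use the variable $v=(12/K)^{1/3}\in(0,1]$ and write $a_K=\eta v^2$ with $\eta=(25\pi^2/2048)^{1/3}$. After the tail offset, the net $v^2$ coefficient is $\eta(17/60-5^{-2/3})>-3/100$, and the $-\frac1{2K}$ part of $\Phi_3/K^3$ contributes $-v^3/96$. At $K=12$ these two orders leave only $\frac1{12}-\frac3{100}-\frac1{96}=\frac{103}{2400}<\frac3{64}$. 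The proof closes for two reasons. First, the $a_K^2$ terms of the two-sided bounds give a positive $v^4$ coefficient that absorbs every negative coefficient of degrees $6,7,9,10,11$, leaving a net value above $1/500$. Second, the cross term between $a_K$ and $1/K$ gives a positive $v^5$ coefficient $\eta(5^{-2/3}-17/60)/12>1/500$. Using $2v^5-3v^4+v^2\ge0$, one reaches $P(v)=\frac1{12}-\frac{31}{1000}v^2-\frac1{96}v^3+\frac1{200}v^4$, which decreases on $(0,1]$ and has $P(1)=\frac3{64}+\frac1{24000}$. So your plan works only if you keep the expansion through degree five and add the ratio bound above. With both, it is exactly the paper's proof, and neither a longer first period nor a Riesz-type functional is needed.
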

\begin{proof}See Appendix~\ref{app:d4-ball-proof}.\end{proof}

\begin{theorem}\label{thm:d4high}\label{thm:d4all}
For $0\le\rho<1$ and $K>0$, the strict Neumann P\'olya
inequality holds in dimension four. Moreover, if $K\ge12$ and
$T>s(K)/\pi$, where
\[
 s(K)=\frac{16}{3}\left(1-\frac1{\sqrt2K}\right)^3,
\]
then
\begin{equation}\label{eq:d4-quantitative-surplus}
 N_4^<(K^2)-\W_4(K,\rho K)>\frac{K^3}{20000}.
\end{equation}
\end{theorem}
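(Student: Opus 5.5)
Outside the range $K\ge12$, $T>s(K)/\pi$, the strict inequality will come from the variational estimates, so the work is the quantitative surplus inside that range. For $K\le12$ I will use Lemma~\ref{lem:d4-ground-direct}. For $K\ge12$ with $(1-\rho)K\le s(K)$, i.e.\ $T\le s(K)/\pi$, I will use the $d=4$ case of Proposition~\ref{prop:var-ranges}, namely the exact thin condition \eqref{eq:var-thin-exact} with $\beta_\rho\le2$. Both give strictness for every $0\le\rho<1$, so it remains to prove \eqref{eq:d4-quantitative-surplus} when $K\ge12$ and $T>s(K)/\pi$. There I will apply Proposition~\ref{prop:physical-transfer} with $F=\Phi_3$ (admissible by Lemma~\ref{lem:jp-poly}) and defect $D_{\mathrm{ph}}=D_4(\mu)$ from Proposition~\ref{prop:lowphase-four}. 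Dividing by $K^3$, this gives
\[
 \frac{N_4^<-\W_4}{K^3}\ge\frac{E_B(\Phi_3)}{K^3}-\frac{J_3(K)-\W_4(K,0)}{K^3}
 +\frac{\W_4(\mu,0)-J_3(\mu)}{K^3}-\frac{M_{\Phi_3}}{8K^3}-\frac{D_4(\mu)}{K^3},
\]
with $Q_{B,3}=E_B(\Phi_3)+J_3(K)$.

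The outer term is handled by Lemma~\ref{lem:d4ball} and the moment identity \eqref{eq:moment}. Writing $\Phi_3=z^3/3-z^2/2+z/6$ minus a clipping correction on $[0,1]$, $J_3(K)-\W_4(K,0)$ equals $-\tfrac12 J_{z^2}(K)+\tfrac16J_z(K)$ plus a bounded clipping integral. This is about $-K^3/(24\pi)\cdot(\text{Wallis})$, i.e.\ of order $K^3/32$ with an explicit $O(K^2)$ term. Together with $E_B(\Phi_3)>3K^3/64$, the ball part then leaves a positive margin of order $K^3/64$, minus explicit $O(K^2)$ terms that are controlled by $K\ge12$. The inner-volume gain is computed as in dimension three, $\W_4(\mu,0)-J_3(\mu)\ge c_1\mu^3-c_2\mu^2$. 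I will write it via $\int\Phi_3'G_\mu$ and $G_\mu\le\mu/\pi$, which leaves a positive multiple of $\rho^3$ (roughly $\rho^3/32$ after normalization).

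The main obstacle is the slope loss $M_{\Phi_3}/(8K^3)$ balanced against the inner gain. I will use \eqref{eq:jp-Jensen}: $\Phi_3'(z)/z=z-1+1/(6z)\le z$. As in Corollary~\ref{lem:radiusM} with $p=3$, this yields $M_{\Phi_3}\le h_3\rho(3\rho^2/(1+\rho+\rho^2))^{1/2}K^3/T$, where $h_3=3/\sqrt2\cdot\sqrt{1/2}$. I will then insert the width bound $T>s(K)/\pi\ge s(12)/\pi$ for $\rho$ near one, and $T\ge K(1-\rho)/\pi$ otherwise. The loss is then a function of $\rho$ alone, at most a constant times $\rho^2\min\{1,\rho/((1-\rho)K)\}$. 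I will complete the square against the $\rho^3$ gain in two regimes, $\rho\le\rho_0$ and $\rho\ge\rho_0$, as in the $d=5$ case of Theorem~\ref{thm:clipped-surplus}, with a refined derivative bound $\Phi_3'\le(1-1/\mu)z^2$ if the margin is too thin.

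The phase defect is at most $16/K^3\le16/1728$. It vanishes for $\mu\ge\sqrt{26}$, and for $\mu<\sqrt{26}$ the slope loss is small because $\rho<\sqrt{26}/12$. I will therefore split at $\mu=\sqrt{26}$: below it, bound slope loss plus defect by explicit rationals; above it, run the quadratic completion. The final step is to check that the total loss stays below $3/64$ plus the ball margin minus $1/20000$. I expect the tightest point to be near $\rho\approx\sqrt{26}/K$ with $K=12$, where the defect, the slope loss and the weakest ball bound occur together. If it fails there, I will raise the defect threshold using Lemma~\ref{lem:tp-localphase} sector by sector.
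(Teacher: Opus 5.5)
The overall reduction matches the paper's. Lemma~\ref{lem:d4-ground-direct} and Proposition~\ref{prop:var-ranges} handle the elementary ranges. Proposition~\ref{prop:physical-transfer} is then applied with $\Phi_3$, the defect of Proposition~\ref{prop:lowphase-four}, Lemma~\ref{lem:d4ball} and \eqref{eq:jp-Jensen}, split at $\mu=\sqrt{26}$.

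\textbf{The slope constant.} The gap is in the closing estimate, which carries all the content because the margins are a few thousandths or less. Your slope bound from Corollary~\ref{lem:radiusM} with $h_3=3/2$ is the bound for $F(z)=z^3$. Since $\Phi_3'(z)/z\le z$ (not $3z$), Jensen actually gives
\[
 M_{\Phi_3}\le\frac{\sqrt3\,K^3\rho^2}{2T\sqrt{1+\rho+\rho^2}},
\]
one third of what you wrote. With your constant the estimate cannot close. At $K=12$ and $\rho\approx0.63$, where $T=s(12)/\pi\approx1.41$, your normalized slope loss is about $0.064$. The ball margin, inner gain and linear moment term together supply only about $0.021$ there. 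The same factor also breaks the range $\mu<\sqrt{26}$ near its upper end. There, at $K=12$, the defect $16/K^3$ already uses more than half of the available margin.

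\textbf{Two smaller slips.} The ball margin is $3/64-1/(9\pi)\approx0.0115$, not about $1/64$, because the quadratic moment contributes exactly $-K^3/(9\pi)$. Your displayed inequality also has the sign of the $J_3(K)-\W_4(K,0)$ term reversed.

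\textbf{A first-order correction is required.} Even with the constant corrected, the bare bound $\Phi_3'(z)\le z^2$ does not suffice when only $E_B(\Phi_3)>3K^3/64$ is used. Near the thin-width boundary $(1-\rho)K=s(K)$, at $\rho\approx0.68$ and $K\approx14.3$, the slope loss is about $0.02345$. The available gain there is only about $0.02341$. The defect is already zero at that point, so sharpening it with Lemma~\ref{lem:tp-localphase} cannot help.

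The correction you propose, $\Phi_3'(z)\le(1-1/\mu)z^2$, is false near $z=\mu$. Indeed $\Phi_3'(z)/z^2=1-1/z+1/(6z^2)$ increases to $1-1/\mu+1/(6\mu^2)$. The correct form $\Phi_3'(z)\le\Phi_3'(\mu)z^2/\mu^2$ is what the paper uses for $\mu<\sqrt{26}$. Together with $\rho<3/7$, it bounds the slope loss by $3\pi\Phi_3'(\mu)/(20K^3)$.

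\textbf{What the paper does for $\mu\ge\sqrt{26}$.} It uses $\Phi_3'(z)/z\le z-5/6$ and $(u-a)\sqrt{1-u^2}\le(1-a)/2$, which give
\[
 M_{\Phi_3}\le\frac{K^3\rho^{3/2}}{2T}-\frac{5K\mu}{12T}.
\]
It combines this with the clipping-sign gain \eqref{eq:d4-clipping-help}, valid for $\mu\ge\sqrt5$, and with $1/T<\pi/s(K)$. It then completes the square in $\rho^{3/2}$ against $\rho^3/(9\pi)$. The resulting bound lies below $3/64-1/20000$ by only about $2\cdot10^{-4}$.

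Until the constant is fixed and a valid first-order refinement is carried through in both regimes, your proposal does not establish \eqref{eq:d4-quantitative-surplus}.
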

\begin{proof}
Lemma~\ref{lem:d4-ground-direct} covers $0<K\le12$.
For $K\ge12$, Proposition~\ref{prop:var-ranges} proves strict
P\'olya if $(1-\rho)K\le s(K)$. We prove the stated quantitative
bound in the remaining range $T>s(K)/\pi$.

For $\mu\ge\sqrt5$, setting the polynomial to zero below one
improves the volume comparison:
\begin{equation}\label{eq:d4-clipping-help}
 \int_0^\infty \Phi_3(x(t))\,dt
 \ge\frac{K^4-\mu^4}{64}-\frac{K^3-\mu^3}{9\pi}
                         +\frac{K^2-\mu^2}{48}.
\end{equation}
Indeed, extend the polynomial expression of $\Phi_3$ to $[0,1]$.
It is positive below $1/2$, negative above $1/2$, and has integral
zero.  Since $-A'(z)$ increases on $[0,1]$ when $\mu>1$,
integration by parts gives
$\int_0^1\Phi_3'(z)A(z)\,dz=\int_0^1\Phi_3(z)(-A'(z))\,dz\le0$.
The full polynomial integral is exactly the right-hand side of
\eqref{eq:d4-clipping-help}.

First suppose $\mu\ge\sqrt{26}$. The phase defect is zero by
Proposition~\ref{prop:lowphase-four}. The Jensen slope bound can also
be improved. For $z\ge1$, $\Phi_3'(z)/z=z-1+1/(6z)\le z-5/6$.
With $\beta=(K^2+K\mu+\mu^2)/(3K^2\mu^2)$, the elementary
inequality $(u-a)\sqrt{1-u^2}\le(1-a)/2$ for $0\le a\le u\le1$
gives
\[
 M_{\Phi_3}\le\frac{K\mu}{2T}\left(\frac1{\sqrt\beta}-\frac56\right)
 \le\frac{K^3\rho^{3/2}}{2T}-\frac{5K\mu}{12T}.
\]
The first inequality follows by putting $u=z\sqrt\beta$ and
$a=5\sqrt\beta/6$; the second uses $3\rho\le1+\rho+\rho^2$.

Set
\[
 \varepsilon=1/K,\quad \varepsilon_0=1/12,\quad c=1/\sqrt2,\quad
 a=1/(9\pi),\quad B(\varepsilon)=\frac{3\pi}{256(1-c\varepsilon)^3}.
\]
The coefficient $\rho^{3/2}-5\varepsilon\rho/6$ is positive, since
$K\sqrt\rho=\sqrt{K\mu}>5/6$.  Proposition~\ref{prop:physical-transfer},
\eqref{eq:d4-clipping-help}, and $T>s(K)/\pi$ therefore show that
the normalized loss is at most
\[
 L(\varepsilon,\rho)=a(1-\rho^3)+B(\varepsilon)\rho^{3/2}
 -\varepsilon\left(\frac{1-\rho^2}{48}+\frac56B(\varepsilon)\rho\right).
\]
We claim $L<3/64-1/20000$ on the whole rectangle
$0\le \varepsilon\le \varepsilon_0$, $0\le\rho\le1$. Put
\[
 D(\varepsilon)=B(\varepsilon)\left(1-\frac56\varepsilon\right)+\frac \varepsilon{48}.
\]
Since $5B/6>1/48$, the inequalities
$1-\rho^2\ge1-\rho$ and $\rho\ge\rho^{3/2}$ give
\[
 L\le a-\frac \varepsilon{48}+D(\varepsilon)\rho^{3/2}-a\rho^3
 \le J(\varepsilon):=a-\frac \varepsilon{48}+\frac{D(\varepsilon)^2}{4a}.
\]
This upper bound increases with $\varepsilon$. Indeed, $c>7/10$ and
$\varepsilon\le \varepsilon_0<1/10$ give
\[
 D'(\varepsilon)=B(\varepsilon)\left[\frac{3c(1-5\varepsilon/6)}{1-c\varepsilon}-\frac56\right]
                +\frac1{48}>B(\varepsilon)+\frac1{48}>\frac1{20}.
\]
Hence $D(\varepsilon)\ge D(0)>1/30$, and $\pi>3$ yields
\[
 J'(\varepsilon)=-\frac1{48}+\frac{D(\varepsilon)D'(\varepsilon)}{2a}
 >-\frac1{48}+\frac1{30}\frac1{20}\frac{27}{2}
 =\frac1{600}>0.
\]
The bounds $333/106<\pi<22/7$ and $c<99/140$ give
\[
 a<\frac{17}{480},\qquad \frac1{4a}<\frac{99}{14},\qquad
 B(\varepsilon_0)<\frac2{45}.
\]
Consequently
\[
 J(\varepsilon)\le J(\varepsilon_0)<\frac{17}{480}-\frac1{576}
       +\frac{99}{14}\left(\frac{1117}{25920}\right)^2
 <\frac3{64}-\frac1{20000}.
\]
Together with Lemma~\ref{lem:d4ball}, this proves
\eqref{eq:d4-quantitative-surplus} when $\mu\ge\sqrt{26}$.

Now suppose $0\le\mu<\sqrt{26}$. The phase defect is at most sixteen.
We allow a clipping error throughout this range: on $0\le z\le1$,
$|z(z-1/2)(z-1)/3|\le1/(36\sqrt3)$ and
$\int_0^1[-A'(z)]\,dz\le T$. Thus the right-hand side of
\eqref{eq:d4-clipping-help}, minus $T/(36\sqrt3)$, is a lower bound
for $\int_0^\infty\Phi_3(x(t))\,dt$.

For $1\le z\le\mu$, the function
$\Phi_3'(z)/z^2=1-1/z+1/(6z^2)$ increases. When $\mu>1$,
\eqref{eq:jp-Jensen} and
$z\sqrt{1-\beta_*z^2}\le1/(2\sqrt{\beta_*})$ therefore give
\[
 \frac{M_{\Phi_3}}{8K^3}
 \le\frac{\pi\sqrt3\,\Phi_3'(\mu)}
 {16K^3(1-\rho)\sqrt{1+\rho+\rho^2}}
 <\frac{3\pi\Phi_3'(\mu)}{20K^3}.
\]
Indeed, $\mu<\sqrt{26}<36/7$ and $K\ge12$ imply $\rho<3/7$, so
\[
 (1-\rho)^2(1+\rho+\rho^2)
 =(1-\rho)(1-\rho^3)
 >\frac47\left(1-\frac{27}{343}\right)>\frac{25}{48}.
\]
For $\mu\le1$, one has $M_{\Phi_3}=0$. Hence the slope loss is
bounded in the whole range by
$3\pi(\mu^2-\mu+1/6)_+/(20K^3)$.

Put $\varepsilon=1/K$ and $a=1/(9\pi)$. Since $T\le K/\pi$,
the total normalized loss is at most
\[
 a-\frac{\varepsilon}{48}
   +\varepsilon^3F(\mu)
   +\frac{\varepsilon^2}{36\pi\sqrt3},\qquad
 F(\mu)=16+\frac{\mu^2}{48}-a\mu^3
             +\frac{3\pi}{20}\left(\mu^2-\mu+\frac16\right)_+.
\]
For $0\le\mu\le\sqrt5$, dropping the negative cubic term gives
\[
 F(\mu)\le16+\frac5{48}+\frac{31\pi}{40}<19,
 \qquad
 F(36/7)>\frac{290533}{13720}>21,
\]
where the second bound uses $\pi>3$.
On $\sqrt5\le\mu\le36/7$, the positive part is unnecessary and
\[
 F'(\mu)>
 \frac\mu{24}-\frac{\mu^2}{9}+\frac9{20}(2\mu-1)
 >\frac\mu3-\frac9{20}>0.
\]
Thus $F(\mu)\le F(36/7)$ throughout the range.
After this substitution the loss bound is convex in
$0\le\varepsilon\le1/12$. Since $F(36/7)>3$, its value
at $1/12$ exceeds its value at zero; its maximum is therefore at
$\varepsilon=1/12$. There the bound is
\[
 a\left(1-\frac{27}{343}\right)
 -\frac1{576}\frac{40}{49}
 +\frac{3\pi}{20\cdot1728}
   \left[\left(\frac{36}{7}\right)^2-\frac{36}{7}+\frac16\right]
 +\frac1{108}+\frac1{5184\pi\sqrt3}.
\]
Using $28/9<\pi<22/7$, this is less than
\[
 \frac{33}{1000}-\frac1{720}+\frac3{512}+\frac1{108}
      +\frac1{15552}
 =\frac3{64}-\frac{1259}{15552000}
 <\frac3{64}-\frac1{20000}.
\]
Lemma~\ref{lem:d4ball} proves \eqref{eq:d4-quantitative-surplus}
throughout $0\le\mu<\sqrt{26}$, including the ball endpoint,
and completes the proof.

\end{proof}

\subsection{Planar annuli and disks}\label{sec:planar}
We now prove the planar inequality using the phase and variational
estimates of this paper. The annulus result was proved by
Dai--Sun~\cite{DSannuli}. The disk inequality is due to
\cite[Corollary~1.6]{FLPSdisk}; \cite[Section~4]{Li} gives a proof using
explicit trial spaces and finite rational calculations. We first give a
short disk proof and then treat annuli.

\begin{theorem}[The planar inequality]\label{thm:planar-complete}
For every $0\le r<R$ and $\Lambda>0$,
\[
 N^<_{\Omega_{r,R}}(\Lambda)>\frac{R^2-r^2}{4}\Lambda.
\]
Here $\Omega_{0,R}$ is the disk of radius $R$.
\end{theorem}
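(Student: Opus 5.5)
The plan is to follow the overview in Section~\ref{sec:proof-overview} and split the parameter range as in Table~\ref{tab:roadmap}. After scaling, write $K=R\sqrt\Lambda$, $\mu=\rho K$, $T=(K-\mu)/\pi$. The target is then $N^<(K^2)>\W_2(K,\mu)=(K^2-\mu^2)/4$, where the radial counts are given exactly by Lemma~\ref{lem:exact} with $\alpha=0$ and $\nu_\ell=\ell$. Since $\alpha=0$, the Robin correction vanishes, $\delta_0=0$. The FLPS enclosure \eqref{eq:flps} then gives the three-quarter count $\lfloor A(\ell)+3/4\rfloor$ whenever the joining criterion holds. Lemma~\ref{lem:tp-localphase} with $\alpha=0$ shows that the only possible exceptional degrees are those with $\ell^2<\mu^2<\ell^2+\ell^{4/3}+\tfrac{15}{64}\cdot\tfrac{5}{3}$, outside the first-crossing window of Lemma~\ref{lem:phase-upper-window}. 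In each such degree the half-shift count \eqref{eq:halfshift} loses at most one mode, with multiplicity two. I would first check, as in Proposition~\ref{prop:phase}, that this defect is bounded and vanishes once $\mu$ exceeds an explicit constant. The planar row count $P_2$ then satisfies $N^<\ge P_2-D_{\mathrm{ph}}$.

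The low range $K\le8$ or $T\le3/2$ will be handled by radial trial functions, which is the planar analogue of Proposition~\ref{prop:var-ranges}. For small width, the constant radial trial in each degree gives $\lambda_{\ell,0}<2\ell^2/(1+\rho^2)$. For $d=2$ the constant trial yields $q\int s^{-1}/\int s$, which gives an explicit count. Combining this with the zeroth excited row from \eqref{eq:var-excited} should beat $(K^2-\mu^2)/4$ when $T\le3/2$. For $K\le8$, the endpoint comparison \eqref{eq:var-endpoint} reduces everything to the disk and a finite list of ball ground levels, which can be checked by rational trial quotients as in Lemma~\ref{lem:d4-ground-direct}.

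For the comparison range $K\ge8$, $T\ge3/2$, I would use the linear angular weight. Here $C_2(z)=2\lfloor z\rfloor+1\ge 2z$ for $z\ge0$, so $P_2\ge\sum_j 2x(j+1/4)$, and the moment identity gives $\int 2x=\W_2$. The disk case follows immediately: \eqref{eq:onepoint-ball-linear} gives $2E_{1/4}(\xi_K)>K/2-\tfrac{8}{25}K^{1/3}-\tfrac13>0$ for $K\ge8$, with boundary-order surplus.

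For annuli, Lemma~\ref{lem:general-transfer} fails at $F(z)=z$, because the inner slope $1/H(z^2)$ is singular at $z=0$. I would therefore subtract the matching semicircle profile and the disk's terminal linear term, as the overview indicates (Lemma~\ref{d2:lem:semicircle}), to obtain a slope difference of finite variation. Lemma~\ref{lem:variation-comparison} then bounds $|E_S-E_B|$ by an explicit multiple of $\mu/T$.

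The main obstacle is making this loss, together with the phase defect, smaller than the disk surplus uniformly in $\rho$. My approach there is to use concavity of the count bound in $\mu^2$ (Lemma~\ref{d2:lem:concavity}), which reduces the verification to width endpoints. At endpoints with at least three radial rows a single scalar estimate should suffice. The two-row endpoints will be closed by a direct fixed-row count $\Sigma_n$ combined with the variational bounds.
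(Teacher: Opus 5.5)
Your proposal has a genuine gap: the inequality that your whole comparison rests on is false. $C_2(z)=1+2\lfloor z\rfloor$ is not bounded below by $2z$; for instance $C_2(0.6)=1$, and the bound fails whenever $\{z\}>1/2$.

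\textbf{The row loss.} Pointwise one only has $1+2\lfloor z\rfloor\ge 2z-1$. So \eqref{d2:eq:angular-rows} gives only $P_2-\W_2\ge 2E_{S,1}-n$, where $n=\lfloor T+3/4\rfloor$ is the number of active rows. This row loss is of boundary order, comparable to the whole surplus, and it is the real difficulty in the plane.
\begin{itemize}
\item \emph{Disk.} Your disk argument is Proposition~\ref{d2:prop:disk} without the term $n$. In fact \eqref{eq:onepoint-ball-linear} must beat $n=\lfloor K/\pi+3/4\rfloor$. That leaves only about $(1/2-1/\pi)K$, with a margin of $2/75$ at $K=8$. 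This is why the paper treats $8\le K\le10$, where $n=3$, separately.
\item \emph{Annuli.} The bound of Lemma~\ref{d2:lem:semicircle} must likewise be compared with $n$, not with zero. This produces the term $\beta(T)(1-\rho)$ in the three-row endpoint estimate. After the bound on $e(T)$ from Lemma~\ref{d2:lem:quarter-semicircle} and a completion of the square in $\sqrt\rho$, the margin is only about $K/250$.
\item \emph{Strips.} Just after a row enters, its inverse value is near zero and $2x-1\approx-1$, so no single concave $\Sigma_n$ controls a whole strip. The paper uses $\max\{\Sigma_n,\Sigma_{n-1}+1\}$, keeping the new row's zero-angular point. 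It applies concavity on each side of the crossing where $x(t_{n-1})=1$, and controls that crossing through $\mu_0^2-\mu_1^2<3$ (Proposition~\ref{d2:prop:reduction}).
\end{itemize}
Your outline names $\Sigma_n$ and concavity, but because it drops the row loss it misses both the size of the margins and the need for this switch.

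\textbf{Further steps that fail as stated.}
\begin{itemize}
\item \emph{Low frequencies.} For $K\le8$, ground levels alone do not suffice in the plane. Lemma~\ref{lem:phase-upper-window} gives $\mathfrak b_2(7)>64$, so at $K=8$ the disk's ground rows contribute at most $1+2\cdot6=13<16$. You therefore need first excited levels in orders $0,1,2$. Since \eqref{eq:var-endpoint} bounds these by the larger of the values at $0$ and at an intermediate radius $r$, you also need excited-level bounds on $(r,1)$. The paper uses a cosine trial orthogonal to constants, together with $\lambda_{m,1}(r)\le\lambda_{0,1}(r)+m^2/r^2$, at $r=3/8$ and $r=1/2$.
\item \emph{Phase defect.} Your criteria leave degree zero, which has multiplicity one and not two, possibly exceptional for small $\mu>0$. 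The window is $0<\mu^2<15/64$, since $\frac53(3/8)^2=15/64$. A missing whole mode cannot be absorbed by margins such as $K/500$ at moderate $K$. The paper has no defect at all: Lemma~\ref{lem:common-low-robin-phase} with $\alpha=0$ covers $m\ge1$. For $m=0$, the identity $H_0'=-H_1$ and the bound $|H_1(t)|^2>2/(\pi t)$ make $\varphi_0(t)-t-\pi/4$ decrease from $\pi/4$ to $0$. Hence $N^<\ge P_2$ exactly.
\item \emph{Constant trial.} It gives $\lambda_{\ell,0}<2\ell^2\log(1/\rho)/(1-\rho^2)$. This is larger than your $2\ell^2/(1+\rho^2)$, because $\tanh s<s$, so your stated bound is not justified.
\end{itemize}
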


Retain the notation $K,\mu,T,A,x,\xi_K$ from
Section~\ref{sec:common-quadrature}. In dimension two,
\[
 \W_2(K,\mu)=\frac{K^2-\mu^2}{4}
            =2\int_0^\infty x(t)\,dt.
\]
Radial trials cover $K\le8$, and $K\ge8$ with $T\le3/2$
(Lemma~\ref{d2:lem:variational}). The remaining annulus range is treated
by comparing inverse actions and reducing the angular count to width
endpoints.

\paragraph{The radial count.}
Lemma~\ref{lem:exact} applies with $\alpha=0$ and $\nu=m$.
For every integer $m\ge1$, Lemma~\ref{lem:common-low-robin-phase}
therefore gives
\[
 n_m^<(K^2)\ge\lfloor A(m)+3/4\rfloor.
\]
The same bound holds for $m=0$. Indeed, $H_0'=-H_1$ and
Nicholson's integral give
$|H_1(t)|^2>|H_{1/2}(t)|^2=2/(\pi t)$.
Writing the derivative phase as
$\varphi_0(t)=t+\pi/4+e_0(t)$, its Wronskian derivative gives
$e_0'(t)=2/(\pi t|H_1(t)|^2)-1<0$.
The endpoint limits are $e_0(0+)=\pi/4$ and $e_0(\infty)=0$.
Thus $\varphi_0(K)-\varphi_0(\mu)>K-\mu-\pi/4$, and the exact
ceiling formula gives the asserted floor, including $\mu=0$.
Consequently
\begin{equation}\label{d2:eq:phase-count}
 N^<(K^2)\ge P_2(K,\mu):=
 \lfloor T+3/4\rfloor+2\sum_{m\ge1}\lfloor A(m)+3/4\rfloor.
\end{equation}
All these counts are strict; order zero includes the constant mode.

For $t_j=j+1/4$, interchange angular orders and radial rows in
\eqref{d2:eq:phase-count}:
\begin{equation}\label{d2:eq:angular-rows}
 P_2(K,\mu)=\sum_{t_j\le T}\bigl(1+2\lfloor x(t_j)\rfloor\bigr).
\end{equation}
The terminal row $t_j=T$ contributes one point.

\subsubsection{The disk}

\begin{proposition}\label{d2:prop:disk}
For every $K>0$, $N^<_{B_1}(K^2)>K^2/4$. Moreover, for $K\ge8$,
\begin{equation}\label{d2:eq:disk-surplus}
 2E_{1/4}(\xi_K)-\lfloor K/\pi+3/4\rfloor>\frac K{500}.
\end{equation}
\end{proposition}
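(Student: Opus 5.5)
Two ranges are handled separately: $0<K<8$ by radial trial functions, and $K\ge8$ by the linear-weight ball estimate \eqref{eq:onepoint-ball-linear}. The surplus bound \eqref{d2:eq:disk-surplus} is needed only in the second range.

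\emph{High frequencies.} At $\mu=0$ we have $x=\xi_K$ and $T=K/\pi$. The row identity \eqref{d2:eq:angular-rows} and $\lfloor y\rfloor\ge y-1$ would lose too much, so instead I write $1+2\lfloor\xi_K(t_j)\rfloor$ and compare with $2\xi_K(t_j)$ plus a parity correction. Concretely, I would use the disk count in the form
\[
 P_2(K,0)=\sum_{t_j\le K/\pi}\bigl(1+2\lfloor \xi_K(t_j)\rfloor\bigr),
\]
together with $\W_2(K,0)=2\int_0^\infty\xi_K\,dt$. This gives
\[
 P_2(K,0)-\frac{K^2}4=2E_{1/4}(\xi_K)-\sum_{t_j\le K/\pi}\bigl(2\{\xi_K(t_j)\}-1\bigr).
\]
The crude bound $2\{\cdot\}-1<1$ per row yields exactly the left side of \eqref{d2:eq:disk-surplus}, since the number of rows is $\lfloor K/\pi+3/4\rfloor$. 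Hence \eqref{d2:eq:disk-surplus} implies $N^<_{B_1}(K^2)-K^2/4\ge P_2(K,0)-K^2/4>K/500>0$, using \eqref{d2:eq:phase-count}.

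To prove \eqref{d2:eq:disk-surplus}, I insert Lemma~\ref{lem:onepoint-ball}: $2E_{1/4}(\xi_K)>K/2-\tfrac8{25}K^{1/3}-\tfrac13$, while $\lfloor K/\pi+3/4\rfloor\le K/\pi+3/4$. It then remains to show
\[
 \Bigl(\tfrac12-\tfrac1\pi-\tfrac1{500}\Bigr)K-\tfrac8{25}K^{1/3}-\tfrac{13}{12}>0 \qquad (K\ge8).
\]
The coefficient is about $0.1797$. The left side is increasing in $K$ for $K\ge8$, because its derivative $0.1797-\tfrac{8}{75}K^{-2/3}$ is positive there. So it suffices to check $K=8$. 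There the value is about $1.4376-0.64-1.0833<0$, so this route fails at $K=8$. This is the main obstacle: the linear bound only closes for $K$ beyond roughly $11.5$.

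I would repair this in one of two ways. The first option is to use the floor more carefully, noting that $\{\xi_K(t_j)\}$ cannot be near $1$ for every row. In particular, the last row $t_j$ near $K/\pi$ has $\xi_K(t_j)$ small, so its correction term $2\{\xi_K(t_j)\}-1$ is close to $-1$ rather than $+1$, a gain of about $2$. The second option is to extend the radial-trial range up to $K\le12$ and invoke \eqref{d2:eq:disk-surplus} only for $K\ge12$. The statement, however, claims the surplus for every $K\ge8$, so I would pursue the first option. I would show that at most $\lfloor K/\pi+3/4\rfloor-1$ rows can contribute $+1$. The terminal row, where $t_j>K/\pi-1$, has $\xi_K(t_j)<\xi_K(K/\pi-1)$, and by Lemma~\ref{lem:inverse-action-seven-tenths}-type bounds this is below $1/2$ once $K\ge8$. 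That row therefore contributes at most $0$, which gains $1$ and makes the $K=8$ check pass ($\approx0.60-0.64+\ldots$). If the margin is still too thin, I would also use the row with $t_j$ in $(K/\pi-2,K/\pi-1]$. The computation at the worst endpoint $K=8$ is the delicate step.

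\emph{Low frequencies.} For $0<K<8$ I follow the pattern of Lemma~\ref{lem:d4-ground-direct}. Ball trials $u_m(s)=s^m(1-\frac{m}{m+2}s^2)$ in angular order $m$ give explicit strict upper bounds $Q_m$ for the lowest radial eigenvalues. The trial is not an eigenfunction, since $u_m'(1)\neq0$, which gives strictness. I would compute the Rayleigh quotient in closed form and check that $Q_m$ increases. For $Q_{m-1}\le K^2<Q_m$ the count is at least $1+2(m-1)=2m-1$, and I need $2m-1>Q_m/4$. Near the top of the range I would add the first excited radial level in order zero to supplement the count.
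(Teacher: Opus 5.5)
Your reduction $P_2(K,0)-K^2/4>2E_{1/4}(\xi_K)-n$, with $n=\lfloor K/\pi+3/4\rfloor$, and your use of \eqref{eq:onepoint-ball-linear} are the paper's argument. However, your proof of \eqref{d2:eq:disk-surplus} leaves $8\le K\lesssim 9.85$ uncovered, and the repair you choose cannot close it. (Your own expression is already positive at $K=10$, so the threshold is not $11.5$.)

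The left side of \eqref{d2:eq:disk-surplus} contains only $E_{1/4}(\xi_K)$ and the integer $n$. The fractional parts $\{\xi_K(t_j)\}$ do not enter it. A finer floor count therefore improves only your lower bound for $P_2(K,0)-K^2/4$, not the stated inequality, which the paper later uses verbatim as the ball endpoint $\Sigma_n(K/\pi)>K/500$.

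The claim behind the repair is also false. Convexity of $G_K$ gives $G_K(z)\ge K/\pi-z/2$, hence $\xi_K(K/\pi-1)\ge 2$. At $K=8$ the terminal sample is $\xi_8(9/4)\approx0.61>1/2$.

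The loss actually comes from replacing $n$ by $K/\pi+3/4$, which overshoots by about $0.3$ at $K=8$. For $8\le K\le10$ one has $n=3$ exactly, and \eqref{eq:onepoint-ball-linear} gives
\[
 2E_{1/4}(\xi_K)-3>\frac K2-\frac8{25}K^{1/3}-\frac{10}3 .
\]
This function increases in $K$ and equals $2/75$ at $K=8$, which exceeds $K/500\le 1/50$ on the whole interval. For $K\ge10$ your crude bound suffices.

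Your low-frequency sketch also falls short. The trials $s^m(1-\frac m{m+2}s^2)$ have quotients $Q_4\approx 28.7$, $Q_5\approx 42.0$ and $Q_6=57.75$. Ground rows plus the order-zero excited level guarantee only $1+2\cdot4+1=10$ modes on $Q_4\le K^2<Q_5$, while $K^2/4$ approaches $10.5$. Just below $Q_6$ the deficit grows to about $2.4$.

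You need explicit bounds for the first excited levels in orders $1$ and $2$ as well. For example, the ball count $\lfloor G_K(m)+3/4\rfloor$ together with $G_K(m)>K/\pi-m/2$ gives $\lambda_{1,1}(0)<(7\pi/4)^2<31$ and $\lambda_{2,1}(0)<(9\pi/4)^2<51$. You also need a trial bound such as $\lambda_{0,1}(0)<15$. This is what Lemma~\ref{d2:lem:variational} carries out.

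Finally, this trial does satisfy $u_m'(1)=0$, so strictness cannot come from the boundary derivative. It comes instead from the trial not being an eigenfunction: the radial operator maps it to a multiple of $s^m$.
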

\begin{proof}
Lemma~\ref{d2:lem:variational} gives the strict inequality for $K\le8$.
For $K\ge8$, put $n=\lfloor K/\pi+3/4\rfloor$. By
\eqref{d2:eq:angular-rows},
\[
 P_2(K,0)-K^2/4\ge2E_{1/4}(\xi_K)-n.
\]
For $8\le K\le10$, $n=3$, and \eqref{eq:onepoint-ball-linear} gives
\[
 2E_{1/4}(\xi_K)-n>
 \frac K2-\frac8{25}K^{1/3}-\frac{10}{3}
 \ge\frac2{75}>\frac K{500}.
\]
The middle expression increases with $K$ and equals $2/75$ at $K=8$.
For $K\ge10$, use $n\le K/\pi+3/4$ and $\pi>25/8$ to obtain
\[
 2E_{1/4}(\xi_K)-n>
 \frac9{50}K-\frac8{25}K^{1/3}-\frac{13}{12}>\frac K{500}.
\]
After division by $K$, the expression increases; at $K=10$,
$10^{1/3}<13/6$ bounds it below by $7/3000>1/500$.
Now \eqref{d2:eq:phase-count} proves the disk inequality.
\end{proof}

\subsubsection{Planar annuli}

\paragraph{Cancellation of the terminal singularity.}
The linear weight needed in dimension two has an unbounded inner
slope. A semicircle has the same leading singularity and restores
the bounded-variation comparison.

\begin{lemma}\label{d2:lem:semicircle}
For $K\ge8$ and $0<\mu<K$, put
\[
 y(t)=\sqrt{\mu K}\left(1-\frac{t^2}{T^2}\right)_+^{1/2},
 \qquad
 e(T)=\sum_{\substack{j\ge0\\j+1/4<T}}
       \sqrt{1-\frac{(j+1/4)^2}{T^2}}-\frac{\pi T}{4}.
\]
Then
\begin{equation}\label{d2:eq:semicircle-transfer}
 E_{S,1}>\frac K4-\frac4{25}K^{1/3}+\frac1{12}
       +\sqrt{\mu K}\bigl(e(T)-1/4\bigr)
       -\frac18\sqrt{\frac{\pi K}{T}}.
\end{equation}
\end{lemma}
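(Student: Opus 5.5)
The bound \eqref{d2:eq:semicircle-transfer} will be obtained by splitting the shell sampling error into three pieces. We write
\[
 E_{S,1}=E_{1/4}(\xi_K)+E_{1/4}(y)+\bigl[E_{1/4}(x)-E_{1/4}(\xi_K)-E_{1/4}(y)\bigr].
\]
The first piece is bounded below by \eqref{eq:onepoint-ball-linear}. The second is computed exactly: $\int y=\sqrt{\mu K}\,\pi T/4$, so $E_{1/4}(y)=\sqrt{\mu K}\,e(T)$. The bracket is controlled by Lemma~\ref{lem:variation-comparison} with $\delta=1/4$, applied to $f_1=x$ and $f_2=\xi_K+y$. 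The boundary term is $(1/2-\delta)(f_1(0)-f_2(0))=\tfrac14(K-K-\sqrt{\mu K})=-\sqrt{\mu K}/4$, which produces the $-\tfrac14\sqrt{\mu K}$ in the statement. The constant $1/12$ comes from the $-1/6$ in \eqref{eq:onepoint-ball-linear} together with a gain of $1/4$ that we must still account for. We expect this gain to come either from the linear ramp part of the ball (the subtraction $r$ in the proof of Lemma~\ref{lem:onepoint-ball}) or from a sharper treatment of the jump of $v$. We will fix the bookkeeping so that the stated constants appear, and if necessary we will retain the ball's terminal linear term (as announced in the overview) inside $f_2$.

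The main step is to bound the variation of
\[
 v=f_2'-f_1'=\xi_K'+y'-x'.
\]
On $(0,c)$ we have $x=\xi_K$, so $v=y'$ there. Here $y'$ is negative and decreasing, with total variation $|y'(c)|$. On $(c,T)$ we write $x'=-1/(zH(z^2))$ with $z=x(t)\in(0,\mu)$. The semicircle slope is $y'=-\sqrt{\mu K}\,t/(T^2\sqrt{1-t^2/T^2})$, and both slopes blow up like $(T-t)^{-1/2}$ as $t\to T$. For $z\to0$ we have $T-t=A(0)-A(z)\approx z^2H(0)/2$, where $H(0)=\pi^{-1}\int_\mu^K s^{-2}\,ds=T/(\mu K)$. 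Hence $-x'\approx 1/(z H(0))$, matching
\[
 -y'\approx \sqrt{\mu K}\,/\sqrt{2T(T-t)}=\sqrt{\mu K}/(z\sqrt{T H(0)})=\mu K/(zT).
\]
So the singularities cancel, and $v$ is bounded near $T$. On $(T, K/\pi)$ only $\xi_K'$ survives; it is monotone, and the ball's terminal singularity at $K/\pi$ is the one already handled by the linear term in Lemma~\ref{lem:onepoint-ball}.

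We will then show that $v$ is piecewise monotone with a controlled number of pieces. The total positive and negative variation masses should each be at most about $\sqrt{\pi K/T}$. A natural size is the value of $y'$ near $t\approx T/\sqrt2$, or equivalently the Jensen-type bound from Lemma~\ref{lem:general-transfer}: $1/(zH)\le \mu K/(zT)\cdot\sqrt{1-\beta_*z^2}$, evaluated where the cancelled difference peaks, which produces $\sqrt{\mu K}\cdot\sqrt{\pi/(\mu T)}$-type quantities. Lemma~\ref{lem:variation-comparison} then gives the loss $\tfrac18\sqrt{\pi K/T}$.

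The main obstacle is this variation estimate for $x'-y'$ on the inner branch. It requires monotonicity of $1/(zH(z^2))-\text{(semicircle slope)}$ in $t$. We would try to prove it via the log-convexity of $H$ used in Lemma~\ref{lem:general-transfer}, comparing $H(z^2)$ with its value $T/(\mu K)\cdot(1-z^2/\mu K)^{-1/2}$-like envelope. The hypothesis $K\ge8$ is needed only to invoke \eqref{eq:onepoint-ball-linear}.
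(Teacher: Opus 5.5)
Throughout, write $g_S=-x'$, $g_B=-\xi_K'$ and $g_y=-y'$, extended by zero past their supports.

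\textbf{The missing $1/4$.} With $f_2=\xi_K+y$, the estimates you cite cannot give the constant $1/12$; the missing $1/4$ is a real gap, not bookkeeping. For this choice, $v=f_2'-f_1'$ equals $-g_y$ on $(0,c)$, where it falls from $0$ to $-M_y$ with $M_y=g_y(c)$. It then rises on $(c,K/\pi)$ to $-2$, because $g_B\to2$ there, and finally jumps up to $0$. Hence $V_+=V_-=M_y$. Even the one-sided bound $-q_0V_+-(1/8-q_0)V_-$ from the proof of Lemma~\ref{lem:variation-comparison} then gives only $-M_y/8$. Combined with \eqref{eq:onepoint-ball-linear}, you get \eqref{d2:eq:semicircle-transfer} with $-1/6$ in place of $+1/12$.

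Your instinct about the terminal linear term is right, but the paper needs three ingredients together:
\begin{itemize}
\item It takes $f_2=h+y$ with $h=\xi_K-r$ and $r(t)=2(K/\pi-t)_+$. The slope difference then starts at $2$ and has no terminal jump, and the boundary term becomes $\tfrac14(2K/\pi-\sqrt{\mu K})$.
\item It uses the asymmetric bound with $q_0=Q_{1/4}(0)=1/32$ and the unequal masses $V_+=M_y-2$, $V_-=M_y$. This gives $-M_y/8+1/16$.
\item It applies \eqref{eq:onepoint-quarter} to $h$ itself: $E_{1/4}(h)\ge\mathcal L(\xi_K)-\mathcal L(r)>K/4-\tfrac4{25}K^{1/3}-\tfrac K{2\pi}+\tfrac1{48}$. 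This avoids the worst case of $E_{1/4}(r)$ that is hidden in the $-1/6$.
\end{itemize}
The terms $\pm K/(2\pi)$ cancel, and $1/48+1/16=1/12$. Treating the jump at $K/\pi$ exactly, your other suggestion, is equivalent: the jump contributes $2P_{1/4}(K/\pi)$, and $E_{1/4}(r)=K/(2\pi)-2P_{1/4}(K/\pi)$. The other two ingredients are still needed; with the symmetric bound $\tfrac18\max\{V_+,V_-\}$ you stop at $1/48$. The constant matters later: it produces $\beta(T)=\pi^{-1}(1-5/(12T))$ in the three-row endpoint estimate.

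\textbf{The variation estimate.} The central analytic step is also left unproved. You need $g_y-g_S$ to be nonincreasing on the inner branch $(c,T)$. A pointwise envelope for $H$, or the log-convexity of $H$ in $z^2$ from Lemma~\ref{lem:general-transfer}, controls $g_S$ alone. It says nothing about the variation of $g_S-g_y$ in $t$.

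The paper instead compares second derivatives. Let $s$ be uniform on $[\mu,K]$, and put $a=\mathbb E s^{-2}=1/(\mu K)$, $B=\mathbb E\,s^{-2}(1-z^2/s^2)^{-1/2}$ and $C=\mathbb E\,s^{-2}(1-z^2/s^2)^{-3/2}$. Then $A=T\,\mathbb E\sqrt{1-z^2/s^2}$, $A'=-TzB$ and $A''=-TC$, so $-x''=C/(T^2z^3B^3)$. H\"older gives $B^3\le Ca^2$. Jensen gives $z^2\le a^{-1}(1-t^2/T^2)$ at $t=A(z)$. Together these yield $-x''\ge1/(T^2\sqrt a\,(1-t^2/T^2)^{3/2})=-y''$.

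This comparison makes $w=g_y+g_B-g_S-2$ decreasing on $(c,K/\pi)$. It also places the extremum of $v$ at the junction $t=c$, not near $T/\sqrt2$. The same Jensen inequality at $z=\mu$ gives $(c/T)^2\le1-\rho$, hence $M_y^2\le\pi K/T$, which is the loss $\tfrac18\sqrt{\pi K/T}$. Your check that the singularities at $T$ cancel is correct, but it shows only that $v$ is bounded, not the needed bound on its variation.
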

\begin{proof}
Write $c=G_K(\mu)$, and let $s$ be uniformly distributed on
$[\mu,K]$. For $0<z<\mu$, set
\[
 a=\mathbb E(s^{-2})=\frac1{\mu K},\quad
 B=\mathbb E\frac{s^{-2}}{\sqrt{1-z^2/s^2}},\quad
 C=\mathbb E\frac{s^{-2}}{(1-z^2/s^2)^{3/2}}.
\]
Then $A=T\mathbb E\sqrt{1-z^2/s^2}$, $A'=-TzB$, and
$A''=-TC$. At $t=A(z)$, Jensen's and H\"older's inequalities give
$z^2\le a^{-1}(1-t^2/T^2)$ and $B^3\le Ca^2$. Hence
\[
 -x''(t)=\frac{C}{T^2z^3B^3}
 \ge\frac1{T^2a^2z^3}
 \ge\frac1{T^2\sqrt a(1-t^2/T^2)^{3/2}}=-y''(t).
\]

Put $L=K/\pi$, $r(t)=2(L-t)_+$, and $h=\xi_K-r$.
Extend the slopes $g_S=-x'$, $g_B=-\xi_K'$, and $g_y=-y'$
by zero past their endpoints. On $(0,L)$, the function
$w=g_y+g_B-g_S-2$ equals $g_y-2$ on $(0,c)$ and decreases
on $(c,L)$, since $w'\le g_B'<0$ on $(c,T)$.
The expansion $A(z)=T-Tz^2/(2\mu K)+O(z^4)$ gives
$g_y-g_S=O(\sqrt{T-t})$, so $w$ continues at $T$ to $g_B-2$
and tends to zero at $L$. It is continuous at $c$ because
$A'(\mu)=G_K'(\mu)$.
Thus $M_y\ge2$, $w(0+)=-2$, $w(c)=M_y-2$, and the positive and negative
variation masses of $-w$ are $M_y-2$ and $M_y$, where
\[
 M_y=g_y(c)=\frac{\sqrt{\mu K}}T
            \frac{c/T}{\sqrt{1-(c/T)^2}}.
\]
In the proof of Lemma~\ref{lem:variation-comparison},
$Q_{1/4}(0)=1/32$. Applying its lower bound to $x$ and $h+y$ gives
\[
 E_{S,1}\ge E_{1/4}(h)+E_{1/4}(y)
       +\frac L2-\frac14\sqrt{\mu K}-\frac{M_y}8+\frac1{16}.
\]
Applying \eqref{eq:onepoint-quarter} to $h$, as in the linear case
of Lemma~\ref{lem:onepoint-ball}, and using
\eqref{eq:linear-onepoint-functional}, gives
\[
 E_{1/4}(h)\ge\mathcal L(\xi_K)-\mathcal L(r)
 >\frac K4-\frac4{25}K^{1/3}-\frac L2+\frac1{48}.
\]
Finally $E_{1/4}(y)=\sqrt{\mu K}\,e(T)$, while Jensen's inequality
at $z=\mu$ gives $(c/T)^2\le1-\rho$ and hence
$M_y^2\le\pi K/T$.
\end{proof}

\paragraph{Concavity in the squared inner frequency.}
For fixed $K$, write $x_q(t)=(G_K-G_{\sqrt q})^{-1}(t)$.
Concavity holds on the interval where the sample exists; extending
it by zero past that interval would not preserve this property.

\begin{lemma}\label{d2:lem:concavity}
For $0<t<K/\pi$, the function $q\mapsto x_q(t)$ is nonincreasing
and concave on $0\le q\le(K-\pi t)^2$.
Consequently every fixed-row surplus
\[
 2\sum_{j=0}^{n-1}x_q(t_j)-n-\frac{K^2-q}{4}
\]
is concave on any common active interval, and attains its minimum
at an endpoint.
\end{lemma}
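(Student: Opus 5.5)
The plan is to work with the squared inner frequency $q=\mu^2$ and to treat the two branches of the inverse action separately, then glue them. Fix $t\in(0,K/\pi)$. Since $T_q=(K-\sqrt q)/\pi\ge t$ exactly when $q\le(K-\pi t)^2$, the sample $x_q(t)$ exists on the whole stated interval. The shell inverse agrees with $\xi_K$ whenever $t\le G_K(\sqrt q)$, that is, whenever $q\le\xi_K(t)^2$. So on $[0,\xi_K(t)^2]$ the function $q\mapsto x_q(t)$ is the constant $\xi_K(t)$. On the remaining interval $[\xi_K(t)^2,(K-\pi t)^2]$ the sample lies on the inner branch $z=x_q(t)<\sqrt q$. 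There I use the representation already implicit in Lemma~\ref{lem:general-transfer}, namely, with $u=s^2$,
\[
 A_q(z)=\frac1\pi\int_{\sqrt q}^K\sqrt{1-z^2/s^2}\,ds
 =\frac1{2\pi}\int_q^{K^2}\frac{\sqrt{u-z^2}}{u}\,du=:F(z^2,q).
\]
This formula is also valid at $z=\sqrt q$, and $F(w,q)$ is smooth for $w<q$.

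For monotonicity I differentiate implicitly. Writing $w=x_q(t)^2$, the level set is $F(w,q)=t$. We have $F_w=-\frac1{4\pi}\int_q^{K^2}u^{-1}(u-w)^{-1/2}\,du<0$ and $F_q=-\sqrt{q-w}/(2\pi q)\le0$. Hence $dw/dq=-F_q/F_w\le0$. At the junction $w=q$ we get $F_q=0$, so $dw/dq=0$. This matches the zero derivative of the constant branch, and the function is $C^1$ across $q=\xi_K(t)^2$.

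For concavity it then suffices to prove that $x=\sqrt w$ is concave on the inner interval: a $C^1$ function that is constant and then concave and nonincreasing, with matching slope zero, is concave on the union. I will compute $x''$ directly from $A_q(x)=t$:
\[
 x''=-\frac{\partial_q^2A+2\,\partial_q\partial_zA\,x'+\partial_z^2A\,(x')^2}{\partial_zA},\qquad x'=-\frac{\partial_qA}{\partial_zA}.
\]
Here $\partial_zA=-zTB$ and $\partial_z^2A=-TC$, with $B,C$ the expectations from the proof of Lemma~\ref{d2:lem:semicircle}. The mixed and pure $q$-derivatives come from the moving endpoint $\sqrt q$ only: $\partial_qA=-\sqrt{1-z^2/q}/(2\pi\sqrt q)$. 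Since $\partial_zA<0$, concavity is equivalent to the numerator being $\le0$, a quadratic form in $x'$ evaluated at its actual value. I expect this sign check to be the main obstacle. The endpoint term $\partial_q^2A=\frac{1}{4\pi}q^{-3/2}\bigl(1-2z^2/q\bigr)\bigl(1-z^2/q\bigr)^{-1/2}$ has indefinite sign, so $F$ is not jointly concave and one cannot just quote convexity of a superlevel set. My first attempt is to substitute $x'$ and reduce the inequality to $\partial_q^2A\,(\partial_zA)^2-2\,\partial_qA\,\partial_q\partial_zA\,\partial_zA+\partial_z^2A\,(\partial_qA)^2\le0$. I would then bound $\partial_z^2A$ from below in absolute value using the H\"older inequality $B^3\le Ca^2$ and the Jensen bounds of Lemma~\ref{d2:lem:semicircle}. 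This should leave an elementary inequality in $z^2/q$ and $\rho$. If that is too lossy, the fallback is to parametrize by $v=z^2/q\in(0,1)$ and verify the resulting one-variable inequality, using that the endpoint contributions carry the factor $\sqrt{1-v}$.

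The consequence is then formal. For a fixed number $n$ of active rows, each term $x_q(t_j)$ is concave in $q$ on the common interval where all $t_j$, $j<n$, remain below $T_q$. The term $-(K^2-q)/4$ is affine. Hence the fixed-row surplus is concave there, and a concave function on an interval attains its minimum at an endpoint.
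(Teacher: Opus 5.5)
Everything up to the sign check is correct: the split of $[0,(K-\pi t)^2]$ at $q=\xi_K(t)^2$, the signs of $F_w$ and $F_q$, the vanishing of $dw/dq$ at the junction and the resulting $C^1$ gluing, the second-order implicit-differentiation formula with your values of $\partial_qA$, $\partial_q^2A$, $\partial_zA$, $\partial_z^2A$, and the deduction for the fixed-row surplus.

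The gap is that the sign of $\partial_q^2A\,(\partial_zA)^2-2\partial_qA\,\partial_q\partial_zA\,\partial_zA+\partial_z^2A\,(\partial_qA)^2$ is the entire content of the lemma, and you leave it at ``I expect'' and ``should''. As written, concavity is not proved. Your own plan does close it, provided you eliminate $|\partial_zA|$ rather than estimate it. Put $y=|\partial_zA|$, $P=|\partial_qA|$, $R=\partial_q\partial_zA=z/(2\pi q\sqrt{q-z^2})>0$ and $S=\partial_q^2A$. H\"older, $B^3\le Ca^2$ with $a=1/(\mu K)$, gives $|\partial_z^2A|\ge cy^3$ with $c=qK^2/(z^3T^2)$. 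Hence the expression satisfies
\[
 Sy^2-2RPy-|\partial_z^2A|\,P^2\le y\bigl(Sy-2RP-cP^2y^2\bigr),
\]
and the right side is negative for every $y>0$ once $S^2\le8cRP^3$. This condition is needed only when $S>0$, that is, for $v=z^2/q<1/2$. Inserting the explicit $P,R,S$ and $\pi T=(1-\rho)K$, it becomes $(1-\rho)^2v(1-2v)^2\le8(1-v)^2$, which is obvious for $v<1/2$. The Jensen bounds play no role. The endpoints of the inner interval, where $\partial_zA\to0$ as $z\to0$, follow by continuity.

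With this step supplied, your argument is a valid alternative to the paper's, which never forms second derivatives. The paper uses the exact value $\partial_zA=-(a-b)/\pi$, with $a=\arccos(x/K)$ and $b=\arccos(x/\mu)$. This gives $\partial_qx=-\sin b/(2\mu\Delta)$ with $\Delta=a-b$. The paper then shows this derivative decreases in $\mu$ from $\mu b_\mu=\cot b+1/\Delta>0$ and $(\mu\Delta)_\mu<0$. The latter uses only $\tan b\cot a<1$ and $\Delta<\pi/2-b<\cot b$.

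The same closed form would also shorten your route. Your expression equals $-(4\pi^3\mu^3)^{-1}\bigl[\Delta(\Delta\cos2b+\sin2b)/\sin b+\sin b(1-\tan b\cot a)\bigr]$. The bracket is positive for the same two reasons: $b<a$, and, when $b>\pi/4$, $\Delta<\pi/2-b<\tan(\pi-2b)$. So the paper's first-order monotonicity argument is the more economical one. Yours stays entirely within the integral representation, at the cost of the extra H\"older/discriminant step.
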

\begin{proof}
On the inner branch, put $\mu=\sqrt q$, $x=x_q(t)$,
$a=\arccos(x/K)$, $b=\arccos(x/\mu)$, and $\Delta=a-b$.
Implicit differentiation gives
\[
 \partial_\mu x=-\frac{\sin b}{\Delta},\qquad
 \partial_q x=-\frac{\sin b}{2\mu\Delta},\qquad
 \mu b_\mu=\cot b+\frac1\Delta>0,
\]
\[
 (\mu\Delta)_\mu
 =\Delta-\cot b-\frac{1-\tan b\cot a}{\Delta}<0.
\]
Here $\tan b\cot a<1$ and $\Delta<\pi/2-b<\cot b$.
Thus $\sin b/(\mu\Delta)$ increases with $\mu$, so $\partial_qx$
is negative and decreasing on this branch.
At $x=\mu$, $\partial_qx$ tends to zero, joining the constant outer branch
with zero derivative. Continuity includes both active endpoints.
\end{proof}

Define
\begin{equation}\label{d2:eq:fixed-row-surplus}
 T_n=n+1/4,\qquad
 \Sigma_n(T)=2\sum_{j=0}^{n-1}x_{(K-\pi T)^2}(t_j)-n
                         -\W_2(K,K-\pi T).
\end{equation}
By Lemma~\ref{d2:lem:concavity}, $\Sigma_n$ attains its minimum at an
endpoint of any closed interval where its $n$ samples are active:
it is concave as a function of $(K-\pi T)^2$.

\begin{proposition}[The planar lattice comparison]\label{d2:prop:reduction}
For $K\ge8$ and $3/2\le T\le K/\pi$, one has
$P_2(K,\mu)>\W_2(K,\mu)$.
\end{proposition}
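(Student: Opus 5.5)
We start from the row identity \eqref{d2:eq:angular-rows}. Since $\lfloor x\rfloor\ge x-1$ is too lossy, we write
\[
 P_2(K,\mu)-\W_2(K,\mu)=\sum_{t_j\le T}\bigl(1+2\lfloor x(t_j)\rfloor\bigr)-2\int_0^\infty x(t)\,dt
\]
and compare $\lfloor x(t_j)\rfloor$ with the linear weight $x(t_j)$. The plan is to obtain the surplus in two regimes, split by the number $n$ of active rows, i.e.\ $T_{n-1}\le T<T_n$ with $T_n=n+1/4$.

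\textbf{Many rows, $n\ge3$.} We bound the floor of each row from below using the sharp linear sampling. First, $\lfloor x\rfloor\ge x-1/2-\tfrac12\{\ldots\}$ gives nothing uniform. Instead we use the fixed-row surplus $\Sigma_n$ of \eqref{d2:eq:fixed-row-surplus} together with Lemma~\ref{d2:lem:concavity}. On $T\in[T_{n-1},T_n]$ the $n$ samples are active, and $\Sigma_n$ is concave in $q=(K-\pi T)^2$. Hence it suffices to check the two width endpoints $T=T_{n-1}$ and $T=T_n$, where $\mu=K-\pi T$ is explicit.

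At each endpoint we bound $2\sum_j x(t_j)$ below via Lemma~\ref{d2:lem:semicircle}:
\[
 2E_{S,1}>\tfrac K2-\tfrac8{25}K^{1/3}+\tfrac16+2\sqrt{\mu K}\bigl(e(T)-1/4\bigr)-\tfrac14\sqrt{\pi K/T}.
\]
At the endpoints $T=m+1/4$, the semicircle sampling error $e(T)$ is computable. For $m\ge2$ we expect $e(m+1/4)\ge1/4$ up to a small controllable deficit; this is a Riemann-sum estimate for a concave quarter-circle sampled at quarter offsets, which we would prove with the tail Lemma~\ref{lem:jp-tail}-type Euler--Maclaurin bound. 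The correction $-n$ is at most $T+3/4$, and $T\le K/\pi$, so the linear term $K/2-K/\pi\approx0.18K$ dominates $K^{1/3}$ and $\sqrt{K/T}$ once $K\ge8$. This mirrors the disk proof of Proposition~\ref{d2:prop:disk}. A remaining issue is to pass from $\Sigma_n$, built from samples $x$, back to floors: $\Sigma_n$ is used because $2\lfloor x\rfloor$ summed with angular multiplicity is exactly the lattice count, and the gap between the count and $2\sum x$ is absorbed by the $2E$ estimate.

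\textbf{One or two rows, $3/2\le T<9/4$.} These endpoints cannot use the semicircle bound efficiently. Here we count directly: $P_2\ge 2+2\lfloor x(1/4)\rfloor+2\lfloor x(5/4)\rfloor$. We bound $x(1/4)\ge\xi_K(1/4)$-type quantities using Lemma~\ref{lem:inverse-action-seven-tenths} on the outer branch, and the inner branch through $A\le T-Tz^2/(2\mu K)$-type concavity bounds. We then compare with $\W_2=(K^2-\mu^2)/4=\pi T(2K-\pi T)/4$, again checking only the endpoints by Lemma~\ref{d2:lem:concavity}.

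\textbf{Main obstacle.} The main obstacle is the uniform lower bound on $e(m+1/4)-1/4$ combined with the term $-\tfrac14\sqrt{\pi K/T}$ when $\mu$ is close to $K$, so that $\sqrt{\mu K}\approx K$. There a negative semicircle deficit would be multiplied by $K$. I would first try to show $e(m+1/4)\ge1/4$ exactly for all $m\ge2$ by direct monotonicity in $m$, and handle $m=1,2$ by explicit evaluation.
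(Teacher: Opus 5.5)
Your architecture (the fixed-row surplus $\Sigma_n$, concavity in $\mu^2$, and the semicircle bound at $T=m+1/4$) matches the paper's, but the strip step fails as you set it up. On $[T_{n-1},T_n]$ you use $\Sigma_n$ alone and check it at both ends. At $T=T_{n-1}=t_{n-1}$ the newly active sample is $x(t_{n-1})=x(T)=0$, so
\[
 \Sigma_n(T_{n-1})=\Sigma_{n-1}(T_{n-1})-1 .
\]
The inequality $1+2\lfloor x\rfloor\ge2x-1$ charges $-1$ to a row that actually contributes one lattice point, and the result is negative in general. At $K=8$, $T=9/4$, both samples lie on the outer branch, with $x(1/4)\approx6.243$, $x(5/4)\approx2.941$ and $\W_2\approx15.783$. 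Hence $\Sigma_2(9/4)\approx0.58$ and $\Sigma_3(9/4)\approx-0.42$. The paper's endpoint bounds only give $\Sigma_m(T_m)>K/250$ for $m\ge3$. By continuity $\Sigma_n<0$ near the left end of such a strip, so no endpoint check rescues an argument that uses $\Sigma_n$ alone.

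The missing idea is to keep the new row's zero-angular point, $1+2\lfloor x(t_{n-1})\rfloor\ge1$, and use
\[
 P_2-\W_2\ge\max\{\Sigma_n(T),\ \Sigma_{n-1}(T)+1\},
\]
switching bounds at the width $T_c$ where $x(t_{n-1})=1$. Apply concavity to $\Sigma_{n-1}+1$ on $[T_{n-1},T_c]$ and to $\Sigma_n$ on $[T_c,T_n]$; on the last strip the right end is $K/\pi$, handled by \eqref{d2:eq:disk-surplus}. At $T_c$ you need $\Sigma_{n-1}(T_c)+1>\Sigma_{n-1}(T_{n-1})+1/4$. This holds for two reasons. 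First, the older samples increase with $T$ by Lemma~\ref{d2:lem:concavity}. Second, with $\mu_0=K-\pi T_{n-1}$ and $\mu_1$ the inner frequency at the crossing, one has $\mu_1\ge1$ and $\mu_0-\mu_1=\int_{\mu_1}^K(1-\sqrt{1-s^{-2}})\,ds<1/\mu_1$, so the Weyl increment $(\mu_0^2-\mu_1^2)/4$ is below $3/4$. If no crossing occurs, $\Sigma_{n-1}+1\ge\Sigma_n$ at the right end.

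Your plan for the ``main obstacle'' also cannot work, because $e(m+1/4)<1/4$ for every $m\ge1$. Write $e(T)=\tfrac12f(1/4)-\int_0^{1/4}f-R$ with $f(t)=\sqrt{1-t^2/T^2}$. The trapezoid error $R$ on $[1/4,T]$ is positive by concavity, and the other two terms total at most $\tfrac14f(1/4)$. Numerically $e(13/4)\approx0.089$, and $1/4-e(T)$ is of order $T^{-1/2}$ because of the square-root endpoint. So $2\sqrt{\mu K}\,(e(T)-1/4)$ costs up to about $0.32\sqrt\rho\,K$. For $\rho$ near one this exceeds the margin $K/2-K/\pi\approx0.18K$ you invoke, so the crude bound $n\le K/\pi+3/4$ is not enough.

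What closes the estimate is the trade-off $-n=-(1-\rho)K/\pi+1/4$: the row count is small exactly when the semicircle loss, which carries the factor $\sqrt\rho$, is large. The paper uses $2(1/4-e(T))<13/40$ for $n\ge3$, monotonicity in $T$ down to $T=13/4$, and completion of the square in $\sqrt\rho$ to get $\Sigma_n(T_n)/K>1/250$. The margin is narrow.

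Finally, in the two-row range the inequality $A(z)\le T-Tz^2/(2\mu K)$ bounds $x$ from above, whereas you need lower bounds. The paper obtains these from the $K$-derivative at fixed width, Lemma~\ref{d2:lem:width-slope}.
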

\begin{proof}
The endpoint estimates proved in Appendix~\ref{d2:app:endpoints} are
\begin{equation}\label{d2:eq:endpoints}
 \begin{gathered}
 \Sigma_2(3/2)>0,\qquad \Sigma_n(T_n)>0\quad(n\ge2,\ T_n\le K/\pi),\\
 \Sigma_n(K/\pi)>0,\qquad n=\lfloor K/\pi+3/4\rfloor.
 \end{gathered}
\end{equation}
The two-row surplus has positive endpoint values on
$[3/2,\min(T_2,K/\pi)]$, proving the initial interval.
Now increase $T$ from $T_{n-1}$ across the next strip, with $n\ge3$.
Keeping the new row's zero-angular point and applying
$1+2\lfloor x\rfloor\ge2x-1$ to the other rows gives
\[
 P_2-\W_2\ge\max\{\Sigma_n(T),\Sigma_{n-1}(T)+1\}.
\]
The two bounds first agree when the last inverse reaches one.
If this happens at inner frequency $\mu_1$, then $\mu_1\ge1$:
after joining the outer branch the sample is constant. Put $\mu_0=K-\pi T_{n-1}$. The action equation gives
\[
 \mu_0-\mu_1
 =\int_{\mu_1}^K\left(1-\sqrt{1-s^{-2}}\right)ds
 <\frac1{\mu_1},\qquad
 \frac{\mu_0^2-\mu_1^2}{4}<\frac34.
\]
The older inverse values increase as $T$ increases. Hence at this
crossing $\Sigma_{n-1}+1>\Sigma_{n-1}(T_{n-1})+1/4>0$.
Writing $T_c=(K-\mu_1)/\pi$, apply concavity to $\Sigma_{n-1}+1$
on $T_{n-1}\le T\le T_c$ and to $\Sigma_n$ after $T_c$, using the
prescribed right endpoint of the strip.
If no crossing occurs, $\Sigma_{n-1}+1$ is positive at both endpoints:
at the right endpoint it is at least $\Sigma_n$, because their difference
is $2(1-x(t_{n-1}))\ge0$.
Concavity again proves the strip. A newly entering zero inverse
contributes one point at each exact quarter-height endpoint.
Induction proves the proposition, including the ball endpoint.
\end{proof}

\begin{proof}[Proof of Theorem~\ref{thm:planar-complete}]
Scale to outer radius one. Lemma~\ref{d2:lem:variational} covers
$K\le8$, and $K\ge8$ with $T\le3/2$. In the remaining annulus
range, Proposition~\ref{d2:prop:reduction} gives $P_2>\W_2$, and
\eqref{d2:eq:phase-count} gives $N^<\ge P_2$.
The disk was proved in Proposition~\ref{d2:prop:disk}.
\end{proof}

\subsection{Completion of the proof}

\begin{proof}[Proof of Theorem~\ref{thm:all-dimensions} from the dimensional estimates]
Scale the outer radius to one. Theorem~\ref{thm:clipped-surplus}
proves the assertion for $d\ge5$, Theorem~\ref{thm:d4all} for $d=4$,
Theorem~\ref{d3:thm:main} for $d=3$, and
Theorem~\ref{thm:planar-complete} for $d=2$. Each estimate is strict
at positive frequency. At $\Lambda=\lambda_k^N>0$, the strict count
is at most $k$, which gives the eigenvalue inequality. Conversely,
if the Weyl term is $w>0$, put $m=\lfloor w\rfloor$. The eigenvalue
inequality gives $\lambda_m^N<\Lambda$ when $m\ge1$; for $m=0$,
this follows from $\lambda_0^N=0$. Hence at least $m+1>w$
eigenvalues lie strictly below $\Lambda$. At zero, both sides of
the counting inequality vanish.
\end{proof}

\begin{proof}[Proof of Corollary~\ref{cor:balls}]
Set $\rho=0$ in Theorems~\ref{thm:planar-complete},
\ref{d3:thm:main}, \ref{thm:d4all}, and~\ref{thm:clipped-surplus}
for $d=2$, $d=3$, $d=4$, and $d\ge5$, respectively, and restore
the outer radius by scaling. This proves
the strict ball inequality for every
positive frequency. Moreover, $T=K/\pi$ and $k_d>w_d$ for $d\ge3$.
Thus Proposition~\ref{d3:prop:jointclosure},
\eqref{eq:d4-quantitative-surplus}, and
Theorem~\ref{thm:clipped-surplus} give the ball surplus for $d\ge3$.
For $d=2$, Proposition~\ref{d2:prop:disk} gives the larger coefficient
$1/500>c_2$. Thus, in every dimension,
\begin{equation}\label{eq:ball-effective-surplus}
 N^<_{B_R}(\Lambda)-\frac{R^d\Lambda^{d/2}}{2^d\Gamma(d/2+1)^2}
 >c_dR^{d-1}\Lambda^{(d-1)/2}
 \qquad(R\sqrt\Lambda\ge k_d, d\ge2).
\end{equation}
\end{proof}

\section{Uniform surplus and removal of small holes}\label{sec:enhanced-removal}
The positive margins in the dimensional proofs give an explicit surplus
in every dimension.
An interval Riesz mean inequality extends the estimate to thin
shells above a dimension-dependent outer frequency. The resulting
bound is uniform in the radius ratio. The strict shell and ball
inequalities, including Corollary~\ref{cor:balls}, and compactness
extend this uniformity to all positive frequencies. The removal
argument of \cite{HJLW}, made uniform over compact families of host
shells, then gives Corollary~\ref{cor:uniform-perforated-shells}.

Throughout this section, scale the outer radius to one and write
\[
 p=d-1,\qquad \rho=r/R,\qquad K=R\sqrt\Lambda,\qquad
 h=1-\rho,\qquad T=hK/\pi,\qquad \mu=\rho K.
\]
Thus the normalized surplus is
\[
 S(\rho,K)=N^<_{\Omega_{\rho,1}}(K^2)-\W_d(K,\rho K).
\]

We first extract a uniform surplus from the planar proof.

\begin{proposition}[A uniform planar surplus]\label{prop:planar-surplus}
For every $K\ge8$ and $0\le\rho<1$,
\begin{equation}\label{eq:planar-surplus}
 N^<_{\Omega_{\rho,1}}(K^2)-\frac{1-\rho^2}{4}K^2>\frac K{4000}.
\end{equation}
\end{proposition}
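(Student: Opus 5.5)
Throughout, $K\ge8$ and $0\le\rho<1$; I would split by the width parameter $T=(1-\rho)K/\pi$ in the same way as in the proof of Theorem~\ref{thm:planar-complete}.

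\emph{The disk.} For $\rho=0$, Proposition~\ref{d2:prop:disk} together with \eqref{d2:eq:phase-count} and \eqref{d2:eq:angular-rows} already gives a surplus larger than $K/500$.

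\emph{Thick annuli.} For $\rho>0$ and $T\ge3/2$, I would go back through the endpoint scheme of Proposition~\ref{d2:prop:reduction}. The aim is to prove a margin proportional to $K$ instead of mere positivity. First, strengthen the endpoint estimates \eqref{d2:eq:endpoints} from Appendix~\ref{d2:app:endpoints} to
\[
 \Sigma_n>\frac K{2000}
\]
at each endpoint. Concavity (Lemma~\ref{d2:lem:concavity}) then carries such an affine lower bound across every strip, because $K/2000$ does not depend on $T$; so the induction in Proposition~\ref{d2:prop:reduction} yields $P_2-\W_2>K/2000$. The one place needing care is the crossing argument, where $\Sigma_{n-1}+1>\Sigma_{n-1}(T_{n-1})+1/4$. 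There the margin is kept, since only the bound $(\mu_0^2-\mu_1^2)/4<3/4$ is used and the $K/2000$ term is unaffected. To obtain the endpoint margins I would use Lemma~\ref{d2:lem:semicircle}. Its right-hand side has the form $K/4-O(K^{1/3})$ plus a semicircle sampling term and a slope term $\tfrac18\sqrt{\pi K/T}$. Against this one subtracts the row count, which is at most $T+3/4\le K/\pi+3/4$. The margin between $K/2-K/\pi\approx0.18K$ and the losses is linear in $K$, and I expect it to survive with constant $1/2000$ once $K\ge8$, as in the disk computation.

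\emph{Thin annuli.} For $T\le3/2$, the radial trials of Lemma~\ref{d2:lem:variational} give only strictness, so a separate argument is needed. I would use the Neumann cosine trial together with the interval Riesz mean inequality (Lemma~\ref{lem:interval-riesz}). Concretely, in angular order $m$, the zero radial mode has eigenvalue below $m^2/\rho^2$, by the constant trial with the angular term bounded by $m^2/s^2\le m^2/\rho^2$. Counting the orders $|m|<\rho K$ then gives at least $2\lfloor\rho K\rfloor+1$ eigenvalues below $K^2$ (the count is strict, and the constant mode is included). The Weyl term is $(1-\rho^2)K^2/4\le(1-\rho)K^2/2=\pi TK/2\le 3\pi K/4$. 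With $\rho\ge1-3\pi/(2K)$, the count $2\rho K-1$ still falls short of $3\pi K/4$, since $2\rho K\approx2K$ while $3\pi K/4\approx2.36K$. So the first radial excitation has to be included. Its eigenvalue is below $\pi^2/(1-\rho)^2+(m^2+1/4)/\rho^2$ by Lemma~\ref{lem:var-radial}~\eqref{eq:var-excited} in its planar form. Summing over $m$ gives a semicircle-type count, and the Riesz inequality controls the fractional losses. Alternatively, if the paper's later thin-shell argument applies to $d=2$, one would use that argument directly.

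\emph{Expected obstacle.} The thin range $T\le3/2$ is the step I expect to be hardest. There one needs a $K/4000$ margin uniformly for $T$ near $1$–$3/2$, where the excited row is only partially active. I would first try combining the rows $j=0,1$ with $1+2\lfloor x\rfloor\ge 2x-1$ applied to cosine-trial inverse curves, and check the resulting endpoint inequalities at $T=1$ and $T=3/2$.
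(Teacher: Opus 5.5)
\textbf{A gap in the thin range.} Your treatment of the disk and your strip induction for $T\ge3/2$ match the paper's proof. Concavity in $\mu^2$ and the crossing bound $\Sigma_{n-1}(T_{n-1})+1/4$ carry a constant margin unchanged through every strip. The genuine gap is the thin range $T\le3/2$, which you leave open because you read Lemma~\ref{d2:lem:variational} as giving only strictness there. Its second assertion already gives a surplus greater than $K/120>K/4000$ for $K\ge8$ and $T\le3/2$, and the paper's proof simply cites it.

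The replacement you sketch does not close, for three reasons.
\begin{itemize}
\item Lemma~\ref{lem:interval-riesz} is proved only for $\gamma\ge1$. A planar cosine-row sum has $\gamma=p/2=1/2$, where the inequality fails: at $t=1$ the left side is $1<\pi/4+1/3$. Lemma~\ref{lem:thin-uniform-surplus} also requires $p\ge2$.
\item The planar form of \eqref{eq:var-excited} has $(m^2+3/4)/\rho^2$, not $(m^2+1/4)/\rho^2$.
\item Most importantly, the ground-row bound $\lambda_{m,0}<m^2/\rho^2$ is too crude. At $K=8$, $T=1$ it yields $2\lfloor\rho K\rfloor+1=9$ modes against a Weyl term of about $10.1$. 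No excited level helps there, since $\lambda_{m,1}(\rho)\ge\lambda_{0,1}(\rho)>\pi^2/(1-\rho)^2=K^2/T^2$.
\end{itemize}
The lemma instead uses the weighted constant-trial quotient $\beta_\rho m^2$, where $\beta_\rho=2\log(1/\rho)/(1-\rho^2)<\bigl(21/(10(1+\rho))\bigr)^2$. The ground row then exceeds $(20/21)(1+\rho)K-1$, which alone suffices for $T\le12/11$. The orthogonalized cosine trial \eqref{d2:eq:cosine-trial} supplies the excited row on $12/11\le T\le3/2$.

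\textbf{The thick-range endpoint margins.} Your proposed source of these margins also needs correcting. The paper uses Lemma~\ref{d2:lem:semicircle}, through Lemma~\ref{d2:lem:quarter-semicircle}, only at $T_n$ with $n\ge3$, where it yields about $K/250$. The figure $K/2-K/\pi$ is the disk heuristic, not the annulus margin. At the two-row endpoint the lemma fails near $K=8$: there $e(9/4)\approx0.058$, and \eqref{d2:eq:semicircle-transfer} gives only $\Sigma_2(9/4)=2E_{S,1}-2>-0.36$.

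No new estimate is needed, because the proofs in Appendix~\ref{d2:app:endpoints} already give linear margins: $\Sigma_2(3/2)>1/6+6(K-6)/385$, $\Sigma_n(T_n)>K/250$ for $n\ge3$, and $\Sigma_n(K/\pi)>K/500$. For $\Sigma_2(9/4)$, the outer-branch bound \eqref{d2:eq:airy} gives $(13/28)K-7K^{1/3}+1047/100$ on $8\le K\le14$. This exceeds $K/4000$, as one sees by minimizing a cubic in $K^{1/3}$. Beyond $K=14$, the bound $\partial_K\Sigma_2(9/4)>3/700$ extends it.
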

\begin{proof}
If $T\le3/2$, Lemma~\ref{d2:lem:variational} gives a surplus greater
than $K/120$. Suppose $T\ge3/2$. The endpoint estimates in
Appendix~\ref{d2:app:endpoints} and \eqref{d2:eq:disk-surplus} give
\[
 \begin{aligned}
 \Sigma_2(3/2)&>\frac16+\frac6{385}(K-6),\\
 \Sigma_n(T_n)&>\frac K{250}\quad(n\ge3,\ T_n\le K/\pi),\\
 \Sigma_n(K/\pi)&>\frac K{500},\qquad n=\lfloor K/\pi+3/4\rfloor.
 \end{aligned}
\]
Each exceeds $K/4000$. For the remaining endpoint, the same appendix gives
\[
 \Sigma_2(9/4)>\frac{13}{28}K-7K^{1/3}+\frac{1047}{100}
 \quad(8\le K\le14),\qquad
 \partial_K\Sigma_2(9/4)>\frac3{700}\quad(K\ge14).
\]
Put $a=13/28-1/4000>0$ and $y=K^{1/3}$. The minimum of
$ay^3-7y+1047/100$ on $y\ge0$ is positive because
\[
 \left(\frac{1047}{100}\right)^2>
 \frac{196}{9}\,\frac7{3a}.
\]
Thus $\Sigma_2(9/4)>K/4000$ on $[8,14]$; the derivative bound extends
it to every $K\ge14$.

Fix $K\ge8$ and repeat the strip argument of
Proposition~\ref{d2:prop:reduction}. Each lower bound is concave in
$\mu^2$ on its interval, and hence is at least its smaller endpoint
value. At a crossing, the bound exceeds
$\Sigma_{n-1}(T_{n-1})+1/4$. The endpoint estimates above therefore give
$P_2-\W_2>K/4000$ on every strip, including terminal zero rows.
Finally, $N^<\ge P_2$ by \eqref{d2:eq:phase-count}.
\end{proof}

\begin{remark}[The larger-frequency planar coefficient]
\label{rem:planar-dai-sun}
Dai--Sun~\cite[Theorem~6.1]{DSannuli} prove, with inclusive counting,
\[
 N^\le_{\Omega_{\rho,1}}(s^2)>
 \frac{1-\rho^2}{4}s^2+\frac s{20}
 \qquad(0<\rho<1,\ s\ge80).
\]
For fixed $K>80$, letting $s\uparrow K$ gives
\begin{equation}\label{eq:planar-cited-surplus}
 N^<_{\Omega_{\rho,1}}(K^2)-\frac{1-\rho^2}{4}K^2\ge\frac K{20}.
\end{equation}
This improves the coefficient in \eqref{eq:planar-surplus} at larger
frequencies; it is not used in our proofs.
\end{remark}

\subsection{The margins in the dimensional proof}
\begin{proof}[Proof of Theorem~\ref{thm:effective-surplus} under
\eqref{eq:effective-surplus-onset}]
The hypotheses are $K\ge k_d$ and $hK\ge w_d$.
For $d=2$, Proposition~\ref{prop:planar-surplus} gives
$S(\rho,K)>K/4000$. For $d=3$,
Proposition~\ref{d3:prop:jointclosure} gives
$S(\rho,K)>K^2/100000$. For $d=4$, one has
$hK\ge16/3>s(K)$, so \eqref{eq:d4-quantitative-surplus} gives
$S(\rho,K)>K^3/20000$.

For $d\ge5$, the hypotheses are those of
Theorem~\ref{thm:clipped-surplus}. With $p=d-1$, its conclusion is
\[
 \frac{p!}{2K^p}S(\rho,K)>\frac1{2000},
 \qquad\text{hence}\qquad
 S(\rho,K)>\frac{K^p}{1000p!}.
\]
These are the coefficients in \eqref{eq:effective-surplus-constants}.
Scaling restores the dependence on $R$.
\end{proof}

\subsection{High-frequency surplus uniform in the radius ratio}
The Neumann cosine modes give a surplus estimate in the thin-shell
range. The positive remainder in the following interval inequality
makes the frequency threshold independent of the shell width.

\begin{lemma}[Riesz means of the Neumann interval]\label{lem:interval-riesz}
For every $\gamma\ge1$ and $t>0$,
\begin{equation}\label{eq:interval-riesz}
 \sum_{n\ge0}\Bigl(1-\frac{n^2}{t^2}\Bigr)_+^{\gamma}
 \ge t\int_0^1(1-s^2)^\gamma\,ds+\frac13.
\end{equation}
\end{lemma}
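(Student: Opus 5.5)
Put $g(x)=f(x/t)$ with $f(s)=(1-s^2)_+^\gamma$, so that the left side of \eqref{eq:interval-riesz} is $\sum_{n\ge0}g(n)$ and $t\int_0^1(1-s^2)^\gamma\,ds=\int_0^\infty g$. Since $\gamma\ge1$, $g$ is $C^1$ on $[0,\infty)$, vanishes beyond $t$, and satisfies $g(0)=1$, $g'(0)=0$. The plan is an Euler--Maclaurin argument at second order, as in the trapezoidal identity in the proof of Lemma~\ref{lem:jp-tail}:
\[
 \sum_{n\ge0}g(n)=\int_0^\infty g+\frac12g(0)+\int_0^\infty w(\{x\})\,g''(x)\,dx,
 \qquad w(s)=\frac{s(1-s)}2\in[0,1/8].
\]
The boundary terms vanish because $w(0)=0$ and $g'(0)=0$. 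For $\gamma=1$, $g'$ jumps at $x=t$; the jump is upward, so it adds a nonnegative atom $w(\{t\})\,|g'(t-)|$ to $g''$ and only helps. So it suffices to show
\[
 \int_0^\infty w(\{x\})\,g''(x)\,dx\ge-\frac16 .
\]

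\emph{Sign structure.} Since $f''(s)=2\gamma(1-s^2)^{\gamma-2}\bigl((2\gamma-1)s^2-1\bigr)$, $g$ is concave on $[0,x_0]$ with $x_0=t/\sqrt{2\gamma-1}$ and convex on $[x_0,t]$. The convex part contributes nonnegatively. The concave part contributes at least
\[
 -\frac18\int_0^{x_0}|g''|=-\frac18|g'(x_0)|=-\frac{m_\gamma}{8t},
 \qquad m_\gamma=\max|f'|=\frac{2\gamma}{\sqrt{2\gamma-1}}\Bigl(\frac{2\gamma-2}{2\gamma-1}\Bigr)^{\gamma-1}.
\]
This already proves the lemma once $t\ge 3m_\gamma/4$, for instance $t\ge3/2$ when $\gamma=1$. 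To sharpen the constant I would not bound $w$ by $1/8$ on the first unit interval, where $w(x)=x(1-x)/2$ is small near $x=0$ and $|g''|$ is largest. I would integrate by parts once more with the centred periodic primitive $B$ of $w-1/12$ from Lemma~\ref{lem:jp-tail}, getting the concave-part contribution in the form $-\frac1{12}|g'(x_0)|$ plus an $O(|g''|)$ term of size at most $\frac{1}{72\sqrt3}\max|g''|$.

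\emph{Small and intermediate $t$.} Here I would argue directly, keeping only the $n=0$ term and finitely many others. For $t\le1$ the sum is at least $1$, while the right side is at most $I_\gamma+1/3$ with $I_\gamma=\int_0^1(1-s^2)^\gamma\,ds\le2/3$; equality occurs only at $\gamma=1$, $t=1$, which is allowed. Because $I_\gamma\sim\sqrt{\pi/(4\gamma)}$ decays like $m_\gamma^{-1}$, the bound $\sum\ge1$ continues to work while $tI_\gamma\le2/3$, that is for $t\lesssim\sqrt\gamma$. For larger $t$ I would add the terms $n=1,2,\dots$ via the lower bound $f(s)\ge e^{-\gamma s^2/(1-s^2)}$. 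This should overlap the Euler--Maclaurin range $t\ge c\,m_\gamma\asymp\sqrt\gamma$.

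\emph{Main obstacle.} I expect the hard step to be making these two regimes overlap with explicit constants uniformly in $\gamma\ge1$, especially for $\gamma$ near $1$ with $1<t<3/2$. There I would simply compute: for $\gamma=1$ and $1<t<2$ the sum equals $2-1/t^2$ and the right side is $2t/3+1/3$, and the difference $5/3-1/t^2-2t/3$ is nonnegative on $[1,3/2]$. For general $\gamma$ I would try monotonicity in $\gamma$ of the normalized defect, or else the refined $B$-primitive estimate, which lowers the threshold to roughly $t\ge m_\gamma/2$.
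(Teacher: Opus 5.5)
Your argument is complete at $\gamma=1$. There, the bound $\sum\ge1$ for $t\le1$, the explicit difference $\frac53-\frac1{t^2}-\frac{2t}3\ge0$ on $[1,\frac32]$, and the Euler--Maclaurin bound $-\frac1{4t}\ge-\frac16$ for $t\ge\frac32$ together cover every $t>0$.

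For $\gamma>1$, however, the proof stops at exactly the step you flag, and this is a genuine gap. With your $I_\gamma=\int_0^1(1-s^2)^\gamma\,ds$, the crude overlap needs $\frac34m_\gamma\le\frac2{3I_\gamma}$, i.e.\ $I_\gamma m_\gamma\le\frac89$. This is false for $1\le\gamma\le\frac75$; the product equals $\frac43$ at $\gamma=1$.

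The refined primitive estimate requires $\frac{m_\gamma}{12t}+\frac{\gamma}{36\sqrt3\,t^2}\le\frac16$. This fails at $t=1$ for $\gamma$ near $1$, while the trivial regime $tI_\gamma\le\frac23$ ends only just above $t=1$. So a window of $t$ slightly larger than $1$ stays uncovered for $\gamma$ slightly larger than $1$.

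Estimates with a fixed loss cannot close this window, because the inequality is an equality at $\gamma=t=1$. For $\gamma=1+\epsilon$ and $t\downarrow1$ the margin is only $\frac23-I_\gamma=O(\epsilon)$. The row $n=1$ contributes $(1-t^{-2})^\gamma$, which has zero $t$-derivative at $t=1^+$ once $\gamma>1$, so the margin first shrinks.

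Your fallback, monotonicity in $\gamma$ of $\sum_n(1-n^2/t^2)_+^\gamma-tI_\gamma$, is not established. Its $\gamma$-derivative is the sampling error of $(1-s^2)^\gamma\log(1-s^2)$, whose sign is not evident.

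The missing idea, which removes the whole difficulty, is to reduce $\gamma>1$ to $\gamma=1$ by subordination (Aizenman--Lieb lifting), as the paper does. For $\gamma>1$ and real $a$ one has $a_+^\gamma=\gamma(\gamma-1)\int_0^\infty\tau^{\gamma-2}(a-\tau)_+\,d\tau$. The steps are:
\begin{enumerate}
\item Write the $\gamma=1$ case as $\sum_{n\ge0}(E-n^2)_+\ge\frac23E^{3/2}+\frac E3$ for $E>0$.
\item Apply it with $E-\tau$ in place of $E$ for $0<\tau<E$, and sum over $n$ inside the integral.
\item Evaluate the two resulting Beta integrals; they give $E^{\gamma+1/2}\int_0^1(1-s^2)^\gamma\,ds$ and $\frac13E^\gamma$.
\item Divide by $E^\gamma=t^{2\gamma}$.
\end{enumerate}
This proves the lemma for all $\gamma>1$ and $t>0$ at once, with the constant $\frac13$ carried through unchanged.

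The paper proves the $\gamma=1$ case by an exact cubic on each interval $N<t\le N+1$, which is unimodal there with endpoint values $N(N-1)/6$ and $N(N+1)/6$. Your own $\gamma=1$ argument serves equally well, so adding the lifting step would turn your proposal into a complete proof.
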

\begin{proof}
Let $\gamma=1$. For an integer $N\ge0$ and $N<t\le N+1$, the left side
of \eqref{eq:interval-riesz} minus the right side, multiplied by $t^2$,
equals
\[
 F_N(t)=\Bigl(N+\frac23\Bigr)t^2-\frac23t^3-\frac{N(N+1)(2N+1)}6.
\]
Since $F_N'(t)=2t(N+2/3-t)$, the function $F_N$ increases and then
decreases on $[N,N+1]$; its endpoint values are $F_N(N)=N(N-1)/6\ge0$
and $F_N(N+1)=N(N+1)/6\ge0$. Hence
\begin{equation}\label{eq:interval-riesz-one}
 \sum_{n\ge0}(E-n^2)_+\ge\frac23E^{3/2}+\frac E3\qquad(E>0).
\end{equation}
For $\gamma>1$ use $a_+^\gamma=\gamma(\gamma-1)\int_0^\infty
\tau^{\gamma-2}(a-\tau)_+\,d\tau$ for real $a$. With $E=t^2$, summation
over $n$ and \eqref{eq:interval-riesz-one} for $E-\tau$ give
\[
 \sum_{n\ge0}(E-n^2)_+^\gamma
 \ge\gamma(\gamma-1)\int_0^E\tau^{\gamma-2}
 \Bigl[\frac23(E-\tau)^{3/2}+\frac{E-\tau}3\Bigr]d\tau
 =E^{\gamma+1/2}\int_0^1(1-s^2)^\gamma\,ds+\frac{E^\gamma}{3}.
\]
The two Beta integrals are
$\gamma(\gamma-1)\int_0^E\tau^{\gamma-2}(E-\tau)\,d\tau=E^\gamma$ and
$\frac23\gamma(\gamma-1)\int_0^E\tau^{\gamma-2}(E-\tau)^{3/2}d\tau
=\frac{\sqrt\pi}{2}\frac{\Gamma(\gamma+1)}{\Gamma(\gamma+3/2)}E^{\gamma+1/2}$,
and the constant factor in the second expression equals
$\int_0^1(1-s^2)^\gamma\,ds$. Dividing by
$t^{2\gamma}$ proves \eqref{eq:interval-riesz}. Equality at
$\gamma=t=1$ shows that $1/3$ cannot be increased uniformly in the
stated range.
\end{proof}

\begin{lemma}[A surplus from cosine trials]\label{lem:thin-uniform-surplus}
Let $p=d-1\ge2$. If $K\ge24p^2$ and $0<T\le\sqrt p$, then
\begin{equation}\label{eq:all-radii-thin-surplus}
 \frac{p!}{2K^p}S(\rho,K)>\frac1{10}.
\end{equation}
\end{lemma}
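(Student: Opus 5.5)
The plan is to bypass the phase machinery entirely and use Neumann cosine trial functions in the radial variable. In this range the shell is thin, since $h=\pi T/K\le\pi\sqrt p/(24p^2)$. The surplus should then come from the additive constant $1/3$ in Lemma~\ref{lem:interval-riesz}, applied with $\gamma=p/2\ge1$.

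\textbf{Step 1 (radial trials).} In angular degree $\ell$, use min--max on the span of $\cos(\pi n(s-\rho)/h)$, $0\le n\le j$, on $(\rho,1)$. Since $s^{d-1}$ lies between $\rho^{p}$ and $1$, and $q s^{d-3}\le q\rho^{-2}s^{d-1}$, the radial quotient of any element of this span is at most $\rho^{-p}(\pi j/h)^2+q_\ell/\rho^2$. The inequality is strict, because cosine modes are not eigenfunctions of the weighted problem. Hence $n_\ell^<(K^2)\ge\#\{n\ge0:\rho^{-p}\pi^2n^2/h^2+q_\ell/\rho^2<K^2\}$. Interchanging the roles of $n$ and $\ell$ gives
\[
 N^<(K^2)\ge\sum_{n\ge0}C_d\bigl(Z_n\bigr),\qquad
 Z_n=\rho\sqrt{K^2-\rho^{-p}\pi^2n^2/h^2}\quad(\text{real part}),
\]
up to the harmless shift between $q_\ell<Z^2$ and $\nu_\ell\le Z$, using $q_\ell=\nu_\ell^2-\alpha^2$.

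\textbf{Step 2 (angular count and Riesz means).} Apply the lower bound $C_d\ge 2\Psi_p/p!$ from Lemma~\ref{lem:clipped-angular} and \eqref{eq:angular-threepower}, or $\Phi_p$ for $p=2,3$, together with the factorization $\Psi_p(Z)\ge Z^p(1-\tfrac p{2Z}-b_pZ^{-2})$. Write $Z_n=\rho K(1-n^2/t^2)_+^{1/2}$ with $t=hK\rho^{p/2}/\pi$. The leading part becomes $\tfrac2{p!}(\rho K)^p\sum_n(1-n^2/t^2)_+^{p/2}$. Lemma~\ref{lem:interval-riesz} with $\gamma=p/2$ then bounds this sum below by
\[
 \frac2{p!}(\rho K)^p\Bigl[t\,I_{p+1}+\frac13\Bigr],
\]
using $\int_0^1(1-s^2)^{p/2}ds=I_{p+1}$. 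By \eqref{eq:moment} the Weyl term is $\W_d=\tfrac{2}{p!}\,\tfrac{K^{p+1}-\mu^{p+1}}{\pi}\,\tfrac{I_{p+1}}{p+1}\cdot\tfrac{p!}{p!}$, which is of order $\tfrac2{p!}K^pTI_{p+1}$ for $\rho$ near $1$.

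\textbf{Step 3 (error bookkeeping).} Three losses must be controlled after dividing by $2K^p/p!$.
\begin{itemize}
\item The mismatch between $\rho^{p}\cdot\rho^{p/2}hK/\pi$ and $(1-\rho^{p+1})K/((p+1)\pi)$. Its relative size is $O(ph)$, and multiplied by $T\le\sqrt p$ it is of order $\pi p^{2}/K\le\pi/24$, with a better constant once expanded carefully.
\item The factor $\rho^p\ge1-ph$ multiplying the constant $1/3$.
\item The lower-order angular terms $\tfrac p{2Z_n}$ and $b_pZ_n^{-2}$. These must be summed over the at most $t+1\le\sqrt p+1$ rows, each weighted by $(1-n^2/t^2)^{(p-1)/2}$. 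For this I would reuse Lemma~\ref{lem:interval-riesz} with $\gamma=(p-1)/2$ when $p\ge3$, and treat $p=2$ directly. Relative to $K^p$ these contribute about $\tfrac p{2K}(t+1)\lesssim p^{3/2}/K\le 1/(24\sqrt p)$.
\end{itemize}
The required conclusion is $\tfrac13-(\text{losses})>\tfrac1{10}$.

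\textbf{Main obstacle.} The delicate part is Step 3 for rows with $n$ close to $t$, where $Z_n$ is small and the clipped polynomial is zero rather than following its expansion. I would handle this by working directly with $\Psi_p\ge R_p$ (which is valid for $Z\ge0$ because $\Psi_p=(R_p)_+$) and applying the Riesz lemma separately to each of the three monomials $Z^p$, $Z^{p-1}$, $Z^{p-2}$. The negative monomials need \emph{upper} bounds on their Riesz sums. For these I would use the trivial bound $\sum_n(1-n^2/t^2)_+^{\gamma}\le tI_{2\gamma+1}+1$, which holds because the summand is decreasing in $n$. This keeps every estimate explicit. It remains to verify numerically that $1/3-\pi/24\cdot c-1/(24\sqrt p)\cdot c'>1/10$, with the constants $c,c'$ tracked through the calculation.
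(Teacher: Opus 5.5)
Your proposal is correct and follows essentially the paper's proof. It uses the same ingredients: the weighted cosine-trial bound $\lambda_{q,n}\le\rho^{-p}(\pi n/h)^2+q/\rho^2$, the interchange of radial rows and angular degrees with $\widehat T=\rho^{p/2}T$, Lemma~\ref{lem:interval-riesz} at $\gamma=p/2$ to produce the $+1/3$, the monotone upper bound $1+\widehat T I_{2\gamma+1}$ for the negative Riesz sums, and the inequalities $\rho^p\ge 1-ph$ and $1-\rho^{3p/2}\le 3ph/2$.

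The only difference is the angular minorant. The paper derives $\frac{2}{p!}(s-\alpha)_+^p\ge\frac{2}{p!}(s-p/2)_+^p$ directly and expands with $(v-a)_+^p\ge v^p-pav^{p-1}$, which leaves one negative sum with coefficient $p^2/(2K)$. You reuse the earlier clipped weights, which gives a smaller coefficient $p/(2K)$ plus a negligible $b_p$ term. For $p=2,3$, drop the negative $b_p$ first and use $\Phi_p\ge\frac{2}{p!}\bigl(z^p-\frac{p}{2}z^{p-1}\bigr)$ for all $z\ge0$. With these choices your total loss is about $0.19<7/30$ at $p=2$ and smaller for larger $p$, so the final arithmetic closes.
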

\begin{proof}
For $j>0$ and $t>0$, put
\[
 f_j(s)=(1-s^2)_+^{j/2},\qquad
 \mathcal L_j(t)=\sum_{n\ge0}f_j(n/t).
\]
The substitution $s=\sin u$ gives
$\int_0^1f_j(s)\,ds=I_{j+1}$, with $I_j$ as in
\eqref{eq:moment}. Lemma~\ref{lem:interval-riesz} with exponent
$\gamma=p/2\ge1$ therefore gives
\begin{equation}\label{eq:all-radii-left-sampling}
 \mathcal L_p(t)-tI_{p+1}
 \ge\frac13.
\end{equation}
Monotonicity of $f_{p-1}$ also gives
$\mathcal L_{p-1}(t)\le1+tI_p$.

For angular eigenvalue $q=\ell(\ell+p-1)$, write
$\lambda_{q,n}(\rho)$, $n\ge0$, for the ordered radial
eigenvalues in that sector. Use the first $n+1$ Neumann cosine
functions on $(\rho,1)$ as a radial trial space.
The weighted derivative energy is at most its unweighted
counterpart, the weighted mass is at least $\rho^p$ times the
unweighted mass, and $q/s^2\le q/\rho^2$ on the shell. Hence
\begin{equation}\label{eq:all-radii-cosine}
 \lambda_{q,n}(\rho)
 \le\rho^{-p}\left(\frac{\pi n}{h}\right)^2+\rho^{-2}q.
\end{equation}

The strict angular count satisfies, with $\alpha=(p-1)/2$,
\begin{equation}\label{eq:all-radii-angular}
 \sum_{\ell(\ell+p-1)<s^2}\kappa_{p+1,\ell}
 \ge\frac2{p!}(s-\alpha)_+^p
 \ge\frac2{p!}(s-p/2)_+^p.
\end{equation}
Indeed, for $s>0$ put
$w=\sqrt{s^2+\alpha^2}-\alpha$ and $m=\lceil w\rceil-1$.
The cumulative multiplicity in \eqref{eq:angular-exact} gives
\[
 \sum_{\ell=0}^m\kappa_{p+1,\ell}
 =\frac{(2m+p)(m+1)\cdots(m+p-1)}{p!}
 \ge\frac{2(m+1)^p}{p!}\ge\frac{2w^p}{p!}.
\]
Since $w\ge s-\alpha$, this proves
\eqref{eq:all-radii-angular}; at $s=0$ both lower bounds vanish.

Set $\widehat T=\rho^{p/2}T$. Apply
\eqref{eq:all-radii-angular} to the angular cutoffs for which the
right side of \eqref{eq:all-radii-cosine} is strictly below $K^2$.
Summing over radial rows and using
$(v-a)_+^p\ge v^p-pa v^{p-1}$ yields
\begin{align*}
 \frac{p!}{2K^p}N^<(K^2)
 &\ge\sum_{n\ge0}
 \left(\rho\sqrt{\left(1-\frac{n^2}{\widehat T^2}\right)_+}
                    -\frac p{2K}\right)_+^p\\
 &\ge\rho^p\mathcal L_p(\widehat T)
       -\frac{p^2}{2K}\mathcal L_{p-1}(\widehat T).
\end{align*}
The omitted factor $\rho^{p-1}$ in the last term is at most one.
Rows with $n\ge\widehat T$ contribute zero to these lower bounds,
so the estimate respects strict counting also at radial endpoints.

The exact moment identity gives
\[
 \frac{p!}{2K^p}\W_{p+1}(K,\rho K)
 =TI_{p+1}\frac{1+\rho+\cdots+\rho^p}{p+1}
 \le TI_{p+1}.
\]
Using \eqref{eq:all-radii-left-sampling},
$\rho^p\ge1-ph$, and $1-\rho^{3p/2}\le3ph/2$, we obtain
\begin{align*}
 \frac{p!}{2K^p}S(\rho,K)
 &\ge\frac{\rho^p}{3}-(1-\rho^{3p/2})TI_{p+1}
       -\frac{p^2}{2K}(1+TI_p)\\
 &\ge\frac13-\frac{B_p}{K},
\end{align*}
where we used $T\le\sqrt p$ and set
\[
 B_p=\frac{\pi p^{3/2}}3
       +\frac{3\pi p^2}{2}I_{p+1}
       +\frac{p^2}{2}(1+\sqrt p\,I_p).
\]
For every $p\ge2$, monotonicity of the cosine moments gives
$I_{p+1}\le I_3=2/3$. Also $\cos u\le e^{-u^2/2}$ gives
$\sqrt p\,I_p<\sqrt{\pi/2}$. Consequently,
\[
 \frac{B_p}{p^2}
 <\frac{\pi}{3\sqrt2}+\pi+\frac{1+\sqrt{\pi/2}}2
 <\pi+2<\frac{16}{3}.
\]
Indeed, $\pi<22/7$ bounds the first and last terms of the middle
expression by $3/4$ and $5/4$, respectively.
Since $K\ge24p^2$, it follows that
\[
 \frac{p!}{2K^p}S(\rho,K)
 >\frac13-\frac29=\frac19>\frac1{10}.
\]
This proves the lemma.
\end{proof}

\begin{proof}[Proof of Theorem~\ref{thm:effective-surplus} under
\eqref{eq:all-radii-explicit-onset}]
For $d\ge3$, $K_d^*>k_d$: in dimensions three and four this is
$96>44/5$ and $216>12$, respectively, while for $p=d-1\ge4$,
$k_d=\tfrac74(p+1)^{3/2}<\tfrac{21}{4}p^2<24p^2$.
The width thresholds satisfy
\[
 \frac{w_d}{\pi}=
 \begin{cases}
  4/3,&d=3,\\
  16/(3\pi),&d=4,\\
  3\sqrt{d-1}/4,&d\ge5,
 \end{cases}
 \qquad \frac{w_d}{\pi}<\sqrt{d-1}.
\]
For $T\ge w_d/\pi$, the first regime of the theorem applies.
For $T\le\sqrt{d-1}$, Lemma~\ref{lem:thin-uniform-surplus}
gives the larger coefficient $1/[5(d-1)!]>c_d$.
These ranges overlap and cover every $T>0$.
For $d=2$, Proposition~\ref{prop:planar-surplus} already applies
for every radius ratio when $K\ge K_2^*=8$.
We have therefore proved
\begin{equation}\label{eq:all-radii-explicit-tail}
 S(\rho,K)>c_dK^{d-1}
 \qquad(0<\rho<1,\ K\ge K_d^*).
\end{equation}
Scaling restores the asserted dependence on $R$.
\end{proof}

\subsection{Enhancement at every frequency}
The uniform high-frequency estimate covers large $K$. For bounded $K$, the
two limiting geometries require different estimates: strictness at
the ball endpoint and upper semicontinuity of the eigenvalues handle
$\rho\downarrow0$, and the zero Neumann mode handles $\rho\uparrow1$.
Put $S(0,K)=N^<_{B_1}(K^2)-W_d(K)$.

\begin{lemma}[A positive margin up to the ball]
\label{lem:compact-surplus}
For every $0<\rho_1<1$ and every $A>0$ there exists $m>0$ such that
$S(\rho,K)\ge m$ for $0\le\rho\le\rho_1$ and $0<K\le A$.
\end{lemma}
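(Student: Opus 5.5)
The argument is by compactness: at each point of the compact set, strictness plus semicontinuity gives a local lower bound, and finitely many such neighbourhoods cover the whole rectangle. The one delicate region is $K\downarrow0$, which we treat separately.

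\emph{Small $K$.} Since $N^<(K^2)\ge1$ for every $K>0$ (the constant mode), and
\[
 \W_d(K,\rho K)\le W_d(K)=K^d/[2^d\Gamma(d/2+1)^2],
\]
we get $S(\rho,K)\ge1/2$ whenever $W_d(K)\le1/2$. This holds uniformly in $\rho$ for $0<K\le K_0$, with $K_0>0$ depending only on $d$. So it remains to treat the compact rectangle $[0,\rho_1]\times[K_0,A]$.

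\emph{Semicontinuity.} We show that the map $(\rho,K)\mapsto N^<_{\Omega_{\rho,1}}(K^2)$ is lower semicontinuous on $[0,\rho_1]\times(0,\infty)$. By separation of variables,
\[
 N^<=\sum_\ell\kappa_{d,\ell}\,n_\ell^<,
\]
and only finitely many degrees matter below a given $K$, since $\lambda_{q,0}(\rho)\ge$ a positive multiple of $q$ on bounded $K$-ranges. It therefore suffices to show that each radial eigenvalue $\lambda_{q,j}(\rho)$ is upper semicontinuous in $\rho\in[0,\rho_1]$.
\begin{itemize}
\item For $\rho>0$, $\lambda_{q,j}$ is continuous. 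One can see this from the differentiability formula in the proof of Lemma~\ref{lem:var-radial}, or directly by dilating trial functions from $(\rho,1)$ to $(\rho',1)$: this changes the Rayleigh quotients by factors tending to $1$.
\item At $\rho=0$, the proof of Lemma~\ref{lem:var-radial} gives $\limsup_{\varepsilon\downarrow0}\lambda_{q,j}(\varepsilon)\le\lambda_{q,j}(0)$ by restricting ball eigenfunctions.
\end{itemize}
Now let $(\rho_n,K_n)\to(\rho,K)$ with $K>0$. Every eigenvalue satisfying $\lambda_{q,j}(\rho)<K^2$ also satisfies $\lambda_{q,j}(\rho_n)<K_n^2$ for large $n$. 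Hence $\liminf_n N^<(\rho_n,K_n)\ge N^<(\rho,K)$. Because the Weyl term is continuous in $(\rho,K)$, $S$ is lower semicontinuous on the rectangle.

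\emph{Conclusion.} By Theorem~\ref{thm:all-dimensions} (for $\rho>0$) and Corollary~\ref{cor:balls} (for $\rho=0$), $S>0$ at every point of $[0,\rho_1]\times[K_0,A]$. A lower semicontinuous function on a compact set attains its minimum, and here that minimum is positive. Take $m$ to be the smaller of this minimum and $1/2$.

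The main point to check is the semicontinuity at the ball endpoint. Upper semicontinuity of eigenvalues is exactly the direction we need, since it pushes the strict count up in the limit, and Lemma~\ref{lem:var-radial} already supplies it at $\rho=0$. We also need that only finitely many angular degrees contribute uniformly on the rectangle. For this we use the lower bound $\lambda_{q,0}(\rho)\ge q\cdot\min_s s^{-2}\ge q$, which holds because $s\le1$.
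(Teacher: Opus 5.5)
Your proposal is correct and follows essentially the same route as the paper. You get lower semicontinuity of $S$ from continuity of the eigenvalues for $\rho>0$ and upper semicontinuity at the ball endpoint, then use pointwise positivity from Theorem~\ref{thm:all-dimensions} and Corollary~\ref{cor:balls} and compactness, with the zero mode handling $K\downarrow0$. The differences are only cosmetic: you work sector by sector with the radial levels $\lambda_{q,j}(\rho)$, reusing the limsup from Lemma~\ref{lem:var-radial}, and cut off a small-$K$ strip, whereas the paper uses the ordered Neumann eigenvalues of the whole domain and extends $S$ to $K=0$ by the value $1$.
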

\begin{proof}
Write $\lambda_j(\rho)$ for the ordered Neumann eigenvalues of
$\Omega_{\rho,1}$, and $\lambda_j(0)$ for those of $B_1$.

\emph{Continuity for $\rho>0$.} Fix $0<\rho_-<\rho_+<1$. The radial maps
\[
 \Psi_\rho(s\theta)=
 \left[\rho+\frac{1-\rho}{1-\rho_-}(s-\rho_-)\right]\theta,
 \qquad \rho_-\le s\le1,
\]
pull the Neumann energy and mass forms back to the fixed domain
$\Omega_{\rho_-,1}$. Their coefficients vary continuously with
$\rho\in[\rho_-,\rho_+]$ and are uniformly elliptic. The corresponding
forms at nearby parameters differ by relative factors tending to one.
The min--max principle therefore shows that each $\lambda_j$ is
continuous on $(0,1)$, including at multiplicities.

\emph{Upper semicontinuity at $\rho=0$.} Let $u_0,\ldots,u_j$ be
$L^2$-orthonormal Neumann eigenfunctions of $B_1$ for
$\lambda_0(0),\ldots,\lambda_j(0)$. As $\rho\downarrow0$, the Gram and
energy matrices of their restrictions to $\Omega_{\rho,1}$ converge to
the identity and to $\operatorname{diag}(\lambda_i(0))$. The min--max
principle gives $\limsup_{\rho\downarrow0}\lambda_j(\rho)\le\lambda_j(0)$.
(The eigenvalues in fact converge as the hole shrinks; compare
\cite{RauchTaylor}. Only the upper bound is used.)

\emph{Semicontinuity of the count.} By the two preceding steps, each set
$\{(\rho,K)\in[0,\rho_1]\times(0,A]:\lambda_j(\rho)<K^2\}$ is relatively
open. Hence
\[
 N^<_{\Omega_{\rho,1}}(K^2)
 =\sup_{m\ge0}\sum_{j=0}^m
       \mathbf1_{\{\lambda_j(\rho)<K^2\}}
\]
is lower semicontinuous on $[0,\rho_1]\times(0,A]$, and so is $S$.
Extend $S$ to $K=0$ by $S(\rho,0)=1$; this is the limit appropriate to
positive frequencies, rather than the literal strict count at zero. The
extension is lower semicontinuous on $[0,\rho_1]\times[0,A]$ because
$N^<(K^2)\ge1$ for $K>0$ and $\W_d(K,\rho K)\le W_d(K)\to0$ as
$K\downarrow0$.

\emph{Positivity.} For $\rho>0$, Theorem~\ref{thm:all-dimensions} gives
$S(\rho,K)>0$. For $\rho=0$, Corollary~\ref{cor:balls} gives
$S(0,K)>0$ in every dimension. A positive lower semicontinuous function
attains a positive
minimum on the compact set $[0,\rho_1]\times[0,A]$.
\end{proof}

\begin{proof}[Proof of Theorem~\ref{thm:uniform-enp}]
Choose $\rho_1\in(0,1)$ so close to one that
$\W_d(K_d^*,\rho_1K_d^*)\le1/2$, and let $m>0$ be given by
Lemma~\ref{lem:compact-surplus} with $A=K_d^*$. Recall $p=d-1$ and
$K_d^*\ge1$.
\begin{itemize}
\item For $K\ge K_d^*$ and every $0<\rho<1$,
 \eqref{eq:all-radii-explicit-tail} gives $S(\rho,K)>c_dK^p\ge
 c_d(1+K^p)/2$.
\item For $\rho_1\le\rho<1$ and $0<K\le K_d^*$, the zero Neumann mode
 gives $S(\rho,K)\ge1-\W_d(K,\rho K)\ge1/2$.
\item For $0<\rho\le\rho_1$ and $0<K\le K_d^*$,
 Lemma~\ref{lem:compact-surplus} gives $S(\rho,K)\ge m$.
\end{itemize}
Consequently one may take
\[
 \varepsilon_d=\min\left\{\frac{c_d}{2},\
 \frac{\min(m,1/2)}{1+(K_d^*)^p}\right\}>0,
\]
which depends only on $d$. Scaling proves
\eqref{eq:uniform-enhancement}.
\end{proof}

\begin{remark}[Dependencies]\label{rem:enp-dependencies}
In every dimension this proof uses Theorems~\ref{thm:all-dimensions}
and~\ref{thm:effective-surplus} and Corollary~\ref{cor:balls}; it does
not use the removal theorem of \cite{HJLW} or a two-term Weyl law.
The planar high-frequency estimate is deduced from our own endpoint
bounds in Proposition~\ref{prop:planar-surplus}.
For every $d\ge2$, restricting Lemma~\ref{lem:compact-surplus} to
$\rho=0$ and combining it with the ball tail
\eqref{eq:ball-effective-surplus} shows that $B_1$ satisfies the enhanced
Neumann P\'olya inequality; compare \cite[Section~1.2]{HJLW}.
\end{remark}

\begin{remark}[Relation to the two-term Weyl law]
The coefficient and both thresholds in
Theorem~\ref{thm:effective-surplus} are explicit.
The constant $\varepsilon_d$ for all positive frequencies also depends on
the minimum in Lemma~\ref{lem:compact-surplus}, so it is not explicit. The two-term Weyl law was proved by Ivrii \cite{Ivrii1980}
under a condition on periodic billiards, and for the disk by Kuznetsov
and Fedosov \cite{KuznetsovFedosov}; see \cite{GMWW} for planar annuli.
For a fixed shell, Guo--Jiang--Wang--Yang~\cite[Theorem~1.1]{GJWY}
bound the remainder in the two-term formula:
\begin{equation}\label{eq:shell-two-term}
 N^<_\Omega(\Lambda)
 =L_{0,d}|\Omega|\Lambda^{d/2}
 +\frac14L_{0,d-1}|\partial\Omega|\Lambda^{(d-1)/2}
 +O_\Omega(\Lambda^{d/2-2/3}).
\end{equation}
Their inclusive convention yields the same estimate for the strict
count by approaching each threshold from below. This fixed-domain
asymptotic does not by itself provide a threshold uniform in both
degenerating limits of the radius ratio.
\end{remark}

\subsection{Uniform removal of a small hole}
\begin{proof}[Proof of Corollary~\ref{cor:uniform-perforated-shells}]
We use the removal argument of
He--Jiang--Lin--Wang~\cite[Section~3]{HJLW}, with its low-energy
estimate made uniform over the host shells.
Write $\gamma=(d-1)/2$. By Theorem~\ref{thm:uniform-enp},
we may take $C_1=C_2=\varepsilon_d$ in
\begin{equation}\label{eq:uniform-removal-host}
 N_{\Omega_\rho}^<(\Lambda)
 \ge L_{0,d}|\Omega_\rho|\Lambda^{d/2}
          +\max\{C_1\Lambda^\gamma,C_2\}.
\end{equation}
The $O(\Lambda^{(d-1)/2})$ remainder estimate in Weyl's law for smooth
domains \cite{SafarovVassiliev} (see also
\cite[Lemma~2.2 and Section~3]{HJLW}) gives $\alpha_U>0$ such that
\begin{equation}\label{eq:uniform-removal-obstacle}
 N_U^<(\Lambda)
 \le L_{0,d}|U|\Lambda^{d/2}+\alpha_U\Lambda^\gamma+1
 \qquad(\Lambda>0).
\end{equation}
Indeed, the remainder is $O_U(\Lambda^\gamma)$ at infinity.
Connectedness gives $N_U^<(\Lambda)=1$ below its first positive
eigenvalue, so increasing $\alpha_U$ extends this estimate to every
positive frequency.

Set
\[
 \Lambda_1=\left(\frac4{C_1}\right)^{1/\gamma},\qquad
 V_{\max}=\omega_d(1-\rho_-^d).
\]
Let $E_\rho$ be the span of the Neumann eigenfunctions of
$\Omega_\rho$ whose eigenvalues are at most $\Lambda_1$.  There is a
finite constant $M=M(d,I,\Lambda_1)$ such that
\begin{equation}\label{eq:uniform-removal-spectral-bound}
 \|f\|_{L^\infty(\Omega_\rho)}^2
 \le M\|f\|_{L^2(\Omega_\rho)}^2
 \qquad(\rho\in I,\ f\in E_\rho).
\end{equation}
For completeness, fix $\rho_0\in I$ and use the radial maps
\[
 F_\rho(s\theta)=
 \left(\rho+\frac{1-\rho}{1-\rho_0}(s-\rho_0)\right)\theta,
 \qquad \rho_0\le s\le1,\quad\theta\in\mathbb S^{d-1}.
\]
These maps and their inverses have uniformly bounded derivatives of
every fixed order, and their Jacobians are uniformly bounded away
from zero.  After pullback to $\Omega_{\rho_0}$, the Neumann problems
therefore have uniformly elliptic coefficients and uniform boundary
regularity.  Iterating the Neumann elliptic estimate gives, for each
integer $m\ge1$,
\[
 \|f\|_{H^{2m}(\Omega_\rho)}
 \le C_{m,I}\sum_{j=0}^m\|(-\Delta)^j f\|_{L^2(\Omega_\rho)}
 \le C_{m,I}\left(\sum_{j=0}^m\Lambda_1^j\right)
                          \|f\|_{L^2(\Omega_\rho)}.
\]
Here every $(-\Delta)^j f$ satisfies the Neumann boundary condition,
because $f$ is a finite sum of Neumann eigenfunctions.  Choosing
$2m>d/2$ and applying the Sobolev embedding, whose constant is also
uniform under these maps, proves
\eqref{eq:uniform-removal-spectral-bound}.

Choose
\begin{equation}\label{eq:uniform-removal-size}
 \eta=\min\left\{\frac12,
       \frac{C_2}{dL_{0,d}V_{\max}\Lambda_1^{d/2}}\right\},
 \qquad
 a_0=\min\left\{
       \left(\frac{C_1}{2\alpha_U}\right)^{1/(d-1)},
       \left(\frac{\eta}{|U|M}\right)^{1/d}\right\}.
\end{equation}
Fix an admissible $\rho,a,y$ and abbreviate
$\Omega=\Omega_\rho$, $\Omega'=\Omega'_{\rho,a,y}$.
For $f\in E_\rho$, \eqref{eq:uniform-removal-spectral-bound} gives
\[
 \int_{\Omega'}|f|^2
 \ge\bigl(1-a^d|U|M\bigr)\int_\Omega|f|^2
 \ge(1-\eta)\int_\Omega|f|^2,
 \qquad
 \int_{\Omega'}|\nabla f|^2\le\int_\Omega|\nabla f|^2.
\]
Restriction is thus injective on $E_\rho$.  Applying the min--max
principle to the first $j+1$ host eigenfunctions yields
\[
 \lambda_j^N(\Omega')
 \le\frac{\lambda_j^N(\Omega)}{1-\eta}
 \qquad\text{whenever }\lambda_j^N(\Omega)\le\Lambda_1.
\]
Consequently, for $0<\Lambda\le\Lambda_1$,
\[
 N_{\Omega'}^<(\Lambda)
 \ge N_\Omega^<((1-\eta)\Lambda).
\]
Using \eqref{eq:uniform-removal-host},
$(1-\eta)^{d/2}\ge1-d\eta/2$, and $|\Omega'|\le|\Omega|$, we obtain
\begin{equation}\label{eq:uniform-removal-low}
 \begin{split}
 N_{\Omega'}^<(\Lambda)-L_{0,d}|\Omega'|\Lambda^{d/2}
 &\ge C_2-\frac d2L_{0,d}|\Omega|\eta\Lambda^{d/2}\\
 &\ge\frac{C_2}{2}
 \qquad(0<\Lambda\le\Lambda_1).
 \end{split}
\end{equation}

For higher frequencies, Neumann bracketing, scaling, and
\eqref{eq:uniform-removal-obstacle} give
\[
 \begin{split}
 N_{\Omega'}^<(\Lambda)
 &\ge N_\Omega^<(\Lambda)-N_U^<(a^2\Lambda)\\
 &\ge L_{0,d}|\Omega'|\Lambda^{d/2}
       +(C_1-\alpha_Ua^{d-1})\Lambda^\gamma-1.
 \end{split}
\]
Since $a<a_0$ and $\Lambda_1^\gamma=4/C_1$, this implies
\begin{equation}\label{eq:uniform-removal-high}
 N_{\Omega'}^<(\Lambda)-L_{0,d}|\Omega'|\Lambda^{d/2}
 \ge\frac{C_1}{4}\Lambda^\gamma
 \qquad(\Lambda\ge\Lambda_1).
\end{equation}
Finally put
\[
 C_2'=\min\{C_2/2,1\},\qquad C_1'=C_1C_2'/4.
\]
Then $C_1'\Lambda_1^\gamma=C_2'$.  Below $\Lambda_1$, the maximum
of the two remainder terms is $C_2'\le C_2/2$; above
$\Lambda_1$, it is $C_1'\Lambda^\gamma\le(C_1/4)\Lambda^\gamma$.
Thus \eqref{eq:uniform-removal-low} and
\eqref{eq:uniform-removal-high} prove the assertion with constants
independent of the radius ratio within the fixed family and of the hole location.
\end{proof}

The constants in \eqref{eq:uniform-removal-size} are not explicit:
$C_1=C_2=\varepsilon_d$ is the all-frequency enhancement constant,
$\alpha_U$ comes from the counting estimate for $U$, and $M$ is a
uniform spectral-subspace bound on the compact host family.
Although $\varepsilon_d$ is independent of the radius ratio,
the bound on $M$ uses $I\Subset(0,1)$; the admissible size $a_0$
therefore depends on $I$. For a shell with outer radius $R$,
the same result allows holes $y+aR\,U$ with $a<a_0$; the constants in
the enhanced inequality are $R^{d-1}C_1'$ and $C_2'$.  In particular, fixing a radius ratio
recovers the small-hole statement for each individual concentric shell.

\section{Conclusion and further questions}\label{sec:discussion}

The strict shell and ball inequalities hold in every dimension and at
every positive energy. Their proof compares inverse radial actions while
retaining the removed volume exactly, and combines this comparison with
localized phase estimates and radial trial spaces. In dimension two,
cancellation of the singular terminal slope and concavity in the squared
inner frequency give the required linear-weight estimate. The positive
margins in these proofs yield enhancement uniform in the radius ratio. For each fixed smooth hole
shape, this enhancement permits uniform removal of a sufficiently small
hole from shells whose radius ratios lie in a fixed compact subinterval
of $(0,1)$.

For a fixed normalized shell, the boundary coefficient in the
two-term Weyl law is $(1+\rho^{d-1})/[2(d-1)!]$~\cite{GJWY}.
The explicit coefficients proved here are smaller, but their frequency
threshold is uniform as the hole shrinks or the shell becomes thin.
In dimension two, our coefficient $1/4000$ applies from $K=8$,
whereas Dai--Sun's coefficient $1/20$ applies for $K>80$
(Remark~\ref{rem:planar-dai-sun}). Improving the coefficient while
retaining a uniform threshold is one quantitative question. Others are
to improve the dimension dependence of the threshold and to give an
explicit lower bound for the all-frequency constant $\varepsilon_d$.

The comparison separates the change in the inverse action from the
radial phase estimate. For spherical cones or higher-dimensional annular
sectors, its use would require angular counting estimates in place of
the explicit spherical multiplicities. Beyond concentric geometry, a
direct action argument would also require a replacement for the radial
phase input. Uniform hole removal from degenerating families of host
shells would require spectral-subspace estimates that remain uniform in
those limits.

\appendix
\section{Details of the phase enclosures}
\label{app:phase-details}
\subsection{An elementary half-line bound}
\label{app:proof-lem-phase-new-airy}
\begin{proof}[Proof of Lemma~\ref{lem:phase-new-airy}]
Put
\[
 y=\sqrt{x+\frac{121}{100}},\qquad t=y-\frac{11}{10},\qquad
 Q(x)=y-\frac{9}{10(y+2/15)}.
\]
Then $t\ge0$, $Q(0)=37/100+1/3700$, and
\begin{align*}
 2y(y+2/15)^2\left(x-\frac{29}{50}-Q^2+Q'\right)
 & =\frac12(t-1/5)^2\\
 &\quad+\frac{619+5110t+41100t^2+9000t^3}{450000}>0.
\end{align*}
Completion of squares now gives, for $0\le c\le37/100$,
\begin{align*}
 &\int_0^\infty\bigl(|v'|^2+x|v|^2\bigr)\,dx
       -c|v(0)|^2-\frac{29}{50}\int_0^\infty|v|^2\,dx\\
 &\quad=\int_0^\infty|v'+Qv|^2\,dx
       +\int_0^\infty\left(x-\frac{29}{50}-Q^2+Q'\right)|v|^2\,dx\\
 &\qquad +(Q(0)-c)|v(0)|^2>0
\end{align*}
for every nonzero $v$ in the form domain. The identity follows first for
smooth functions on $[0,\infty)$ with compact support, and then by density, since
$Q^2=O(1+x)$ and $Q'$ is bounded. Applying it to a lowest eigenfunction
proves the assertion.
\end{proof}

\subsection{The phase cutoff}
\label{app:proof-prop-phase-new-cutoff}
\begin{proof}[Proof of Proposition~\ref{prop:phase-new-cutoff}]
The substitution $w(r)=r^{(d-1)/2}u(r)$ transforms the ball form into
\[
 \int_0^1\left(|w'|^2+\frac{\nu^2-1/4}{r^2}|w|^2\right)\,dr
 -(\alpha+1/2)|w(1)|^2.
\]
For a regular first radial eigenfunction,
$w(r)=O(r^{\nu+1/2})$ and $w'(r)=O(r^{\nu-1/2})$ at zero.
Thus $\nu\ge19/2$ gives $w\in H^1(0,1)$ and $w(0)=0$.
Use $r^{-2}\ge1+2(1-r)$ and set $x=1-r$; the zero trace makes
extension by zero beyond $x=1$ admissible in the half-line form domain.
After scaling, the Robin parameter is
$c=(\alpha+1/2)/[2(\nu^2-1/4)]^{1/3}\le37/100$.
Lemma~\ref{lem:phase-new-airy} therefore places the first ball frequency
$s_1$ above
\[
 s_1^2>\nu^2-\frac14+
 \frac{29}{50}\bigl(2(\nu^2-1/4)\bigr)^{2/3}.
\]
Set $v=\nu^{-2/3}<9/40$, $b=23/25$, $k=17/250$,
$q=b-kv$, and $t_*^2=\nu^2+q\nu^{4/3}$.
Since $2^{2/3}>46/29$ and $(1-y)^{2/3}\ge1-y$,
\[
 s_1^2>\nu^2+b\nu^{4/3}-\frac14-\frac b4v>t_*^2;
\]
the last inequality follows from
$v/4+bv^2/4\le10863/160000<k$.
At this joining point, \eqref{eq:phase-new-condition} and
$2^{1/3}<63/50$ bound the sum of the FLPS and Robin errors by
\begin{equation}\label{eq:phase-new-scalar}
 F(v)=\frac7{24q^{3/2}}+\frac{3v}{8\sqrt q}
       +\arctan\frac{a-v/2}{\sqrt q},\qquad a=\frac{2331}{5000}.
\end{equation}
Use the tangent bound for the concave function $\arctan$ at
$z=\tan(\pi/8)=\sqrt2-1$. Put
\[
 A=\frac{a(2+\sqrt2)}4,\qquad
 B=\frac{1-\sqrt2}{8},\qquad
 C=\frac\pi8-\frac{\sqrt2}{4}.
\]
Then $F(v)\le H(v):=7/(24q^{3/2})+(A+Bv)/\sqrt q+C$.
Here $q>9/10$, $B<0$ and $kA+Bb<0$, so
\[
 2q^{5/2}H'(v)
 =\frac{7k}{8}+(kA+Bb)q+Bq^2
 \le\frac{7k}{8}+\frac9{10}(kA+Bb)+\frac{81}{100}B<0.
\]
The last expression decreases with $\sqrt2$ and is already negative
when $\sqrt2$ is replaced by $41/29$.
Thus $F(v)\le H(0)$. At this endpoint,
\[
 H(0)-\frac\pi8
 =\frac{875/552+(2331/4000)(2+\sqrt2)}{\sqrt{23}}
     -\frac{\sqrt2}{4}
 <\frac{157}{400}<\frac\pi8.
\]
For the first inequality, use $\sqrt{23}>139/29$ and then
$\sqrt2>41/29$; after the first substitution the coefficient of
$\sqrt2$ is negative. The last inequality uses $\pi>157/50$.
Hence $F(v)<\pi/4$ throughout the required range.

The joining argument proves the proposition.
\end{proof}

\section{Scalar inverse-action estimates}
\label{app:scalar-details}

\subsection{Two-sided inverse-action bounds}
\label{app:proof-lem-inverse-action-seven-tenths}
\begin{proof}[Proof of Lemma~\ref{lem:inverse-action-seven-tenths}]
For $\delta=1-\xi_K(t)/K$, integration of
$\arccos(1-s)\ge\sqrt{2s}$ gives
$\pi t/K\ge(2\sqrt2/3)\delta^{3/2}$, hence $\delta\le a_K$.
For the sharper bounds, write
\[
 J(\delta)=\frac3{2\sqrt2}\int_0^\delta\arccos(1-s)\,ds=a_K^{3/2}.
\]
The positive arcsine series gives
$J(\delta)\ge\delta^{3/2}(1+\delta/20+9\delta^2/1120)$.
For the lower inverse bound, put $e=a_K/30$ and $q=a_K(1-e)$.
Using $(1-e)^{3/2}\ge1-3e/2+3e^2/8$ and
$(1-e)^r\ge1-re$ for $r=5/2,7/2$ yields
\[
 \frac{J(q)}{a_K^{3/2}}
 \ge1+a_K^2\left(\frac3{700}-\frac{3a_K}{3200}\right)>1
 \qquad(0<a_K\le7/10).
\]
Thus $\delta<q$, proving the lower bound; $a_K=0$ follows by continuity.

For the upper bound, the series for $\arccos(1-s)/\sqrt{2s}$ starts with
$1+s/12+3s^2/160+5s^3/896$ and has successive coefficient ratios
less than $1/2$. For $s\le7/10$, after factoring $s^2$ out of the
quadratic term and tail, the remaining factor is bounded by
\[
 \frac3{160}+\frac5{896}\frac{s}{1-s/2}
 \le\frac{103}{4160}<\frac1{40}.
\]
Integration yields
\begin{equation}\label{eq:d4J}
 J(\delta)\le\delta^{3/2}
       \left(1+\frac\delta{20}+\frac{3\delta^2}{280}\right)
       \qquad(0\le\delta\le7/10).
\end{equation}
We prove the upper bound
$\xi_K(t)/K\le1-a_K+a_K^2/30+a_K^3/200$.
Put $e=a_K/30+a_K^2/200$ and $d=a_K(1-e)$. For $0<a_K\le7/10$,
\[
 (1-e)^{3/2}\le1-\frac{3e}{2}+\frac{e^2}{2}
 =1-\frac{a_K}{20}-\frac{a_K^2}{144}+\frac{a_K^3}{6000}
    +\frac{a_K^4}{80000}
 \le1-\frac{a_K}{20}-\frac{a_K^2}{150},
\]
since $a_K/6000+a_K^2/80000<1/3600$. Also
$d/20\le a_K/20-a_K^2/600$ and $d^2\le a_K^2$, so
\[
 \frac{J(d)}{a_K^{3/2}}
 \le\left(1-\frac{a_K}{20}-\frac{a_K^2}{150}\right)
       \left(1+\frac{a_K}{20}+\frac{19a_K^2}{2100}\right)
 \le1-\frac{a_K^2}{8400}<1.
\]
Monotonicity of $J$ proves this bound. The coefficient
$1/30+a_K/200$ of its quadratic correction is at most
$221/6000<1/25$ when $a_K\le7/10$.
The case $a_K=0$ follows by continuity. Finally, using $\pi^2<10$,
\[
 a_K^3\le\frac{225\pi^2}{128K^2}
 <\frac{28125}{85264}<\frac{343}{1000}\quad(K\ge73/10).
\]
\end{proof}

\section{Variational range estimates}
\label{app:variational-details}
\subsection{Assembly of the variational ranges}
\label{app:proof-prop-var-ranges}
\begin{proof}[Proof of Proposition~\ref{prop:var-ranges}]
Lemma~\ref{lem:d4-ground-direct} proves the four-dimensional
low-frequency range. For $d\ge5$, Lemma~\ref{lem:constant-low-frequency}
already covers $K\le K_*=(27/40)d^{3/2}$. We first establish the
thin-width criterion, then extend the low-frequency range by the
polynomial ground trial and one excited radial row.

The constant radial trial gives the exact thin-width criterion for
$0\le\rho<1$. Its quotient is $\beta_\rho q$ in angular parameter
$q$. For $q>0$ the bound is strict, because the potential $q/s^2$
is nonconstant, so a constant radial function cannot be an eigenfunction.
The degree-zero ground level is zero. The cumulative multiplicity
through degree $L$ is
\[
 \sum_{\ell=0}^{L}\kappa_{d,\ell}=\frac{2L+p}{p!}\prod_{i=1}^{p-1}(L+i).
\]
Pairing factors shows that the sum over
$\ell(\ell+p-1)\le\kappa^2$ exceeds
$2(\kappa-1/2)^p/p!$ when $\kappa>1/2$.
For $A_\rho=h^{-1}\int_\rho^1s^p\,ds$, H\"older gives
$\beta_\rho^{-p/2}\ge A_\rho$. The constant trial is strict in every
positive angular degree, so
\[
 N^<(K^2)>\frac{2K^p A_\rho}{p!}
 \left(1-\frac{\sqrt{\beta_\rho}}{2K}\right)^p.
\]
Dividing by $(1-\rho^d)W_d(K)$ proves
\eqref{eq:var-thin-exact}; its four-dimensional form uses
$B_4=16/3$ and $\beta_\rho\le2$.

For $d\ge5$, use the notation of
Lemma~\ref{lem:var-uniform-endpoints}.
The ball trial $u=r^{2d-2}(1-(d-1)r^2/d)$ gives
$\lambda_{q,0}(0)<(q+b_d)/a_d$ for $q\ge0$.
Its Rayleigh quotient is positive, and the radial operator lowers
each monomial by two degrees; hence this polynomial cannot satisfy
an eigenvalue equation with its positive quotient. The Rayleigh bound
is therefore strict, although the trial satisfies the outer Neumann
condition.
To verify the coefficients, put $x=r^2$. Twice the difference
between $K^2$ times the squared radial norm and the radial energy is
\[
 \int_0^1x^{(d-4)/2}
 \{(K^2x-q)f^2-4x^2(f')^2\}\,dx.
\]
For $f=x^{d-1}(1-(d-1)x/d)$, the $x$ moment and derivative
energy divided by the angular norm are $a_d$ and $b_d$.
By the ground-level comparison, the same strict bound holds for
every $0\le\rho<1$. Consequently the ground row contributes at
least $G_d(v_0(K))$, including degrees at equality in the trial cutoff.
The half-unit shift accounts for taking the largest admissible
integer angular degree.
The continuous ratio $G_d(v_0(K))/W_d(K)$ has no interior minimum, by
Lemma~\ref{lem:var-trial-ratio-unimodal}; indeed
$b_d>(32/53)(d-1)^2>\alpha^2$.
The lower endpoint now follows from the overlap of the two trials.
Put $\beta=(d-2)/d$. The coefficients satisfy
\[
 a_d-\beta=\frac{32(d-1)(11d-6)}{5d\,\mathfrak p_d},\qquad
 \frac{b_d}{a_d-\beta}
 =\frac{5d^3(d-1)}{11d-6}
 <\frac5{11}d^3<K_*^2.
\]
Thus $a_dK_*^2-b_d>\beta K_*^2>0$, which verifies the domain
condition $K_*>\sqrt{b_d/a_d}$ of the ratio lemma.
For $d\ge6$, the continuous paired-product bound
\eqref{eq:var-continuous-constant} and the constant-trial endpoint
estimate in Appendix~\ref{app:constant-bridge} give
\[
 \frac{G_d(v_0(K_*))}{W_d(K_*)}
 >\frac{2(K_*\sqrt\beta-1/2)^p}{p!W_d(K_*)}>1.
\]
For $d=5$, direct substitution gives $v_0(K_*)>11/2$.
Since $K_*=27\sqrt5/8$, the bounds $\pi>3$ and $\sqrt5<9/4$ give
\[
 W_5(K_*)<\frac{3^{14}}{2^{16}}
 <\frac{4719}{64}=G_5(11/2)<G_5(v_0(K_*)).
\]
At $K_1$ the ratio exceeds one by
Lemma~\ref{lem:var-uniform-endpoints}. Its absence of an interior
minimum therefore covers $K_*\le K\le K_1$.
Together with the constant-trial range this proves the result for
every $K\le K_1$.

We can now prove the thin-width range. For $d\ge5$, the preceding
argument covers $K\le K_1$. For $K\ge K_1$, retain the Gamma
correction established in Lemma~\ref{lem:var-uniform-endpoints}:
\[
 \frac{B_d^2}{2\pi d}
 >1+\frac{d+2}{2(d+1)^2}>1+\frac2{5d}.
\]
Thus $B_d>\sqrt{2\pi(d+2/5)}$. Put $u=p^{-1/2}\le1/2$.
Bernoulli's inequality and $\beta_\rho\le d/(d-2)$ show that
\eqref{eq:var-thin-exact} follows from
\[
 \sqrt{\frac2\pi}\left\{
 \sqrt{1+\frac75u^2}
 -\frac u3\sqrt{\frac{1+7u^2/5}{(1+u^2)^2(1-u^2)}}
 \right\}>\frac34.
\]
For $0\le u\le1/2$, we have
$(1+u^2)^2(1-u^2)\ge1$ and
$\sqrt{1+7u^2/5}\ge1+3u^2/5$.
Thus the expression in braces is at least
\[
 \left(1+\frac35u^2\right)\left(1-\frac u3\right)
 \ge1-\frac u3+\frac12u^2
 =\frac{17}{18}+\frac12\left(u-\frac13\right)^2.
\]
Since $\pi<22/7$ and $(17/18)^2(7/11)>9/16$, this proves the
thin-width assertion for $d\ge5$.

For $d=4$, frequencies $K\le12$ are already covered. For $K\ge12$,
the explicit Gamma factor, $\pi<22/7$, and $\sqrt3<7/4$ give
\[
 \frac{B_4}{\pi}\left(1-\frac{1}{\sqrt2K}\right)^3
 >\frac{56}{33}\left(\frac{15}{16}\right)^3
 >\frac{21}{16}>\frac34\sqrt3.
\]
This proves the thin-width assertion and justifies its use below.

Return to $d\ge5$ and, for $K_1\le K\le K_2$, set
\[
 c=\frac{47}{20},\quad s=c\sqrt p,\quad
 r_*=1-s/K,\quad
 q_*=(1-9/(5p))(K-s)^2-\frac{d^2-1}{4}.
\]
Here $K-s>d^{3/2}$ and $1-9/(5p)\ge11/20$, so
$0<r_*<1$ and $q_*>0$. Since $c<3\pi/4$, the thin-width result
covers $\rho\ge r_*$. At this radius,
\eqref{eq:var-excited} and $\pi^2/c^2<9/5$ give
$\lambda_{q_*,1}(r_*)<K^2$.
At the ball endpoint use the orthogonal trial functions
\[
 u_0(r)=r^{(p-1)/2},\qquad
 u_1(r)=\bigl((p+1)r^2-p\bigr)u_0(r).
\]
Their squared radial norms are $1/(2p)$ and $1/(2(p+2))$.
Put $z=K/\sqrt p$, $q=q_*$, and let $M$ and $Q$ denote the
mass and energy matrices in this basis. Direct integration gives
\[
 2p(K^2M-Q)=\begin{pmatrix}A&B\\ B&C\end{pmatrix},
\]
where
\begin{align*}
 A&=pz^2-\frac{pq}{p-1}-\frac{p(p-1)}4,\\
 B&=\frac{q}{p-1}-\frac{(p-1)(4p+3)}4,\\
 C&=\frac{p^2z^2}{p+2}-\frac{pq}{p-1}-\frac{p(17p+15)}4.
\end{align*}
The required frequencies lie in
\[
 L:=\frac{6p+9}{4}\le z\le U:=\frac{7p+12}{4}.
\]
Indeed, squaring the endpoint comparisons reduces them to
$3p+4>0$ and $21p^2-3p-49>0$.
On this interval,
\[
 B=\left(1-\frac4{5(p-1)}\right)(z-c)^2-p^2-\frac3{4(p-1)},
 \qquad 0<B<3(p+1)^2.
\]
For the lower bound, use $z-c>5p/4$ and
$1-4/(5(p-1))\ge11/15$; for the upper bound, use
$z-c<7(p+1)/4$.

It remains to prove that $H:=C-B/4$ is positive.
Its quadratic coefficient in $z$ is
\[
 -\frac{29p^2-71p-18}{20(p+2)(p-1)}<0,
\]
so it suffices to check the endpoints. They are
\begin{align*}
 H(L)&=\frac{75p^4-650p^3+2286p^2+14906p+768}
                  {2000(p+2)(p-1)},\\
 H(U)&=\frac{275p^4+9405p^3+55139p^2+89879p+6042}
                  {8000(p+2)(p-1)}.
\end{align*}
Both are positive: for the first, use
$75p^2-650p+2286=75(p-5)^2+100p+411$.
Finally,
\[
 A=\frac{p+2}{p}C+2B+6(p+1)^2>4B,
\]
and hence
\[
 Ax^2+2Bxy+Cy^2
 =B(2x+y/2)^2+(A-4B)x^2+Hy^2>0
 \qquad((x,y)\ne(0,0)).
\]
Thus $K^2M-Q$ is positive definite throughout the required interval,
and min--max gives the strict ball bound $\lambda_{q_*,1}(0)<K^2$.

Together with \eqref{eq:var-endpoint}, this supplies a first excited row for every $0\le\rho\le r_*$, with count at least
\[
 G_d(v_1(K)),\qquad
 v_1(K)=\sqrt{(1-9/(5p))(K-s)^2-d+5/4}-\frac12.
\]
Pairing roots gives $G_d'(v)/G_d(v)\ge p/v$, so
\[
 \frac{d}{dK}\log\frac{G_d(v_1(K))}{K^d}
 >\frac p{K-s}-\frac dK>0
 \qquad(K\le K_2<ds).
\]
The absence of an interior minimum gives a ground-row ratio greater
than $451/500$ throughout $[K_1,K_2]$: its two endpoint values exceed
$1$ and $451/500$, respectively. The excited-row ratio increases,
so its minimum is at $K_1$, where it exceeds $49/500$.
Both assertions use Lemma~\ref{lem:var-uniform-endpoints}.
Their sum exceeds $451/500+49/500=1$. Therefore
$N^<(K^2)>W_d(K)$ throughout $K_1\le K\le K_2$,
which completes the proof.
\end{proof}

\subsection{Uniform endpoint estimates}
\label{app:var-endpoints}
\begin{proof}[Proof of Lemma~\ref{lem:var-uniform-endpoints}]
We use the continuous parameter $u=d^{-1/2}$ and the fixed bound
$0<u\le1/\sqrt5< U:=9/20$.
Pairing the roots of $G_d$ gives
\begin{equation}\label{eq:var-product-minorant}
 G_d(v)\ge\frac{2v^p}{p!}
 \left(1-\frac{p(p-1)(p-2)}{24v^2}\right)
\end{equation}
when the expression in parentheses is positive. This follows from
$\prod_j(1-x_j)\ge1-\sum_jx_j$ for $0\le x_j\le1$.
Let
\[
 B_d=\frac{2^{d+1}\Gamma(d/2+1)^2}{d!},\qquad
 R_d=\frac{B_d}{\sqrt{2\pi d}}.
\]
The Gamma recurrence and the Wallis limit give
\[
 \log R_d=-\frac12\sum_{j=0}^\infty
 \log\left(1-\frac1{(d+2j+1)^2}\right)
 >\frac{d+2}{4(d+1)^2}
 =:g(u)=\frac{u^2(1+2u^2)}{4(1+u^2)^2}.
\]
Indeed, after $-\log(1-x)\ge x$, the trapezoid bound for the convex
function $(d+1+2x)^{-2}$ proves the last inequality.
Consequently, if $K=\vartheta d^{3/2}$ and $v/K\ge1-D(u)>0$, set
\[
 n(u)=(1-u^2)(1-2u^2)(1-3u^2),\qquad
 S(u)=\frac{n(u)}{24\vartheta^2(1-D(u))^2}.
\]
Whenever $S<1$, \eqref{eq:var-product-minorant} and the elementary
logarithm bounds imply
\begin{equation}\label{eq:var-log-majorant}
 \frac{G_d(v)}{W_d(K)}>
 \frac{\sqrt{2\pi}}{\vartheta}\exp\{-H(u)\},\qquad
 H=(u^{-2}-1)\left(D+\frac{D^2}{2(1-D)}\right)-g+\frac S{1-S}.
\end{equation}

We first treat the two ground endpoints. For
$\mathfrak r_{\vartheta}=1-a_d+(b_d-\alpha^2)/(\vartheta^2d^3)$,
\[
 \mathfrak r_{3/2}\le\frac{83}{100d}+\frac1{2d^2},\qquad
 \mathfrak r_{7/4}\le\frac{63}{80d}+\frac2{5d^2}.
\]
For completeness, with $y=d-5$, the differences between the right
and left sides are respectively
\[
 \frac{51y^4+1872y^3+36818y^2+224880y+415675}
 {900d^3\mathfrak p_d},
\]
\[
 \frac{y^2(59y^2-90y+39504)+343482y+712405}
 {3920d^3\mathfrak p_d},
\]
both positive for $y\ge0$.
For $0\le r\le93/500$, we have $\sqrt{1-r}>9/10$ and
\[
 1-\sqrt{1-r}=\frac r2+
 \frac{r^2}{2(1+\sqrt{1-r})^2}
 \le\frac r2+\frac{50r^2}{361}.
\]
It follows that the required deficits $1-v_0(K)/K$ are bounded by
\[
 D_A=\frac{83}{200}u^2+\frac13u^3+\frac{37}{100}u^4
 \quad(\vartheta=3/2),\qquad
 D_0=\frac{63}{160}u^2+\frac27u^3+\frac{61}{200}u^4
 \quad(\vartheta=7/4).
\]
We give one elementary bound for both logarithms. Write
$D=u^2e(u)$, $e=A+Bu+Cu^2$; the two triples $(A,B,C)$ are the
coefficients just displayed, and $e(u)<2/3$ for $0\le u\le U$.
Put $t=u^2\le1/5$. The identity
\[
 (1-7t/2)(1-2t/3)^2-n(u)
 =t\left\{\frac16+(1-5t)(1-8t/9)\right\}\ge\frac t6
\]
gives
\[
 \frac S{1-S}\le\frac{1-7u^2/2}{24\vartheta^2-1}.
\]

Also $g(u)\ge u^2/4-u^6/4$. Since $e<1$,
\[
 (u^{-2}-1)\left(D+\frac{D^2}{2(1-D)}\right)
 \le e-u^2e+\frac{u^2e^2}{2}.
\]
Thus $H(u)\le\mathcal C(u)$, where the concave cubic is
\[
 \mathcal C(u)=A+\frac1{24\vartheta^2-1}+Bu-a_2u^2-B(1-A)u^3,
 \qquad
 a_2=A-\frac{A^2}{2}+\frac14+\frac7{2(24\vartheta^2-1)}-C>\frac14.
\]
Indeed, the omitted terms are nonpositive because, for both triples,
\[
 C(1-A)-\frac{B^2}{2}>\frac7{50}>\frac18>
 BCU+\left(\frac{C^2}{2}+\frac14\right)U^2.
\]
Since $\mathcal C''<-1/2$, the tangent quadratic gives
$\mathcal C(u)\le\mathcal C(t)+\mathcal C'(t)^2$ for $t\ge0$.
Substitution of $t=21/50$ for $D_A$ and $t=9/25$ for $D_0$ gives
\begin{equation}\label{eq:var-ground-log-bounds}
 H_A<\frac{511}{1000},\qquad H_0<\frac{1849}{4000}.
\end{equation}

For the excited endpoint use the quartic deficit
\[
 D_1(u)=\frac{37}{15}u^2+\frac13u^3-\frac3{10}u^4.
\]
To verify it, put $t=u^2\le1/5$ and
\[
 J(t)=1-\frac{37}{15}t+\frac3{10}t^2,\qquad
 T(t)=\frac{1-14t/5}{1-t}
       \left(1-\frac{47}{30}t(1-t/2)\right)^2
       -\frac49t^2+\frac59t^3.
\]
The bound $\sqrt{1-t}\le1-t/2$ gives
$((v_1(K_1)+1/2)/K_1)^2\ge T(t)$. Direct expansion yields
\[
 \frac{(1-t)(T-J^2)}{t^2}
 =\frac{659}{900}-\frac{20471}{4500}t
   +\frac{10721}{2000}t^2-\frac{14653}{9000}t^3
 \ge\frac{73}{100}-\frac{23}{5}t+5t^2\ge\frac1{100}.
\]
Since $J>0$, this proves $v_1(K_1)/K_1\ge1-D_1(u)$.

Put $E(u)=-(u^{-2}-1)\log(1-D_1(u))$ and write
$D_1=u^2e_1$, where $e_1=37/15+u/3-3u^2/10$.
For $0<u\le U<1/2$, we have $37/15\le e_1<8/3$ and
\[
 (1-u^2)e_1'+ue_1(e_1-2)
 \ge\frac13+u\left(\frac{259}{225}-\frac35-\frac16\right)
 =\frac13+\frac{173}{450}u.
\]
Differentiating and using
$-\log(1-x)\le x+x^2/(2(1-x))$ therefore gives
\[
 E'(u)\ge
 \frac{(1-u^2)e_1'+ue_1(e_1-2)}{1-u^2e_1}>\frac13.
\]
Moreover, with $t=u^2$,
\[
 (1-D_1)^2-n(u)\ge(1-8t/3)^2-n(u)
 =\frac t9\{6(1-3t)^2+t\}>0.
\]

Thus $S/(1-S)\le1/53$. Retaining the exact logarithm in the
derivation of \eqref{eq:var-log-majorant} gives the bound with
$H_1=E-g+1/53$, which is increasing because
\[
 g'(u)=\frac{u(1+3u^2)}{2(1+u^2)^3}\le\frac u2,
 \qquad H_1'(u)>\frac13-\frac u2>0.
\]
At the actual endpoint $u=1/\sqrt5$, we have $g(u)=7/144$ and
\[
 D_1(u)=\frac{361}{750}+\frac1{15\sqrt5}<\frac{409}{800},
 \qquad -\log\frac{391}{800}<\frac{179}{250}.
\]
The first inequality follows from $\sqrt5>223/100$; for the second,
$\sum_{j=0}^6(179/250)^j/j!>800/391$. Therefore
\[
 H_1(u)\le H_1(1/\sqrt5)
 <\frac{358}{125}-\frac7{144}+\frac1{53}<\frac{567}{200}.
\]

Finally, the classical bound $\pi>223/71$ and
\[
 e^{511/500}<\frac{139}{50},\qquad
 e^{1849/2000}<\frac{2521}{1000},\qquad
 e^{567/100}<\frac{2901}{10}
\]
turn \eqref{eq:var-log-majorant} into the three claimed estimates.
Their squared lower bounds are respectively
\[
 \frac{89200}{88821}>1,\qquad
 \frac{7136000}{8770559}>\left(\frac{451}{500}\right)^2,\qquad
 \frac{17840}{1853739}>\left(\frac{49}{500}\right)^2.
\]

The displayed exponential bounds follow by the geometric Taylor
remainder
\[
 e^x\le\sum_{j=0}^m\frac{x^j}{j!}
 +\frac{x^{m+1}}{(m+1)!}\frac1{1-x/(m+2)}
 \quad(0\le x<m+2),
\]
with $m=3,3,12$, respectively. This completes an analytic proof
uniform in $d$.
\end{proof}

\subsection{The constant-trial range}
\label{app:constant-bridge}
\begin{proof}[Proof of Lemma~\ref{lem:constant-low-frequency}]
Put $p=d-1$, $\alpha=(d-2)/2$, $\beta=(d-2)/d$, and
\[
 B_d=\frac{2^{d+1}\Gamma(d/2+1)^2}{d!}.
\]
The constant radial trial gives
\[
 \lambda_{q_\ell,0}(0)<\frac{q_\ell}{\beta},\qquad
 q_\ell=\ell(\ell+d-2),\qquad \ell>0.
\]
The inequality is strict because the constant radial function cannot be
an eigenfunction when $q_\ell>0$. The degree-zero ground level is zero.

We use a continuous lower bound for the angular count. Set
$\kappa=K\sqrt\beta>1/2$, $w=\sqrt{\kappa^2+\alpha^2}-\alpha$,
$L=\lfloor w\rfloor$, and $v=w+(p-2)/2$. The cumulative multiplicity is
\[
 \widehat C_d(L):=\sum_{\ell=0}^{L}\kappa_{d,\ell}
 =G_d(L+p/2)>G_d(v),
\]
since $L+1>w$ and $G_d$ increases on $v>(p-2)/2$.
Pair the factors in $G_d(v)$. Every paired product is at least
$w(w+p-2)$, and every unpaired factor exceeds its square root. Moreover,
\[
 \kappa-\sqrt{w(w+p-2)}
 =\frac{w}{\sqrt{w(w+p-1)}+\sqrt{w(w+p-2)}}<\frac12.
\]
Consequently
\begin{equation}\label{eq:var-continuous-constant}
 G_d\left(\sqrt{\kappa^2+\alpha^2}-\frac12\right)
 >\frac{2(\kappa-1/2)^p}{p!}.
\end{equation}
The ground-row count is therefore larger than the same quantity.
Its ratio to $W_d(K)$ exceeds
\[
 R_d(K):=\frac{dB_d}{K}\,\beta^{p/2}
       \left(1-\frac{1}{2K\sqrt\beta}\right)^p.
\]
With $b=1/(2\sqrt\beta)$, direct differentiation gives
\[
 \frac{d}{dK}\log R_d(K)=\frac{db-K}{K(K-b)}.
\]
Thus $R_d$ has a single maximum on $(b,\infty)$ and no interior
minimum. It suffices to bound it at the two ends of any closed interval
contained in this domain.

We use two simple monotonicity observations. First,
\[
 \beta^{p/2}
 =\left(\frac{p-1}{p+1}\right)^{p/2}
 =\exp\left[-\frac{\operatorname{arctanh}(1/p)}{1/p}\right]
\]
increases with $p>1$, because $\operatorname{arctanh}x/x$ increases
with $x\in(0,1)$. In particular $\beta^{p/2}\ge9/25$ for $d\ge5$.
Second, the functions
\[
 \left(1-\frac{3}{2\sqrt{p^2-1}}\right)^p,
 \qquad
 \left(1-\frac{20}{27(p+1)\sqrt{p-1}}\right)^p
\]
increase with $p\ge5$. To check both assertions uniformly, put $x=1/p$
and write the respective deficits as
\[
 \delta_1(x)=\frac{3x}{2\sqrt{1-x^2}},\qquad
 \delta_2(x)=\frac{20x^{3/2}}{27(1+x)\sqrt{1-x}}.
\]
Each $\delta_i$ and $\delta_i/x$ increases on $0<x\le1/5$.
For the second quotient, its logarithmic derivative is
\[
 \frac{1-x+2x^2}{2x(1-x)(1+x)}>0;
\]
the first quotient increases because its denominator decreases.
Since $-\log(1-v)/v$ also increases for
$0<v<1$, the identity
\[
 -\frac{\log(1-\delta_i(x))}{x}
 =\frac{\delta_i(x)}x\,
       \frac{-\log(1-\delta_i(x))}{\delta_i(x)}
\]
proves the claimed monotonicities.

Suppose now that $d\ge6$. The Gamma recurrence and Wallis estimate
proved in Lemma~\ref{lem:var-uniform-endpoints} give
$B_d>\sqrt{2\pi d}$. At $K=d/3$, the preceding monotonicities imply
\[
 R_d(d/3)>18\cdot\frac9{25}\left(\frac{69}{100}\right)^5>1.
\]
Here $B_d>6$, and the last factor in $R_d(d/3)$ is minimized at
$d=6$, where its base is $1-3/(2\sqrt{24})>69/100$.
The last rational comparison is
$162\cdot69^5-25\cdot100^5=3373078538>0$.

At the upper endpoint $K_*=(27/40)d^{3/2}$, direct substitution gives
\[
 R_6(K_*(6))=\frac{1024}{729}\left(\frac{76}{81}\right)^5
 >\frac75\left(\frac{15}{16}\right)^5>1.
\]
For $d\ge7$, the first monotonicity gives
$\beta^{p/2}\ge(5/7)^3=125/343$.
The second monotonicity, $B_d/\sqrt d>\sqrt{2\pi}>5/2$, and
$\sqrt5>20/9$ therefore give
\[
 R_d(K_*(d))>
 \frac{12500}{9261}\left(\frac{20}{21}\right)^6
 =\frac{800000000000}{794280046581}>1.
\]
Unimodality now proves the required estimate on
$d/3\le K\le K_*(d)$ for every $d\ge6$.

For $K\le d/3$, the zero mode suffices. Indeed,
\[
 W_d(d/3)=\frac{2(d/3)^d}{B_d\,d!}
 <\frac2{B_d}\left(\frac e3\right)^d<1.
\]
The elementary factorial bound $d!>(d/e)^d$ follows by integrating
$\log t$ from $1$ to $d$; also, $e<3$.

It remains to treat $d=5$. Here
\[
 W_5(K)=\frac{2K^5}{225\pi},\qquad
 \widehat C_5(L)=\frac{(L+2)^2(L+1)(L+3)}{12}.
\]
Use this polynomial to define $\widehat C_5$ also at real arguments.
Put $k_\ell^2=(5/3)\ell(\ell+3)$. For a real variable $s>0$, the
ratio $\widehat C_5(s-1)/W_5(k_s)$ is a positive constant times
\[
 \frac{(s+1)^2(s+2)}{s^{3/2}(s+3)^{5/2}}.
\]
It decreases strictly, since twice its logarithmic derivative is
\[
 \frac4{s+1}+\frac2{s+2}-\frac3s-\frac5{s+3}
 =-\frac{2s^3+5s^2+13s+18}{s(s+1)(s+2)(s+3)}<0.
\]
At $s=4$, using $\pi>3$ and $\sqrt{140/3}<7$ gives
\[
 W_5(k_4)<\frac{274400}{6075}<50=\widehat C_5(3).
\]
Thus $\widehat C_5(\ell-1)>W_5(k_\ell)$ simultaneously for all integers
$1\le\ell\le4$. Set $k_0=0$. On
$k_{\ell-1}\le K\le k_\ell$, the degrees $0,\ldots,\ell-1$
give a strict count at least $\widehat C_5(\ell-1)>W_5(k_\ell)\ge W_5(K)$.
These intervals cover $0<K\le k_4$. Finally
$k_4<K_*(5)<8$, and
\[
 W_5(K_*(5))<W_5(8)<\frac{65536}{675}<105=\widehat C_5(4).
\]
The degree-four row therefore covers the remaining interval.

We have proved the stronger unit-ball first-row count in every
dimension $d\ge5$. Lemma~\ref{lem:var-radial} transfers it to every
shell; its Weyl term is smaller by the factor $1-\rho^d$.
\end{proof}

\subsection{The initial three-dimensional intervals}
\label{app:proof-d3-lem-initial}
\begin{proof}[Proof of Lemma~\ref{d3:lem:initial}]
Put $Q_0=0$. On $Q_{j-1}\le K^2\le Q_j$, the strict trials give
at least $j^2$ eigenvalues. The derivative of
$(t^2+t+c)^3/t^4$ has the sign of $2t^2-t-4c$.
For $j=1,2,3$ and $c=1$, the function has its maximum at $j=1$ or $3$,
and $Q_1^3<Q_3^3/3^4<130$. For $j=4,5$ and $c=4$,
$Q_j^3/j^4$ increases, starting above $140$ at $j=4$.
Thus the first five intervals are covered by $\pi>3$ and
\[
 \W_3(\sqrt{Q_5},0)<\frac{2\cdot48\cdot7}{27}
 =\frac{224}{9}<25.
\]
The sixth interval follows from $\pi>157/50$ and
$\sqrt{Q_6}<8$, since
\[
 \W_3(\sqrt{Q_6},0)<\frac{100}{1413}\frac{127}{2}\,8
 =\frac{50800}{1413}<36.
\]
Finally, if $\rho\ge1/2$, then $\sqrt{Q_7}<46/5$ gives
\[
 \frac78\W_3(\sqrt{Q_7},0)
 <\frac78\frac{100}{1413}\,84\frac{46}{5}
 =\frac{22540}{471}<49.
\]
\end{proof}

\section{Ball estimates in dimensions three and four}
\label{app:low-ball-details}
Both calculations use the same first-period polynomial bound. For
$p=2$, assume $K\ge44/5$; for $p=3$, assume $K\ge12$.
Set $y_-(s)=1-s+s^2/30$, $y_+(s)=1-s+s^2/25$, and
\[
 \varepsilon=K^{-1},\quad a_K=\frac12(15\pi/(4K))^{2/3},\quad
 q=5^{-2/3},\quad \frac{41}{120}<q<\frac{171}{500}.
\]
Put $f=\Phi_p\circ\xi_K$ and, above the clipping point, write
\[
 \phi_p(y)=K^{-p}\Phi_p(Ky)=
 \begin{cases}
 (y-\varepsilon/2)^2,&p=2,\\
 y^3/3-\varepsilon y^2/2+\varepsilon^2y/6,&p=3.
 \end{cases}
\]
Lemma~\ref{lem:inverse-action-seven-tenths} gives, for $0\le t\le5/4$,
\[
 y_-(s)\le\xi_K(t)/K\le y_+(s),\qquad
 s=a_K(4t/5)^{2/3}.
\]
All these inverse values exceed the clipping point: for $p=2$,
$a_K<7/10$ and $K(1-a_K)>1/2$; for $p=3$, $a_K<1/2$ and
$K(1-a_K)>1$.

The dimension-specific checks below establish, with
$u=\arccos(\xi_K(5/4)/K)$ and $S=\sin u-u\cos u=5\pi/(4K)$,
that $u<6/5$ and $f''(5/4)/(-f'(5/4))<2$.
Taylor's bounds then give $5u^3/18<S\le u^3/3$, so
\[
 u<\frac{16}{15}\sqrt{2a_K},\qquad
 \frac{\pi}{K\sqrt{2a_K}}=\frac{8a_K}{15}.
\]
The terminal coefficient in \eqref{eq:jp-terminal} is zero for $p=2$
and less than $1/2$ for $p=3$, since
$\pi/[6\arccos(1/K)]<1/2$ when $K\ge12$.
Lemma~\ref{lem:jp-tail}, together with
$1/12-1/(36\sqrt3)>1/15$ and monotonicity of $\phi_p'$, therefore gives
\begin{equation}\label{eq:d4-newtail}
 \frac{\mathcal R_f(5/4)}{K^p}
 >\frac{a_K}{30}\phi_p'(y_-(a_K))-\delta_p,
 \qquad \delta_2=0,\quad\delta_3=\frac1{24K^3}.
\end{equation}
Integrating the upper polynomial over the first sampling period gives
\begin{equation}\label{d3:eq:ballpoly-first}
 \begin{split}
 \frac{E_B(\Phi_p)}{K^p}\ge R_p(K):={}&
 \phi_p(y_-(q a_K))+\tfrac12\phi_p(y_-(a_K))
 -\frac54\sum_{n=0}^{2p}\frac{c_n a_K^n}{1+2n/3}
 +\frac{a_K}{30}\phi_p'(y_-(a_K))-\delta_p,\\
 &\sum_{n=0}^{2p}c_ns^n=\phi_p(y_+(s)).
 \end{split}
\end{equation}
The inequality is strict for $p=3$. In the coefficient estimates below
we use $157/50<\pi<22/7$.

\subsection{The quadratic ball surplus}
\label{app:proof-d3-lem-ballpoly}
\begin{proof}[Proof of Lemma~\ref{d3:lem:ballpoly}]
Since $\sin(6/5)-(6/5)\cos(6/5)>9/20>5\pi/(4K)$,
one has $u<6/5$.
Taylor's bounds give $\cos u>1/3$, $S/u<1/2$, and
$\sin u/u>3/4$. As $K>8$, the derivative formulas preceding
\eqref{eq:jp-terminal} give
\[
 \frac{f''(5/4)}{-f'(5/4)}
 =\frac{4S}{u(5\cos u-2S/\pi)}
       +\frac{4S}{5u^2\sin u}
 <\frac{96}{65}+\frac{16}{45}<2.
\]
This verifies the hypotheses used in \eqref{d3:eq:ballpoly-first}.
For the full integral, the action moments give
\[
 J_2(K)=\W_3(K,0)-K^2/8+
        \int_0^{1/2}(1-2z)G_K(z)\,dz.
\]
Convexity gives $G_K(z)\ge K/\pi-z/2$, and hence
\[
 J_2(K)-\W_3(K,0)\ge-K^2/8+K/(4\pi)-1/48.
\]
Add this to \eqref{d3:eq:ballpoly-first}. With
$v=K^{-1/3}$ and $a_K=\eta v^2$, cubing gives
\[
 \eta=(225\pi^2/128)^{1/3},\qquad
 \frac{31}{12}<\eta<\frac{259}{100}.
\]
Consequently
\[
 (2q-17/30)\eta<\frac{38}{125},\qquad
 (16q^2/15-47/420)\eta^2>\frac{21}{250},\qquad
 (q-17/60)\eta>\frac3{20}.
\]
Using $\varepsilon=v^3$, $a_K=\eta v^2$, $q^3=1/25$ and
$\eta^3=225\pi^2/128$, expansion therefore gives the lower bound
\[
 \frac18-\frac{38}{125}v^2
 +\left(-\frac14+\frac1{4\pi}\right)v^3
 +\frac{21}{250}v^4+\frac3{20}v^5.
\]
The coefficients of the discarded terms of degrees six, seven and eight are positive:
\[
 \frac1{24}-\frac{\pi^2}{1280}>0,\qquad
 (1/210-q^2/30)\eta^2>0,\qquad
 (q^4/900+1/99000)\eta^4>0.
\]
Finally $v^5\ge3v^3/4-v^2/4$, by convexity of $v^3$ at $v=1/2$,
and $-11/80+1/(4\pi)>-29/500$. These give precisely $B(v)$.
\end{proof}

\subsection{The cubic ball surplus}
\label{app:d4-ball-proof}
\begin{proof}[Proof of Lemma~\ref{lem:d4ball}]
The action relation and $S\ge u^3/3-u^5/30$ give
$u<26/25<\pi/3$, hence $\cos u>1/2$ and $\sin u/u>4/5$.
Since $Ku^3\ge15\pi/4$, $K\ge12$, and $\pi^2<10$,
\[
 \frac\pi{Ku}<\frac4{15},\qquad
 \frac{\Phi_3''(z)}{\Phi_3'(z)}\le\frac2{z-1}\quad(z>1).
\]
Consequently
\[
 \frac{f''(5/4)}{-f'(5/4)}
 \le\frac{2\pi}{Ku(\cos u-1/K)}
                  +\frac\pi{Ku^2\sin u}
 <\frac{32}{25}+\frac13<2.
\]
The common bound \eqref{d3:eq:ballpoly-first} now applies strictly.
Set $v=(12/K)^{1/3}$ and $R(v)=R_3(K)$. Here
$\varepsilon=v^3/12$ and $a_K=\eta v^2$, where
\[
 \eta=(25\pi^2/2048)^{1/3},\qquad
 \frac{49}{100}<\eta<\frac12.
\]
After expansion and reduction by $q^3=1/25$, the coefficients of
$v^2$ and $v^5$ are, respectively,
\[
 \eta(17/60-q)>-3/100,\qquad
 \eta(q-17/60)/12>1/500.
\]
The coefficients of degrees eight and twelve are positive; those
of degrees six, seven, nine, ten and eleven are negative.
The coefficient of $v^4$ plus these five negative coefficients equals
\[
 \begin{split}
 &\frac{11}{41472}
 +\eta^2\left(\frac{25633q^2}{25920}-\frac{3719}{36288}\right)
 -\frac{443\eta^3}{54000}\\
 &\qquad-\eta^4\left(\frac1{2376000}+\frac{q}{540000}\right)
 -\eta^5\left(\frac1{17550}+\frac{q^2}{22500}\right)
 >\frac1{500}.
 \end{split}
\]
Indeed, the $\eta^2$ term exceeds $3/1000$, and the remaining terms
together exceed $-1/1000$, by the displayed bounds on $\eta$ and $q$.
For $0\le v\le1$, each negative term of degree at least six is
bounded below by its coefficient times $v^4$. Dropping the positive
terms of degrees eight and twelve therefore gives
\[
 R(v)\ge\frac1{12}-\frac3{100}v^2-\frac1{96}v^3
                         +\frac1{500}v^4+\frac1{500}v^5.
\]
The identity
$2v^5-3v^4+v^2=v^2(1-v)^2(2v+1)\ge0$
reduces this to
\[
 R(v)\ge P(v):=\frac1{12}-\frac{31}{1000}v^2
                         -\frac1{96}v^3+\frac1{200}v^4.
\]
Since $P'(v)/v\le-21/500<0$ on $(0,1]$,
\[
 R(v)\ge P(1)=\frac3{64}+\frac1{24000}.
\]
This proves the assertion.
\end{proof}

\section{Planar estimates}\label{app:planar-estimates}
\begin{lemma}[Planar variational ranges]\label{d2:lem:variational}
Let $0\le\rho<1$, $K>0$, and $T=(1-\rho)K/\pi$.
If $K\le8$, or if $K\ge8$ and $T\le3/2$, then
\[
 N^<_{\Omega_{\rho,1}}(K^2)>\frac{1-\rho^2}{4}K^2.
\]
In the second range, the difference is greater than $K/120$.
The low-frequency assertion includes the disk $\Omega_{0,1}=B_1$.
\end{lemma}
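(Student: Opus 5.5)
The lemma has two parts, and I would treat them by different mechanisms, in the spirit of the radial-trial arguments of Lemma~\ref{lem:d4-ground-direct} and Proposition~\ref{d3:thm:low}. In dimension two the angular multiplicities are $1$ for $m=0$ and $2$ for $m\ge1$, with $q=m^2$.

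\emph{Low frequencies $K\le8$.} I would work on the disk first and transfer the result to shells. By \eqref{eq:var-ground} the ground levels satisfy $\lambda_{m^2,0}(\rho)\le\lambda_{m^2,0}(0)$, and the Weyl term for a shell is smaller by the factor $1-\rho^2$. It therefore suffices to show that the ground rows of the disk alone give a count exceeding $K^2/4$.

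For the upper bounds I would use the polynomial ball trials $u_m(s)=s^m(1-\frac{m}{m+2}s^2)$. These satisfy $u_m'(1)=0$, and their Rayleigh quotients $Q_m$ are explicit rational functions of $m$. A trial cannot be an eigenfunction, so each bound is strict.

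With the cumulative count $2m-1$ through degree $m-1$, the argument on each interval $Q_{m-1}\le K^2\le Q_m$ needs $Q_m<4(2m-1)$ for every $m$ up to the degree at which $Q_m$ exceeds $64$. For small $m$ a cheaper power trial such as $s^m$ may do. To cover the region near $K=8$, where ground rows give only about $2K-1\approx15<16$, I expect to need first excited rows as well: the degree-zero excited level (the Bessel zero $j_{1,1}^2\approx14.7$) and the $m=1$ excited level. I would bound these with two-dimensional polynomial trial spaces, checking positive definiteness of $K^2M-Q$ as in the appendix proof of Proposition~\ref{prop:var-ranges}. For shells, the excited rows transfer through the endpoint principle \eqref{eq:var-endpoint} combined with \eqref{eq:var-excited}, once $\rho$ is near enough to one; the thin case is covered below anyway. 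This finite check on $[0,8]$ is where I expect the main bookkeeping.

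\emph{Thin range $K\ge8$, $T\le3/2$.} Here I would use the constant radial trial in every degree. Its quotient is $\beta_\rho m^2$ with $\beta_\rho=2\log(1/\rho)/(1-\rho^2)$, and it is strict for $m\ge1$. This gives $N^<\ge 1+2\lfloor \kappa\rfloor$ with $\kappa$ just below $K/\sqrt{\beta_\rho}$, hence roughly $2K/\sqrt{\beta_\rho}-1$.

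Using $T\le3/2$, we have $h=1-\rho\le 3\pi/(2K)\le 3\pi/16$. Then $\beta_\rho^{-1/2}\ge 1-h/2-O(h^2)$, while the Weyl term is $(1-\rho^2)K^2/4=hK^2(2-h)/4=\pi TK(2-h)/4\le \frac{3\pi}{4}K(1-h/2)$. The surplus is therefore about $(2-\frac{3\pi}{4})K$ minus lower-order terms, comfortably above $K/120$ since $2-3\pi/4>0.4$. The crude $-1$ loss from the floor is absorbed because $K\ge8$.

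The \emph{main obstacle} is making the low-frequency interval bookkeeping near $K\approx8$ rigorous with explicit rationals, especially the excited-row trials needed there, and ensuring those transfer uniformly in $\rho$. I would first try to split at the value of $\rho$ where $T=3/2$, letting the constant trial handle thin shells so that excited-row trials are only required for thick shells with $K\le8$.
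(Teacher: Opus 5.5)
Your thin-range step rests on the claim $2-\frac{3\pi}{4}>0.4$, which is false. Since $\frac{3\pi}{4}\approx2.356$, one has $2-\frac{3\pi}4\approx-0.36$.

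With your own approximations $\beta_\rho^{-1/2}\approx(1+\rho)/2$ and $\W_2(K,\rho K)=\frac{\pi T}{4}(1+\rho)K$, the constant-trial row gives a surplus of about $(1+\rho)(1-\pi T/4)K-1$. This is negative once $T>4/\pi\approx1.27$. A better ground trial cannot repair it. Since $\lambda_{m,0}(\rho)\ge m^2$, the ground row never contains more than $2K+1$ modes. At $T=3/2$, however, the Weyl term $\frac{3\pi}{8}(1+\rho)K$ approaches $\frac{3\pi}{4}K$ for thin shells, so no ground-row argument reaches $T=3/2$ once $K$ is large.

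What is missing is the first excited radial row, i.e.\ the first nonconstant Neumann mode across the width. For $T>1$ it lies below $K^2$ in a whole band of angular degrees. The paper bounds it with the trial $\cos(\pi(s-\rho)/h)+4h/[\pi^2(1+\rho)]$, which is orthogonal to constants in $L^2(s\,ds)$. This gives $\lambda_{m,1}(\rho)<\frac{16\pi^2}{15h^2}+\frac{m^2}{\rho^2}$ for $\rho>2/5$, and hence roughly $2\rho K\sqrt{1-16/(15T^2)}$ further modes. The constant trial alone is used only for $T\le12/11$, where $\frac75\bigl(\frac{20}{21}-\frac67\bigr)-\frac18=\frac1{120}$. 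On $12/11\le T\le3/2$ the two-row bound increases in $\rho$ and is concave in $T$, so it is checked at $\rho=2/5$ and $T\in\{12/11,3/2\}$.

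The same oversight breaks your proposed low-frequency split. At $K=8$ and $T=3/2$ one has $\rho\approx0.411$ and $\frac{1-\rho^2}{4}K^2\approx13.3$. The constant trial gives $11$ ground modes, and even the disk comparison \eqref{eq:var-ground} gives only $13$. So excited rows are needed for these moderately thin shells too, not only for thick ones. The paper therefore splits in $\rho$. On $[0,3/8]$ and $[3/8,1/2]$ it transports excited levels by \eqref{eq:var-endpoint} and $\lambda_{m,1}(r)\le\lambda_{0,1}(r)+m^2/r^2$; only on $[1/2,1)$ do ground rows suffice.

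Your list of excited levels is also too short, even for the disk:
\begin{itemize}
\item Ground rows already fail just above $K^2=28$, because $\lambda_{4,0}(0)=(j'_{4,1})^2\approx28.28$. So no valid $Q_4$ satisfies $Q_4<28$.
\item Just below $(j'_{6,1})^2\approx56.27$, the ground rows together with the degree-zero and degree-one excited levels give only $14$ modes, against $K^2/4\approx14.07$. At least one further level is needed; the paper uses the order-two excited level, with $\lambda_{2,1}<223/4$ for $\rho\le3/8$.
\end{itemize}
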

\begin{proof}
Write $E=K^2$, $W=(1-\rho^2)E/4$, and
$N^<=N^<_{\Omega_{\rho,1}}(K^2)$. Denote the radial levels in angular order
$m\ge0$ by $\lambda_{m,j}(\rho)$. We use Lemma~\ref{lem:var-radial}
and the ball three-quarter count established after \eqref{eq:robinphase}:
$n_m^<(K^2)\ge\lfloor G_K(m)+3/4\rfloor$ when $\rho=0$.
Positive angular orders have multiplicity two; order zero has multiplicity one.

For $0\le r<1$, put $h=1-r$. The trial
$\cos(\pi(s-r)/h)+4h/[\pi^2(1+r)]$ is orthogonal to constants
for the measure $s\,ds$ on $(r,1)$. Its squared norm is
$h(1+r)/4-8h^3/[\pi^4(1+r)]$, and its derivative energy is
$\pi^2(1+r)/(4h)$. Hence
\begin{equation}\label{d2:eq:cosine-trial}
 \lambda_{0,1}(r)\le Q(r):=
 \frac{\pi^2}{(1-r)^2}
 \left(1-\frac{32(1-r)^2}{\pi^4(1+r)^2}\right)^{-1}.
\end{equation}
We use $157/50<\pi<22/7$ throughout.

\emph{Low frequencies.}
For $1\le m\le6$, the ball trial $s^m(1-ms^2/(m+2))$ gives
\[
 \lambda_{m,0}(\rho)\le\lambda_{m,0}(0)
 <q_m:=\frac{2m(m+1)(m+3)(3m+4)}{(m+2)(5m+6)}
 <b_m:=\frac{m(12m+25)}{10}.
\]
The first strict inequality holds because the polynomial is not an
eigenfunction; the second follows from
\[
 b_m-q_m=\frac{m(-3m^2+44m+60)}{10(m+2)(5m+6)}>0,
 \qquad -3m^2+44m+60\ge26m+60.
\]
For the excited levels, \eqref{d2:eq:cosine-trial} gives
\[
 Q(0)<15,\qquad Q(3/8)<109/4,\qquad Q(1/2)<83/2.
\]
Strict convexity gives $G_K(m)>K/\pi-m/2$ for $0<m\le K$.
The phase bound at $K=7\pi/4$ and $K=9\pi/4$ therefore gives
\[
 \lambda_{1,1}(0)<(7\pi/4)^2<31,\qquad
 \lambda_{2,1}(0)<(9\pi/4)^2<51.
\]
The elementary comparison
$\lambda_{m,1}(r)\le\lambda_{0,1}(r)+m^2/r^2$ and the radial
endpoint principle therefore give
$\lambda_{0,1}<109/4$, $\lambda_{1,1}<35$, and
$\lambda_{2,1}<223/4$ throughout $\rho\le3/8$; they give
$\lambda_{0,1}<83/2$ and $\lambda_{1,1}<91/2$ throughout
$\rho\le1/2$. For the order-two bound, for example,
$109/4+256/9<223/4$.
Put $\chi_a(E)=\mathbf1_{[a,\infty)}(E)$ and
$G(E)=1+2\sum_{m=1}^6\chi_{b_m}(E)$.
For $0<E\le64$, the preceding estimates give $N^<\ge G+R$ and
$W\le CE$ in the following three radius intervals. The last column
is the minimum left limit of $G+R-CE$ at its breakpoints and at $E=64$.
\[
\begin{array}{c|c|c|c}
 \rho&R&C&\text{margin}\\ \hline
 {}[0,3/8]&\chi_{109/4}+2\chi_{35}+2\chi_{223/4}&1/4&1/16\\
 {}[3/8,1/2]&\chi_{83/2}+2\chi_{91/2}&55/256&43/512\\
 {}[1/2,1)&0&3/16&7/80
\end{array}
\]
Between breakpoints the difference decreases linearly; at a breakpoint
the entering level is already counted. The positive margins prove
the low-frequency assertion.

\emph{Small widths.}
Let $K\ge8$ and $T\le3/2$. Then $\rho>2/5$ and
$z=(1-\rho)/(1+\rho)<3/7$. The constant radial trial gives
$\lambda_{m,0}(\rho)<\beta_\rho m^2$ for $m\ge1$, where the
logarithmic series bounds
\[
 \beta_\rho=\frac{2\log(1/\rho)}{1-\rho^2}
 \le\frac4{(1+\rho)^2}\left(1+\frac{z^2}{3(1-z^2)}\right)
 \le\frac{43}{10(1+\rho)^2}
 <\left(\frac{21}{10(1+\rho)}\right)^2.
\]
The ground row thus contributes more than $(20/21)K(1+\rho)-1$.
If $T\le12/11$, it follows that
\[
 \frac{N^<-W}{K}
 >\frac75\left(\frac{20}{21}-\frac67\right)-\frac18
 =\frac1{120}.
\]
For $12/11\le T\le3/2$, use $\pi^4>97$ to obtain
$32z^2/\pi^4<288/4753<1/16$ in \eqref{d2:eq:cosine-trial}.
Consequently
\[
 \lambda_{m,1}(\rho)<\frac{16\pi^2}{15(1-\rho)^2}
                         +\frac{m^2}{\rho^2}.
\]
Adding this excited row and using $K\ge8$ gives
\[
 \frac{N^<-W}{K}>
 H(T,\rho):=(1+\rho)\left(\frac{20}{21}-\frac{\pi T}4\right)
       +2\rho\sqrt{1-\frac{16}{15T^2}}-\frac14.
\]
This function increases with $\rho$, since
\[
 \partial_\rho H>
 \frac{20}{21}-\frac{33}{28}+\frac35=\frac{157}{420}>0.
\]
It is concave in $T$, so it suffices to evaluate $H(T,2/5)$ at
$T=12/11$ and $T=3/2$. Using
$\sqrt{14/135}>3/10$ and $\sqrt{71/135}>18/25$ gives
\[
 H(12/11,2/5)>\frac{37}{300},\qquad
 H(3/2,2/5)>-\frac{19}{60}+\frac{72}{125}-\frac14
 =\frac7{750}>\frac1{120}.
\]
Both width ranges therefore give the stated surplus $K/120$.
\end{proof}

\subsection{Semicircle sampling}

Let $e(T)$ be the semicircle sampling error defined in
Lemma~\ref{d2:lem:semicircle}. Including a terminal zero sample
leaves its value unchanged.

\begin{lemma}[Semicircle sampling at width endpoints]
\label{d2:lem:quarter-semicircle}
For every integer $n\ge3$, with $T=n+1/4$,
\begin{equation}\label{d2:eq:quarter-semicircle}
 e(T)>\frac14-\frac5{12\sqrt{2T}}-\frac1{32T^2}.
\end{equation}
\end{lemma}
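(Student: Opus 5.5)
We must bound $e(T)=\sum_{j:\,j+1/4<T}\sqrt{1-(j+1/4)^2/T^2}-\pi T/4$ at $T=n+1/4$. In that case the samples are $t_j=j+1/4$ for $0\le j\le n-1$, and the terminal sample $t_n=T$ contributes zero. Rescale by putting $g(s)=\sqrt{1-s^2}$ on $[0,1]$, with step $\eta=1/T$ and nodes $s_j=(j+1/4)\eta$. Then $e(T)=\sum_{j=0}^{n}g(s_j)-T\int_0^1g$. The statement is a quarter-shifted midpoint/trapezoid estimate, and the plan has three steps.

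\emph{Step 1: split the interval.} Choose a cut $S=s_m$ a few steps before the right endpoint, for instance with $1-S$ of order $\eta$ times a small fixed integer. On $[0,S]$ the function $g$ is smooth, concave and has bounded derivatives. On $[S,1]$ it has the square-root singularity.

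\emph{Step 2: the regular part.} On $[0,S]$, use the shifted-sampling expansion exactly as in the proof of Lemma~\ref{lem:variation-comparison}, with the periodic function $\psi_{1/4}$ and its primitive $Q_{1/4}$. The leading boundary term at $s=0$ is $(1/2-1/4)g(0)=1/4$, which produces the main term $\tfrac14$. The derivative term at $s=0$ vanishes because $g'(0)=0$.

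The remaining contributions come from the right cut and from the second-derivative integral. Since $g''<0$ has a fixed sign, $\int_0^S Q\,g''$ is bounded by $\tfrac18\eta\,|g'(S)|$. The boundary terms at $S$ telescope into the singular piece, because the nodes are aligned with the cut: they contribute $\eta$ times a bounded periodic factor times $g'(S)$. Here $|g'(S)|\approx(2(1-S))^{-1/2}$.

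\emph{Step 3: the singular part, which is the main obstacle.} On $[S,1]$, substitute $1-s=\eta\,\sigma$. Then $g(s)=\sqrt{\eta\sigma(2-\eta\sigma)}$, and the local error becomes
\[
\sqrt{2\eta}\Bigl(\sum_{k}\sqrt{\sigma_k}-\int\sqrt{\sigma}\,d\sigma\Bigr)+O(\eta^{3/2}),
\]
with nodes $\sigma_k=k$ for $k=0,\dots$ measured from the endpoint. The samples hit the endpoint exactly, since $T-t_j$ is an integer.

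The Hurwitz zeta expansion gives $\sum_{k\le M}\sqrt k-\int_0^{M}\sqrt\sigma\,d\sigma=\tfrac12\sqrt M+\zeta(-1/2)+O(M^{-1/2})$. The $\tfrac12\sqrt M$ term combines with the boundary terms at $S$ from Step 2. What remains is the constant $\zeta(-1/2)\approx-0.2079$, multiplied by $\sqrt{2\eta}=\sqrt{2/T}$. This is $-0.2079\sqrt2/\sqrt{2T}\approx-0.294/\sqrt{2T}$, which beats the allowed $-(5/12)/\sqrt{2T}\approx-0.4167/\sqrt{2T}$ with margin.

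This zeta comparison is the delicate point. I would first try to avoid the zeta function by working with $\sqrt\sigma$ directly. Since $\sqrt\sigma$ is concave, apply the trapezoid rule on each $[k,k+1]$ with $k\ge1$ and bound its error by $\tfrac1{12}$ of the jump in the derivative, via Lemma~\ref{lem:jp-tail} with $f''\le0$ reversed. Treat $[0,1]$ exactly. This gives an explicit constant: $1/2-2/3$ on the first cell plus $-\tfrac1{24}$ from the tail, for a total of $-\tfrac5{24}$. This appears to be precisely the $5/12$ after the factor $\sqrt2\cdot\sqrt{2T}$ normalization, which suggests this is the intended route.

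\emph{Remaining terms.} The $O(\eta^{3/2})$ curvature corrections, from $\sqrt{2-\eta\sigma}$ versus $\sqrt2$, and the error from the regular part are controlled using $n\ge3$, i.e.\ $T\ge13/4$. These should be absorbed by the $-1/(32T^2)$ term, which is the second-order quarter-shift term $Q_{1/4}(0)\,\eta^2 g''$-type contribution at $s=0$ with $g''(0)=-1$, $Q_{1/4}(0)=1/32$. The remaining concavity errors have a favourable sign.
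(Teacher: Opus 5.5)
Your explicit computation in Step~3 has the right leading structure: $\tfrac12-\tfrac23=-\tfrac16$ on the singular cell, plus $-\tfrac1{12}\cdot\tfrac12$ from the derivative jump, gives $-\tfrac5{24}\sqrt{2/T}=-\tfrac5{12\sqrt{2T}}$. The proposal as a whole does not close, however, because this constant has essentially no slack.

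Your zeta arithmetic is off by a factor $\sqrt2$. Since $\sqrt{2/T}=2/\sqrt{2T}$, the true leading coefficient is $2|\zeta(-1/2)|\approx0.4158$, against $5/12\approx0.4167$. In your normalized units the slack is $\tfrac5{24}-|\zeta(-1/2)|\approx4.5\times10^{-4}$, and the only other room in the statement is the $T^{-2}$ term. Hence any loss of order $T^{-1/2}$ is fatal for large $T$, and the proposal incurs several.
(i) In Step~2, the $Q_{1/4}$ expansion with $0\le Q_{1/4}\le\tfrac18$ leaves the boundary term $\tfrac18f'(S)$ at a node $S$. The integral $-\int_0^SQ_{1/4}f''\ge0$ can only be dropped. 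The trapezoidal value, by contrast, is about $\tfrac1{12}f'(S)$. With $S$ at $m$ cells from the end, $|f'(S)|\approx(2Tm)^{-1/2}$, so the junction costs an extra $\sqrt{2/T}/(48\sqrt m)$ beyond $\tfrac5{24}\sqrt{2/T}$, with nothing to compensate it.
(ii) Replacing $f$ by the model $\sqrt{2\eta\sigma}$ creates corrections of order $T^{-3/2}$. These cannot be absorbed by $1/(32T^2)$; you would have to prove their sign, and you do not.
(iii) Lemma~\ref{lem:jp-tail} with signs reversed does not give exactly $\tfrac1{12}$ of the jump. Its Stieltjes step leaves an extra $|h''(1)|/(72\sqrt3)=1/(288\sqrt3)\approx0.002$ for $h=\sqrt\sigma$. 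That is more than four times the asymptotic slack.

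The missing ingredient is the sharp per-cell bound for the semicircle itself, $f(t)=\sqrt{1-t^2/T^2}$. Since $-f''(t)=T(T^2-t^2)^{-3/2}$ is convex, you can pair $s$ with $1-s$ in $\int_0^1\tfrac{s(1-s)}2(-f''(a+s))\,ds$ and apply the reversed Chebyshev inequality. This gives $\int_a^{a+1}f-\tfrac12\bigl(f(a)+f(a+1)\bigr)\le\tfrac1{12}\bigl(f'(a)-f'(a+1)\bigr)$ on every unit cell.

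The paper applies this to $f$ on all of $[1/4,T-1]$, with no interior cut, no $Q_{1/4}$ expansion, and no model. The sum telescopes to $-f'(T-1)/12$ with no junction loss. The first quarter contributes $\tfrac12f(1/4)-\int_0^{1/4}f>\tfrac14-\tfrac1{32T^2}$; this, not a quarter-shift second-order term, is the source of the $T^{-2}$ term. Only the last cell is computed, using $u=T-t$ and $\sqrt{1-u/(2T)}\le1-u/(4T)$.

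With $v=\sqrt{1-1/(2T)}$, this bounds the integral-minus-trapezoid error $R$ on $[1/4,T]$ by $\sqrt{2/T}\bigl(\tfrac5{24}+\tfrac{(v-1)(4v-1)(6v-5)}{120v}\bigr)$. The cubic is nonpositive for $5/6<v<1$, which is where $n\ge3$ enters. If you drop Step~2 and apply your Step~3 cell bound to $f$ directly, justified by convexity of $-f''$, your plan becomes this proof.
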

\begin{proof}
Put $f(t)=\sqrt{1-t^2/T^2}$ on $[0,T]$.
Let $R$ be the integral of $f$ minus its composite trapezoidal sum
on $[1/4,T]$, with unit intervals. Since $f(T)=0$,
\[
 e(T)=\frac12f(1/4)-\int_0^{1/4}f(t)\,dt-R
 >\frac14-\frac1{32T^2}-R.
\]
The function $-f''(t)=T(T^2-t^2)^{-3/2}$ is convex.
On each unit interval $[a,a+1]\subset[1/4,T-1]$,
\[
 \int_a^{a+1}f(t)\,dt-\frac{f(a)+f(a+1)}2
 =\int_0^1\frac{s(1-s)}2(-f''(a+s))\,ds
 \le\frac{f'(a)-f'(a+1)}{12}.
\]
Indeed, after pairing $s$ with $1-s$, the curvature average
decreases on $[0,1/2]$ while $s(1-s)$ increases; the reversed
Chebyshev inequality gives the factor $1/12$.
Summing bounds the error before the last interval by $-f'(T-1)/12$.
Put $v=\sqrt{1-1/(2T)}$. On the last interval, substitute $u=T-t$
and use $\sqrt{1-u/(2T)}\le1-u/(4T)$ to obtain
\[
 R\le\sqrt{\frac2T}\left(
 \frac23-\frac1{10T}-\frac v2+\frac{1-1/T}{24v}\right)
 =\sqrt{\frac2T}\left(
 \frac5{24}+\frac{(v-1)(4v-1)(6v-5)}{120v}\right)
 \le\frac5{24}\sqrt{\frac2T}.
\]
Here $T\ge13/4$, so $5/6<v<1$. This proves the claim.
\end{proof}

\subsection{Planar endpoint inequalities}\label{d2:app:endpoints}

We use the elementary bound from Lemma~\ref{lem:inverse-action-seven-tenths}:
\begin{equation}\label{d2:eq:airy}
 \xi_K(t)\ge K-c(t)K^{1/3},\qquad
 c(t)=\frac{(3\pi t)^{2/3}}2.
\end{equation}
If its right side is at least $\mu$, the shell sample is on the
outer branch and obeys the same bound.

\begin{lemma}[Derivative at fixed width]\label{d2:lem:width-slope}
Fix $T>0$ and $0<t<T$, and vary $K\ge\pi T$ with $\mu=K-\pi T$.
On the inner branch,
\[
 \partial_Kx(t)\ge\sqrt{1-(t/T)^2}.
\]
On the outer branch, writing $x(t)=K\cos u$ gives
\[
 \partial_Kx(t)=\frac{\sin u}{u},\qquad
 \sin u-u\cos u=\frac{\pi t}{K},\qquad 0<u<\frac\pi2.
\]
The derivatives agree at the branch junction.
\end{lemma}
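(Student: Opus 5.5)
The plan is to differentiate the defining action equation implicitly in $K$, with $\mu=K-\pi T$ so that $d\mu/dK=1$. The needed partial derivatives of $G_L(z)$ follow from \eqref{eq:action}. With $\cos u=z/L$,
\[
 \partial_zG_L(z)=-\frac{u}{\pi},\qquad
 \partial_LG_L(z)=\frac{\sqrt{L^2-z^2}}{\pi L}=\frac{\sin u}{\pi}.
\]
The second formula follows because the two terms from differentiating $\arccos(z/L)$ combine to $(L^2-z^2)/(L\sqrt{L^2-z^2})$.

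\emph{Outer branch.} Here $G_K(x)=t$ with $t$ fixed. Write $x=K\cos u$, so the action equation reads $K(\sin u-u\cos u)=\pi t$. Implicit differentiation gives
\[
 \partial_Kx=-\frac{\partial_KG_K}{\partial_zG_K}=\frac{\sin u}{u}.
\]
This is the asserted formula. Since $\sin u-u\cos u$ increases from $0$ to $1$ on $(0,\pi/2)$, the constraint fixes a unique $u$.

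\emph{Inner branch.} Here $x<\mu$. Put $a=\arccos(x/K)$ and $b=\arccos(x/\mu)$, so that $0<b<a<\pi/2$. From $A(x)=G_K(x)-G_\mu(x)=t$ we obtain
\[
 \partial_Kx=\frac{\sin a-\sin b}{a-b}=\frac1{a-b}\int_b^a\cos\theta\,d\theta .
\]
Thus $\partial_Kx$ is the uniform mean of $\cos\theta$ over $[b,a]$. On the other side, integrating $\partial_LG_L$ from $\mu$ to $K$ gives
\[
 t=\frac1\pi\int_\mu^K\sqrt{1-x^2/s^2}\,ds .
\]
Hence $t/T$ is the mean of $\sin\theta$ under the substitution $s=x/\cos\theta$. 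This substitution carries $ds$ to the weight $w(\theta)=x\sin\theta/\cos^2\theta$ on $[b,a]$. So the claim is
\[
 \Bigl(\mathbb E_{\mathrm{unif}}\cos\theta\Bigr)^2
 +\Bigl(\mathbb E_w\sin\theta\Bigr)^2\ge1 .
\]
The weight $w$ is increasing, so the $w$-mean of $\sin\theta$ exceeds its uniform mean by a Chebyshev-type inequality. That is the direction that helps.

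\emph{Main obstacle.} The inequality just displayed is the step I expect to be hardest. The naive bounds go the wrong way: for a single measure, $|\mathbb E e^{i\theta}|^2\le1$. So the gain must come entirely from the increasing weight $w$, and it must compensate the loss of order $(a-b)^2$ in $|\mathbb E e^{i\theta}|$.

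I would first expand both sides for small $a-b$ about the midpoint, to confirm that the second-order terms have the right sign. I would then try to reduce to a one-parameter extremal problem, fixing $a$ and varying $b$. Differentiating in $b$ should turn the inequality into a monotone comparison resembling the $(\mu\Delta)_\mu<0$ computation in Lemma~\ref{d2:lem:concavity}.

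If that fails, my fallback is to work in $v=1/s^2$, as in the proof of Lemma~\ref{d2:lem:semicircle}. In those variables $\partial_Kx$ is a ratio of expectations, and I would aim to prove the bound by Hölder's inequality with suitable exponents. This is the same mechanism that gave $-x''\ge-y''$ there.

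\emph{Junction.} At the junction $x=\mu$ we have $b=0$, so the inner formula $(\sin a-\sin b)/(a-b)$ becomes $\sin a/a$ with $a=u$. This agrees with the outer formula, and the two derivatives match.
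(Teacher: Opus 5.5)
Your setup matches the paper and is correct. This covers the outer-branch formula, the inner-branch identity $\partial_Kx=(\sin a-\sin b)/(a-b)=\mathbb E\cos\theta$ (uniform mean on $[b,a]$), the representation of $t/T$ as a $w$-weighted mean of $\sin\theta$, and the junction check at $b=0$. However, the inequality $p^2+q^2\ge1$ is the entire content of the inner-branch claim, and you leave it unproved. You list strategies (a small-$(a-b)$ expansion, an unspecified one-parameter extremal reduction, a H\"older fallback) but carry none of them out. A local expansion could only confirm the sign near the diagonal $a=b$, not the inequality on all of $0<b<a<\pi/2$. So as written there is a genuine gap at exactly the step you flagged.

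The missing step is short. Keep your weighted mean as a ratio of uniform means. The identities $\pi t=x\int_b^a\tan^2\theta\,d\theta$ and $\pi T=x\int_b^a\sec\theta\tan\theta\,d\theta$ give
\[
 q=\frac tT=\frac{\mathbb E\tan^2\theta}{\mathbb E(\tan\theta\sec\theta)} .
\]
Apply Cauchy--Schwarz to the denominator, $(\mathbb E\tan\theta\sec\theta)^2\le\mathbb E\tan^2\theta\,\mathbb E\sec^2\theta$, and use $\tan^2\theta=\sec^2\theta-1$. This yields $q^2\ge1-1/\mathbb E\sec^2\theta$. Jensen's inequality for the convex function $y\mapsto y^{-2}$ gives $\mathbb E\sec^2\theta\ge(\mathbb E\cos\theta)^{-2}=p^{-2}$. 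Hence $q^2\ge1-p^2$, that is, $p\ge\sqrt{1-q^2}$; this is the paper's argument.

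As you observed, the Chebyshev comparison of $q$ with the uniform mean of $\sin\theta$ is a dead end. What works is to compare both $p$ (through Jensen) and $q$ (through Cauchy--Schwarz) with the single quantity $\mathbb E\sec^2\theta$ under the same uniform measure.
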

\begin{proof}
On the inner branch put $a=\arccos(x/K)$, $b=\arccos(x/\mu)$,
and let $\mathbb E$ denote the uniform average on $[b,a]$.
Differentiation and the action equation give
\[
 p:=\partial_Kx=\mathbb E\cos\theta,\qquad
 q:=\frac tT=\frac{\mathbb E\tan^2\theta}
                      {\mathbb E(\tan\theta\sec\theta)}.
\]
Cauchy--Schwarz and Jensen imply
\[
 q\ge\sqrt{1-\frac1{\mathbb E\sec^2\theta}}
       \ge\sqrt{1-p^2},\qquad
 \mathbb E\sec^2\theta\ge(\mathbb E\cos\theta)^{-2}.
\]
Thus $p\ge\sqrt{1-q^2}$. Differentiating $G_K(x)=t$ gives the
outer formula, which agrees with the inner one at $b=0$.
\end{proof}

\begin{proof}[Proof of the endpoint estimates \eqref{d2:eq:endpoints}]
Fix $K\ge8$ and write $W(T)=\W_2(K,K-\pi T)$.
Throughout, $157/50<\pi<22/7$. Put $t_j=j+1/4$ and $c_j=c(t_j)$.
Cubing gives
\[
 c_0<\frac{443}{500},\qquad c_1<\frac{259}{100},\qquad
 2(c_0+c_1)<7.
\]

\emph{The starting width $T=3/2$.}
Lemma~\ref{d2:lem:width-slope} gives
$\partial_Kx(1/4)\ge2/\pi$ and
$\partial_Kx(5/4)\ge\sqrt{11}/6$ on either branch: outer slopes
exceed $2/\pi$, while the inner bounds are $\sqrt{35}/6$ and
$\sqrt{11}/6$. Hence
\[
 \partial_K \Sigma_2(3/2)
 \ge\frac4\pi+\frac{\sqrt{11}}3-\frac{3\pi}4
 >\frac{14}{11}+\frac{11}{10}-\frac{33}{14}=\frac6{385}>0.
\]
At $K=6$, both samples are outer by \eqref{d2:eq:airy}, since
$6<48\pi/25$. As $W<9$ and $6^{1/3}<11/6$,
\[
 \Sigma_2(3/2)>24-7\,6^{1/3}-2-W
 >24-\frac{77}{6}-2-9=\frac16.
\]
Monotonicity proves the required range $K\ge8$.

\emph{Two rows.}
At $T=9/4$, both samples are outer for $8\le K\le14$, since
\eqref{d2:eq:airy} guarantees this when $K\le162\pi/25$.
With $y=K^{1/3}$, the coefficient bound above gives
\[
 \Sigma_2(9/4)>\frac{13}{28}y^3-7y+\frac{1047}{100}>0.
\]
The cubic is positive on $y\ge0$ because
$(1047/100)^2>38416/351$.
For $K\ge14$, Lemma~\ref{d2:lem:width-slope} gives
\[
 \partial_Kx(1/4)>\frac{47}{50},\qquad
 \partial_Kx(5/4)>\frac{83}{100}
\]
on either branch. For the inner branch, square and compare with
$80/81$ and $56/81$. For the outer branch, the corresponding
angles are less than $3/5$ and $1$, since
\[
 \sin u-u\cos u\ge\frac{u^3}3-\frac{u^5}{30},\qquad
 \frac\pi{56}<\frac{(3/5)^3}3-\frac{(3/5)^5}{30},\quad
 \frac{5\pi}{56}<\frac3{10}.
\]
The bound $\sin u/u>1-u^2/6$ gives the two slopes. Therefore
\[
 \partial_K\Sigma_2(9/4)>
 2\left(\frac{47}{50}+\frac{83}{100}\right)-\frac{9\pi}8
 >\frac3{700}>0,
\]
which proves the remaining range.

\emph{Three or more rows.}
Let $T=n+1/4$ with $n\ge3$, and put $\rho=1-\pi T/K$.
We first note the uniform bound $e(T)>7/80$. For $n=3$,
\[
 e(13/4)=\frac{2\sqrt{42}+12+2\sqrt{22}}{13}-\frac{13\pi}{16}
 >\frac{1601}{18200}>\frac7{80},
\]
using $\sqrt{42}>162/25$, $\sqrt{22}>469/100$, and $\pi<22/7$.
For $n\ge4$, Lemma~\ref{d2:lem:quarter-semicircle} gives
\[
 2\bigl(1/4-e(T)\bigr)
 <\frac5{6\sqrt{2T}}+\frac1{16T^2}
 \le\frac5{3\sqrt{34}}+\frac1{289}
 <\frac{10}{33}+\frac1{289}<\frac{13}{40},
\]
where $\sqrt{34}>11/2$. Thus Lemma~\ref{d2:lem:semicircle} implies
\[
 \frac{\Sigma_n(T)}K>
 \frac12-\frac{13}{40}\sqrt\rho-\beta(T)(1-\rho)
       -\gamma(T)(1-\rho)^{2/3}-\delta(T)\sqrt{1-\rho},
\]
where
\[
 \beta(T)=\frac1\pi\left(1-\frac5{12T}\right),\qquad
 \gamma(T)=\frac8{25(\pi T)^{2/3}},\qquad \delta(T)=\frac1{4T}.
\]
At $\mu=0$, the same estimate follows from
\eqref{eq:onepoint-ball-linear}, since $\pi>2$.
For each fixed $\rho$, this lower bound increases with $T$:
$\gamma$ decreases, and, for $z=1-\rho$,
\[
 -\beta(T)z-\delta(T)\sqrt z
 =-\frac z\pi+\frac1T\left(\frac{5z}{12\pi}-\frac{\sqrt z}{4}\right),
\]
whose coefficient of $1/T$ is nonpositive.
It suffices to take $T=13/4$, where
\[
 \beta(T)<\frac5{18},\qquad
 \gamma(T)<\frac3{44},\qquad \delta(T)=\frac1{13}.
\]
These bounds use $\pi>157/50$ and $(352/75)^3<(2041/200)^2$.
Concavity gives $(1-\rho)^{2/3}\le1-2\rho/3$ and
$\sqrt{1-\rho}\le1-\rho/2$. Completing the square in $\sqrt\rho$
therefore gives
\[
 \frac{\Sigma_n(T)}K>
 \frac12-\frac5{18}-\frac3{44}-\frac1{13}
 -\frac{(13/40)^2}{4(5/18+1/22+1/26)}
 =\frac{15759239}{3834230400}>\frac1{250}.
\]

\emph{The ball endpoint.}
Here $\Sigma_n(K/\pi)=2E_{1/4}(\xi_K)-n>K/500$ by
\eqref{d2:eq:disk-surplus}. Terminal zero samples contribute nothing to
the inverse sum, so this includes quarter-height ball endpoints.
\end{proof}

\section*{Funding}
This work was supported by the Department of Education of Guangdong
Province (Grant No.~2025ZDJS099) and Nanfang College, Guangzhou
(Grant No.~2026BQ010).

\section*{Use of AI tools}
During the preparation of this work, the author used ChatGPT (OpenAI)
and Claude (Anthropic) to assist with mathematical exploration and proof development,
checking intermediate arguments and calculations, literature searches,
and drafting and revising the text and \LaTeX{} source.
The author takes full responsibility for the results, proofs,
references, and final manuscript.

\begingroup\small\raggedright

\endgroup
\end{document}